\newcommand{\nbb}{\mathbb{N}}
\newcommand{\rbb}{\mathbb{R}}
\renewcommand{\L}{\mathcal{L}}
\newcommand{\W}{\mathcal{W}}
\renewcommand{\H}{\mathcal{H}}
\newcommand{\Pcal}{\mathcal{P}}
\newcommand{\B}{\mathcal{B}}
\newcommand{\la}{\langle}
\newcommand{\ra}{\rangle}
\newcommand{\grad}{\nabla}
\newcommand{\rx}{U_0}
\newcommand{\ru}{u_0}
\newcommand{\Hzeroeps}{\mathcal{H}^0_{\mueps}}
\newcommand{\Honeeps}{\mathcal{H}^1_{\mueps}}
\newcommand{\Hieps}{\mathcal{H}^i_{\mueps}}
\newcommand{\Mzeroeps}{M^0_{\mueps}}
\newcommand{\Moneeps}{M^1_{\mueps}}
\newcommand{\Mieps}{M^i_{\mueps}}
\newcommand{\TV}{\text{TV}}
\newcommand{\nt}{\nonumber}
\newcommand{\Phieps}{U^\varepsilon}
\newcommand{\ueps}{u^\varepsilon}
\newcommand{\etaeps}{\eta^\varepsilon}
\newcommand{\Ueps}{U^\varepsilon}
\newcommand{\mueps}{\mu_\varepsilon}
\newcommand{\nueps}{\nu^{\mu_\varepsilon}}
\newcommand{\numu}{\nu^\mu}
\newcommand{\etazero}{\eta^{0,\varepsilon}}
\newcommand{\Law}{\text{Law}}
\newcommand{\Wtilde}{\tilde{W}}
\newcommand{\wtilde}{\tilde{w}}
\newcommand{\Peps}{P^{\mueps}}
\newcommand{\Utilde}{\tilde{U}}
\newcommand{\etatilde}{\tilde{\eta}}
\newcommand{\utilde}{\tilde{u}}
\newcommand{\Psitilde}{\tilde{\Psi}}
\newcommand{\Xtil}{\widetilde{X}}
\newcommand{\ubar}{\bar{z}}
\newcommand{\zeps}{z^{\varepsilon}}
\newcommand{\zetaeps}{\zeta^{\varepsilon}}
\newcommand{\Pzero}{P^{0,\varepsilon}}
\newcommand{\nuzero}{\nu^{0,\varepsilon}}
\newcommand{\Phizero}{U^{0,\varepsilon}}
\newcommand{\De}{\Delta}
\newcommand{\x}{U_0}
\newcommand{\y}{\tilde{U}_0}
\newcommand{\mi}{\wedge}
\newcommand{\D}{\mathcal{D}}
\newcommand{\Tr}{\text{Tr}}
\renewcommand{\d}{\text{d}}
\newcommand{\domain}{\mathcal{O}}
\newcommand{\f}{\varphi}
\newcommand{\E}{\mathbb{E}}
\newcommand{\Ecal}{E}
\renewcommand{\P}{\mathbb{P}}
\newcommand{\textred}[1]{\textcolor{red}{#1}}
\newcommand{\A}{\mathcal{A}}
\newcommand{\M}{\mathcal{M}}
\newcommand{\Z}{\mathcal{Z}}
\newcommand{\T}{\mathbb{T}}
\newcommand{\Tcal}{\mathcal{T}}
\newcommand{\dom}{\text{Dom}}
\newcommand{\nbar}{\bar{n}}
\newcommand{\nhat}{\hat{n}}
\newcommand{\close}{\!\!\!}
\theoremstyle{plain}
\newtheorem{theorem}{Theorem}[section]
\newtheorem{corollary}[theorem]{Corollary}
\newtheorem{lemma}[theorem]{Lemma}
\newtheorem{proposition}[theorem]{Proposition}
\newtheorem{definition}[theorem]{Definition}
\theoremstyle{definition}
\newtheorem{remark}[theorem]{Remark}
\numberwithin{equation}{section}
\title[short memory limit]{The short memory limit for long time statistics in a stochastic Coleman-Gurtin model of heat conduction}
\author{Nathan E.~ Glatt-Holtz$^1$, Vincent R. Martinez$^{2}$ and Hung D.~Nguyen$^3$}
\address{$^1$ Department of Statistics, Indiana University, Bloomington, IN, USA}
\address{$^{2}$ Department of Mathematics \& Statistics, CUNY Hunter College, Department of Mathematics, CUNY Graduate Center, New York, NY, USA}
\address{$^3$ Department of Mathematics, University of Tennessee, Knoxville, TN, USA}
\begin{document}

\begin{abstract}
We consider a class of semi-linear differential Volterra equations with polynomial-type potentials that incorporates the effects of memory {{while being}} subjected to random perturbations via an additive Gaussian noise. Our main study is the long time statistics of the system in the singular regime as the memory kernel collapses to a Dirac function. Specifically, we show that provided that sufficiently many directions in the phase space are stochastically forced, there is a unique invariant probability measure to which the system converges, with respect to a suitable Wasserstein-type topology, and at an exponential rate which is independent of the decay rate of the memory kernel. We then prove the convergence of this unique statistically steady state to the unique invariant probability measure of the classical stochastic reaction-diffusion equation in the zero-memory limit. Consequently, we establish the global-in-time validity of the short memory approximation. 
\end{abstract}

\maketitle

%
\setcounter{tocdepth}{1}
\tableofcontents

\section{Introduction} \label{sec:intro}
Let $\domain\subset \rbb^d$ be a bounded open domain with smooth boundary, {where $d\geq1$}. We are interested in the following semilinear stochastic Volterra equation, which is written in non-dimensional variables:
\begin{equation} \label{eqn:react-diff:K}
\begin{aligned}
&\d u(t)=\kappa\Delta u(t)\d t +(1-\kappa)\int_{0}^{\infty}\close K(s)\Delta u(t-s)\d s\d t+\f(u(t))\d t+Q\d w(t),\\
&u(t)\big{|}_{\partial\domain}=0,\qquad u|_{(-\infty,0]}=u_0.
\end{aligned}
\end{equation}
This equation was introduced to describe the {evolution} of a {scalar} field $u(t)=u(t,x):[0,\infty)\times \domain\to\rbb$, such as heat, in a viscoelastic medium. On the right--hand side of~\eqref{eqn:react-diff:K}, $\f:\rbb\to\rbb$ {represents the potential, typically given as a polynomial nonlinearity that satisfies} certain dissipative conditions, $Q\d w(t)$ is a Gaussian process which is delta correlated (white) in time and {whose spatial correlation is characterized by the operator $Q$},
$\kappa\in(0,1)$ defines the relative contribution of memory terms, and $K:\rbb^+\to\rbb^+$, where $\rbb^+=[0,\infty)$, denotes a memory kernel, which regulates the extent to which the past can affect the present; it is assumed to be a smooth function that satisfies
\begin{equation} \label{cond:K:exponential}
K'+\delta K\leq 0,
\end{equation}
for some constant $\delta>0$.

In the absence of memory effects, that is, when $K\equiv \delta_0$, where $\delta_0$ denotes the Dirac mass at $s=0$, the system \eqref{eqn:react-diff:K} reduces to the classical stochastic reaction-diffusion equation 
\begin{equation} \label{eqn:react-diff}
\begin{aligned}
\d u(t)&=\Delta u(t)\d t  +\f(u(t)){\d t}+Q{\d}w(t),\quad &u(t)\big{|}_{\partial\domain}=0.
\end{aligned}
\end{equation}
This equation is commonly used to model a {scalar field} of {a heat flow by conduction} that is governed by Fick's second law, {in which} the instantaneous rate-of-change of {the internal energy} in a small volume element is proportional to the flux of the internal energy across the volume element's boundary. However, as pointed out in \cite{coleman1967equipresence,coleman1964material,gurtin1968general}, there are cases where it is physically {relevant to account for} the history of the velocity {field as well}. For example, the presence of elasticity in a viscoelastic medium induces a ``memory effect" between the {conduction} and the surrounding molecular bombardment \cite{clement1996some,clement1997white,clement1998white}. In this setting, it is more appropriate to consider~\eqref{eqn:react-diff:K}. There is a vast literature investigating the classical system~\eqref{eqn:react-diff} and its time-asymptotic behavior. In particular, under certain assumptions on the potential and noise, it is known that \eqref{eqn:react-diff} admits a unique invariant probability and exhibits geometric ergodicity (see \cite{cerrai2001second,cerrai2020convergence,
da1996ergodicity,da2014stochastic}). In contrast, much less is known about the statistically stationary states of \eqref{eqn:react-diff:K} and whether or not they resemble those of~\eqref{eqn:react-diff}.

In a recent work of \cite{glatt2024paperII}, it can be shown that all invariant probability measures of \eqref{eqn:react-diff:K} possess regularity properties that are dictated by the nonlinear structures. However, both the uniqueness and the issue of relating the statistically steady states of the system with memory effect to those the system without memory were left open in that work. The main goal of the present article is to address the issue of unique ergodicity of the system \eqref{eqn:react-diff:K} and to establish a strong relationship between \eqref{eqn:react-diff:K} and \eqref{eqn:react-diff} in the regime of small memory parameter, namely, that \eqref{eqn:react-diff:K} can uniformly approximate \eqref{eqn:react-diff} (with respect to a Wasserstein-type distance) on an infinite-time horizon. We do so by studying the short memory limit, in which $K$ converges to $\delta_0$, in both finite-time and infinite-time horizons. {We will now provide a detailed overview of the main mathematical results of the article}.

\subsection{Overview of the main results} \label{sec:intro:results}
As discussed elsewhere in \cite{conti2006singular,dafermos1970asymptotic,
glatt2024paperII}, due to the presence of memory, the process $u(t)$ governed by \eqref{eqn:react-diff:K} is generally not Markovian. This issue is dealt with by lifting the original process into an extended phase space to recover Markovianity. This approach was originally developed in \cite{conti2006singular,dafermos1970asymptotic} for deterministic systems by introducing a ``history variable," $\eta(t,s)$ that is subsequently appended to the original process, $u(t)$, to form a joint process $(u(t),\eta(t,s))$ that is Markovian in an extended phase space. More precisely, we introduce the \emph{integrated past history} of $u(t)$ given by 
\begin{equation} \label{form:eta}
        \eta(t,s)=\begin{cases} \int_0^su(t-r)\d r,& 0<s\leq t\\ \eta_0(s-t)+\int_0^tu(t-r)\d r,&s>t.\end{cases}
\end{equation}
Observe that $\eta$ satisfies the following inhomogeneous transport equation
\begin{align*}
\partial_t\eta(t,s)=-\partial_s\eta(t,s)+u(t),\quad \eta(0,\cdotp)=\eta_0(\cdotp),\quad \eta_0\big{|}_{\partial\mathcal{O}}=0.
\end{align*}
Notice, moreover, that under the assumption that $u(t)\big{|}_{\partial\mathcal{O}}=0$, one has $\eta(t,s)\big{|}_{\partial\mathcal{O}}=0$, for all $s,t\geq0$. To see the role of $\eta$ in \eqref{eqn:react-diff:K}, set 
\begin{align}\label{def:mu:K}
\mu(s):=-K'(s).
\end{align}
Then integrate by parts with respect to $s$ in the integral appearing in \eqref{eqn:react-diff:K} and invoke \eqref{form:eta} so that
\begin{equation} \label{eqn:integration-by-part}
\int_{0}^{\infty}\close K (s)\Delta u(t-s)\d s=\int_{0}^{\infty}\close \mu(s)\Delta\eta(t,s)\d s.
\end{equation}
This results in the following extended system, whose dynamics are Markovian:
\begin{equation} \label{eqn:react-diff:mu:original}
\begin{aligned}
&{\d} u(t)=\kappa\Delta u(t){\d t}+(1-\kappa)\int_0^\infty\close \mu(s)\Delta\eta(t,s)\d s {\d t} +\f(u(t))\d t+Q{\d} w(t),\\
\quad
&\partial_t \eta(t,s)=-\partial_s\eta(t,s)+u(t),\quad  \\
&u(t)\big{|}_{\partial\domain}=0,\quad\eta(t,s)\big{|}_{\partial \domain}=0,\\
u(0)&=u_0 \text{ in }\domain,\quad \eta(0;s)=\eta_0(s).
\end{aligned}
\end{equation}
The precise details of the phase space for the extended variable $(u,\eta)$ are briefly provided in \cref{sec:notation}.

Now recall that when $K$ is (formally) replaced by the Dirac delta function, $\delta_0$, in \eqref{eqn:react-diff:K}, the system \eqref{eqn:react-diff:K} reduces to \eqref{eqn:react-diff}. Intuitively speaking, this corresponds to the situation where past information has become negligible \cite{conti2006singular}. More precisely, given $\varepsilon>0$, define
\begin{equation} \label{form:K_epsilon}
K_\varepsilon(s): = \frac{1}{\varepsilon}K\big(\frac{s}{\varepsilon}\big).
\end{equation}
Assuming that
\begin{equation} \label{eqn:int.K=1}
\int_0^\infty\close K(s)\d s =1,
\end{equation}
then $K_\varepsilon$ also satisfies \eqref{eqn:int.K=1} and \eqref{form:K_epsilon} converges to $\delta_0$ (in the sense of distributions) as $\varepsilon\to0$, namely
\begin{align*}
\int_0^\infty\close K_\varepsilon(s)\f(s)\d s \to \f(0),\quad\text{as }\, \varepsilon\to 0,
\end{align*}
for all test functions $\f\in C^\infty_c([0,\infty))$, where $C^\infty_c([0,\infty))$ denotes the class of smooth functions over $[0,\infty)$ with compact support.  Thus, upon replacing $K$ by $K_\varepsilon$ in \eqref{eqn:react-diff:K} and letting $\varepsilon\to0$, we formally recover \eqref{eqn:react-diff} from \eqref{eqn:react-diff:K}. In other words, the past becomes increasingly irrelevant as memory fades away.

This can be formalized in terms of the augmented system \eqref{eqn:react-diff:mu:original}. For this, we recall \eqref{def:mu:K}  and define
\begin{equation} \label{form:mu_epsilon}
   \mu_\varepsilon(s) = -(K_\varepsilon)'(s)= \frac{1}{\varepsilon^2}\mu\big(\frac{s}{\varepsilon}\big). 
\end{equation}
Our first main result is to prove uniqueness of invariant probability measures for \eqref{eqn:react-diff:mu:original} provided that the noise structure is sufficiently non-degenerate and that the Markovian dynamics exponentially relaxes towards the unique stationary distribution in a manner that is uniform in the memory parameter, $\varepsilon$. For clarity, we make use of the convention that $\eqref{eqn:react-diff:mu:original}_{\varepsilon}$ refers to the system \eqref{eqn:react-diff:mu:original} with $\mueps$ replacing $\mu$.

\begin{theorem} \label{thm:unique:pseudo}
Suppose that $K$ satisfies~\eqref{cond:K:exponential}, that noise acts in sufficiently many directions of the phase space, and that $\f$ grows at most algebraically and is dissipative, that is, $\f$ satisfies for all $x\in \rbb$,
\begin{align*}
    |\f(x)|\le c|x|^{p}+C,
\end{align*}
and 
\begin{align*}
    x\f(x)\le -\tilde{c}|x|^{p+1}+C,
\end{align*}
for some positive constants $c,\tilde{c}, C,p$. Then, given $\mueps$ as in \eqref{form:mu_epsilon}, $\eqref{eqn:react-diff:mu:original}_\varepsilon$ admits exactly one invariant probability measure $\nueps$. Moreover,  $\nueps$ is exponentially attractive for its dynamics with a rate that is \textbf{\textup{uniform}} in $\varepsilon$.
\end{theorem}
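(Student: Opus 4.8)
The plan is to verify the hypotheses of a weak Harris-type theorem (in the spirit of Hairer--Mattingly--Scheutzow and its generalized/asymptotic coupling refinements), with \emph{all constants tracked uniformly in $\varepsilon$}. Writing $\Pcal^\varepsilon_t$ for the Markov semigroup of $\eqref{eqn:react-diff:mu:original}_\varepsilon$ on the extended phase space $\mathcal{H}$ for $(u,\eta)$ (equipped with the $\mueps$-weighted history norm), I would: (i) produce a Lyapunov function $\mathcal{V}$ obeying a drift estimate $\Pcal^\varepsilon_t\mathcal{V}\le e^{-\gamma t}\mathcal{V}+C$ with $\gamma,C$ independent of $\varepsilon$; (ii) establish a local contraction / small-set estimate on sublevel sets of $\mathcal{V}$, again with $\varepsilon$-independent constants; and (iii) assemble these into a distance-like function $d$ on $\mathcal{H}$ whose associated Wasserstein semi-metric $\W_d$ contracts exponentially under $\Pcal^\varepsilon_t$ at an $\varepsilon$-uniform rate. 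Uniqueness of $\nueps$ and its exponential attractivity then follow at once, while existence of $\nueps$ may be quoted from \cite{glatt2024paperII} or recovered by a Krylov--Bogolyubov argument based on $\mathcal{V}$.

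For step (i) I would test the $u$-equation against $u$ and, as is standard for polynomial potentials, against a suitable power of $u$ to absorb the nonlinearity via its one-sided dissipativity, while simultaneously exploiting the memory dissipation: the history equation $\partial_t\eta=-\partial_s\eta+u$ together with $K'+\delta K\le0$ yields a differential inequality for the $\mueps$-weighted energy of $\eta$ whose decay constant is of order $\delta/\varepsilon$, hence bounded \emph{below} uniformly as $\varepsilon\to0$, which is precisely what is needed. The key point is that the rescaling \eqref{form:mu_epsilon} is chosen so that the natural energy balance (energy of $u$ plus a small multiple of the history energy) closes with $\varepsilon$-independent constants; the algebraic growth of $\f$ enters exactly as in the estimates of \cite{glatt2024paperII}, which were already $\varepsilon$-uniform.

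For step (ii) --- the heart of the matter --- I would run a generalized (asymptotic) coupling: given two initial data in a fixed sublevel set of $\mathcal{V}$, drive both copies of $\eqref{eqn:react-diff:mu:original}_\varepsilon$ by the same noise, except add to the second copy a Girsanov control steering the finitely many forced, ``unstable'' modes of the difference $w=u^{(1)}-u^{(2)}$ toward $0$. Dissipativity of $\De$ forces the remaining high modes of $w$ to decay unaided, so $\|w(t)\|$ becomes small, while the Girsanov cost remains finite because $Q$ forces enough directions (this is where the hypothesis on the number of forced directions is used). The genuinely new subtlety relative to the memoryless equation \eqref{eqn:react-diff} is the history difference $\zeta=\eta^{(1)}-\eta^{(2)}$, which solves the transport equation forced by $w$ and is therefore essentially $\zeta(t,s)=\int_0^s w(t-r)\,\d r$; to convert decay of $w$ over a long time window into smallness of $\|\zeta(t)\|$ in the $\mueps$-weighted norm one splits the history interval into a ``core'' $\{s\le R\}$ and a ``tail'' $\{s>R\}$, controlling the tail by the exponential weight coming from \eqref{cond:K:exponential} (uniformly in $\varepsilon$ after rescaling) and the core by the $w$-decay. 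This estimate must be carried out carefully so that neither the coupling time, the coupling probability, nor the resulting contraction factor degenerates as $\varepsilon\to0$.

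Finally, I would feed (i) and (ii) into the weak Harris / generalized-coupling machinery to obtain constants $C\ge1$ and $\gamma>0$ independent of $\varepsilon$, together with a distance-like $d$ on $\mathcal{H}$ (of the form $d(x,y)=\sqrt{\rho(x,y)\,\bigl(1+\mathcal{V}(x)+\mathcal{V}(y)\bigr)}$ for a suitably truncated metric $\rho$) such that $\W_d(\Pcal^{\varepsilon,*}_t\lambda_1,\Pcal^{\varepsilon,*}_t\lambda_2)\le Ce^{-\gamma t}\W_d(\lambda_1,\lambda_2)$ for all probability measures $\lambda_1,\lambda_2$ on $\mathcal{H}$ with finite $\mathcal{V}$-moment. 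This forces at most one invariant measure $\nueps$ and gives its exponential attractivity at the $\varepsilon$-uniform rate $\gamma$. I expect step (ii), specifically the uniform-in-$\varepsilon$ control of the history difference $\zeta$ through the non-smoothing transport dynamics, to be the main obstacle.
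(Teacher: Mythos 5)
Your overall plan coincides with the paper's: a weak-Harris / generalized-coupling argument in the extended phase space, with (i) an $\varepsilon$-uniform Lyapunov bound of the form $\E\exp(\beta\Psi_0(U(t)))\le e^{-c_0 t}\E\exp(\beta\Psi_0(U_0))+C_0$ (Lemmas~\ref{lem:moment-bound:H^0_epsilon}, \ref{lem:exponential-bound:H^0_epsilon:beta/2}), (ii) a contracting/small-set estimate obtained by adding the feedback shift $\kappa\alpha_{\nbar}P_{\nbar}(u-\utilde)$ to one copy and invoking Girsanov and condition~\nameref{cond:Q:ergodicity} (Proposition~\ref{prop:contracting-d-small}, Lemmas~\ref{lem:dissipative-bound}--\ref{lem:error-in-law}), and (iii) the spectral gap with respect to $d^{\mueps}_{N,\beta}$ (Theorem~\ref{thm:react-diff:epsilon:geometric-ergodicity}). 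The existence is quoted from \cite{glatt2024paperII}, exactly as you suggest. So at the architectural level this is essentially the paper's proof.

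The one place you propose a genuinely different route, and also misjudge the difficulty, is the control of the history difference $\zeta=\etaeps-\etatilde$. You would use the explicit transport representation $\zeta(t,s)\approx\int_0^s \ubar(t-r)\,\d r$ and a core/tail decomposition of the $s$-interval. The paper never needs this, and it would be considerably more awkward, since bounding the integrand requires control of $\ubar$ in $H^1$, which the coupling argument does not directly furnish. Instead the paper computes $\frac{\d}{\d t}\Psi_0(\ubar,\zeta)$ for the joint functional \eqref{form:Psi_0} and observes two things. First, the skew-coupling terms $-(1-\kappa)\la\zeta,\ubar\ra_{\Mzeroeps}$ (from the $u$-equation) and $+(1-\kappa)\la\ubar,\zeta\ra_{\Mzeroeps}$ (from the $\eta$-equation) cancel exactly, so $\zeta$ and $\ubar$ never have to be estimated against each other. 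Second, $\la\Tcal_{\mueps}\zeta,\zeta\ra_{\Mzeroeps}\le-\tfrac{\delta}{2\varepsilon}\|\zeta\|^2_{\Mzeroeps}\le-\tfrac{\delta}{2}\|\zeta\|^2_{\Mzeroeps}$ for $\varepsilon\le1$, so the memory dissipation rate only improves as $\varepsilon\to0$ and the $\varepsilon$-uniformity is automatic. Combined with $\kappa\alpha_{\nbar}>a_\f$ and the feedback term this yields $\frac{\d}{\d t}\Psi_0\le -\min\{2(\kappa\alpha_{\nbar}-a_\f),\delta\}\Psi_0$ and the exponential decay of $\|U(t;\x)-\Utilde(t;\x,\y)\|_{\Hzeroeps}$ in one Gronwall step, with constants manifestly independent of $\varepsilon$. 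Your core/tail approach is plausible but trades a one-line energy identity for a multi-scale estimate that is not needed; the ``main obstacle'' you flag is in fact the easiest part of the argument once the right functional is chosen.
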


For the more precise statement of this result, we refer the reader to \cref{thm:react-diff:epsilon:geometric-ergodicity}. We note that the conditions imposed on the potential, $\f$, in \cref{thm:unique:pseudo} apply to a broad class of polynomial nonlinearities, one of which is the well-known Allen-Cahn cubic potential, $\f(x)=x-x^3$. The uniformity of the convergence rate with respect $\varepsilon$ in \cref{thm:unique:pseudo} is a significant fact. Indeed, with this in hand, we are able to study the short memory limit, $\varepsilon\to 0$. In our last main result, we establish the validity of the memory-less system \eqref{eqn:react-diff} as an approximation for the short memory system \eqref{eqn:react-diff:mu:original} on both finite-time and infinite-time horizons. The result can be summarized as follows. 

\begin{theorem} \label{thm:epsilon->0:pseudo} Let $p$ be the growth rate of $\f$ specified in \cref{thm:unique:pseudo}. Suppose that {$p<\infty$ when $d=1,2$,} that $p\le 5$ when $d= 3$, and that $p\leq1$ when $d\ge 4$. Then
\begin{enumerate}[noitemsep,topsep=0pt,wide=0pt,label=\arabic*.,ref=\theassumption.\arabic*]
    \item Let $(u_0,\eta_0)$ be a random variable satisfying suitable moment bounds. Let $(u^\varepsilon(t;u_0),\eta^\varepsilon(t;\eta_0))$ denote the unique solution of $\eqref{eqn:react-diff:mu:original}_{\varepsilon}$ corresponding to $(u_0,\eta_0)$. Let $u^0(t;u_0)$ denote the unique solution of the memory-less equation \eqref{eqn:react-diff} corresponding to $u_0$. Then for each time $T>0$, there exists a positive constant $C$, independent of $\varepsilon$, such that
\begin{align} \label{lim:epsilon->0:finite-time:pseudo}
\E\|\ueps(t)-u^0(t)\|_{L^2(\domain)}^2\le C\varepsilon^{1/3},\quad\text{for all}\ t\leq T.
\end{align}

    \item Let $\nueps$ denote the unique invariant probability measure of $\eqref{eqn:react-diff:mu:original}_{\varepsilon}$ and $\nu^0$ denote the unique invariant probability measure of \eqref{eqn:react-diff}. Let $(\ueps(t;u_0),\etaeps(t;\eta_0))$ denote the unique solution of \eqref{eqn:react-diff:mu:original} corresponding to initial data, $(u_0,\eta_0)$, distributed as $\nueps$ and let $u^0(t;u_0)$ denote the unique solution of \eqref{eqn:react-diff} corresponding to initial data, $u_0$, distributed as $\nu^0$. Then there exists a positive constant $C$, independent of $\varepsilon$, such that
\begin{align} \label{lim:epsilon->0:measure:pseudo}
\sup_{t\geq0}\big{|}\E \, f(\ueps(t))-\E\, f(u^0(t))\big{|}\le C\varepsilon^{1/12},
\end{align} 
holds for all suitable observables $f$.

    \item ($u^\varepsilon$ arbitrary initial distribution, but deterministic), given any $\lambda\in(0,1/12)$ and suitable observable $f$, it follows that
\begin{align} \label{lim:epsilon->0:test-function:pseudo}
\sup_{t\ge0}\big{|} \E f(\ueps(t))-\E f(u(t))\big{|} \le C\varepsilon^{\lambda},
\end{align}
for some positive constant $C$ independent of $\varepsilon$.
\end{enumerate}
\end{theorem}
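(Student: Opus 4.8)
The plan is to make the finite-time bound \eqref{lim:epsilon->0:finite-time:pseudo} the analytic core and then to deduce the infinite-horizon statements \eqref{lim:epsilon->0:measure:pseudo}--\eqref{lim:epsilon->0:test-function:pseudo} by feeding it into the $\varepsilon$-uniform exponential relaxation provided by \cref{thm:unique:pseudo}. For the first claim I would realize $\ueps$ and $u^0$ on a common probability space driven by the same $Q\d w$ and set $\zeps:=\ueps-u^0$. Subtracting \eqref{eqn:react-diff} from $\eqref{eqn:react-diff:mu:original}_\varepsilon$ the noise cancels exactly, and using \eqref{eqn:integration-by-part} together with $K_\varepsilon'=-\mu_\varepsilon$ and $\int_0^\infty K_\varepsilon=1$ to write the memory term as the instantaneous term plus a remainder, $\zeps$ solves the \emph{pathwise} PDE
\begin{align*}
\partial_t\zeps=\Delta\zeps+(1-\kappa)\Delta\rhoeps+\big(\f(\ueps)-\f(u^0)\big),\qquad \zeps|_{\partial\domain}=0,\quad \zeps(0)=0,
\end{align*}
driven by the \emph{memory defect}
\begin{align*}
\rhoeps(t):=\int_0^\infty K_\varepsilon(s)\big(\ueps(t-s)-\ueps(t)\big)\d s=\int_0^\infty\mu_\varepsilon(s)\etaeps(t,s)\d s-\ueps(t).
\end{align*}
Testing against $\zeps$, integrating $\langle\Delta\rhoeps,\zeps\rangle$ by parts (both terms vanish on $\partial\domain$), and using the one-sided Lipschitz bound on $\f$ with Young's inequality gives $\tfrac{\d}{\d t}\|\zeps\|_{L^2}^2\le C\|\zeps\|_{L^2}^2+C\|\nabla\rhoeps\|_{L^2}^2$; since $\zeps(0)=0$, Grönwall reduces \eqref{lim:epsilon->0:finite-time:pseudo} to the estimate $\E\int_0^T\|\nabla\rhoeps(t)\|_{L^2}^2\d t\le C\varepsilon^{1/3}$.

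The crux, and the step I expect to be the main obstacle, is this last bound on the memory defect. I would split the convolution defining $\nabla\rhoeps$ at a scale $s=\varepsilon^\theta$ with $\theta\in(0,1)$. On the tail $s\ge\varepsilon^\theta$ the exponential decay \eqref{cond:K:exponential} forces $\int_{\varepsilon^\theta}^\infty K_\varepsilon(s)\d s\le Ce^{-\delta\varepsilon^{\theta-1}}$, so combined with an $\varepsilon$-uniform bound $\sup_{t\ge0}\E\|\ueps(t)\|_{H^1}^2\le C$ (and the corresponding control of the initial history $\eta_0$, relevant once $s>t$) the tail is negligible. On the core $s\le\varepsilon^\theta$ one needs an $\varepsilon$-uniform short-time regularity estimate $\E\|\nabla\ueps(t)-\nabla\ueps(t-s)\|_{L^2}^2\le Cs^{\alpha}$ for some $\alpha>0$, obtained by expressing $\ueps(t)-\ueps(t-s)$ through the equation, the drift contributing an $O(s)$ term in a negative-order space and the stochastic increment $Q(w(t)-w(t-s))$ an $O(s^{1/2})$ term in a higher-order space, and then interpolating. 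Inserting both pieces and optimizing over $\theta$ yields the exponent $1/3$. This is exactly where the hypotheses on $\f$ (hence the dimensional thresholds) enter: they are what make the $\varepsilon$-uniform energy and time-regularity estimates on $(\ueps,\etaeps)$ available — such bounds being supplied by the a priori analysis underlying \cref{thm:unique:pseudo} and by \cite{glatt2024paperII}. The subtlety is that the natural Lyapunov functionals for $\eqref{eqn:react-diff:mu:original}_\varepsilon$ carry the $\varepsilon$-dependent weight $\mu_\varepsilon$, so one must extract from them estimates that do not degenerate as $\varepsilon\to0$.

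For \eqref{lim:epsilon->0:measure:pseudo}, note that since $\nueps$ and $\nu^0$ are stationary one has $\E f(\ueps(t))=\nueps(f)$ and $\E f(u^0(t))=\nu^0(f)$ for all $t$, so the claim reduces to bounding $|\nueps(f)-\nu^0(f)|$. Fixing $T>0$, lifting $\nu^0$ to the extended phase space (with trivial history, say), and inserting the intermediate quantity $\E f(\ueps(T;\nu^0))$, one has
\begin{align*}
|\nueps(f)-\nu^0(f)|\le\big|\nueps(f)-\E f(\ueps(T;\nu^0))\big|+\big|\E f(\ueps(T;\nu^0))-\E f(u^0(T;\nu^0))\big|,
\end{align*}
where $\E f(u^0(T;\nu^0))=\nu^0(f)$ was used. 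The first term is $\le Ce^{-\gamma T}$ by the $\varepsilon$-\emph{uniform} exponential attraction of $\nueps$ from \cref{thm:unique:pseudo} together with the moment bounds on $\nu^0$; the second is $\le C(T)\varepsilon^{1/6}$ by the finite-time bound \eqref{lim:epsilon->0:finite-time:pseudo} and Jensen's inequality, with $C(T)$ at most exponential in $T$. Balancing by taking $T\asymp\log(1/\varepsilon)$ gives $|\nueps(f)-\nu^0(f)|\le C\varepsilon^{1/12}$.

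Finally, \eqref{lim:epsilon->0:test-function:pseudo} follows by interpolating in time across a crossover $T_\varepsilon:=c_0\log(1/\varepsilon)$. For $t\le T_\varepsilon$ the finite-time bound \eqref{lim:epsilon->0:finite-time:pseudo} gives $|\E f(\ueps(t))-\E f(u^0(t))|\le C(t)\varepsilon^{1/6}\le C\varepsilon^{\lambda}$ once $c_0$ is small enough to absorb the exponential growth of $C(t)$; for $t\ge T_\varepsilon$,
\begin{align*}
\big|\E f(\ueps(t))-\E f(u^0(t))\big|\le\big|\E f(\ueps(t))-\nueps(f)\big|+\big|\nueps(f)-\nu^0(f)\big|+\big|\nu^0(f)-\E f(u^0(t))\big|,
\end{align*}
whose outer terms are $\le Ce^{-\gamma T_\varepsilon}\le C\varepsilon^{\lambda}$ by the ($\varepsilon$-uniform, resp.\ classical) geometric ergodicity and whose middle term is $\le C\varepsilon^{1/12}$ by the previous step. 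Taking the supremum over $t\ge0$ then yields \eqref{lim:epsilon->0:test-function:pseudo} for every $\lambda\in(0,1/12)$.
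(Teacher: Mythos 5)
Your outline for Parts 2 and 3 is essentially the paper's argument (insert an intermediate process, use the $\varepsilon$-uniform spectral gap from \cref{thm:unique:pseudo}, feed in the finite-time estimate, and optimize the crossover time $T\sim\log(1/\varepsilon)$), though two details need care: the paper inserts the auxiliary invariant measure $\nuzero$ of the lifted memoryless system \eqref{eqn:react-diff:eta^0_epsilon} rather than a trivial extension of $\nu^0$, and the $\varepsilon^{1/6}$ you assign to the finite-time contribution is too optimistic — because $d_{N,\beta}$ carries a square root, the Cauchy--Schwarz step in \cref{lem:epsilon->0:alpha_0(solutions-limit-solution)->0} costs another factor of two, giving $\varepsilon^{1/12}$ already before balancing against the geometric decay.

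The genuine gap is in Part 1, and it is structural. Your memory defect
\begin{align*}
\rhoeps(t)=\int_0^\infty K_\varepsilon(s)\big(\ueps(t-s)-\ueps(t)\big)\d s
\end{align*}
is expressed through the history of $\ueps$ itself, so closing the Gr\"onwall argument requires an $\varepsilon$-uniform time-increment estimate $\E\|\nabla\ueps(t)-\nabla\ueps(t-s)\|_{L^2}^2\le Cs^\alpha$. Such an estimate is not available — and the heuristic you sketch (split $\ueps(t)-\ueps(t-s)$ into drift and stochastic increment, with the drift contributing $O(s)$) fails uniformly in $\varepsilon$, because the drift of $\ueps$ contains $\int_0^\infty\mueps(s)A\etaeps(t,s)\d s$, whose $H$-norm is controlled only by $(\int\mueps\d s)^{1/2}\|\etaeps\|_{\Moneeps}\sim\varepsilon^{-1/2}\|\etaeps\|_{\Moneeps}$. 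The time derivative of $\ueps$ therefore has an $\varepsilon^{-1/2}$-scale component, and no $\varepsilon$-uniform H\"older regularity for $\nabla\ueps$ can be extracted from it. The paper sidesteps this entirely: in the extended phase-space formulation, after testing $(\zeps,\zetaeps)=(\ueps-u^0,\etaeps-\etazero)$ against $\Psi_0$, the cross term $-(1-\kappa)\la\zeps,\etaeps\ra_{\Mzeroeps}$ combines with $(1-\kappa)\la\zeps,\zetaeps\ra_{\Mzeroeps}$ coming from the $\etaeps$-equation to leave only $-(1-\kappa)\la\zeps,\etazero\ra_{\Mzeroeps}$, so the residual memory defect (the term $I_3^\varepsilon$ in the proof of \cref{thm:epsilon->0:solution->limit-solution:finite-time}) involves $u^0(t-r)-u^0(t)$, not $\ueps(t-r)-\ueps(t)$. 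The time-regularity needed (\cref{lem:react-diff:phi.Lipschit:|u(t)-u(s)|-bound}) is then for the $\varepsilon$-independent process $u^0$, in $H$ rather than $H^1$, and it is exactly here that the dimension-dependent growth hypotheses on $\f$ enter — to control $\|\f(u^0)\|_{L^2}$ through Agmon-type inequalities — not, as you anticipated, in order to make $\varepsilon$-uniform energy bounds on $\ueps$ available. Without the cancellation coming from $\zetaeps$, your pathwise decomposition does not reach a quantity the available a priori bounds can control.
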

We refer the reader to \cref{thm:epsilon->0:solution->limit-solution:finite-time}, \cref{thm:epsilon->0:invariant-measures->limit-measure} and \cref{thm:epsilon->0:sup_t.E[f(u^epsilon(t))-f(u^0(t))]} for the more precise version of \cref{thm:epsilon->0:pseudo}. We remark that the linear growth condition, $p\leq 1$, assumed on $\f$ above appears due to  difficulties that arise in controlling the nonlinearity in higher dimensions. In contrast, {when restricted to dimensions $d=1,2$, $p$ can be arbitrarily chosen whereas} in the physical dimension $d= 3$, our short memory results hold for potentials that grow at most as a fifth--order polynomial. Of course, this range still includes the important case of the Allen--Cahn cubic potential.

\subsection{Previous related literature and methodology of proofs}

Stochastic differential equations with memory were studied as early as in the seminal work \cite{ito1964stationary}. Since then, there have been many works concerning the theory of well-posedness {for} infinite-dimensional systems with memory {such as} \cite{barbu1975nonlinear,barbu52nonlinear,barbu1979existence,
bonaccorsi2004large,bonaccorsi2006infinite,clement1996some,clement1997white}. The existence of statistically steady states in {this} context {was addressed} for several {systems}, such as {the} stochastic Volterra equations \cite{clement1998white}, {the} Navier-Stokes equation \cite{weinan2001gibbsian}, {and the} Ginzburg–Landau equation \cite{weinan2002gibbsian}, while the {issue of uniqueness was studied in} \cite{bakhtin2005stationary,bonaccorsi2012asymptotic,
weinan2002gibbsian,weinan2001gibbsian,
glatt2020generalized,hairer2011asymptotic,
nguyen2022ergodicity}.

Turning to~\eqref{eqn:react-diff:K}, the extended phase space approach for studying~\eqref{eqn:react-diff:K} as manifested in the system ~\eqref{eqn:react-diff:mu:original} was introduced in the work \cite{dafermos1970asymptotic} and later popularized for many partial differential equations (PDEs) \cite{conti2005singular,conti2006singular,gatti2004exponential,
gatti2005memory} as well as stochastic PDEs (SPDEs) \cite{caraballo2007existence,caraballo2008pullback,li2019asymptotic,
liu2017well,liu2019asymptotic,
shangguan2024geometric,shu2020asymptotic}. The advantage of this method is that it allows one rewrite the noiseless counterpart of \eqref{eqn:react-diff:mu:original} as an autonomous system of evolution equations on product spaces (see \eqref{eqn:react-diff:epsilon:mu} below). Consequently, this provides one access to the classical Markovian framework to study statistically invariant structures of~\eqref{eqn:react-diff:mu:original}. It is through this form that we are able to obtain the powerful geometric ergodicity property. 

A crucial property that is leveraged in this article, as well as in many others \cite{bonaccorsi2012asymptotic,clement1997white, ottobre2011asymptotic,pavliotis2014stochastic}, is the assumption that the memory kernel decays exponentially into the past (see \eqref{cond:K:exponential} and \nameref{cond:mu}). For stochastic equations with memory decaying non-exponentially, e.g., sub--exponential or power-law, we refer the reader to \cite{baeumer2015existence, desch2011p,
 glatt2020generalized, nguyen2018small}. We emphasize that in the regime of exponential-type memory kernels that are considered in this article, the results we establish are able to accommodate a wider class of nonlinear potentials $\f$ with minimal growth conditions, and are compatible with results regarding their deterministic counterpart.

In the first main result of the paper, as captured in \cref{thm:unique:pseudo}, we address the issue of uniform mixing rate for \eqref{eqn:react-diff:mu:original}. Under a general set of structural assumptions on the potential and the noise, mainly, that the noise directly forces sufficiently many length scales, we will show that the system \eqref{eqn:react-diff:mu:original} with memory possesses only one invariant probability measure. We remark that the condition  we impose on the {potential allows for the maximum of its derivative to be positive}. Thus, our unique ergodicity result is valid for a large class of {potentials}, $\f$, including the important case of the cubic potential $\f(x)=x-x^3$.  We are furthermore able to obtain geometric ergodicity, {for which a defining characteristic is} that the law of the process governed by \eqref{eqn:react-diff:mu:original} {relaxes} exponentially fast to the unique stationary distribution with an exponential rate independent of memory regime $\varepsilon\to 0$.

Adopting the framework developed in \cite{weinan2002gibbsian,weinan2001gibbsian, hairer2006ergodicity,hairer2008spectral,hairer2011theory}, the proof of uniqueness relies on two crucial ingredients. The first is a suitable distance-like function $d$ that is \emph{contracting} for the Markov semigroup. The second is the $d$--\emph{smallness} of bounded sets to which the dynamics returns infinitely often \cite{butkovsky2020generalized,hairer2006ergodicity,hairer2008spectral,
hairer2011theory,kulik2017ergodic,kulik2015generalized} (see \cref{sec:main-result} and \cref{sec:ergodicity}). These properties will be established via the so-called \emph{generalized asymptotic coupling} approach by choosing appropriate controls that ultimately ensure a legitimate change of measure via Girsanov's Theorem. Then, by combining this with the Lyapunov structure of the system, we will be able to deduce the existence of a spectral gap, from which exponential convergence rates to the unique stationary distribution follows. The reader is referred to \cref{thm:react-diff:epsilon:geometric-ergodicity} for the precise statement. The consequences of a spectral gap are then exploited in a crucial way to study the short memory limit, which we now bring our attention to.

{The last set of main results of this article concern the stability of the long-time behavior of the extended variable $(\ueps(t),\etaeps(t))$ governed by~$\eqref{eqn:react-diff:mu:original}_\varepsilon$}. From the point of view of the homogenization problem for stochastic systems, traditionally one would like to compare the solutions of~\eqref{eqn:react-diff:mu:original} to those of~\eqref{eqn:react-diff} on any finite-time horizon, for instance as~\eqref{lim:epsilon->0:finite-time:pseudo}. Results similar to~\eqref{lim:epsilon->0:finite-time:pseudo} are commonly established in the so-called \emph{small mass limit} for a variety of finite and infinite dimensional settings \cite{birrell2017small,cerrai2006smoluchowski,cerrai2006smoluchowski2,
cerrai2017smoluchowski,cerrai2016smoluchowski,herzog2016small,hottovy2015smoluchowski,
lim2020homogenization,salins2019smoluchowski,shi2021small}. It is also studied in the \textit{small noise limit} \cite{blomker2007multiscale,da2014stochastic,lv2008limiting}, as well as in many other types of limiting situations \cite{foldes2015ergodic,foldes2019large}. On the other hand, results of the kind \eqref{lim:epsilon->0:finite-time:pseudo} for SPDEs with memory appear to be less available. We therefore hope to begin to fill this gap by establishing \cref{thm:epsilon->0:pseudo} (Part 1) and show that on any finite-time window, the solution of~\eqref{eqn:react-diff:mu:original} must converge to that of~\eqref{eqn:react-diff}. The result \eqref{lim:epsilon->0:finite-time:pseudo} ultimately relies on the framework of assumptions developed in \cite{conti2005singular,conti2006singular,gatti2004exponential,gatti2005memory,gatti2005navier} for the deterministic setting. Its proof requires a careful analysis of the memory term as well as the nonlinear term. Indeed, moment bounds for solutions of~\eqref{eqn:react-diff} in higher-order topologies are crucial for maintaining uniformity in the memory parameter. We refer the reader to \cref{thm:epsilon->0:solution->limit-solution:finite-time} for the precise statement and to \cref{sec:epsilon->0:finite-time} for its proof.

Once the short memory limit for~\eqref{eqn:react-diff:mu:original} on finite-time horizons has been settled, we consider the much more difficult problem of establishing it on an infinite-time horizon as captured by~\eqref{lim:epsilon->0:measure:pseudo}. This result {can be regarded as the convergence of the corresponding statistical steady states} in the short memory limit. Indeed, {by considering an appropriate class of observables, $f$, in~\eqref{lim:epsilon->0:measure:pseudo}, it can be shown that~\eqref{lim:epsilon->0:measure:pseudo} is a consequence of the assertion that}
\begin{align} \label{lim:Smoluchowski:measure}
\lim_{\varepsilon\to 0}\W(\pi_1^{-1}\nueps,\nu^0)=0,
\end{align}
where $\nueps$ denotes the statistical steady state of system~$\eqref{eqn:react-diff:mu:original}_{\varepsilon}$, $\pi_1^{-1}\nueps$ denotes the marginal on  $L^2(\domain)$ corresponding to the first component of the extended variable $(u,\eta)$, $\nu^0$ is the statistical steady state of the limiting memoryless system, and $\W$ is a Wasserstein-type distance that is found to be appropriate for the analysis. We note that there are situations where stationary solutions can be explicitly computed. In such systems, limits of the type~\eqref{lim:Smoluchowski:measure} are actually trivial to obtain since the steady state in question happen to be independent of the parameter. This is found to be the case for a gradient system~\cite{cerrai2006smoluchowski}, as a well as a generalized Langevin equation~\cite{nguyen2018small}. However, such a phenomenon appears to be more coincidental than generic since invariant probability measures typically do not present themselves in an explicit fashion. Consequently, the best that one can hope for in general is a statement of the form \eqref{lim:Smoluchowski:measure}.

The difficulties surrounding the proof of \eqref{lim:Smoluchowski:measure} amount to the fact that one has to deal with delicate estimates on solutions with initial random conditions as well as exponential moment bounds with respect to the invariant probability measures. In a recent series of papers \cite{cerrai2020convergence,
foldes2017asymptotic,foldes2019large}, results similar to~\eqref{lim:Smoluchowski:measure} were obtained in a variety of settings for SPDEs by employing an asymptotic coupling framework from~\cite{hairer2006ergodicity,hairer2008spectral} developed for infinite-dimensional settings. While there is a rich literature on proving statements of the form~\eqref{lim:epsilon->0:finite-time:pseudo} and~\eqref{lim:Smoluchowski:measure} for limiting regimes distinct from the short memory limit, there seems to be far less work for evolution systems with memory \cite{nguyen2018small,ottobre2011asymptotic}, particularly systems containing drag terms in convolution such the case presented in~\eqref{eqn:react-diff:mu:original}.

The general approach that we employ to prove~\eqref{lim:Smoluchowski:measure} is motivated by those employed in~\cite{cerrai2020convergence,foldes2017asymptotic,foldes2019large,
glatt2021mixing} tailored to our memory-dependent setting. {In a word, the strategy can be described as follows: we consider the Markovian transition functions $P^{\nueps}_t$ and $P^0_t$ associated with~$\eqref{eqn:react-diff:mu:original}_\varepsilon$ and \eqref{eqn:react-diff}, respectively. First, we reduce the convergence of $\pi_1^{-1}\nueps$ to $\nu^0$ as $\varepsilon\to 0$ to the task of obtaining bounds on $\W\big(\pi_1^{-1}(P^{\nueps}_t)^*\nueps,(P^0_t)^*\nu^0\big)$ at a \textit{fixed} time $t$; we do so by appealing to weak formulation of the Harris
theorem \cite{glatt2021mixing,hairer2008spectral,hairer2011asymptotic}, but suitably adapted to our setting. We then invoke the fact that $\W$ is described in terms of an optimal coupling of measures \cite{villani2008optimal} related to the norm $\|\cdot\|_{L^2(\domain)}$ and obtain further control on $\W\big(\pi_1^{-1}(P^{\nueps}_t)^*\nueps,(P^0_t)^*\nu^0\big)$ in terms of the difference $\|\ueps(t)-u^0(t)\|_{L^2(\domain)}$. As shown in~\cref{sec:epsilon->0:inv-measure}, the argument ultimately reduces to deriving~\eqref{lim:epsilon->0:finite-time:pseudo} for random initial conditions.} To the best of the authors' knowledge, the type of limit~\eqref{lim:Smoluchowski:measure} that we establish here seems to be the first in this direction for stochastic systems with memory. 

Establishing~\eqref{lim:Smoluchowski:measure} is interesting on its own right from the probabilistic point of view, as it essentially implies a stochastic version of the analogous result in the deterministic system (when $Q\equiv0$) regarding the convergence of global attractors with respect to an appropriate Hausdorff distance \cite[Theorem 9.1]{conti2006singular}. Results similar to those established in~\cite{conti2006singular} have also been established in a variety of other dynamical systems with memory \cite{conti2005singular,gatti2005navier,gatti2005memory,gatti2004exponential}.
We refer the reader to the precise versions of~\eqref{lim:Smoluchowski:measure} and \cref{thm:epsilon->0:pseudo} (Part 2) in~\cref{thm:epsilon->0:invariant-measures->limit-measure}.

As a final consequence of the short memory limits~\eqref{lim:epsilon->0:finite-time:pseudo}, \eqref{lim:epsilon->0:measure:pseudo} and~\eqref{lim:Smoluchowski:measure}, we obtain, through \cref{thm:epsilon->0:pseudo} (Part 3), the \textit{global-in-time} validity of solutions corresponding to the memoryless system as an approximation to solutions corresponding to the system possessing a short memory. In particular, provided one starts from sufficiently nice initial conditions and the observable $f$ possesses a suitable Lipschitz property (see \eqref{form:Lipschitz}), we show that \eqref{lim:epsilon->0:test-function:pseudo} holds. This generalizes~\eqref{lim:epsilon->0:finite-time:pseudo} over finite-time windows. The method we employ to prove~\eqref{lim:epsilon->0:test-function:pseudo} draws upon recent works from~\cite{glatt2021mixing} where similar issues were dealt with in the setting of two-dimensional incompressible Navier--Stokes equations. For the appropriate class of observables, $f$, the difference in terms of the observable that appears on the left-hand side of~\eqref{lim:epsilon->0:test-function:pseudo} are a consequence of~\eqref{lim:Smoluchowski:measure} as well as the uniform contraction property of~\eqref{eqn:react-diff:mu:original} toward $\nueps$. Although this might give the impression that there is a restriction on the observables $f$, it turns out that the limit~\eqref{lim:epsilon->0:test-function:pseudo} is actually applicable for a rather broad class observables (see \cref{remark:sup_t.E[f(u^epsilon(t))-f(u^0(t))]} (Part 1)). The precise version of~\cref{thm:epsilon->0:pseudo} (Part 3) is provided in \cref{thm:epsilon->0:sup_t.E[f(u^epsilon(t))-f(u^0(t))]}, while its detailed proof will be supplied in \cref{sec:proof-of-E[u^epsilon-u^0]}.

\subsection{Organization of the paper}
 
The rest of the paper is organized as follows: in \cref{sec:notation}, we {establish} the {precise} functional setting {that we work in}. Particularly, we will {formulate} \eqref{eqn:react-diff:mu:original} as an abstract Cauchy {problem} \eqref{eqn:react-diff:epsilon:mu} on an appropriate product space. In \cref{sec:main-result}, we identify the main assumptions that we make on the memory, the nonlinear potentials and the noise structure. We also {state} our main results in this section, including \cref{thm:react-diff:epsilon:geometric-ergodicity} on uniform geometric ergodicity and \cref{thm:epsilon->0:invariant-measures->limit-measure}--\cref{thm:epsilon->0:sup_t.E[f(u^epsilon(t))-f(u^0(t))]} on the short memory limits. In \cref{sec:moment}, we perform a priori moment bounds on the solutions {that ultimately ensures the uniform geometric ergodicity and the short memory limits}. In \cref{sec:ergodicity}, we prove the first main result of the paper, regarding the uniform geometric ergodicity. In \cref{sec:epsilon->0:inv-measure}, we prove the main result of the paper on the convergence of~\eqref{eqn:react-diff:mu:original} toward~\eqref{eqn:react-diff}. The main ingredients of the arguments are collected in \cref{sec:0-equation} and \cref{sec:nu^(0-epsilon)}. The paper concludes with two appendices, \cref{sec:Wasserstein} and \cref{sec:auxiliary-result}. In \cref{sec:Wasserstein}, we provide useful estimates on Wasserstein distances, that are exploited to establish the short memory limit, while in \cref{sec:auxiliary-result}, we collect several {technical} auxiliary results that are invoked in proving the main results.


\section{Functional Setting}\label{sec:notation}

Given a bounded open set $\domain$ in $\rbb^d$ with smooth boundary, we let $L^p(\domain)$, for $1\leq p\leq\infty$ denote the usual Lebesgue spaces.  In the particular case $p=2$, we let $H=L^2(\domain)$. We denote the {corresponding} inner product and norm in $H$ by $\langle\,\cdot\,,\,\cdot\,\rangle_H$ and $\|\cdot\|_H$, {respectively}.

Let $A$ denote the (negative) Dirichlet Laplacian operator, \textred{$-\De_D$}. It is well-known that there exists a complete orthonormal basis $\{e_k\}_{k\ge 1}$ in $H$ that diagonalizes $A$, i.e., there exists a positive sequence $0<\alpha_1<\alpha_2<\dots$ diverging to infinity such that
\begin{equation} \label{eqn:Ae_k=-alpha.e_k}
Ae_k=\alpha_k e_k, \quad k\ge 1.
\end{equation}

More generally, for each $r\in\rbb$, we denote by $H^r$, the domain of $A^{r/2}$ endowed with the inner product, i.e., $H^r:=D(A^{r/2})$, (see \cite{cerrai2020convergence,conti2006singular}). Then the corresponding inner product is defined by
$$\langle u,v\rangle_{H^r}=\sum_{k\ge 1}\alpha_k^{r}\la u,e_k\ra_H\la v,e_k\ra_H,$$
so that the corresponding induced norm is given by
$$\|u\|_{H^r}^2=\sum_{k\geq 1}\alpha_k^{r}\langle u,e_k\rangle^2_H.$$

Next, we {introduce} the notion of extended phase spaces {that was developed in} \cite{
conti2005singular,conti2006singular,
dafermos1970asymptotic}. {This will establish the formal framework in which we construct solutions to} \eqref{eqn:react-diff:mu:original}. First, letting $\mueps:[0,\infty)\to[0,\infty)$ be defined in \eqref{form:mu_epsilon}, we define the following weighted Hilbert spaces
\begin{equation}
M^\beta_{\mueps}=L^2_{\mueps}([0,\infty);H^{\beta+1}), \qquad \beta\in\rbb,
\end{equation}
endowed with the inner product (recalling from \eqref{cond:K:exponential}, \eqref{def:mu:K} and \eqref{form:mu_epsilon} that indeed $\mueps(s)>0$)
\begin{align}\label{def:Mnorm}
\langle \eta_1,\eta_2\rangle_{M^\beta_{\mueps}}=\int_0^\infty\close \mueps(s)\langle A^{(1+\beta)/2}\eta_1(s),A^{(1+\beta)/2}\eta_2(s)\rangle_H\d s.
\end{align}
It is important to note that while the usual embedding $H^{\beta_1}\subset H^{\beta_2}$, $\beta_1>\beta_2$, is compact, the embedding $M^{\beta_1}_{\mueps}\subset M^{\beta_2}_{\mueps}$ is only continuous \cite{pata2001attractors}. In order to establish sufficient regularity of invariant probability measures, we will work with additional spaces, which we develop now.

Next, let $\Tcal_{\mueps}$ be the operator on $\Mzeroeps$ defined by
\begin{align} \label{form:Tcal}
\Tcal_{\mueps}\eta := -\partial_s\eta,\qquad\dom(\Tcal_{\mueps}) =\{\eta\in\Mzeroeps:\partial_s\eta\in \Mzeroeps,\eta(0)=0\},
\end{align}
where $\partial_s$ is the derivative in the distribution sense. In other words, $\Tcal_{\mueps}$ is the infinitesimal generator of the right-translation semigroup acting on $\Mzeroeps$ \cite[Theorem 3.1]{grasselli2002uniform}. Furthermore, if $u\in L^1_{\text{loc}}([0,\infty);H^1)$ then the following functional formulation of the Cauchy initial-value problem
\begin{equation} \label{eqn:eta:Cauchy-problem}
\left\{
\begin{aligned}
\frac{\d}{\d t}\eta(t)&=\Tcal_{\mueps}\eta(t)+u(t),\\
\eta(0)&=\eta_0\in\Mzeroeps,
\end{aligned}
\right.
\end{equation}
has a unique solution $\eta\in C([0,\infty);\Mzeroeps)$ with the following representation: \cite{conti2006singular,grasselli2002uniform}
\begin{equation} \label{form:eta(t)-representation}
\eta(t,s;\eta_0)=\begin{cases} \int_0^s u(t-r)\d r,& 0<s\le t,\\
\eta_0(s-t)+\int_0^t u(t-r)\d r,& s>t.
\end{cases}
\end{equation}

Furthermore, given $\eta\in \Mzeroeps$, we introduce the {\it tail function}
\begin{equation}\label{form:tailfunction}
\T_\eta^{\mueps}(r) = \int_{(0,\frac{1}{r})\cup(r,\infty)}\close\close\close\|A^{1/2}\eta(s)\|_H^2\mueps(s)\d s,\qquad r\geq 1.
\end{equation}
Then we define the Banach space for $\beta\in\rbb$
\begin{equation} \label{form:space:L}
\Ecal^\beta_{\mueps} =\{\eta\in M^\beta_{\mueps}:\eta\in\dom(\Tcal_{\mueps}),\quad\sup_{r\geq 1}r\T_\eta^{\mueps}(r)<\infty\},
\end{equation}
with the norm {defined by}
\begin{equation} \label{form:space:L:norm}
\|\eta\|_{\Ecal^{\beta}_{\mueps}}^2 = \|\eta\|_{M^\beta_{\mueps}}^2+\|\Tcal_{\mueps}\eta\|^2_{\Mzeroeps}+\sup_{r\geq 1}r\T_\eta^{\mueps}(r).
\end{equation}

Having introduced these ``memory spaces", we define for $\beta\in\rbb$ the Banach spaces
\begin{align} \label{form:space:H_epsilon}
\H^\beta_{\mueps}&=H^\beta\times M^\beta_{\mueps} = \{(u,\eta):\|(u,\eta)\|^2_{\H^\beta_{\mueps}}= \|u\|^2_{H^\beta}+\|\eta\|^2_{M^\beta_{\mueps}}<\infty\}, \notag \\
\Z^\beta_{\mueps}&=H^\beta\times E^\beta_{\mueps}\\
&=\{(u,\eta):\|(u,\eta)\|^2_{\Z^\beta_{\mueps}}= \|u\|^2_{H^\beta}+\|\eta\|^2_{\Ecal^\beta_{\mueps}} = \|u\|^2_{H^\beta}+\|\eta\|_{M^\beta_{\mueps}}^2+\|\Tcal_{\mueps}\eta\|^2_{\Mzeroeps}+\sup_{r\geq 1}r\T_\eta^{\mueps}(r) <\infty\}.\notag 
\end{align}
where we recall the norm $M^\beta_{\mueps}$ from expression \eqref{def:Mnorm}. It was shown in \cite{gatti2004exponential,joseph1989heat,joseph1990heat} that $\Ecal^\beta_{\mueps}$ is compactly embedded into $M^0_{\mueps}$ for $\beta>0$, so that any bounded set in $\Z^\beta_{\mueps}$ is totally bounded in $\H^\beta_{\mueps}$. Moreover, for $(u,\eta)\in \H^\beta_{\mueps}$ (or $\Z^\beta_{\mueps}$), we denote by $\pi_i,\,i=1,2,$ the projection on marginal spaces, namely
$$\pi_1(u,\eta)=u,\qquad \pi_2(u,\eta)=\eta.$$

Finally, for each $\varepsilon\in(0,1]$, recalling $\mueps$ from \eqref{form:mu_epsilon}, we may now recast $\eqref{eqn:react-diff:mu:original}_{\varepsilon}$ as follows:
\begin{equation} \label{eqn:react-diff:epsilon:mu}
\begin{aligned}
\d \ueps(t)&=-\kappa A \ueps(t)\d t-(1-\kappa)\int_0^\infty\close \mueps(s)A\etaeps(t;s)\d s \d t+\f(\ueps(t))\d t+Q\d  w(t),\\
\frac{\d}{\d t} \etaeps(t)&= \Tcal_{\mueps}\etaeps(t)+\ueps(t), \\
( \ueps(0),\etaeps(0))&=(u_0,\eta_0)\in\Hzeroeps.
\end{aligned}
\end{equation}

\section{Assumptions and Statements of Main Results}\label{sec:main-result}

In this section, we state our main results and detail the various structural assumptions we impose on \eqref{eqn:react-diff:epsilon:mu} for them. In \cref{sect:well-posed}, we briefly review the well-posedness of the system \eqref{eqn:react-diff:epsilon:mu}. In \cref{sect:ergodicity}, we state the uniqueness of the invariant probability and the uniform geometric rate of convergence through \cref{thm:react-diff:epsilon:geometric-ergodicity}. Lastly, in \cref{sect:short-memory}, we {formulate our result concerning}
the validity of the short memory limit for the solutions on any finite time window, for the invariant probability measure as well as for the solutions on infinite time horizon, through \cref{thm:epsilon->0:solution->limit-solution:finite-time}, \cref{thm:epsilon->0:invariant-measures->limit-measure} and \cref{thm:epsilon->0:sup_t.E[f(u^epsilon(t))-f(u^0(t))]}, respectively.

\subsection{Well-posedness}\label{sect:well-posed}

We first start with the condition on the memory kernel $\mu$.
\subsection*{(M1)}\label{cond:mu} Let $\mu\in C^1([0,\infty))$ be a positive function such that 
\begin{equation} \label{ineq:mu}
\mu'+\delta\mu\le 0,
\end{equation}
for some $\delta>0$. 

\begin{remark} \label{rem:mu_epsilon}
    1. Let us recall~\eqref{form:mu_epsilon} from the introduction, namely, $\mueps(s):=\varepsilon^{-2}\mu(s\varepsilon^{-1})$, and observe that~\nameref{cond:mu} implies
\begin{equation} \label{ineq:mu_epsilon}
\mueps'+\frac{\delta}{\varepsilon}\mueps\le 0.
\end{equation} 

2. We remark that the result of uniform geometric ergodicity, cf. Theorem \ref{thm:react-diff:epsilon:geometric-ergodicity}, does not rely on the scaling \eqref{form:mu_epsilon} as long as the kernel $\mu$ satisfies \eqref{ineq:mu}, or $\mu_\varepsilon$ satisfies \eqref{ineq:mu_epsilon}. While this might give the impression that the scaling structures do not affect the analysis on the short memory limit, expression \eqref{form:mu_epsilon} is actually crucial in proving the convergence of the solutions on finite time windows, cf. Theorem \ref{thm:epsilon->0:solution->limit-solution:finite-time}. It also allows us to quantify the convergence rate \eqref{lim:epsilon->0:solutions->limit-solution} as $\varepsilon\to 0$.
\end{remark}

We will also make use of the following useful estimate \cite[Theorem 3.1]{grasselli2002uniform} that will be employed throughout {our analysis below}: for $\eta\in\dom(\Tcal_{\mueps})$, observe that integrating by parts gives
\begin{align}
\la \Tcal_{\mueps}\eta,\eta\ra_{\Mzeroeps}&=-\frac{1}{2}\int_0^\infty\close \mueps(s)\partial_s\|A^{1/2}\eta(s)\|^2_H\d s\notag \\
&=\frac{1}{2}\int_0^\infty\close \mueps'(s)\|A^{1/2}\eta(s)\|^2_H\d s \le -\frac{\delta}{2\varepsilon}\|\eta\|_{\Mzeroeps}^2. \label{ineq:<T_epsilon.eta,eta>}
\end{align}
More generally, for any $\beta\in\rbb$, we have that
\begin{equation} \label{ineq:<T_epsilon.eta,eta>_(M^n)}
\begin{aligned}
\la \Tcal_{\mueps}\eta,\eta\ra_{M^\beta_{\mueps}}\le -\frac{\delta}{2\varepsilon}\|\eta\|_{M^\beta_{\mueps}}^2,
\end{aligned}
\end{equation}
where $M^\beta_\mu$ is defined in \eqref{def:Mnorm}.

{Regarding the noise, we assume that} $w(t)$ is a cylindrical Wiener process on $H$, {whose decomposition is given by}
$$w(t)=\sum_{k\ge 1}e_kB_k(t),$$
where $\{e_k\}_{k\ge 1}$ is the orthonormal basis of $H$ as in \eqref{eqn:Ae_k=-alpha.e_k} and $\{B_k(t)\}_{k\ge 1}$ is a sequence of independent {standard} one-dimensional Brownian motions, each defined on the same stochastic basis $\mathcal{S}=(\Omega, \mathcal{F},\{\mathcal{F}_t\}_{t\ge 0},\P)$ \cite{karatzas2012brownian}. Concerning the linear operator $Q$, we impose the following assumption \cite{bonaccorsi2012asymptotic,cerrai2020convergence,da2014stochastic,glatt2017unique}:
\subsection*{(Q1)}\label{cond:Q}  
$Q:H\to H$ is a symmetric, non-negative, bounded linear map {such that}
\begin{align*}
\Tr(QAQ)<\infty,\quad \text{and}\quad \sup_{x\in\domain}\sum_{k\ge 1}|Qe_k(x)|^2<\infty.
\end{align*}
{In the above, we recall that $\Tr(QAQ)=\sum_{k\ge 1}\la QAQe_k,e_k\ra_H =\sum_{k\ge 1}\|Qe_k\|^2_{H^1}$.}

\begin{remark} We note that condition $ \sup_{x\in\domain}\sum_{k\ge 1}|Qe_k(x)|^2<\infty$ is required for the well--posedness \cite{bonaccorsi2012asymptotic,da2014stochastic}. We do not explicitly make use of this condition for the large--time asymptotic analysis of~\eqref{eqn:react-diff:epsilon:mu}.

\end{remark}

Finally, concerning the {potential, $\f:\rbb\to\rbb$}, we impose the following  conditions:

\subsection*{(P0)}\label{cond:phi} $\f\in C^1$ satisfies $\f(0)=0$.

\subsection*{(P1)}\label{cond:phi:1} There exist positive constants $a_1$ and $p_0>1$ such that for all $x\in\rbb$,
$$ |\f(x)|\le a_1(1+|x|^{p_0}).$$

\subsection*{(P2)}\label{cond:phi:2} There exist positive constants $a_2,a_3$ such that for all $x\in\rbb$, 
$$x\f(x)\le -a_2|x|^{p_0+1}+a_3,$$
where $p_0$ is the same constant from \textbf{(P1)}.

\subsection*{(P3)}\label{cond:phi:3} The derivative $\f'$ satisfies
$$\sup_{x\in\rbb}\f'(x)=:a_\f<\infty.$$

Fixing a stochastic basis $\mathcal{S}=(\Omega, \mathcal{F},\{\mathcal{F}_t\}_{t\ge 0},\P)$, let us now state what we mean by a ``weak solution" of \eqref{eqn:react-diff:epsilon:mu}  (see \cite{glatt2008stochastic}).
\begin{definition} \label{defn:mild-soln}
Given an initial condition $(u_0,\eta_0)\in \Hzeroeps$, a process $\Ueps(\cdot)=\big(\ueps(\cdot),\etaeps(\cdot)\big)$ is called a weak solution of \eqref{eqn:react-diff:epsilon:mu} if $\Ueps(\cdotp)$ is $\mathcal{F}_t$-adapted and there exists $q\geq1$ such that $\P$--a.s. one has
\begin{align*}
\ueps\in C_{w}([0,\infty);H)\cap L^2_{\emph{loc}}([0,\infty);H^1), \qquad \etaeps\in C({[0,\infty)};\Mzeroeps),\qquad \f(\ueps)\in L^q_{\emph{loc}}([0,\infty);L^q(\domain)),
\end{align*}
that is, $\ueps(\cdotp)$ is weakly continuous in $H$ with respect to $t$, and moreover, for $\P$--a.s.  
\begin{align*}
\la \ueps(t),v\ra_H&=\la u_0,v\ra_H  -\kappa\int_0^t\la \ueps(r),v\ra_{H^1} \emph{d} r-(1-\kappa)\int_0^t\la \etaeps(r),v\ra_{\Mzeroeps}+\int_0^t\la \f(\ueps(r)),v\ra_H \emph{d} r\\
&\quad+\int_0^t\la v(r),Q\emph{d} w(r)\ra_H,\\ 
\la \etaeps(t),\etatilde\ra_{\Mzeroeps}&=\la\eta_0,\etatilde\ra_{\Mzeroeps}+  \int_0^t\la \Tcal_{\mueps}\etaeps(r),\etatilde\ra_{\Mzeroeps} \emph{d}r+\int_0^t\la \ueps(r),\etatilde\ra_{\Mzeroeps}\emph{d} r,
\end{align*} 
{holds for all $v\in H^1\cap L^{q'}(\domain)$, where $q'\geq1$ is the H\"older conjugate of $q$, and for all $\etatilde\in \Mzeroeps$.}
\end{definition}

The first result of this note is the following well-posedness result ensuring the existence and uniqueness of weak solutions.
\begin{proposition}{\cite[Theorem 2.3]{glatt2024paperII} } \label{prop:well-posed} 
{Assume that \nameref{cond:mu}, \nameref{cond:Q} and \nameref{cond:phi}--\nameref{cond:phi:3} hold}. Then for all $U_0\in\Hzeroeps$, \eqref{eqn:react-diff:epsilon:mu} admits a unique weak solution $U(\,\cdot\,;\x)$ in the sense of \cref{defn:mild-soln}. Furthermore, the solution $\Ueps(t;U_0)$ is continuous in $\Hzeroeps$ with respect to $U_0$, for all fixed $t\ge 0$, i.e.,
\begin{align*}
\E\|\Ueps(t;U^{n}_0 )-\Ueps(t;U_0)\|^2_{\Hzeroeps}\to 0,\quad\text{as  }n\to \infty,
\end{align*}
whenever $\|U_0^n-U_0\|_{\Hzeroeps}\to 0$ as $n\to\infty$.
\end{proposition}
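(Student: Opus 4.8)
The plan is to construct solutions by a Galerkin approximation tailored to the Lyapunov/energy structure of \eqref{eqn:react-diff:epsilon:mu}, to treat the memory component through its explicit transport representation \eqref{form:eta(t)-representation} rather than by any compactness argument (which is unavailable, since $M^{\beta_1}_{\mueps}\hookrightarrow M^{\beta_2}_{\mueps}$ is only continuous, not compact; cf.\ the remark after \eqref{def:Mnorm}), and to extract both uniqueness and the asserted continuous dependence from a single Gr\"onwall estimate resting on the one-sided bound \nameref{cond:phi:3} and the additive structure of the noise. (Since the coupled drift turns out to be one-sided Lipschitz in the equivalent norm $\|u\|_H^2+(1-\kappa)\|\eta\|_{\Mzeroeps}^2$ on $\Hzeroeps$ --- see the uniqueness computation below --- one could alternatively invoke the variational theory for locally monotone SPDEs.)

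\emph{Galerkin scheme and energy estimates.} Projecting the $\ueps$--equation onto $H_n=\mathrm{span}\{e_1,\dots,e_n\}$ via the orthogonal projection $P_n$, and solving the (linear, explicitly integrable) transport equation for the memory variable $\etaeps_n$ with input $\ueps_n$ through \eqref{form:eta(t)-representation}, one obtains a finite-dimensional system that is locally well-posed since $\f\in C^1$; global existence follows from the energy estimate. Applying It\^o's formula to $\|\ueps_n\|_H^2+(1-\kappa)\|\etaeps_n\|_{\Mzeroeps}^2$ and using that (i) the coupling cross terms $\mp(1-\kappa)\la\etaeps_n,\ueps_n\ra_{\Mzeroeps}$ cancel, (ii) the transport term is nonpositive by \eqref{ineq:<T_epsilon.eta,eta>}, (iii) $\la\f(\ueps_n),\ueps_n\ra_H\le -a_2\|\ueps_n\|_{L^{p_0+1}(\domain)}^{p_0+1}+a_3|\domain|$ by \nameref{cond:phi:2}, and (iv) the It\^o correction $\tfrac12\Tr(Q^2)$ is finite since $\Tr(Q^2)\le\alpha_1^{-1}\Tr(QAQ)<\infty$ by \nameref{cond:Q} (recall $\alpha_k\ge\alpha_1>0$), one obtains, for every $T>0$ and uniformly in $n$, bounds in expectation (and, by iterating, in all higher moments, with the time-supremum handled by the Burkholder--Davis--Gundy inequality) on $\ueps_n$ in $L^\infty([0,T];H)\cap L^2([0,T];H^1)\cap L^{p_0+1}([0,T]\times\domain)$ and on $\etaeps_n$ in $C([0,T];\Mzeroeps)$; by \nameref{cond:phi:1} this also bounds $\f(\ueps_n)$ in $L^{(p_0+1)/p_0}([0,T]\times\domain)$.

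\emph{Passage to the limit.} Along a subsequence one extracts weak/weak-$*$ limits $\ueps_n\rightharpoonup\ueps$ and $\f(\ueps_n)\rightharpoonup\chi$, and the crux is to identify $\chi=\f(\ueps)$, which requires strong (equivalently, a.e.) convergence of $\ueps_n$. I expect this to be the main obstacle, for two reasons: the memory spaces are not compactly embedded, and $\f(\ueps_n)$ is controlled only in $L^{(p_0+1)/p_0}$. The first issue is bypassed by never using compactness for the memory variable --- since the $\etaeps_n$--equation is linear, its limit is read off directly from \eqref{form:eta(t)-representation} with input $\ueps$. For the second, one subtracts the stochastic integral $Z_n(t):=\int_0^tP_nQ\,\d w(r)$, which lies in $C^{\gamma}([0,T];H)$ for every $\gamma<1/2$ and converges to $\int_0^tQ\,\d w(r)$; the remainder $y_n:=\ueps_n-Z_n$ is bounded in $L^2([0,T];H^1)\cap L^\infty([0,T];H)$ with $\partial_t y_n$ bounded in $L^2([0,T];H^{-1})+L^\infty([0,T];H^{-1})+L^{(p_0+1)/p_0}([0,T]\times\domain)$ --- here the memory coupling term $\int_0^\infty\mueps(s)A\etaeps_n(s)\,\d s$ is bounded in $L^\infty([0,T];H^{-1})$ with norm $\le\big(\int_0^\infty\mueps(s)\,\d s\big)^{1/2}\|\etaeps_n\|_{\Mzeroeps}$. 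An Aubin--Lions argument then gives compactness of $\{y_n\}$ in $L^2([0,T]\times\domain)$, and a standard stochastic compactness argument (tightness, the Skorokhod representation theorem, the pathwise uniqueness obtained next, and the Gy\"ongy--Krylov criterion) upgrades this to the existence, on the original stochastic basis, of a weak solution $\Ueps=(\ueps,\etaeps)$ of \eqref{eqn:react-diff:epsilon:mu} in the sense of \cref{defn:mild-soln}.

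\emph{Uniqueness and continuous dependence.} Let $\Ueps(\cdot\,;U_0^1)$ and $\Ueps(\cdot\,;U_0^2)$ be two solutions on the same basis. Because the noise is additive, the difference $(\bar v,\bar w):=(\ueps(\cdot\,;U_0^1)-\ueps(\cdot\,;U_0^2),\,\etaeps(\cdot\,;U_0^1)-\etaeps(\cdot\,;U_0^2))$ solves a pathwise, noise-free equation of the same structure with $\f$ replaced by $\f(\ueps(\cdot\,;U_0^1))-\f(\ueps(\cdot\,;U_0^2))$. Pairing the first component with $\bar v$ and the second with $\bar w$ in $\Mzeroeps$, forming $\|\bar v\|_H^2+(1-\kappa)\|\bar w\|_{\Mzeroeps}^2$, and noting that the cross terms cancel, the transport contribution is nonpositive by \eqref{ineq:<T_epsilon.eta,eta>}, and $\la\f(a)-\f(b),a-b\ra_H\le a_\f\|a-b\|_H^2$ by \nameref{cond:phi:3}, one arrives at
\begin{equation*}
\frac{\d}{\d t}\Big(\|\bar v(t)\|_H^2+(1-\kappa)\|\bar w(t)\|_{\Mzeroeps}^2\Big)\le 2a_\f\Big(\|\bar v(t)\|_H^2+(1-\kappa)\|\bar w(t)\|_{\Mzeroeps}^2\Big),
\end{equation*}
so that Gr\"onwall's inequality yields $\|\Ueps(t;U_0^1)-\Ueps(t;U_0^2)\|_{\Hzeroeps}^2\le C_\kappa\,e^{2a_\f t}\|U_0^1-U_0^2\|_{\Hzeroeps}^2$ for $\P$--a.e.\ $\omega$. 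Taking $U_0^1=U_0^2$ gives pathwise uniqueness, while the displayed bound --- being deterministic in $\omega$ --- yields $\E\|\Ueps(t;U_0^n)-\Ueps(t;U_0)\|_{\Hzeroeps}^2\to0$ whenever $U_0^n\to U_0$ in $\Hzeroeps$, completing the proof.
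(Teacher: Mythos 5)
Your proposal gives a complete Faedo--Galerkin proof, and it is essentially correct. However, the paper itself does not supply a proof of \cref{prop:well-posed}: it simply cites \cite[Theorem~2.3]{glatt2024paperII} and remarks that ``the method that we employ to construct solutions is the well-known Faedo--Galerkin approximation,'' deferring all details to the reference. So there is no internal proof to compare against --- but the construction you outline is precisely the one the authors indicate, and your argument is sound on its own terms.

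A few small comments. (i) The way you handle the memory variable --- never invoking compactness for $\etaeps$ but instead reading off the limit from the explicit transport formula \eqref{form:eta(t)-representation}, and establishing compactness only for $\ueps_n$ via the translated variable $y_n=\ueps_n-Z_n$ and Aubin--Lions --- is the right response to the lack of compact embeddings of $M^{\beta_1}_{\mueps}\hookrightarrow M^{\beta_2}_{\mueps}$, and your estimate on the coupling term $\bigl\|\int_0^\infty\mueps(s)A\etaeps_n(s)\,\mathrm{d} s\bigr\|_{H^{-1}}\le\bigl(\int_0^\infty\mueps\bigr)^{1/2}\|\etaeps_n\|_{\Mzeroeps}$ is exactly what is needed to place $\partial_t y_n$ in a suitable sum of spaces. (ii) Your bound $\Tr(QQ^*)\le\alpha_1^{-1}\Tr(QAQ^*)$ from \nameref{cond:Q} is correct. (iii) In the uniqueness step, the displayed differential inequality $\tfrac{\mathrm d}{\mathrm dt}\bigl(\|\bar v\|_H^2+(1-\kappa)\|\bar w\|_{\Mzeroeps}^2\bigr)\le 2a_\f\bigl(\|\bar v\|_H^2+(1-\kappa)\|\bar w\|_{\Mzeroeps}^2\bigr)$ is only valid as written when $a_\f\ge 0$; when $a_\f<0$ the passage from $2a_\f\|\bar v\|_H^2$ to $2a_\f\bigl(\|\bar v\|_H^2+(1-\kappa)\|\bar w\|_{\Mzeroeps}^2\bigr)$ reverses. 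The fix is trivial --- replace $a_\f$ by $\max\{a_\f,0\}$ in the Gr\"onwall constant --- and does not affect the conclusion, but it is worth stating. Finally, it is worth emphasizing that because the noise is additive, the continuous-dependence estimate you derive is pathwise and deterministic in $\omega$, which is slightly stronger than the $L^2(\Omega)$ statement claimed in \cref{prop:well-posed}; that stronger form is in fact what is used implicitly in the Feller property and coupling arguments later in the paper, so it is good that your proof delivers it.
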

The method that we employ to construct solutions is the well-known Faedo-Galerkin approximation and can be found in many previous works for SPDE. We refer the reader to \cite[Theorem 2.3]{glatt2024paperII} for a more detailed argument. See also \cite{albeverio2008spde,
caraballo2007existence,caraballo2008pullback,
glatt2008stochastic} for instance.

\subsection{Uniform geometric ergodicity}\label{sect:ergodicity}

Under the assumptions of the well-posedness result \cref{prop:well-posed}, we may define Markov transition probabilities corresponding to the process $U(t;U_0)$ satisfying \eqref{eqn:react-diff:epsilon:mu} given by 
\begin{align*}
P_t^{\mueps}(U_0,A):=\P(\Ueps(t;\x)\in A),
\end{align*}
for each $t\geq 0$, $U_0\in\Hzeroeps$, and Borel sets $A\subseteq \Hzeroeps$. We let $\B_b(\Hzeroeps)$ denote the set of bounded Borel measurable functions defined on $\Hzeroeps$. The Markov semigroup associated to \eqref{eqn:react-diff:epsilon:mu} is the operator $P_t^{\mueps}:\B_b(\Hzeroeps)\to\B_b(\Hzeroeps)$ defined by
\begin{align}\label{form:P_t^epsilon}
P_t^{\mueps} f(U_0)=\E[f(\Ueps(t;\x))], \quad f\in \B_b(\Hzeroeps).
\end{align}
Recall that a probability measure $\nu\in \Pcal r(\Hzeroeps)$ is said to be \emph{invariant} for the semigroup $P_t^{\mueps}$ if for every $f\in \B_b(\Hzeroeps)$
\begin{align*}
\int_{\Hzeroeps} f(U_0) (P_t^{\mueps})^*\nu(\d U_0)=\int_{\Hzeroeps} f(U_0)\nu(\d U_0),
\end{align*}
where $(P_t^{\mueps})^*\nu$ denotes the push-forward measure of $\nu$ by $P_t^{\mueps}$, i.e., 
$$\int_{\Hzeroeps}f(\x)(P^{\mueps}_t)^*\nu(\d \x)=\int_{\Hzeroeps}P^{\mueps}_t f(\x)\nu(\d \x).$$

Next, to derive It\^o's formula for \eqref{eqn:react-diff:epsilon:mu}, let us recast \eqref{eqn:react-diff:epsilon:mu} in a more compact form as 
\begin{align*}
    \d U^\varepsilon = \A U^\varepsilon\d t+ F(U^\varepsilon)\d t + Q\d W,
\end{align*}
where 
\begin{align*}
    \A \begin{pmatrix}
        u\\ \eta
    \end{pmatrix} = \begin{pmatrix}
        -\kappa A u-(1-\kappa)\int_0^\infty\close \mueps(s)A\eta(s)\d s\\
         \Tcal_{\mueps}\eta+u
         \end{pmatrix},\quad F\begin{pmatrix}
             u\\ \eta
         \end{pmatrix} = \begin{pmatrix}
             \f(u) \\ 0
         \end{pmatrix},\quad W = \begin{pmatrix}
             w \\ 0
         \end{pmatrix}.
\end{align*}
In particular, for $U_1=(u_1,v_1)\in \D(\A)\subset \Hzeroeps$ and $U_2=(u_2,\eta_2)\in\Hzeroeps$, it holds that
\begin{align*}
     \la \A U_1,U_2\ra_{\Hzeroeps} & = -\kappa\la  Au_1, u_2\ra_H -(1-\kappa)\int_0^\infty\close \mueps(s)\la A\eta_1(s),u_2\ra_H \d s\\
     &\qquad+ \la \Tcal_{\mueps}\eta_1,\eta_2\ra_{\Mzeroeps}+\la u_1,\eta_2\ra_{\Mzeroeps}.
\end{align*}
Let $D$ denote the Frechet derivative and $D_u,D_{uu},D_\eta$ denote the partial derivatives in $\Hzeroeps$. In view of It\^o's formula from \cite[Theorem 4.32]{da2014stochastic}, for any $g\in C^2(\Hzeroeps)$ with $g=g(u,\eta)$ satisfying
\begin{align*}
\Tr(D_{uu}gQQ^*)<\infty,
\end{align*}
we have
\begin{align*}
    \d g(U^\varepsilon) = \la \A U^\varepsilon+ F(U^\varepsilon), Dg(U^\varepsilon)\ra_{\Hzeroeps}\d t  + \la D g(U^\varepsilon),Q\d W\ra_{\Hzeroeps}+\frac{1}{2}\Tr(D_{uu}gQQ^*)\d t.
\end{align*}
More specifically, 
\begin{align*}
    \d g(\ueps,\etaeps) = \L^{\mueps}g(\ueps,\etaeps)\d t  + \la D_u g(\ueps,\etaeps),Q\d w\ra_{H},
\end{align*}
where $\L^{\mueps}$ denotes the operator defined as
\begin{align}\label{form:L^epsilon}
\L^{\mueps} g(u,\eta)&:=-\kappa\la Au,D_u g\ra_H-(1-\kappa)\int_0^\infty\close \mueps(s) \la  A\eta(s),D_u g\ra_H\d s+\la \f(u),D_u g\ra_H \notag \\
&\qquad +\la \Tcal_{\mueps}\eta,D_\eta g\ra_{\Mzeroeps}+\la u,{D_\eta  g}\ra_{\Mzeroeps}+\frac{1}{2}\Tr(D_{uu}gQQ^*).
\end{align}


We now turn to the topic of uniform geometric ergodicity of \eqref{eqn:react-diff:epsilon:mu}. Following the framework of~\cite{butkovsky2020generalized,cerrai2020convergence,
hairer2006ergodicity,hairer2008spectral,nguyen2022ergodicity},
we recall that a function $d:\Hzeroeps\to[0,\infty)$ is called \emph{distance-like} if it is symmetric, lower semi-continuous, and $d(\x,\y)=0$ if and only if $ \x=\y$; see \cite[Definition 4.3]{hairer2011asymptotic}. Let $\W_{d}$ denote the corresponding coupling distance or Wasserstein-type distance in $Pr(\Hzeroeps)$ associated with $d$, given by
\begin{equation} \label{form:W_d:mu}
\W_{d}(\nu_1,\nu_2) := \inf \E\, d(X,Y),
\end{equation}
where the infimum is taken over all pairs of random variables $(X,Y)$ such that $X\sim \nu_1$ and $Y\sim\nu_2$. 
\begin{remark} \label{rem:W_d:dual}
When $d$ is a distance function in $\Hzeroeps$, the Kantorovich Theorem implies \cite[Theorem 5.10]{villani2008optimal}
\begin{align} \label{form:W_d:mu:dual-Kantorovich}
\W_{d}(\nu_1,\nu_2)=\sup_{[f]_{\text{Lip}_d}\leq1}\big{|}\int_{\Hzeroeps}f(U)\nu_1(\d U)-\int_{\H}f(U)\nu_2(\d U)\big{|},
\end{align}
where
\begin{align} \label{form:Lipschitz}
[f]_{\text{Lip}_{d}}=\sup_{U\neq \tilde{U}}\frac{|f(U)-f(\tilde{U})|}{d(U,\tilde{U})}.
\end{align}
On the other hand, if $d$ is a distance-like function, then the following one-sided inequality holds
\begin{equation} \label{ineq:W_0(nu_1,nu_2):dual}
\W_{d}(\nu_1,\nu_2) \ge \sup_{[f]_{\text{Lip}_{d}}\le 1}\big{|}\int_{\Hzeroeps} f(U)\nu_1(\d U)-\int_{\Hzeroeps} f(U)\nu_2(\d U)\big{|}.
\end{equation}
We refer the reader to \cite[Proposition A.3]{glatt2021mixing} for a further discussion of this point.
\end{remark}

\newcommand{\Ut}{\tilde{U}}

In our setting, we will pay particular attention to the following two distances in $\Hzeroeps$: the first is the discrete metric, i.e.,
$d(U,\tilde{U})=1$ when $U\neq \tilde{U}$ and $d(U,\tilde{U})=0$ otherwise; the corresponding $\W_d$ is the usual total variation distance, denoted by $\W_{\TV}$. The second is the distance $d_N^{\mueps}$, $N>0$, given by \cite{butkovsky2020generalized,hairer2011asymptotic,kulik2017ergodic,
kulik2015generalized,nguyen2022ergodicity}
\begin{equation} \label{form:d_N:mu}
d_N^{\mueps}(U,\tilde{U}):=N\|U-\tilde{U}\|_{\Hzeroeps}\mi 1.
\end{equation}
To estimate the convergence rate of~\eqref{eqn:react-diff:epsilon:mu} toward its unique invariant probability, we introduce the distance-like function
\begin{equation}\label{form:d_N,beta:mu}
d_{N,\beta}^{\mueps}(U,\tilde{U})=\sqrt{d_N^{\mueps}(U,\Ut)\big[1+e^{\beta\Psi_0(U)}+e^{\beta\Psi_0(\Ut)}\big]},
\end{equation}
where the functional $\Psi_0:\Hzeroeps\to \rbb$ is defined as
\begin{equation} \label{form:Psi_0}
\Psi_0(u,\eta)=\frac{1}{2}\|u\|^2_{H}+\frac{1}{2}(1-\kappa)\|\eta\|^2_{\Mzeroeps}.
\end{equation} 

Also, for $n\ge 1$, we denote by $P_n$ the projection of $(u,\eta)$ onto the {subspace spanned by the first $n$ eigenfunctions $e_k$}:
\begin{align} \label{form:P_Nu}
P_nu=\sum_{k=1}^n\la u,e_k\ra_He_k,\quad\text{and}\quad P_n\eta(s) =\sum_{k=1}^n\la \eta(s),e_k\ra_H e_k.
\end{align}

We will make the following additional assumptions on the potential and noise.

\subsection*{(Q3)} \label{cond:Q:ergodicity}  
Let $Q$ be as in \nameref{cond:Q}. For some $\nbar\in\nbb$ such that 
\begin{align*}
\kappa\alpha_{\nbar}>a_\f,
\end{align*}
where $\alpha_{\nbar}$ denotes the $\nbar$--th eigenvalue of $A$ as in~\eqref{eqn:Ae_k=-alpha.e_k} and $a_\f$ is as in~\nameref{cond:phi:3}, there exists $a_Q=a_Q(\nbar)>0$ such that $$\|Qu\|_H\ge a_Q\|P_{\nbar}u\|_H,\qquad u\in H,$$
where $P_{\nbar}$ is the spectral projection as in \eqref{form:P_Nu} for $\nbar$.

\begin{remark}\label{rem:ergodicity}
We note that \nameref{cond:Q:ergodicity} is a standard assumption and can be found in several preceding works \cite{butkovsky2020generalized,
cerrai2020convergence,glatt2021long,
glatt2017unique,nguyen2022ergodicity}. Essentially, we will require that the noise directly forces sufficiently many modes in order to dominate the effect of non-linear potentials.  
\end{remark}

We now state the first main result, concerning the unique ergodicity as well as the uniform geometric convergence rate with respect to the singular parameter $\varepsilon$.
\begin{theorem} \label{thm:react-diff:epsilon:geometric-ergodicity}
Assume that the conditions from \cref{prop:well-posed} hold. Assume additionally that \nameref{cond:Q:ergodicity} holds. Then, there exist $\beta_0>0$, $N_0>0$ independent of $\varepsilon$ such that for all $\beta\leq\beta_0$ and $N\geq N_0$, there exists $T^*=T^*(N,\beta)$ such that
\begin{equation} \label{ineq:react-diff:epsilon:geometric-ergodicity:beta/2}
\sup_{\varepsilon\in(0,1]}\sup_{\nu_1\neq \nu_2\in\Pcal r(\Hzeroeps)}\frac{\W_{d_{N,\beta}^{\mueps}}\big((P_t^{\mueps})^*\nu_1,(P_t^{\mueps})^*\nu_2\big)}{\W_{d_{N,\beta/2}^{\mueps}}(\nu_1,\nu_2)}\le C_1e^{-C_2 t},\quad\text{for all}\ t\geq T^*,
\end{equation}
for some positive constants, $C_1, C_2$, holds for all $\varepsilon$, where $\W_{d_{N,\beta}^{\mueps}}$ is the coupling distance~\eqref{form:W_d:mu} associated with distance-like function $d_{N,\beta}^{\mueps}$ as in~\eqref{form:d_N,beta:mu}.

\end{theorem}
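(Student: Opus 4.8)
The plan is to establish \eqref{ineq:react-diff:epsilon:geometric-ergodicity:beta/2} through the weak Harris-type machinery of \cite{butkovsky2020generalized,hairer2011asymptotic,nguyen2022ergodicity}, verifying two ingredients whose constants are scrutinized for uniformity in $\varepsilon$: (i) a \emph{uniform Lyapunov structure}, and (ii) a \emph{uniform $d$-contraction at a fixed time}. For (i), I would apply $\L^{\mueps}$ to the exponential weight $e^{\beta\Psi_0}$ with $\Psi_0$ as in \eqref{form:Psi_0}. Using \eqref{ineq:<T_epsilon.eta,eta>} the memory term contributes $-\tfrac{\delta}{2\varepsilon}(1-\kappa)\|\eta\|_{\Mzeroeps}^2$, which only \emph{helps} dissipation; the cross term $\la u,D_\eta g\ra$ against $\la A\eta,D_u g\ra$ is handled exactly as in the deterministic Dafermos trick so that the $u$–$\eta$ coupling closes with constants independent of $\varepsilon$; the nonlinearity is absorbed via \nameref{cond:phi:2}, and $\tfrac12\Tr(D_{uu}gQQ^*)$ is controlled by \nameref{cond:Q}. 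The upshot is $\L^{\mueps}e^{\beta\Psi_0}\le (-c_1\beta\Psi_0 + c_2\beta)e^{\beta\Psi_0}$ for $\beta\le\beta_0$, with $c_1,c_2,\beta_0$ uniform in $\varepsilon$, hence $P_t^{\mueps}e^{\beta\Psi_0}\le e^{-c_1\beta t}e^{\beta\Psi_0}+C$ uniformly. (These bounds are exactly the a priori moment estimates promised in \cref{sec:moment}.)

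For (ii), I would run the \emph{generalized asymptotic coupling} argument. Given two initial data $U_0,\tilde U_0$, couple the two copies of \eqref{eqn:react-diff:epsilon:mu} by driving the second copy with a shifted noise $d\tilde w = dw + Q^{-1}v\,dt$ on the $\nbar$ low modes, where $v$ is a feedback control of the form $v = -\lambda P_{\nbar}(u^\varepsilon-\tilde u^\varepsilon)$ plus the drift mismatch on $P_{\nbar}$; on the high modes no control is applied. Writing $\rho=(u^\varepsilon-\tilde u^\varepsilon,\eta^\varepsilon-\tilde\eta^\varepsilon)$, the energy estimate for $\|\rho\|_{\Hzeroeps}^2$ uses: the spectral gap $\kappa\alpha_{\nbar}>a_\f$ from \nameref{cond:Q:ergodicity} together with \nameref{cond:phi:3} to make the high-mode part of $-\kappa\la A\rho_1,\rho_1\ra+\la\f(u^\varepsilon)-\f(\tilde u^\varepsilon),\rho_1\ra$ strictly dissipative with rate $\kappa\alpha_{\nbar}-a_\f>0$ (uniform in $\varepsilon$); the control kills the low-mode part; and \eqref{ineq:<T_epsilon.eta,eta>} handles $\eta$ with the \emph{favorable} sign. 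This yields $\|\rho(t)\|_{\Hzeroeps}\le e^{-\gamma t}\|\rho(0)\|_{\Hzeroeps}$ with $\gamma>0$ uniform in $\varepsilon$, while the Lyapunov bound from (i) controls $\E\int_0^\infty\|v\|_H^2\,ds$ and hence, via Girsanov/Novikov, gives a uniform lower bound on the probability that the coupling is admissible on a bounded $\Hzeroeps$-ball. Combining the pathwise contraction with the change of measure produces, for a fixed $T_0$ and starting points in a $d_N^{\mueps}$-ball, a bound of the form $\W_{d_N^{\mueps}}\big((P_{T_0}^{\mueps})^*\delta_{U_0},(P_{T_0}^{\mueps})^*\delta_{\tilde U_0}\big)\le (1-c)\, d_N^{\mueps}(U_0,\tilde U_0)$ for large $N$, with $c\in(0,1)$ uniform in $\varepsilon$.

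With these two uniform ingredients in hand, I would conclude as in \cite[\S2]{hairer2011asymptotic} (see also \cite{butkovsky2020generalized,nguyen2022ergodicity}): the contraction on the $d_N^{\mueps}$-small set, the Lyapunov return-time estimate guaranteeing that the dynamics visits that set geometrically often, and the definition \eqref{form:d_N,beta:mu} of $d_{N,\beta}^{\mueps}$ combine — through the standard iteration over the return times and the $\sqrt{\,\cdot\,}$-structure that interpolates between the $d_N^{\mueps}$-distance and the exponential weights — to yield a genuine contraction of $\W_{d_{N,\beta}^{\mueps}}$ relative to $\W_{d_{N,\beta/2}^{\mueps}}$ at unit-order times, and thus the exponential decay \eqref{ineq:react-diff:epsilon:geometric-ergodicity:beta/2} for $t\ge T^*$. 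The choice $\beta$ vs.\ $\beta/2$ on the two sides is precisely what absorbs the loss incurred when trading the exponential moments for the return-time tail. Existence and uniqueness of $\nueps$ then follow by the usual Bolzano–Weierstrass/Krylov–Bogolyubov argument combined with the contraction.

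The main obstacle is securing \emph{$\varepsilon$-uniformity} at two delicate points: first, the Lyapunov estimate must not lose a factor of $\varepsilon^{-1}$ — the $\mueps'+\tfrac{\delta}{\varepsilon}\mueps\le0$ structure \eqref{ineq:mu_epsilon} gives an arbitrarily strong damping on $\eta$ as $\varepsilon\to0$, so one must be careful that this strong (and singular) term is only exploited with its \emph{beneficial} sign and never inverted, while the Dafermos-type coupling that extracts dissipation of $u$ from the memory must be arranged so its constants come from $\mu$ alone and not $\mueps$; second, the contraction constant $c$ and the control cost in the Girsanov step must be bounded below/above uniformly, which forces the feedback strength $\lambda$ and the radius $N$ to be chosen from the $\varepsilon$-independent spectral gap $\kappa\alpha_{\nbar}-a_\f$ and the $\varepsilon$-independent Lyapunov constants rather than from any $\varepsilon$-dependent quantity. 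Keeping the $\eta$-component of the coupled difference under control in the weak topology of $\Mzeroeps$ — where the embedding $\Ecal^\beta_{\mueps}\hookrightarrow\Mzeroeps$ is compact but with $\varepsilon$-dependent norms — is the technical crux that ties these together.
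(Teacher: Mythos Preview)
Your outline is essentially the paper's strategy---Lyapunov structure plus generalized coupling fed into a weak Harris argument---and the key $\varepsilon$-uniform inputs you identify (the spectral gap $\kappa\alpha_{\nbar}>a_\f$ from \nameref{cond:Q:ergodicity}, the favorable sign in \eqref{ineq:<T_epsilon.eta,eta>}, the cross-term cancellation in $\Psi_0$) are exactly the ones used. A few points where the paper's execution is simpler than what you sketch: the control in the auxiliary process \eqref{eqn:react-diff:mu:pi_n(u_x-u_xy)} is just the linear feedback $\kappa\alpha_{\nbar}P_{\nbar}(u-\tilde u)$, with \emph{no} drift-mismatch term; as a consequence the difference $U-\tilde U$ obeys a deterministic exponential decay (\cref{lem:dissipative-bound}) almost surely, so the Girsanov cost $\int_0^\infty\|\beta(t)\|_H^2\,dt$ is bounded by $C\|U_0-\tilde U_0\|_{\Hzeroeps}^2$ pathwise and the Lyapunov moments are \emph{not} needed at this step. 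The compact embedding $\Ecal^\beta_{\mueps}\hookrightarrow\Mzeroeps$ plays no role in the ergodicity proof (it is only used for existence of invariant measures); the $\eta$-difference is controlled directly in the strong $\Mzeroeps$-norm via the same $\Psi_0$ energy identity. Finally, for the $\beta$ vs.\ $\beta/2$ trade-off the paper uses, in addition to the standard Lyapunov bound \eqref{ineq:exponential-bound:H^0_epsilon}, a second estimate (\cref{lem:exponential-bound:H^0_epsilon:beta/2}) of the form $\E\,e^{\beta\Psi_0(U(t))}\le C\,e^{\beta\Psi_0(U_0)e^{-\gamma t}}$ obtained via the exponential martingale inequality; combining the two and splitting into three regions (near/far in $d_N^{\mueps}$, large/small in the exponential weight) yields the contraction of $\W_{d^{\mueps}_{N,\beta}}$ against $\W_{d^{\mueps}_{N,\beta/2}}$ directly, without a separate return-time iteration.
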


\begin{remark}
By definition of $d_{N,\beta}^{\mueps}$, we observe that $d^{\mueps}_{N,\beta/2}(\x,\y)\le d^{\mueps}_{N,\beta}(\x,\y)$.  An immediate consequence of \cref{thm:react-diff:epsilon:geometric-ergodicity} is
\begin{equation} \label{ineq:react-diff:epsilon:geometric-ergodicity}
\sup_{\varepsilon\in(0,1]}\sup_{\nu_1\neq \nu_2\in\Pcal r(\Hzeroeps)}\frac{\W_{d_{N,\beta}^{\mueps}}\big((P_t^{\mueps})^*\nu_1,(P_t^{\mueps})^*\nu_2\big)}{\W_{d_{N,\beta}^{\mueps}}(\nu_1,\nu_2)}\le C_1e^{-C_2 t},\qquad t\ge T^*.
\end{equation}

\end{remark}

The proof of \cref{thm:react-diff:epsilon:geometric-ergodicity} makes use of the framework in \cite{butkovsky2020generalized,
hairer2008spectral,hairer2011theory,hairer2011asymptotic,
kuksin2012mathematics,nguyen2022ergodicity} tailored to our setting. The argument relies on two crucial ingredients: a \emph{contraction} property for $P_t^{\mueps}$ with respect to  $d_N^{\mueps}$, and the $d_N^{\mueps}-$\emph{smallness} of $P_t^{\mueps}$ in any bounded set of $\Hzeroeps$. Together with a suitable exponential moment bounds, we are able to deduce the convergence rate~\eqref{ineq:react-diff:epsilon:geometric-ergodicity} with respect to $d^{\mueps}_{N,\beta}$. This will be carried out in \cref{sec:ergodicity}. We turn now to the application of~\cref{thm:react-diff:epsilon:geometric-ergodicity} to study the short memory limit.

\subsection{The short memory limit $\varepsilon\to 0$}\label{sect:short-memory}

We now consider the short memory limit and describe the precise sense in which we compare~\eqref{eqn:react-diff:epsilon:mu} with~\eqref{eqn:react-diff}. We recall from~\cref{sec:intro} that upon setting $K_\varepsilon(s):=\frac{1}{\varepsilon}K\big(\frac{s}{\varepsilon}\big)$, together with the normalizing condition~\eqref{eqn:int.K=1}, one may formally see how~\eqref{eqn:react-diff:K} approximates~\eqref{eqn:react-diff}. Translating this heuristic argument to the system~\eqref{eqn:react-diff:epsilon:mu} with $\mu(s)=-K'(s)$, we introduce  the following normalizing condition for $\mu$:
\subsection*{(M2)} \label{cond:mu:int.s.mu(s)=1}
The memory kernel $\mu$ satisfies the following normalizing condition
\begin{equation}  \label{eqn:int.s.mu(s)=1}
\int_0^\infty\close s\mu(s)\d s=1.
\end{equation}

\begin{remark}
 Suppose that $\mu$ satisfies $c:=\int_0^\infty s\mu(s)\d s$, then instead of~\eqref{eqn:react-diff}, the limiting equation would read
\begin{align*}
\d u(t)=-\big(\kappa+(1-\kappa)c\big)Au(t)+\f(u(t))\d t+ Q\d w(t).
\end{align*}
We therefore impose the normalized condition~\nameref{cond:mu:int.s.mu(s)=1} for the sake of convenience and notation clarity only. Indeed, one may easily extend the proof of  of~\cref{thm:epsilon->0:solution->limit-solution:finite-time} to accommodate this case.
\end{remark}

Observe that as a consequence of \cref{thm:react-diff:epsilon:geometric-ergodicity}, for each $\mueps$, there exists a unique invariant probability measure, $\nueps$, for~\eqref{eqn:react-diff:epsilon:mu} with convergence rate~\eqref{ineq:react-diff:epsilon:geometric-ergodicity} holding uniformly with respect to $\varepsilon$. We turn now to stating our first result regarding the short memory limit as $\varepsilon\to 0$ over any finite-time window. To properly state this result, we introduce the following ersatz history variable, $\etazero$, analogous to $\etaeps$, but for the limiting \textit{memoryless} system, defined by
\begin{equation*}
\frac{\d}{\d t}\etazero(t)=\Tcal_{\mueps}\etazero(t)+u^0(t).
\end{equation*}
Then \eqref{eqn:react-diff} can be rewritten as
\begin{equation} \label{eqn:react-diff:eta^0_epsilon}
\begin{aligned}
\d u^0(t)&=-Au^0(t)\d t +\f(u^0(t))\d t+Q\d w(t),\\
\frac{\d}{\d t}\etazero(t)&=\Tcal_{\mueps}\etazero(t)+u^0(t),\\
 (u^0(0),\etazero(0))&=(u_0,\eta_0)\in\Hzeroeps.
\end{aligned}
\end{equation}
Note that in this formulation, both $u^0$ and $(u^0,\eta^{0,\varepsilon})$ are Markov in contrast to \eqref{eqn:react-diff:epsilon:mu}, where Markovianity only of the extended process ($u^\varepsilon,\eta^\varepsilon)$ is guaranteed.

\begin{theorem} \label{thm:epsilon->0:solution->limit-solution:finite-time} Assume the hypotheses of \cref{prop:well-posed} and, in addition, that \nameref{cond:mu:int.s.mu(s)=1} holds. Suppose that $\{\rx^\varepsilon\}_{\varepsilon\in(0,1)} \subset L^2(\Omega;\Hzeroeps)$, where $U_0^\varepsilon=(u_0^\varepsilon, \eta_0^{\varepsilon})$ satisfies
    \begin{align} \label{cond:U^epsilon_0:O(|x|)}
       \limsup_{\varepsilon\to 0}\E\|\rx^\varepsilon\|^2_{\Honeeps}<\infty.
    \end{align}
Let $\Phieps (t;U_0^\varepsilon)=(\ueps (t;u_0^\varepsilon),\etaeps(t;\eta_0^\varepsilon))$ and $\Phizero(t;U_0^\varepsilon)= (u^0(t;u_0^\varepsilon),\etazero(t;\eta_0^\varepsilon))$ be the solutions of~\eqref{eqn:react-diff:epsilon:mu} and~\eqref{eqn:react-diff:eta^0_epsilon} both corresponding to $\rx^\varepsilon$, respectively. Suppose that 

\begin{enumerate}[noitemsep,topsep=0pt,wide=0pt,label=\arabic*.,ref=\theassumption.\arabic*]

\item In dimensions $d\ge 4$, $\f$ satisfies
\begin{align} \label{cond:phi:O(|x|)}
|\f(x)|\le c_\f(|x|+1),\quad \text{for all}\ x\in\rbb,
\end{align}
for some positive constant $c_\f$; or

\item in dimensions $d=1,2$, $\rx^\varepsilon$ satisfies
\begin{align} \label{cond:U^epsilon_0:agmon:d=1-2}
\limsup_{\varepsilon\to 0} \left(\E\|\pi_1\rx^\varepsilon\|^{2\lceil p_0\rceil }_{H^1} +\E\|\pi_1\rx^\varepsilon\|^{4 \lceil p_0\rceil}_{H} \right)<\infty,
\end{align}
where $p_0$ is the constant as in \nameref{cond:phi:1}-\nameref{cond:phi:2}; or

\item in dimensions $d=3$, $\rx^\varepsilon$ and $\f$ satisfy
\begin{align} \label{cond:U^epsilon_0:agmon}
\limsup_{\varepsilon\to 0} \left(\E\|\pi_1\rx^\varepsilon\|^{10}_{H^1} +\E\|\pi_1\rx^\varepsilon\|^{20}_{H} \right)<\infty,
\end{align}
and
\begin{equation} \label{cond:phi:agmon}
|\f(x)|\le c_\f(|x|^5+1),\quad \text{for all}\ x\in\rbb.
\end{equation}
Then there exist positive constants $c, C$ such that
\begin{equation} \label{lim:epsilon->0:solutions->limit-solution}
\sup_{0\le t\le T}\E\|\Phieps(t)-\Phizero(t)\|^2_{\Hzeroeps}\le Ce^{cT}\varepsilon^{1/3},
\end{equation}
for all $T>0$ and $\varepsilon\in(0,1)$.
\end{enumerate}
\end{theorem}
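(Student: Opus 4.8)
\textbf{Proof plan for \cref{thm:epsilon->0:solution->limit-solution:finite-time}.}
The plan is to set $\rhoeps(t) = \ueps(t) - u^0(t)$ and $\zetaeps(t) = \etaeps(t) - \etazero(t)$ and derive an energy inequality for the pair $(\rhoeps,\zetaeps)$ in $\Hzeroeps$. Subtracting~\eqref{eqn:react-diff:eta^0_epsilon} from~\eqref{eqn:react-diff:epsilon:mu}, the noise terms cancel exactly since both systems are driven by the same $Qw(t)$ with the same initial data, so $(\rhoeps,\zetaeps)$ solves a \emph{random PDE} (no It\^o correction) with
\begin{align*}
\d\rhoeps(t) &= -\kappa A\rhoeps(t)\d t - (1-\kappa)\Big[\int_0^\infty\close\mueps(s)A\zetaeps(t,s)\d s\d t + \int_0^\infty\close\mueps(s)A\etazero(t,s)\d s\d t - Au^0(t)\d t\Big]\\
&\qquad + \big(\f(\ueps(t)) - \f(u^0(t))\big)\d t,\\
\frac{\d}{\d t}\zetaeps(t) &= \Tcal_{\mueps}\zetaeps(t) + \rhoeps(t).
\end{align*}
The crucial algebraic observation is that the memory discrepancy
$$
R^\varepsilon(t) := (1-\kappa)\Big(\int_0^\infty\close\mueps(s)A\etazero(t,s)\d s - Au^0(t)\Big)
$$
is an error term driven entirely by $u^0$, which, by the normalizing condition~\nameref{cond:mu:int.s.mu(s)=1} (so that $\int_0^\infty s\mu(s)\d s=1$, equivalently $\int_0^\infty \mueps(s)\cdot s\,\d s=1$ after rescaling) and the explicit representation~\eqref{form:eta(t)-representation} of $\etazero$, should be $O(\varepsilon^{1/3})$ in a negative-order norm (here one pays a power of $\varepsilon$ by splitting the $s$-integral at a threshold and using $\int \mueps(s)\,\d s = O(\varepsilon^{-1})$ while $\int \mueps(s) s^2\,\d s = O(\varepsilon)$, together with the transport estimate $\|\etazero(t,s) - s\,u^0(t)\|$ being controlled by $\int_{t-s}^t \|u^0(r) - u^0(t)\|\d r$ and thus by higher-order moments of $u^0$). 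This is where the moment bounds on $u^0$ in higher-order topologies — and hence the hypotheses~\eqref{cond:U^epsilon_0:agmon} and the dimension/growth restrictions on $\f$ — enter: one needs $\E\sup_t\|u^0(t)\|_{H^1}^{q}$ and $\E\sup_t\|u^0(t)\|_H^{q}$ finite for suitable $q$ to close the nonlinear and memory estimates uniformly in $\varepsilon$.

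Next I would test the $\rhoeps$-equation with $\rhoeps$ in $H$ and the $\zetaeps$-equation with $(1-\kappa)\zetaeps$ in $\Mzeroeps$, and add. The coupling terms $\pm(1-\kappa)\la A\zetaeps,\rhoeps\ra_H$ (equivalently $\pm(1-\kappa)\la\rhoeps,\zetaeps\ra_{\Mzeroeps}$ after noting $\la A^{1/2}\zetaeps, A^{1/2}\rhoeps\ra$ appears with matching signs) cancel in the usual Dafermos/Grasselli–Pata fashion, leaving the dissipation $-\kappa\|\rhoeps\|_{H^1}^2$ and, via~\eqref{ineq:<T_epsilon.eta,eta>}, the term $\la\Tcal_{\mueps}\zetaeps,\zetaeps\ra_{\Mzeroeps}\le -\tfrac{\delta}{2\varepsilon}\|\zetaeps\|_{\Mzeroeps}^2\le 0$, which we simply discard (we do not need the $\varepsilon^{-1}$ gain, and discarding it is precisely what makes the final constant $\varepsilon$-independent). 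The nonlinearity contributes $\la\f(\ueps)-\f(u^0),\rhoeps\ra_H \le a_\f\|\rhoeps\|_H^2$ by the one-sided Lipschitz bound~\nameref{cond:phi:3}; in dimension $d=3$ with $p_0\le 5$ (or $d=1,2$ with general $p_0$) one instead estimates $\la\f(\ueps)-\f(u^0),\rhoeps\ra$ using $|\f'(x)|\lesssim 1+|x|^{p_0-1}$, H\"older, Gagliardo–Nirenberg and Agmon-type inequalities, absorbing $\|\rhoeps\|_{H^1}^2$ into the dissipation at the cost of a factor $(1+\|\ueps\|_{H^1}^{*}+\|u^0\|_{H^1}^{*})$ — this is exactly why~\eqref{cond:U^epsilon_0:agmon} and~\eqref{cond:U^epsilon_0:agmon:d=1-2} are imposed. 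Finally the forcing term $\la R^\varepsilon(t),\rhoeps(t)\ra_H$ is handled by Cauchy–Schwarz: writing $R^\varepsilon = A^{1/2}\tilde R^\varepsilon$ with $\|\tilde R^\varepsilon(t)\|_H \lesssim \varepsilon^{1/3}(1+\text{higher moments of }u^0)$, we get $|\la R^\varepsilon,\rhoeps\ra_H|\le \tfrac{\kappa}{2}\|\rhoeps\|_{H^1}^2 + C\varepsilon^{2/3}(1+\cdots)$, absorbing the first term.

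Collecting these estimates yields, after taking expectations, a differential inequality of the form
$$
\frac{\d}{\d t}\E\|(\rhoeps,\zetaeps)(t)\|_{\Hzeroeps}^2 \le c\,\E\|(\rhoeps,\zetaeps)(t)\|_{\Hzeroeps}^2 + C\varepsilon^{2/3}\,\Xi(T),
$$
where $\Xi(T)$ bundles the (finite, $\varepsilon$-uniform) higher-order moments of $u^0$ and $\ueps$ supplied by the a priori bounds of \cref{sec:moment}, and $c$ is independent of $\varepsilon$. Since $(\rhoeps,\zetaeps)(0)=0$, Gr\"onwall's inequality gives $\sup_{0\le t\le T}\E\|(\rhoeps,\zetaeps)(t)\|_{\Hzeroeps}^2 \le C e^{cT}\varepsilon^{2/3}$; finally, taking square roots (or rather, noting that the $\varepsilon^{1/3}$ in~\eqref{lim:epsilon->0:solutions->limit-solution} is stated for the square of the norm — so in fact $\varepsilon^{2/3}$ would follow and $\varepsilon^{1/3}$ is the safe statement allowing room for a suboptimal split of the memory integral) gives the claim. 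The main obstacle is the estimate on the memory discrepancy $R^\varepsilon$: one must choose the splitting threshold in the $s$-integral to optimally balance the $O(\varepsilon^{-1})\int_{\text{far}}$ contribution against the $O(\varepsilon)$ second-moment contribution near $s=0$, all while only having control of $u^0$ in the topologies permitted by the dimensional constraints on $\f$ — it is precisely this balancing, rather than the (standard) coupling cancellation or the (standard) nonlinear absorption, that produces the exponent $1/3$ and dictates the hypotheses.
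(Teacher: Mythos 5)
Your overall architecture matches the paper's: subtract the two systems so the noise cancels, test against $(\rhoeps,(1-\kappa)\zetaeps)$, discard the $\Tcal_{\mueps}$ term via~\eqref{ineq:<T_epsilon.eta,eta>}, isolate a memory discrepancy $R^\varepsilon$, and close with Gr\"onwall. The normalization~\nameref{cond:mu:int.s.mu(s)=1} and a splitting of the $s$-integral are indeed the engine. However, two of your specific mechanisms are not the paper's, and one of them would not close.

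First, the claim that $R^\varepsilon(t)=A^{1/2}\tilde R^\varepsilon(t)$ with $\|\tilde R^\varepsilon(t)\|_H\lesssim\varepsilon^{1/3}$ \emph{pointwise in $t$} is false: the far-tail contribution (the paper's $I_1^\varepsilon$) is bounded only by $c(1+T)\varepsilon^{-1/2}e^{-\delta t\varepsilon^{\gamma-1}/2}\big(e^{-\delta t/(4\varepsilon)}+\varepsilon^{1/2}\big)$, which behaves like $\varepsilon^{-1/2}$ near $t=0$. Only after integrating in $t$ over $[0,T]$ does this become $O(\varepsilon^{1/2}+\varepsilon^{1-\gamma})$; the paper therefore integrates the inequality in time \emph{before} invoking Gr\"onwall, and the three pieces $I_1^\varepsilon,I_2^\varepsilon,I_3^\varepsilon$ contribute $\varepsilon^{1/2},\varepsilon^{1-\gamma},\varepsilon^{\gamma/2}$ respectively, optimized at $\gamma=2/3$. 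The resulting bound on the squared norm is $\varepsilon^{1/3}$, not the $\varepsilon^{2/3}$ you guess; your parenthetical that $\varepsilon^{2/3}$ ``would follow'' has no support.

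Second, your route of absorbing $\tfrac{\kappa}{2}\|\rhoeps\|_{H^1}^2$ via Young's, which requires $\|\tilde R^\varepsilon\|_{L^2_t H}$ small, forces you to estimate $A^{-1/2}R^\varepsilon$, i.e.\ the memory error in the $H^1$ topology; for the near-field piece this in turn requires a time-H\"older estimate $\E\|u^0(t-r)-u^0(t)\|_{H^1}^2\lesssim r$. The paper never establishes such an estimate. What it proves (\cref{lem:react-diff:phi.Lipschit:|u(t)-u(s)|-bound}) is only $\E\|u^0(t-r)-u^0(t)\|_H^2\lesssim r(\cdots)$, and to use that it must place the \emph{entire} $A$ on $\zeps$, pairing $\|u^0(t-r)-u^0(t)\|_H$ against $\|A\zeps\|_H$, with $\int_0^T\E\|A\zeps\|_H^2\,\d t$ controlled by the $H^1$-energy estimates \eqref{ineq:solution:|Phi|^2_(H^1_epsilon)-bound:random-initial-cond} and \eqref{ineq:react-diff:|Phi|^2_(H^1_epsilon)-bound:random-initial-cond}. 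This is the key structural difference: no absorption into the $H^1$ dissipation takes place; the dissipation $-\kappa\|A^{1/2}\zeps\|_H^2$ is simply dropped.

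Third, you have misplaced the role of the dimensional restrictions. The nonlinear difference $\la\f(\ueps)-\f(u^0),\zeps\ra_H\le a_\f\|\zeps\|_H^2$ is handled uniformly across all dimensions by the one-sided bound~\nameref{cond:phi:3}; there is no Gagliardo--Nirenberg/Agmon step needed there. The dimensional hypotheses~\eqref{cond:phi:O(|x|)},~\eqref{cond:U^epsilon_0:agmon:d=1-2},~\eqref{cond:U^epsilon_0:agmon} enter solely through the proof of the $H$-time-H\"older estimate for $u^0$ (specifically through bounding $\int_0^T\E\|\f(u^0(r))\|_H^2\,\d r$), which feeds the near-field memory estimate $I_3^\varepsilon$. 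Until you relocate the dimensional analysis to that lemma and replace the pointwise absorption with a time-integrated Cauchy--Schwarz against $\|A\zeps\|_H$, the proposal does not close.
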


\begin{remark} \label{rem:finite-time}
\cref{thm:epsilon->0:solution->limit-solution:finite-time} can be considered a stochastic analogue of the result~\cite[(5.1)]{conti2006singular} in the deterministic setting, where the convergence rate was shown only to be of the order $O(\varepsilon^{1/4})$. In the stochastic setting considered here, we identify an opportunity to optimize powers in our argument, which leads to the $1/3$ exponent stated in the above result. We refer the reader to \eqref{ineq:gamma=1/3} in  \cref{sec:epsilon->0:finite-time}, where the proof of \cref{thm:epsilon->0:solution->limit-solution:finite-time} is supplied.
\end{remark}

The proof of \cref{thm:epsilon->0:solution->limit-solution:finite-time} will be provided in \cref{sec:epsilon->0:finite-time}. Having established the convergence of $\Phieps(t)$ toward $\Phizero(t)$ in finite time, we will explore the limit behavior of $\nueps$ in suitable Wasserstein distances, which we introduce next. 

Similarly to $d^{\mueps}_{N}$ and $d^{\mueps}_{N,\beta}$ as in~\eqref{form:d_N:mu} and~\eqref{form:d_N,beta:mu}, respectively, we define  
\begin{equation} \label{form:d_0(x,y)}
d_N(u,v):=N\|u-v\|_H \mi 1,
\end{equation}
and
\begin{equation} \label{form:d_N,beta}
d_{N,\beta}(u,v):=\sqrt{d_N(u,v)\left(1+\exp\left({\frac{\beta}{2}\|u\|^2_H }\right)+\exp\left({\frac{\beta}{2}\|v\|^2_H }\right)\right)}.
\end{equation}
The Wasserstein distance in $\Pcal r(H)$ corresponding to $d_{N,\beta}$ is given by
\begin{equation} \label{form:W_0(nu_1,nu_2)}
\W_{d_{N,\beta}}(\nu_1,\nu_2)=\inf \E\,d_{N,\beta}(X,Y),  
\end{equation}
where the infimum is taken over all pairs $(X,Y)$ such that $X\sim \nu_1$ and $Y\sim\nu_2$. Under the hypothesis of \cref{thm:react-diff:epsilon:geometric-ergodicity}, it is well-known that \eqref{eqn:react-diff} admits a unique invariant probability $\nu^0$ in $H$ \cite{da1996ergodicity,da2014stochastic,hairer2011theory}. It will furthermore be shown in \cref{thm:react-diff:geometric-ergodicity} that its dynamics is exponentially attractive toward $\nu^0$ in $\W_{d_{N,\beta}}$ for all $N$ sufficiently large and $\beta$ small enough.

Next, for a probability distribution $\nu\in\Pcal r(\Hzeroeps)$, we denote the marginals of $\nu$ respectively in $u$ and $\eta$ as $\pi_1^{-1}\nu$ and $\pi_2^{-1}\nu$; the marginals are respectively defined as follows: for Borel sets $\B_1\in H$ and $\B_2\in \Mzeroeps$
$$\pi_1^{-1}\nu(\B_1)=\nu(\{(u,\eta)\in\Hzeroeps:u\in \B_1\}),\quad\text{and}\quad \pi_2^{-1}\nu(\B_2)=\nu(\{(u,\eta)\in\Hzeroeps:\eta\in \B_2\}).$$
We recall that the auxiliary variable, $\etaeps$, which encodes the integrated history of the variable $u$ becomes irrelevant as the memory kernel collapses to Dirac mass, that is, in the limit $\varepsilon\rightarrow0$. Thus, the limit that needs to be understood is that of $\ueps\rightarrow u^0$. Naturally, this leads one to compare the marginal $\pi_1^{-1}\nu^\varepsilon$ with $\nu^0$. 

As observed elsewhere \cite{lv2008limiting,nguyen2018small,ottobre2011asymptotic}, we recall that there are situations in which the marginal distribution $\pi_1^{-1}\nu^\varepsilon$ happens to be the same as $\nu^0$ (see discussion following \eqref{lim:Smoluchowski:measure}, for instance). This corresponds to the exceptional situation where the limiting measure can be computed explicitly. In our system and other general settings \cite{cerrai2020convergence,foldes2016ergodicity,
foldes2017asymptotic,foldes2015ergodic,foldes2019large}, explicit expressions are usually difficult to obtain. Nevertheless, under reasonable general conditions, we establish the convergence of $\pi_1^{-1}\nu^\varepsilon$ towards $\nu^0$ in the limit of increasingly negligible dependence on the past.

\begin{theorem} \label{thm:epsilon->0:invariant-measures->limit-measure}
In addition to the hypothesis of \cref{prop:well-posed}, assume that \nameref{cond:Q:ergodicity} and~\nameref{cond:mu:int.s.mu(s)=1} hold, and that $\f$ satisfies either~\eqref{cond:phi:O(|x|)} in dimension $d\ge 4$, or~\eqref{cond:phi:agmon} in dimension $d= 3$. Let $\nueps$ and $\nu^0$ respectively be the unique invariant probability measure of~\eqref{eqn:react-diff:epsilon:mu} and~\eqref{eqn:react-diff}. 
Then, for all $N$ sufficiently large and $\beta$ sufficiently small independent of $\varepsilon$, there exists a positive constant $c_{N,\beta}$ such that the following holds:
\begin{equation} \label{lim:epsilon->0:invariant-measures->limit-measure}
\W_{d_{N,\beta}}\big(\pi_1^{-1}\nueps,\nu^0\big)\le c_{N,\beta}\varepsilon^{1/12}, \quad\text{as}\quad \varepsilon\to 0,
\end{equation}
where $\W_{d_{N,\beta}}$ is the Wasserstein distance defined in~\eqref{form:W_0(nu_1,nu_2)}.
\end{theorem}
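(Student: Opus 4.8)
The plan is to combine the uniform geometric ergodicity of $\eqref{eqn:react-diff:epsilon:mu}$ from \cref{thm:react-diff:epsilon:geometric-ergodicity}, the analogous geometric ergodicity of the limiting equation \eqref{eqn:react-diff} (cited as \cref{thm:react-diff:geometric-ergodicity}), and the finite-time convergence estimate \eqref{lim:epsilon->0:solutions->limit-solution} from \cref{thm:epsilon->0:solution->limit-solution:finite-time}, all glued together by a triangle-inequality argument at a carefully chosen intermediate time. First, I would fix $N$ large and $\beta$ small enough that both spectral gaps hold, and introduce the lifted invariant measure $\nuzero$ of the system \eqref{eqn:react-diff:eta^0_epsilon} on $\Hzeroeps$ (i.e. the law of $(u^0,\etazero)$ in the stationary regime); note $\pi_1\nuzero=\nu^0$, and the $\eta$-marginal is the stationary history built from $u^0$. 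Then for any $t>0$ I would write, using that $\nueps$ and $\nuzero$ are invariant for $P^{\mueps}_t$ and $P^{0}_t$ respectively,
\begin{align*}
\W_{d_{N,\beta}}\big(\pi_1\nueps,\nu^0\big)
&\le \W_{d_{N,\beta}}\big(\pi_1(P^{\mueps}_t)^*\nueps,\pi_1(P^{0}_t)^*\nuzero\big)\\
&\quad + \W_{d_{N,\beta}}\big(\pi_1(P^{0}_t)^*\nuzero,(P^{0}_t)^*\nu^0\big),
\end{align*}
where in the second line I identify $\pi_1(P^0_t)^*\nuzero=(P^0_t)^*\nu^0=\nu^0$ (since the $u^0$-dynamics is autonomously Markov and $\nu^0$ is its invariant measure), so the second term in fact vanishes; the work is entirely in the first term.

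The first term I would bound by coupling $\rx^\varepsilon\sim\nueps$ with $\ry^\varepsilon=(\pi_1\rx^\varepsilon,\,\widehat\eta)\sim\nuzero$ sharing the \emph{same} first coordinate, running both equations with the \emph{same} Brownian motion $w$, and applying the definition of $\W_{d_{N,\beta}}$ together with the projection-contraction lemma for Wasserstein distances from \cref{sec:Wasserstein}: since $d_{N,\beta}$ on $H$ is dominated by a constant multiple of $d_{N,\beta}^{\mueps}$ composed with $\pi_1$ up to the exponential weights, one gets $\W_{d_{N,\beta}}(\pi_1\mu_1,\pi_1\mu_2)\lesssim \W_{d^{\mueps}_{N,\beta}}(\mu_1,\mu_2)$ for measures on $\Hzeroeps$. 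Hence the first term is controlled by $\W_{d^{\mueps}_{N,\beta}}\big((P^{\mueps}_t)^*\nueps,(P^{\mueps}_t)^*\Law(\ry^\varepsilon)\big)$ — here I must be careful that $\ry^\varepsilon$ is being evolved by the \emph{memory} semigroup $P^{\mueps}_t$, not by \eqref{eqn:react-diff:eta^0_epsilon}; the two agree on the $u$-component only up to the finite-time error $\eqref{lim:epsilon->0:solutions->limit-solution}$, so I split once more into (a) $\W_{d^{\mueps}_{N,\beta}}\big((P^{\mueps}_t)^*\nueps,(P^{\mueps}_t)^*\Law(\ry^\varepsilon)\big)$, bounded via \cref{thm:react-diff:epsilon:geometric-ergodicity} by $C_1 e^{-C_2 t}\W_{d^{\mueps}_{N,\beta/2}}(\nueps,\Law(\ry^\varepsilon))$, which is $\le C_1 e^{-C_2 t}$ times a finite constant (uniform in $\varepsilon$) coming from the uniform exponential moment bounds of \cref{sec:moment}; and (b) the error from replacing the true memory evolution of $\ry^\varepsilon$ by the memoryless one, which is exactly where \eqref{lim:epsilon->0:solutions->limit-solution} enters and gives a bound of order $e^{ct}\varepsilon^{1/3}$ at time $t$ (after checking that the random initial data $\ry^\varepsilon\sim\nuzero$ satisfies the moment hypotheses \eqref{cond:U^epsilon_0:O(|x|)} and the relevant higher-order condition among \eqref{cond:U^epsilon_0:agmon:d=1-2}, \eqref{cond:U^epsilon_0:agmon} — this is where the dimension-dependent growth restrictions on $\f$ are used, via regularity of $\nu^0$).

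Putting the pieces together yields, for every $t>0$,
\begin{align*}
\W_{d_{N,\beta}}\big(\pi_1\nueps,\nu^0\big)\le C e^{-C_2 t} + C e^{ct}\varepsilon^{1/3},
\end{align*}
with all constants independent of $\varepsilon$, and then optimizing over $t$ — choosing $t\sim \tfrac{1}{C_2+c}\log(\varepsilon^{-1/3})$ — balances the two terms and produces a rate $\varepsilon^{\gamma}$ with $\gamma = \tfrac{1}{3}\cdot\tfrac{C_2}{C_2+c}$; tracking the explicit constants from the two ergodicity theorems and the $1/3$ exponent should pin this down to the stated $\varepsilon^{1/12}$. The main obstacle I anticipate is step (b): one must apply the finite-time estimate \eqref{lim:epsilon->0:solutions->limit-solution} with \emph{random, stationary} initial data drawn from $\nuzero$ rather than deterministic data, which forces uniform-in-$\varepsilon$ control of the relevant moments ($H^1$ and $H$ norms, to the powers dictated by $d$ and $p_0$) of the invariant measure $\nu^0$ and of its lifted history component in $M^0_{\mueps}$ — the latter requiring a uniform-in-$\varepsilon$ bound on $\E\|\widehat\eta\|^2_{M^0_{\mueps}}$ that follows from the representation \eqref{form:eta(t)-representation} and the dissipativity \eqref{ineq:<T_epsilon.eta,eta>}, but needs to be done with care. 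A secondary technical point is the comparison of the Wasserstein distances $\W_{d_{N,\beta}}$ on $H$ and $\W_{d^{\mueps}_{N,\beta}}$ on $\Hzeroeps$ under the projection $\pi_1$, including handling the exponential weights $e^{\beta\Psi_0}$ versus $e^{\frac{\beta}{2}\|\cdot\|_H^2}$; this is the content of the auxiliary Wasserstein estimates in \cref{sec:Wasserstein} and should be invoked rather than re-derived.
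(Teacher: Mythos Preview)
Your overall architecture is right and matches the paper's: lift $\nu^0$ to the invariant measure $\nuzero$ of the auxiliary system~\eqref{eqn:react-diff:eta^0_epsilon} on $\Hzeroeps$ (with $\pi_1\nuzero=\nu^0$), use the projection inequality $\W_{d_{N,\beta}}(\pi_1\nueps,\nu^0)\le \W_{d^{\mueps}_{N,\beta}}(\nueps,\nuzero)$, then split by a triangle inequality into an ergodicity piece $\W_{d^{\mueps}_{N,\cdot}}\big((P^{\mueps}_t)^*\nueps,(P^{\mueps}_t)^*\nuzero\big)$ and a finite-time comparison piece $\W_{d^{\mueps}_{N,\cdot}}\big((P^{\mueps}_t)^*\nuzero,(\Pzero_t)^*\nuzero\big)$. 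Two points, however, prevent your argument from reaching the stated exponent.

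First, in your step (b) the finite-time estimate~\eqref{lim:epsilon->0:solutions->limit-solution} controls $\E\|\Phieps-\Phizero\|^2_{\Hzeroeps}$ by $Ce^{ct}\varepsilon^{1/3}$, not the Wasserstein distance $\W_{d^{\mueps}_{N,\beta}}$. Converting costs two square roots: one because $\E\|\cdot\|_{\Hzeroeps}\le(\E\|\cdot\|^2)^{1/2}$, and another because $d^{\mueps}_{N,\beta}=\sqrt{d^{\mueps}_N(1+e^{\beta\Psi_0}+e^{\beta\Psi_0})}$ forces a Cauchy--Schwarz step. The correct bound on (b) is therefore $Ce^{ct}\varepsilon^{1/12}$ (this is exactly the content of the paper's \cref{lem:epsilon->0:alpha_0(solutions-limit-solution)->0}), not $\varepsilon^{1/3}$.

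Second, and more substantively, your balancing-in-$t$ argument cannot recover the exponent $1/12$. With the corrected (b) you would have $\W\le Ce^{-C_2 t}+Ce^{ct}\varepsilon^{1/12}$, and optimizing yields $\varepsilon^{\gamma}$ with $\gamma=\tfrac{1}{12}\cdot\tfrac{C_2}{C_2+c}<\tfrac{1}{12}$; no tracking of constants will make this equal $1/12$. The paper avoids this loss by a \emph{self-absorbing} step you are missing: rather than bounding $\W_{d^{\mueps}_{N,\beta/2}}(\nueps,\nuzero)$ by an a priori constant in (a), one uses invariance to write $\W_{d^{\mueps}_{N,\beta}}(\nueps,\nuzero)=\W_{d^{\mueps}_{N,\beta}}\big((P^{\mueps}_t)^*\nueps,(\Pzero_t)^*\nuzero\big)$ and then applies~\eqref{ineq:react-diff:epsilon:geometric-ergodicity:beta/2} so that the very quantity $\W_{d^{\mueps}_{N,\beta}}(\nueps,\nuzero)$ reappears on the right with coefficient $Ce^{-ct}$. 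Choosing a single fixed $t^*$ (independent of $\varepsilon$) with $Ce^{-ct^*}<1/2$ lets you absorb that term to the left, leaving $\W_{d^{\mueps}_{N,\beta}}(\nueps,\nuzero)\le 2C\,\E\,d^{\mueps}_{N,2\beta}(\Phieps(t^*),\Phizero(t^*))\le C'\varepsilon^{1/12}$ directly, with no optimization and hence no degradation of the exponent. (Incidentally, your proposed coupling of $\rx^\varepsilon\sim\nueps$ and $\ry^\varepsilon\sim\nuzero$ with the \emph{same} first coordinate is impossible in general since $\pi_1\nueps\neq\nu^0$; fortunately your (a)/(b) split does not actually use it.)
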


In order to prove~\cref{thm:epsilon->0:invariant-measures->limit-measure}, we will adapt the approach developed in~\cite{conti2005singular,conti2006singular} tailored to our setting. Denote by $\Phizero(t)$ and $P^{0,\varepsilon}_t$ the solution and Markov semigroup of~\eqref{eqn:react-diff:eta^0_epsilon}, respectively. Then we show that $P^{0,\varepsilon}$ admits an invariant probability $\nuzero$ (see ~\cref{thm:existence-inv-measure:nu^(0-epsilon)}). By invoking the invariance property of $\nueps$, $\nu^0$ and $\nuzero$ together with expression~\eqref{form:W_d:mu} and~\eqref{form:W_0(nu_1,nu_2)}, $\W_{d_{N,\beta}}\big(\pi_1^{-1}\nueps,\nu^0\big)$ can be shown to be dominated by $\W_{d^{\mueps}_{N,\beta}}( (P^{\mueps}_t)^*\nueps,(P^{0,\varepsilon}_t)^*\nuzero)$, which in turn is controlled by $\E\, d_{N,\beta}^{\mueps} (\Phieps(t;\x^\varepsilon),\Phizero(t;\x^\varepsilon))$, for time $t$ sufficiently large and for random initial condition $\x^\varepsilon$ distributed according to $\nuzero$. More precisely, in the proof of~\cref{thm:epsilon->0:invariant-measures->limit-measure} (in \cref{sec:epsilon->0:large-time}), we will see that for all $t$ sufficiently large, there exists a positive constant $c$ independent of $\varepsilon$ such that
\begin{align*}
\W_{d_{N,\beta}}\big(\pi_1^{-1}\nu^\varepsilon,\nu^0\big)\le c\W_{d^{\mueps}_{N,2\beta}}(\Phieps(t;\rx^\varepsilon),\Phizero(t;\rx^\varepsilon))\le c\E\, d^{\mueps}_{N,2\beta} (\Phieps(t;\rx^\varepsilon),\Phizero(t;\rx^\varepsilon)).
\end{align*}
(see \cref{thm:react-diff:epsilon:geometric-ergodicity}). Furthermore, in~\cref{lem:epsilon->0:alpha_0(solutions-limit-solution)->0} below, the distance $  d^{\mueps}_{N,2\beta} (\Phieps(t;\rx^\varepsilon),\Phizero(t;\rx^\varepsilon))$ can actually be estimated via the usual distance $\|\Phieps(t;\rx^\varepsilon)-\Phizero(t;\rx^\varepsilon))\|_{\Hzeroeps}$. In particular we show, for some appropriate power $q>0$ and constant $c>0$, that
$$\E\,  d^{\mueps}_{N,2\beta}  (\Phieps(t;\rx^\varepsilon),\Phizero(t;\rx^\varepsilon))\le c\,\E\|\Phieps(t;\rx^\varepsilon)-\Phizero(t;\rx^\varepsilon))\|_{\Hzeroeps}^q.$$
This relies crucially on moment bounds on the quantity $\|\Phieps(t;\rx^\varepsilon)-\Phizero(t;\rx^\varepsilon))\|_{\Hzeroeps}$, as well for the invariant probability $\nuzero$.

The main ingredients for the proof of~\cref{thm:epsilon->0:invariant-measures->limit-measure} will be developed in detail in~\cref{sec:epsilon->0:large-time}. The  moment bounds on $\Phieps(t;\rx^\varepsilon)$ and $\Phizero(t;\rx^\varepsilon)$ that are sufficient for our purposes will be collected in a series of lemmas in~\cref{sec:moment},~\cref{sec:0-equation} and~\cref{sec:nu^(0-epsilon)}. More importantly, the crucial estimates on $\nuzero$ will be established in~\cref{sec:nu^(0-epsilon)}. We point out that although the approach we employ is similar to those developed in in~\cite{conti2005singular,conti2006singular}, several arguments that were employed in deterministic settings treated there are not directly applicable to our stochastic setting and, in fact, require a more careful analysis; we refer the reader to 
\cref{rem:epsilon->0:lipschitz}. 

Finally, as a consequence of~\cref{thm:react-diff:epsilon:geometric-ergodicity} and~\cref{thm:epsilon->0:invariant-measures->limit-measure}, we obtain the following result:

\begin{theorem} \label{thm:epsilon->0:sup_t.E[f(u^epsilon(t))-f(u^0(t))]}
Under the same hypothesis of \cref{prop:well-posed}, assume additionally that \nameref{cond:Q:ergodicity} and~\nameref{cond:mu:int.s.mu(s)=1} hold and that $\f$ satisfies either~\eqref{cond:phi:O(|x|)} in dimension $d\ge 4$ or~\eqref{cond:phi:agmon} in dimension $d= 3$. Given $R>0$, let $\{\x^\varepsilon\}_{\varepsilon\in(0,1)}$ be a sequence of deterministic initial conditions such that $\|\x^\varepsilon\|_{\Honeeps}\le R$ for all $\varepsilon\in(0,1)$. Then, for all $N$ sufficiently large and $\beta$ sufficiently small independent of $\varepsilon$, there exists a positive constant $C_{N,\beta,R}$ independent of $\varepsilon$ such that for some $\lambda_*\in (0,1/12)$
\begin{equation} \label{lim:epsilon->0:W_0(P^epsilon_t,P^0_t)}
\sup_{t\ge 0}\W_{d_{N,\beta}}(\pi_1^{-1}(\Peps_t)^*\delta_{\x^\varepsilon},(P_t^0)^*\delta_{\pi_1 \x^\varepsilon})\le C_{N,\beta,R}\varepsilon^{\lambda_*}, \quad\text{as}\quad \varepsilon\to 0,
\end{equation}
where $\W_{d_{N,\beta}}$ is the Wasserstein distance associated with $d_{N,\beta}$ as in~\eqref{form:d_N,beta}. Consequently, for every $f\in C(H;\rbb)$ such that $L_f:=[f]_{\emph{Lip}_{d_{N,\beta}}}<\infty$
\begin{equation} \label{lim:epsilon->0:sup_t.E[f(u^epsilon(t))-f(u^0(t))]}
\sup_{t\ge 0}\, \big{|}\E f(\ueps(t;\x^\varepsilon))-\E f(u^0(t;\pi_1 \x^\varepsilon))\big{|} \le C_{N,\beta,R}\varepsilon^{\lambda_*},\quad\text{as}\quad \varepsilon\to 0.
\end{equation}
\end{theorem}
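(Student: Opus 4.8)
The plan is to establish \eqref{lim:epsilon->0:W_0(P^epsilon_t,P^0_t)} first and then derive the observable bound \eqref{lim:epsilon->0:sup_t.E[f(u^epsilon(t))-f(u^0(t))]} as an immediate consequence of the dual characterization of $\W_{d_{N,\beta}}$ in \cref{rem:W_d:dual}. For the Wasserstein bound, I would split the supremum over $t\geq 0$ into a transient regime $t\leq T_0$ and a long-time regime $t\geq T_0$, where $T_0$ is a fixed threshold (at least as large as the $T^*$ appearing in \cref{thm:react-diff:epsilon:geometric-ergodicity}) to be optimized against $\varepsilon$ at the end.

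\emph{Long-time regime.} For $t\geq T_0$ the strategy is the standard triangle-inequality decomposition through the invariant measures. Writing $\nu^\varepsilon$, $\nu^0$ for the unique invariant measures of \eqref{eqn:react-diff:epsilon:mu} and \eqref{eqn:react-diff} respectively, one bounds
\begin{align*}
\W_{d_{N,\beta}}\big(\pi_1(\Peps_t)^*\delta_{\x^\varepsilon},(P_t^0)^*\delta_{\pi_1\x^\varepsilon}\big)
&\leq \W_{d_{N,\beta}}\big(\pi_1(\Peps_t)^*\delta_{\x^\varepsilon},\pi_1\nu^\varepsilon\big)
+\W_{d_{N,\beta}}\big(\pi_1\nu^\varepsilon,\nu^0\big)\\
&\qquad+\W_{d_{N,\beta}}\big((P_t^0)^*\delta_{\pi_1\x^\varepsilon},\nu^0\big).
\end{align*}
The middle term is $\leq c_{N,\beta}\varepsilon^{1/12}$ by \cref{thm:epsilon->0:invariant-measures->limit-measure}. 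For the first term, one uses that projecting onto the $u$-marginal does not increase the Wasserstein distance (since $d_{N,\beta}\leq d_{N,\beta}^{\mueps}$ after identifying $u$ with its lift, which needs a short verification), combined with the uniform-in-$\varepsilon$ contraction \eqref{ineq:react-diff:epsilon:geometric-ergodicity} applied to $\nu_1=\delta_{\x^\varepsilon}$, $\nu_2=\nu^\varepsilon$; since $\|\x^\varepsilon\|_{\Honeeps}\leq R$, the initial Wasserstein distance $\W_{d_{N,\beta}^{\mueps}}(\delta_{\x^\varepsilon},\nu^\varepsilon)$ is bounded by a constant depending only on $R$ (using the exponential moment bounds of \cref{sec:moment} for $\nu^\varepsilon$, uniform in $\varepsilon$), so this term is $\leq C_R e^{-C_2 t}\leq C_R e^{-C_2 T_0}$. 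The third term is handled identically using \cref{thm:react-diff:geometric-ergodicity} for the memoryless equation: it is $\leq C_R e^{-C_2 t}$.

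\emph{Transient regime and optimization.} For $t\leq T_0$, I would lift the deterministic datum $\pi_1\x^\varepsilon$ to $(\pi_1\x^\varepsilon,\etazero(0))$ via the ersatz history equation \eqref{eqn:react-diff:eta^0_epsilon}, so that both $\Phieps(t;\x^\varepsilon)$ and $\Phizero(t;\x^\varepsilon)$ evolve in $\Hzeroeps$ with the same initial data $\x^\varepsilon$ (here one should check the initial history of $\x^\varepsilon$ is compatible, or absorb a fixed error; since $\x^\varepsilon$ is deterministic and bounded in $\Honeeps$, condition \eqref{cond:U^epsilon_0:O(|x|)}, and \eqref{cond:U^epsilon_0:agmon} in $d=3$, holds). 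Then by the coupling definition of $\W_{d_{N,\beta}}$ and $d_{N,\beta}\leq \sqrt{d_N(u,v)}\cdot C$ together with $\sqrt{x\wedge 1}\leq \sqrt{x}$,
\begin{align*}
\W_{d_{N,\beta}}\big(\pi_1(\Peps_t)^*\delta_{\x^\varepsilon},(P_t^0)^*\delta_{\pi_1\x^\varepsilon}\big)
\leq \E\, d_{N,\beta}\big(\ueps(t),u^0(t)\big)
\leq C_{N,\beta}\Big(\E\|\ueps(t)-u^0(t)\|_H^2\Big)^{1/4}\cdot M,
\end{align*}
where $M$ controls the exponential-moment factor $1+e^{\frac\beta2\|\ueps(t)\|_H^2}+e^{\frac\beta2\|u^0(t)\|_H^2}$ uniformly for $t\leq T_0$ and $\|\x^\varepsilon\|_{\Honeeps}\leq R$ (Cauchy--Schwarz plus the exponential moment bounds of \cref{sec:moment}, using $\beta$ small). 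By \cref{thm:epsilon->0:solution->limit-solution:finite-time}, applied with the bounded deterministic data which trivially satisfies the moment hypotheses, $\E\|\ueps(t)-u^0(t)\|_H^2\leq \E\|\Phieps(t)-\Phizero(t)\|_{\Hzeroeps}^2\leq C e^{cT_0}\varepsilon^{1/3}$. Hence the transient contribution is $\leq C_{N,\beta,R}\,e^{cT_0}\varepsilon^{1/12}$.

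Combining the two regimes gives, for every $T_0\geq T^*$, a bound of the form $C_R e^{-C_2 T_0}+C_{N,\beta,R}e^{cT_0}\varepsilon^{1/12}$. Choosing $T_0=\theta\log(1/\varepsilon)$ for a small enough $\theta>0$ balances the two terms and yields $C_{N,\beta,R}\varepsilon^{\lambda_*}$ for some $\lambda_*\in(0,1/12)$, namely any $\lambda_*<\frac{1}{12}\cdot\frac{C_2}{C_2+c'}$ for an appropriate $c'$ absorbing $c$ and the constants from the moment bounds. This proves \eqref{lim:epsilon->0:W_0(P^epsilon_t,P^0_t)}. The observable estimate \eqref{lim:epsilon->0:sup_t.E[f(u^epsilon(t))-f(u^0(t))]} then follows at once: for $f$ with $[f]_{\mathrm{Lip}_{d_{N,\beta}}}=L_f<\infty$, the one-sided duality \eqref{ineq:W_0(nu_1,nu_2):dual} (note $d_{N,\beta}$ is only distance-like, not a genuine metric) gives $|\E f(\ueps(t;\x^\varepsilon))-\E f(u^0(t;\pi_1\x^\varepsilon))|\leq L_f\,\W_{d_{N,\beta}}(\pi_1(\Peps_t)^*\delta_{\x^\varepsilon},(P_t^0)^*\delta_{\pi_1\x^\varepsilon})\leq L_f C_{N,\beta,R}\varepsilon^{\lambda_*}$, uniformly in $t\geq 0$.

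\emph{Main obstacle.} The delicate point is not the $\log$-optimization, which is routine, but ensuring that \emph{all} the constants feeding into the two regimes — the contraction rate $C_2$ and prefactor $C_1$ in \eqref{ineq:react-diff:epsilon:geometric-ergodicity}, the exponential moment bounds used to tame $e^{\frac\beta2\|\cdot\|_H^2}$ (which forces $\beta$ small in a way coordinated with the Lyapunov structure), and the constant $c$ in the exponent $e^{cT_0}$ from \cref{thm:epsilon->0:solution->limit-solution:finite-time} — are genuinely \textbf{independent of $\varepsilon$}. The uniformity of $C_2$ is exactly the content of \cref{thm:react-diff:epsilon:geometric-ergodicity}, and the $\varepsilon$-independence of the finite-time rate is the content of \cref{thm:epsilon->0:solution->limit-solution:finite-time}; the remaining care is in checking that passing to the $u$-marginal and lifting the memoryless solution into $\Hzeroeps$ (with the $\mueps$-dependent history space) does not secretly reintroduce $\varepsilon$-dependence, which is where the estimates \eqref{ineq:<T_epsilon.eta,eta>} and the structure of $\Mzeroeps$ must be used carefully.
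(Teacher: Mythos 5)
Your overall architecture — split $\sup_{t\ge 0}$ at a threshold $T_0$, use the uniform finite-time bound from \cref{thm:epsilon->0:solution->limit-solution:finite-time} on $[0,T_0]$ and the triangle-type decomposition through $\nu^\varepsilon$, $\nu^0$ together with geometric ergodicity on $[T_0,\infty)$, then set $T_0\sim\log(1/\varepsilon)$ to balance the two terms — is exactly what the paper does.

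There is, however, one genuine gap. In the long-time regime you write a three-term \emph{ordinary} triangle inequality for $\W_{d_{N,\beta}}$. But $d_{N,\beta}$ is only a \emph{distance-like} function (see \eqref{form:d_N,beta}): the factor $\sqrt{1+e^{\frac{\beta}{2}\|u\|^2}+e^{\frac{\beta}{2}\|v\|^2}}$ destroys the triangle inequality, so $\W_{d_{N,\beta}}$ is not a metric and the step
\begin{equation}\label{eq:badtriangle}
\W_{d_{N,\beta}}(\mu_1,\mu_3)\le \W_{d_{N,\beta}}(\mu_1,\mu_2)+\W_{d_{N,\beta}}(\mu_2,\mu_3)
\end{equation}
is simply false in general. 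You are aware of the distance-like issue (you flag it when invoking the one-sided duality \eqref{ineq:W_0(nu_1,nu_2):dual}), but you do not apply the same caution here. What the paper uses in its place is the \emph{generalized} triangle inequality of \cref{lem:W_0:triangle-ineq}, namely $\W_{d_{N,\beta/2}}(\nu_1,\nu_3)\le C\big(\W_{d_{N,\beta}}(\nu_1,\nu_2)+\W_{d_{N,\beta}}(\nu_2,\nu_3)\big)$, applied twice. This forces a bookkeeping of the $\beta$-parameter (the three terms end up being controlled in $\W_{d_{N,2\beta}}$ and $\W_{d_{N,4\beta}}$, which requires that $4\beta$ still lie in the admissible range for the exponential-moment bounds and for \cref{thm:epsilon->0:invariant-measures->limit-measure}), and introduces a multiplicative constant $C$. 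Once you replace \eqref{eq:badtriangle} by \cref{lem:W_0:triangle-ineq} and track the $\beta$'s accordingly, the rest of your argument — including the Cauchy--Schwarz reduction of $\E\,d_{N,\beta}$ to $(\E\|\ueps-u^0\|_H^2)^{1/4}$ (giving the exponent $\tfrac14\cdot\tfrac13=\tfrac1{12}$), the use of \cref{lem:W_0(nu_1-nu_2)<W_epsilon(nu_1-nu_2)} to pass to the $u$-marginal, and the final log-optimization — goes through and matches the paper's proof.
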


In order to prove~\cref{thm:epsilon->0:sup_t.E[f(u^epsilon(t))-f(u^0(t))]}, we will make use of the exponential rate~\eqref{ineq:react-diff:epsilon:geometric-ergodicity}, \cref{thm:epsilon->0:invariant-measures->limit-measure} as well as the estimate~\eqref{ineq:W_0(nu_1,nu_2):dual}. The proof of~\cref{thm:epsilon->0:sup_t.E[f(u^epsilon(t))-f(u^0(t))]} will be carried out in~\cref{sec:proof-of-E[u^epsilon-u^0]}.

\begin{remark} \label{remark:sup_t.E[f(u^epsilon(t))-f(u^0(t))]}

\begin{enumerate}[noitemsep,topsep=0pt,wide=0pt,label=\arabic*.,ref=\theassumption.\arabic*]
\item In view of \cite[Proposition A.9]{glatt2021mixing}, a sufficient condition for  a function $f\in C(H;\rbb)$ to be $d_{N,\beta}-$Lipschitz is that
\begin{align*}
\sup_{u\in H}\frac{\max\{|f(u)|,\|Df(u)\|_H\}}{\sqrt{1+e^{\frac{\beta}{2}\|u\|^2_H}}}<\infty.
\end{align*}
In particular, it is not difficult to check that the class of polynomial functions satisfies the above condition.

\item With regard to the hypothesis on $\x^\varepsilon$ in \cref{thm:epsilon->0:sup_t.E[f(u^epsilon(t))-f(u^0(t))]}, we can always find an admissible collection of $\x^\varepsilon$ such that $\pi_2\x^\varepsilon$ is non trivial. To see this, let us fix an initial past trajectory $u_0(\cdot)\in C((-\infty,0];H^2)$ satisfying 
$$\|u_0(-r)\|_{H^2}^2\le c_0e^{\delta_0 r},\quad r\ge r_0,$$
for some $c_0,\delta_0,\,r_0>0$. Then, set 
\begin{align*}
\x^\varepsilon=(u_0(0),\eta_0(\cdot)), \quad \varepsilon\in(0,1].
\end{align*}
We note that this choice of $\x^\varepsilon$ satisfies
\begin{align*}
\limsup_{\varepsilon\to 0}\|\x^\varepsilon\|^2_{\Honeeps}=\|u_0(0)\|^2_{H^1}+\limsup_{\varepsilon\to 0}\|\eta_0\|^2_{\Moneeps}<\infty.
\end{align*}
See \cref{lem:|x^epsilon|<R} for the proof of the last implication above.
\end{enumerate}
\end{remark}



\section{A priori moment estimates} \label{sec:moment}

Throughout the rest of the paper, $c$ and $C$ denote generic positive constants that may change from line to line. The main parameters that they depend on will appear between parenthesis, e.g., $c(T,q)$ is a function of $T$ and $q$. Also, for notational convenience, we will drop the superscript $\varepsilon$ in $\Ueps,\ueps$ and $\etaeps$.

First we state two moment bounds in $\Hzeroeps$ for $U(t)$ through \eqref{ineq:E.int_0^t|A^(1/2)u^epsilon(t)|-bound} and \eqref{ineq:exponential-bound:H^0_epsilon}. The former will be employed to derive estimates in higher regularity in \cref{lem:moment-boud:H^1_epsilon} whereas the latter will appear in the proof of uniform geometric ergodicity in \cref{sec:ergodicity:mixing-rate}. While the proof of \cref{lem:moment-bound:H^0_epsilon} is adapted to that of \cite[Lemma 3.1]{glatt2024paperII}, the key feature of the argument is to keep track of the dependence on $\varepsilon$.

\begin{lemma} \label{lem:moment-bound:H^0_epsilon}
Assume the hypotheses of~\cref{prop:well-posed} and let $U_0=(u_0,\eta_0)\in \Hzeroeps$. Then, 
\begin{align}\label{ineq:E.int_0^t|A^(1/2)u^epsilon(t)|-bound}
&\E\Psi_0(U(t))+\kappa\int_0^t\E\|A^{1/2}u(r)\|^2_{H}+\frac{1}{2}(1-\kappa)\delta\E\|\eta(r)\|^2_{\Mzeroeps}\emph{d} r \notag \\
&\le \E\Psi_0(\rx)+\Big(a_3|\domain|+\frac{1}{2}\emph{Tr}(QQ^*)\Big)t.
\end{align}
Furthermore, for all $\beta$ sufficiently small independent of $\varepsilon$, 
\begin{equation}\label{ineq:exponential-bound:H^0_epsilon}
\E \exp\left({\beta\Psi_0(U(t))}\right)\le e^{-c_0t}\E\exp\left({\beta\Psi_0(U_0)}\right)+C_0,\quad t\ge 0,
\end{equation}
where $\Psi_0$ is as in \eqref{form:Psi_0} and $c_0=c_0(\beta)>0$, $C_0=C_0(\beta)>0$ do not depend on $U_0\in\Hzeroeps$, $t\ge 0$ and $\varepsilon$. 
\end{lemma}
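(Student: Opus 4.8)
The plan is to derive both moment bounds from a single Itô-type energy identity for the functional $\Psi_0(U(t)) = \tfrac12\|u(t)\|_H^2 + \tfrac12(1-\kappa)\|\eta(t)\|_{\Mzeroeps}^2$, applied to the solution of \eqref{eqn:react-diff:epsilon:mu}. Formally (justified rigorously by working with the Faedo–Galerkin approximants from \cref{prop:well-posed} and passing to the limit, exactly as in \cite[Lemma 3.1]{glatt2024paperII}), applying Itô's formula to $\tfrac12\|u\|_H^2$ and the chain rule to $\tfrac12(1-\kappa)\|\eta\|_{\Mzeroeps}^2$ and adding gives
\begin{align*}
\d\Psi_0(U(t)) &= \Big[-\kappa\|A^{1/2}u\|_H^2 - (1-\kappa)\la A\eta,u\ra_{\Mzeroeps} + \la\f(u),u\ra_H + \tfrac12\Tr(QQ^*)\Big]\d t\\
&\quad + (1-\kappa)\Big[\la\Tcal_{\mueps}\eta,\eta\ra_{\Mzeroeps} + \la u,\eta\ra_{\Mzeroeps}\Big]\d t + \la u, Q\d w\ra_H.
\end{align*}
The key cancellation is that, using the weighted $\Mzeroeps$-inner product \eqref{def:Mnorm}, $\la A\eta,u\ra_{\Mzeroeps} = \la u,\eta\ra_{\Mzeroeps}$ (both equal $\int_0^\infty\mueps(s)\la A^{1/2}u, A^{1/2}\eta(s)\ra_H\,\d s$, since $u$ is $s$-independent), so the two cross terms $-(1-\kappa)\la A\eta,u\ra_{\Mzeroeps}$ and $+(1-\kappa)\la u,\eta\ra_{\Mzeroeps}$ cancel exactly. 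We are then left with the transport dissipation term, which by \eqref{ineq:<T_epsilon.eta,eta>} satisfies $(1-\kappa)\la\Tcal_{\mueps}\eta,\eta\ra_{\Mzeroeps} \le -\tfrac{\delta}{2\varepsilon}(1-\kappa)\|\eta\|_{\Mzeroeps}^2$; note that this coefficient degenerates as $\varepsilon\to 0$, but since it has the favorable sign we may always bound it by $-\tfrac{\delta}{2}(1-\kappa)\|\eta\|_{\Mzeroeps}^2$ for $\varepsilon\le 1$, which is the $\varepsilon$-uniform form needed for \eqref{ineq:E.int_0^t|A^(1/2)u^epsilon(t)|-bound}. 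For the nonlinear term we invoke \nameref{cond:phi:2}, giving $\la\f(u),u\ra_H \le -a_2\|u\|_{L^{p_0+1}}^{p_0+1} + a_3|\domain| \le a_3|\domain|$.

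For \eqref{ineq:E.int_0^t|A^(1/2)u^epsilon(t)|-bound}, I would integrate the identity in time, take expectations (the stochastic integral is a martingale, at least along the Galerkin approximants where everything is finite-dimensional and hence has zero mean; one controls it uniformly and passes to the limit), and drop the nonpositive $-a_2\|u\|_{L^{p_0+1}}^{p_0+1}$ term to obtain precisely the claimed inequality, with the $\Tr(QQ^*)$ replacing $\Tr(QAQ)$ — indeed $\Tr(QQ^*)=\sum_k\|Qe_k\|_H^2 \le \sum_k\|Qe_k\|_{H^1}^2 = \Tr(QAQ) < \infty$ by \nameref{cond:Q}, so this is finite. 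For \eqref{ineq:exponential-bound:H^0_epsilon}, I would apply Itô's formula to $e^{\beta\Psi_0(U(t))}$:
\begin{align*}
\d e^{\beta\Psi_0(U(t))} = \beta e^{\beta\Psi_0(U(t))}\Big[\d\Psi_0(U(t)) + \tfrac{\beta}{2}\|Q^*u(t)\|_H^2\,\d t\Big],
\end{align*}
where the extra term comes from the quadratic variation $\tfrac12\beta^2 e^{\beta\Psi_0}\|Q^*u\|_H^2$. Using the drift bound above, the quadratic-variation correction is absorbed as follows: $\tfrac{\beta}{2}\|Q^*u\|_H^2 \le \tfrac{\beta}{2}\|Q\|^2\|u\|_H^2 \le \tfrac{\beta}{2}\|Q\|^2\alpha_1^{-1}\|A^{1/2}u\|_H^2$, so for $\beta$ small enough (independently of $\varepsilon$, since $\|Q\|,\alpha_1$ do not depend on $\varepsilon$) this is dominated by, say, $\tfrac{\kappa}{2}\|A^{1/2}u\|_H^2$, leaving a net dissipation $-\tfrac{\kappa}{2}\|A^{1/2}u\|_H^2 - \tfrac{\delta}{2}(1-\kappa)\|\eta\|_{\Mzeroeps}^2 \le -c\,\Psi_0(U)$ by the Poincaré inequality $\|A^{1/2}u\|_H^2\ge\alpha_1\|u\|_H^2$. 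Hence
\begin{align*}
\d e^{\beta\Psi_0(U(t))} \le \beta e^{\beta\Psi_0(U(t))}\big[-c\,\Psi_0(U(t)) + a_3|\domain| + \tfrac12\Tr(QQ^*)\big]\d t + \text{(martingale)}.
\end{align*}
Since $x\mapsto e^{\beta x}(-cx + b)$ is bounded above on $[0,\infty)$ by some constant $C_0$ (taking $c_0 < c\beta$ in the elementary inequality $\beta e^{\beta x}(-cx+b) \le -c_0 e^{\beta x} + C_0$), we get $\d\E e^{\beta\Psi_0(U(t))} \le -c_0\E e^{\beta\Psi_0(U(t))}\,\d t + C_0\,\d t$, and Grönwall's inequality yields \eqref{ineq:exponential-bound:H^0_epsilon}.

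The main obstacle is not the algebra — which is a standard energy estimate — but ensuring every constant ($\beta_0$, $c_0$, $C_0$, and the implicit constants in absorbing the Itô correction and the nonlinear term) is genuinely \emph{independent of $\varepsilon$}. The only place $\varepsilon$ enters the drift is through $-\tfrac{\delta}{2\varepsilon}(1-\kappa)\|\eta\|_{\Mzeroeps}^2$ from \eqref{ineq:<T_epsilon.eta,eta>}, and since $\varepsilon\in(0,1]$ this term is only \emph{more} dissipative than its $\varepsilon=1$ counterpart, so replacing $\tfrac{\delta}{\varepsilon}$ by $\delta$ throughout preserves all estimates uniformly; the spatial quantities $\|u\|_H,\|A^{1/2}u\|_H, \Tr(QQ^*), \alpha_1, \|Q\|$ are all $\varepsilon$-free. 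A secondary technical point, handled exactly as in \cite[Lemma 3.1]{glatt2024paperII}, is the rigorous justification of Itô's formula for the $M^0_{\mueps}$-valued component $\eta$ and of the martingale property after the Galerkin limit; I would carry out the computation on the finite-dimensional approximants $U^n$, obtain the bounds with $\varepsilon$-uniform (and $n$-uniform) constants, and pass to the limit using the lower semicontinuity of the norms and Fatou's lemma.
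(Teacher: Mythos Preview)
Your proposal is correct and follows essentially the same route as the paper's proof: the same It\^o energy identity for $\Psi_0$, the same cross-term cancellation, the same use of \eqref{ineq:<T_epsilon.eta,eta>} and \nameref{cond:phi:2}, and the same absorption of the It\^o correction term (the paper bounds $\|Qu\|_H^2\le \Tr(QQ^*)\|u\|_H^2$ rather than using the operator norm, but this is inessential). Your remarks on $\varepsilon$-uniformity and on the Galerkin justification likewise match the paper's treatment.
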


\begin{proof} 

We first start~\eqref{ineq:E.int_0^t|A^(1/2)u^epsilon(t)|-bound} and compute partial derivatives of $\Psi_0$
\begin{align*}
D_u\Psi_0 = u,\quad D_\eta \Psi_0=\eta,\quad \text{and}\quad D_{uu}\Psi_0=Id.
\end{align*}
Recalling $\L^{\mueps}$ as in \eqref{form:L^epsilon}, we have
\begin{align*}
\L^{\mueps}\Psi_0(u,v)& {\ =}-\kappa\|A^{1/2}u\|^2_H-(1-\kappa)\la \eta,u\ra_{\Mzeroeps}+\la \f(u),u\ra_H+\frac{1}{2}\Tr(QQ^*)\\
&\qquad +(1-\kappa)\la \Tcal_{\mueps}\eta,\eta\ra_{\Mzeroeps} +(1-\kappa)\la u,\eta\ra_{\Mzeroeps}\\
&= -\|A^{1/2}u\|^2_H+(1-\kappa)\la \Tcal_{\mueps}\eta,\eta\ra_{\Mzeroeps}+\la \f(u),u\ra_H+\frac{1}{2}\Tr(QQ^*).
\end{align*}
Recalling \nameref{cond:Q}, we readily have $\Tr(QQ^*)<\infty$. In light of \eqref{ineq:<T_epsilon.eta,eta>}, 
\begin{align*}
\la \Tcal_{\mueps}\eta,\eta\ra_{\Mzeroeps}\le -\frac{1}{2}\delta\|\eta\|^2_{\Mzeroeps}.
\end{align*}
 Using \nameref{cond:phi:2}, it holds that
\begin{align*}
\la \f(u),u\ra_H\le a_3|\domain|,
\end{align*}
where $|\domain|$ denotes the {Lebesgue measure} of $\domain$ in $\rbb^d$.
Combining the above estimates, we arrive at
\begin{align} \label{ineq:L^epsilon.Psi_0}
\L^{\mueps}\Psi_0(u,v)&\le -\kappa\|A^{1/2}u\|^2_H-\frac{1}{2}(1-\kappa)\delta\|\eta\|^2_{\Mzeroeps}+a_3|\domain|+\frac{1}{2}\Tr(QQ^*).
\end{align}
This immediately produces \eqref{ineq:E.int_0^t|A^(1/2)u^epsilon(t)|-bound}.

Turning to~\eqref{ineq:exponential-bound:H^0_epsilon}, we consider $g(u,\eta)=\exp\left({\beta\Psi_0(u,\eta)}\right)$. For $\xi\in\Hzeroeps$, the Frechet derivatives of $g$ along the direction of $\xi$ are given by
\begin{align*}
\la D_ug(u,\eta),\pi_1\xi\ra_{H}&=\beta \exp\left({\beta\Psi_0(u,\eta)}\right)\la u,\pi_1\xi\ra_{H},\\
\la D_\eta g(u,\eta),\pi_2\xi\ra_{\Mzeroeps}&=\beta(1-\kappa) \exp\left({\beta\Psi_0(u,\eta)}\right)\la\eta,\pi_2\xi\ra_{\Mzeroeps},\\
\text{and}\quad D_{uu}g(u,\eta)(\xi)&=\beta \exp\left({\beta\Psi_0(u,\eta)}\right)\pi_1\xi+\beta^2\exp\left({\beta\Psi_0(u,\eta)}\right)\la u,\pi_1\xi\ra_{H} u.
\end{align*}
Applying $\L^{\mueps}$ to $g$ gives
\begin{align*}
\L^{\mueps} g(u,\eta)&=\beta e^{\beta\Psi_1(u,\eta)}\Big(-\kappa\|A^{1/2}u\|^2_H+(1-\kappa)\la \Tcal_{\mueps}\eta,\eta\ra_{\Mzeroeps}+\la \f(u),u\ra_H\\
&\qquad\qquad\qquad\qquad+\frac{1}{2}\Tr(QQ^*)+\frac{1}{2}\beta\sum_{k\ge 1}\la u,QQ^*e_k\ra_H\la u,e_k\ra_{H}\Big).
\end{align*}
To estimate the above right hand side, we recall from~\eqref{ineq:<T_epsilon.eta,eta>},~\nameref{cond:Q}, and ~\nameref{cond:phi:2} that
\begin{align*}
\la \Tcal_{\mueps}\eta(t),\eta(t)\ra_{\Mzeroeps}&\le -\frac{\delta}{2}\|\eta(t)\|^2_{\Mzeroeps},\qquad
\la \f(u),u\ra_H\le a_3|\domain|,
\end{align*}
and
\begin{align*}
\frac{1}{2}\beta\sum_{k\ge 1}\la u,QQ^*e_k\ra_H\la u,e_k\ra_{H}=\frac{1}{2}\beta\|Qu\|^2_{H}&=\frac{1}{2}\beta\sum_{k\ge 1}|\la u,Qe_k\ra_H|^2\\
&\le  \frac{1}{2}\beta\Tr(QQ^*)\|u\|^2_H,
\end{align*}
which can be subsumed into $\kappa\|A^{1/2}u\|_{H}^2$ by taking $\beta$ sufficiently small, namely, 
\begin{align*}
\beta<\frac{2\kappa\alpha_1}{\Tr(QQ^*)}.
\end{align*}
We thus combine the above estimates to infer the existence of positive constants $c=c(\beta,\kappa,Q,\f)$ and $C=C(\beta,\kappa,Q,\f)$ such that the following holds uniformly in $t$ and $\varepsilon$.
\begin{align*}
\frac{\d}{\d t}\E \,\exp\left({\beta\Psi_0(U(t))}\right)&\le -c\E \,\exp\left({\beta\Psi_0(U(t))}\right)\Big(\Psi_0(U(t))-C\Big).
\end{align*}
To further bound the above right hand side, we employ the elementary fact that there exists a constant $\tilde{C}=\tilde{C}(\beta,C)>0$ such that for all $r\ge 0$,
\begin{equation} \label{ineq:e^r(r-C)>e^r-C}
e^{\beta r}(r-C)>e^{\beta r}-\tilde{C}.
\end{equation}
So, there exist $c$ and $C$ independent of $t$, $\varepsilon$ and initial condition $U_0$ such that
\begin{equation} \label{ineq:d/dt.E.e^(beta.Psi_1(t))}
\frac{\d}{\d t}\E \,\exp\left({\beta\Psi_0(U(t))}\right)\le - c\,\E \,\exp\left({\beta\Psi_0(U(t))}\right)+C.
\end{equation} 
This establishes~\eqref{ineq:exponential-bound:H^0_epsilon} by virtue of Gronwall's inequality.

\end{proof}

Next, we state and prove~\cref{lem:E.sup.|u^epsilon(t)|-bound} concerning the sup norm in $\Hzeroeps$. This result will be employed later to study the short memory limit in~\cref{sec:epsilon->0:large-time}.

\begin{lemma} \label{lem:E.sup.|u^epsilon(t)|-bound}
Assume the hypotheses of~\cref{prop:well-posed} and let $\rx\in L^2(\Omega;\Hzeroeps)$. Then, for all $T>0$, there exists a constant $c(T)$ independent of $\varepsilon$ such that
\begin{equation} \label{ineq:E.sup.|u^epsilon(t)|-bound}
\E\sup_{0\le t\le T}\|U(t)\|^2_{\Hzeroeps}\le c(T)\big(\E\|\rx\|^2_{\Hzeroeps}+1\big).
\end{equation}
where $\Psi_0(u,\eta)$ is as in~\eqref{form:Psi_0}.
\end{lemma}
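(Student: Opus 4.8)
The plan is to apply Itô's formula to $\Psi_0(U(t))$ and then take the supremum in time before taking expectations, following the standard route for $L^2(\Omega;C([0,T]))$-type bounds. First I would write the Itô expansion of $\Psi_0(U(t))=\tfrac12\|u(t)\|_H^2+\tfrac12(1-\kappa)\|\eta(t)\|_{\Mzeroeps}^2$, obtaining
\begin{align*}
\Psi_0(U(t))=\Psi_0(\rx)+\int_0^t\L^{\mueps}\Psi_0(U(r))\,\d r+\int_0^t\la u(r),Q\,\d w(r)\ra_H,
\end{align*}
and then invoke the dissipative bound~\eqref{ineq:L^epsilon.Psi_0} from the proof of~\cref{lem:moment-bound:H^0_epsilon}, which gives (uniformly in $\varepsilon$)
\begin{align*}
\L^{\mueps}\Psi_0(U(r))\le -\kappa\|A^{1/2}u(r)\|_H^2-\tfrac12(1-\kappa)\delta\|\eta(r)\|_{\Mzeroeps}^2+a_3|\domain|+\tfrac12\Tr(QQ^*)\le C.
\end{align*}
Hence $\sup_{0\le t\le T}\Psi_0(U(t))\le \Psi_0(\rx)+CT+\sup_{0\le t\le T}\big|\int_0^t\la u(r),Q\,\d w(r)\ra_H\big|$.

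The key step is then to control the expected supremum of the stochastic integral. For this I would apply the Burkholder--Davis--Gundy inequality:
\begin{align*}
\E\sup_{0\le t\le T}\Big|\int_0^t\la u(r),Q\,\d w(r)\ra_H\Big|
\le c\,\E\Big(\int_0^T\sum_{k\ge1}\la u(r),Qe_k\ra_H^2\,\d r\Big)^{1/2}
\le c\,\E\Big(\Tr(QQ^*)\int_0^T\|u(r)\|_H^2\,\d r\Big)^{1/2}.
\end{align*}
Using $ab\le \tfrac{1}{2\theta}a^2+\tfrac{\theta}{2}b^2$ on this, the term $\big(\int_0^T\|u(r)\|_H^2\d r\big)^{1/2}$ is bounded by $\tfrac{\theta}{2}\sup_{0\le t\le T}\|u(t)\|_H^2+\tfrac{T}{2\theta}$, and the supremum term can be absorbed into the left-hand side by choosing $\theta$ small (using $\|u\|_H^2\le 2\Psi_0(U)$). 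This leaves a bound of the form
\begin{align*}
\E\sup_{0\le t\le T}\Psi_0(U(t))\le C\big(\E\Psi_0(\rx)+T+1\big),
\end{align*}
and since $\|U(t)\|_{\Hzeroeps}^2=\|u(t)\|_H^2+\|\eta(t)\|_{\Mzeroeps}^2\le \max\{2,2/(1-\kappa)\}\,\Psi_0(U(t))$, the claimed estimate~\eqref{ineq:E.sup.|u^epsilon(t)|-bound} follows with $c(T)$ independent of $\varepsilon$ (the $\varepsilon$-dependence only helps, since the $-\delta/(2\varepsilon)$ factor in~\eqref{ineq:<T_epsilon.eta,eta>} makes the dissipation stronger, not weaker).

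The main obstacle is a rigorous justification of Itô's formula for $\Psi_0$ applied to the weak solution of~\eqref{eqn:react-diff:epsilon:mu}, given that the solution only lives in $C_w([0,\infty);H)\cap L^2_{\mathrm{loc}}([0,\infty);H^1)$ and $\f(u)$ is merely in $L^q_{\mathrm{loc}}$; strictly one must either work with the Galerkin approximants and pass to the limit (using lower semicontinuity of norms and Fatou), or appeal to an infinite-dimensional Itô formula valid in the variational/Gelfand-triple framework with a locally integrable $\f(u)\cdot u$ term. A secondary technical point is keeping all constants independent of $\varepsilon$: this is immediate here because the only $\varepsilon$-sensitive term, $\la\Tcal_{\mueps}\eta,\eta\ra_{\Mzeroeps}$, is controlled by~\eqref{ineq:<T_epsilon.eta,eta>} with a sign and a constant $\delta/(2\varepsilon)\ge\delta/2$ that is favorable and bounded below uniformly in $\varepsilon\in(0,1]$. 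Since~\cref{lem:moment-bound:H^0_epsilon} has already carried out the analogous Itô-level computation for $\E\Psi_0$ and for $\E e^{\beta\Psi_0}$, I would simply cite that computation and only add the BDG estimate and the absorption argument described above.
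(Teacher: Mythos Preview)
Your proof is correct and follows essentially the same route as the paper: apply It\^o's formula to $\Psi_0$, use the dissipative bound~\eqref{ineq:L^epsilon.Psi_0} to control the drift, and handle the stochastic integral via Burkholder--Davis--Gundy. The only minor difference is in closing the loop: you absorb the $\sup_t\|u(t)\|_H^2$ term directly into the left-hand side via Young's inequality, whereas the paper instead bounds the BDG term by $c\int_0^t\E\sup_{\ell\le r}\Psi_0(U(\ell))\,\d r$ and finishes with Gronwall's inequality.
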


\begin{proof} Integrating~\eqref{ineq:d.Psi_0} with respect to time $t$, we observe that
\begin{align*}
\Psi_0(U(t))\le \Psi_0(\rx) +\Big(a_3|\domain|+\frac{1}{2}\Tr(QQ^*)\Big) t+\int_0^t \la u(r),Q\d w(r)\ra_H.
\end{align*} 
In the above, $a_3$ is as in \nameref{cond:phi:2}. By Burkholder's inequality, the Martingale term can be estimated as
\begin{align*}
\E\sup_{r\in[0,t]}\big{|}\int_0^r \la u(\ell),Q\d w(\ell)\ra_H\big{|}\le \Big(\E\int_0^t \|Qu(r)\|^2_H\d r\Big)^{1/2}&\le \frac{1}{2}\Tr(QQ^*)\E\int_0^t\|u(r)\|^2_H\d r+\frac{1}{2}\\
&\le \Tr(QQ^*)\E\int_0^t\Psi_0(U(r))\d r+\frac{1}{2}.
\end{align*}
We therefore arrive at the bound in sup norm
\begin{align*}
\E\sup_{r\in[0,t]}\Psi_0(U(r))\le \E\Psi_0(\rx)+ ct+c\int_0^t\E\sup_{\ell\in [0,r]}\Psi_0(U(\ell))\d r.
\end{align*}
This produces~\eqref{ineq:E.sup.|u^epsilon(t)|-bound} by virtue of Gronwall's inequality.
\end{proof}

We next introduce the function 
\begin{equation} \label{form:Psi_1}
\Psi_1(u,\eta)=\frac{1}{2}\|u\|^2_{H^1}+\frac{1}{2}(1-\kappa)\|\eta\|^2_{\Moneeps}.
\end{equation}
The $\Honeeps$--analog of \cref{lem:moment-bound:H^0_epsilon} is presented below. 

\begin{lemma} \label{lem:moment-boud:H^1_epsilon}
Assume the hypotheses of~\cref{prop:well-posed} and let $U_0=(u_0,\eta_0)\in  L^2(\Omega;\Honeeps)$. Then, for all $t>0$ it holds that
\begin{align}\label{ineq:solution:|Phi|^2_(H^1_epsilon)-bound:random-initial-cond}
&\E\Psi_1(U(t))+\int_0^t\frac{1}{2}\kappa\E\|Au(t)\|^2_H+\frac{1}{2}(1-\kappa)\delta\E\|\eta(t)\|_{\Moneeps}^2\emph{d} t \notag \\
&\le \E\Psi_1(\rx)+\frac{2a_\f^2}{\alpha_1\kappa^2}\E\Psi_0(\rx)+\Big[\Big(a_3|\domain|+\frac{1}{2}\emph{Tr}(QQ^*)\Big)\frac{2a_\f^2}{\alpha_1\kappa^2}+\frac{1}{2}\emph{Tr}(QAQ^*)\Big]t,
\end{align}
where $\Psi_0$ and $\Psi_1$ are respectively as in~\eqref{form:Psi_0} and~\eqref{form:Psi_1}. Furthermore, for all $\beta$ sufficiently small independent of $\varepsilon$, 
\begin{equation}\label{ineq:exponential-bound:H^1_epsilon}
\E \exp\left({\beta\Psi_1(U(t))}\right)\le  e^{-c_{1}t}\exp\left({\beta C_{1}\Psi_1(U_0)}\right)+C_{1},\quad t\ge 0,
\end{equation}
where $\Psi_1(u,\eta)$ is as in \eqref{form:Psi_1} and $c_{1}=c_{1}(\beta)>0$, $C_{1}=C_{1}(\beta)>0$ do not depend on $U_0$, $\varepsilon$ and $t\ge 0$. 

\end{lemma}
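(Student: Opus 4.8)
The plan is to mimic the structure of the proof of \cref{lem:moment-bound:H^0_epsilon}, working now with $\Psi_1$ in place of $\Psi_0$, and then to close the estimate by using the already-proven $\Hzeroeps$-bounds to control the extra nonlinear term that appears at the $H^1$ level. First I would compute the action of the generator: $D_u\Psi_1 = Au$, $D_\eta\Psi_1 = \eta$ (in the $\Moneeps$ inner product), and $D_{uu}\Psi_1 = A$, so that
\begin{align*}
\L^{\mueps}\Psi_1(u,\eta) &= -\kappa\|Au\|_H^2 - (1-\kappa)\int_0^\infty\mueps(s)\la A\eta(s),Au\ra_H\d s + \la\f(u),Au\ra_H \\
&\qquad + (1-\kappa)\la\Tcal_{\mueps}\eta,\eta\ra_{\Moneeps} + (1-\kappa)\la u,\eta\ra_{\Moneeps} + \tfrac{1}{2}\Tr(QAQ^*).
\end{align*}
The mixed memory term $-(1-\kappa)\int\mueps(s)\la A\eta(s),Au\ra_H\d s$ is precisely $-(1-\kappa)\la u,\eta\ra_{\Moneeps}$ by definition of the $\Moneeps$ inner product \eqref{def:Mnorm}, so those two terms cancel, exactly as in the $\Psi_0$ computation. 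Then I apply \eqref{ineq:<T_epsilon.eta,eta>_(M^n)} with $\beta=1$ to get $(1-\kappa)\la\Tcal_{\mueps}\eta,\eta\ra_{\Moneeps}\le -\tfrac{\delta}{2\varepsilon}(1-\kappa)\|\eta\|_{\Moneeps}^2 \le -\tfrac{\delta}{2}(1-\kappa)\|\eta\|_{\Moneeps}^2$ for $\varepsilon\le1$.

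The one genuinely new term is $\la\f(u),Au\ra_H$, which I would control using \nameref{cond:phi:3}: integrating by parts, $\la\f(u),Au\ra_H = \la\f(u),-\Delta u\ra_H = \int_\domain \f'(u)|\nabla u|^2\le a_\f\|A^{1/2}u\|_H^2$. I then absorb $a_\f\|A^{1/2}u\|_H^2$ into $\kappa\|Au\|_H^2$ via interpolation/Young: $\|A^{1/2}u\|_H^2 \le \tfrac{\kappa}{2a_\f}\|Au\|_H^2 + \tfrac{a_\f}{2\kappa}\|u\|_H^2$ (using $\|A^{1/2}u\|_H^2\le\|Au\|_H\|u\|_H$ and $2ab\le\tfrac{\kappa}{2a_\f}a^2+\tfrac{2a_\f}{\kappa}b^2$), leaving a residual $\tfrac{a_\f^2}{\kappa}\|u\|_H^2 \le \tfrac{2a_\f^2}{\kappa}\Psi_0(u,\eta)$. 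Assembling the pieces yields a differential inequality of the form $\tfrac{\d}{\d t}\E\Psi_1(U(t)) \le -\tfrac{\kappa}{2}\E\|Au(t)\|_H^2 - \tfrac{\delta}{2}(1-\kappa)\E\|\eta(t)\|_{\Moneeps}^2 + \tfrac{2a_\f^2}{\kappa}\E\Psi_0(U(t)) + \tfrac12\Tr(QAQ^*)$; integrating in time and substituting the integrated bound \eqref{ineq:E.int_0^t|A^(1/2)u^epsilon(t)|-bound} (which controls $\int_0^t\E\Psi_0$ — more precisely, feeding in $\E\Psi_0(U(r))\le \E\Psi_0(\rx)+(a_3|\domain|+\tfrac12\Tr(QQ^*))r$ isn't quite right since that grows in $r$; instead I should use \eqref{ineq:exponential-bound:H^0_epsilon} at the non-exponential level, or rather integrate the inequality $\tfrac{\d}{\d t}\E\Psi_0 \le -\kappa\alpha_1\E\Psi_0 + \text{const}$ implicit in \eqref{ineq:L^epsilon.Psi_0}, giving a uniform-in-time bound $\E\Psi_0(U(r))\le \E\Psi_0(\rx)+C$) produces \eqref{ineq:solution:|Phi|^2_(H^1_epsilon)-bound:random-initial-cond} after matching the constant $\tfrac{2a_\f^2}{\alpha_1\kappa^2}$, where the $\alpha_1^{-1}$ arises from converting a time integral of a dissipative exponential.

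For the exponential bound \eqref{ineq:exponential-bound:H^1_epsilon}, I would set $g = \exp(\beta\Psi_1)$ and compute $\L^{\mueps}g$ as in \cref{lem:moment-bound:H^0_epsilon}, picking up the Itô correction $\tfrac12\beta^2 e^{\beta\Psi_1}\|A^{1/2}Qu\|_H^2 = \tfrac12\beta^2 e^{\beta\Psi_1}\sum_k|\la Au,Qe_k\ra_H|^2$; here I bound $\sum_k|\la Au,Qe_k\ra_H|^2 \le \|A^{1/2}u\|_H^2\sum_k\|A^{1/2}Qe_k\|_{?}$ — more carefully, $\sum_k\la A^{1/2}u,A^{1/2}Qe_k\ra_H^2\le \|A^{1/2}u\|_H^2\Tr(QAQ^*)$ — and absorb the $\|A^{1/2}u\|_H^2$ factor into $\kappa\|Au\|_H^2$ by Young for $\beta$ small, leaving a residual proportional to $\|u\|_H^2\le 2\Psi_0 \le 2\Psi_1$. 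The presence of the $\Psi_0$ residual from the nonlinear term forces the constant $C_1$ inside the exponential on the right-hand side of \eqref{ineq:exponential-bound:H^1_epsilon} (one cannot do better than $\exp(\beta C_1\Psi_1(U_0))$ because closing the Gronwall argument requires dominating $\Psi_0$ by $\Psi_1$, which costs a multiplicative constant under the exponential). I then derive $\tfrac{\d}{\d t}\E g(U(t)) \le -c\,\E[g(U(t))(\Psi_1(U(t))-C)] + (\text{lower order in }\Psi_0)$, invoke \eqref{ineq:e^r(r-C)>e^r-C} together with the already-established uniform-in-time moment bound on $\E\exp(\beta'\Psi_0)$ from \eqref{ineq:exponential-bound:H^0_epsilon} to handle the cross term $\E[g(U(t))\cdot\Psi_0(U(t))]$ (using Young's inequality for products of exponentials, $e^{\beta\Psi_1}\Psi_0 \lesssim e^{\beta\Psi_1} + e^{\beta'\Psi_0}$-type bounds, or simply absorbing $\Psi_0$ into $\Psi_1$ since $\Psi_0\le\Psi_1$), and conclude via Gronwall. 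The main obstacle is bookkeeping the $\varepsilon$-uniformity and the precise constants: one must verify that the threshold on $\beta$ and all constants $c_1, C_1$ depend only on $\kappa, \delta, a_\f, Q, \domain$ and not on $\varepsilon$ — this works because the only $\varepsilon$-dependence, namely the factor $\delta/\varepsilon$ in \eqref{ineq:<T_epsilon.eta,eta>_(M^n)}, only helps (it makes dissipation stronger), so replacing $\delta/\varepsilon$ by $\delta$ throughout is a safe, $\varepsilon$-uniform simplification.
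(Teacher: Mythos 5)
Your $H^1$-level generator computation is correct and matches the paper: the mixed memory term cancels exactly as at the $H^0$ level, the transport term is handled by \eqref{ineq:<T_epsilon.eta,eta>_(M^n)} (dropping the helpful $1/\varepsilon$), and the nonlinear term is controlled via \nameref{cond:phi:3} followed by Young's inequality, leaving a residual $\lesssim \|u\|_H^2$. However, both closing steps go astray.

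For the integrated bound \eqref{ineq:solution:|Phi|^2_(H^1_epsilon)-bound:random-initial-cond}, the paper's route is shorter and produces exactly the stated constant: apply Poincar\'e to convert the residual $\|u(r)\|_H^2 \le \alpha_1^{-1}\|A^{1/2}u(r)\|_H^2$, then substitute the already-integrated bound
\begin{align*}
\kappa\int_0^t \E\|A^{1/2}u(r)\|_H^2\,\d r \le \E\Psi_0(\rx) + \Big(a_3|\domain|+\tfrac12\Tr(QQ^*)\Big)t
\end{align*}
from \eqref{ineq:E.int_0^t|A^(1/2)u^epsilon(t)|-bound}. That is precisely where the factor $\alpha_1^{-1}\kappa^{-1}$ in the coefficient of $\E\Psi_0(\rx)$ comes from. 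Your alternatives --- a uniform-in-time bound $\E\Psi_0(U(r))\le\E\Psi_0(\rx)+C$, or ``converting a time integral of a dissipative exponential'' --- would yield a $t\,\E\Psi_0(\rx)$ term upon integration (wrong structure), or require keeping the exponential decay inside the intermediate Gronwall step, which you mention and then immediately discard. The Poincar\'e plus \eqref{ineq:E.int_0^t|A^(1/2)u^epsilon(t)|-bound} route is what actually works.

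The more serious gap is in the exponential bound \eqref{ineq:exponential-bound:H^1_epsilon}. You propose to differentiate $\exp(\beta\Psi_1)$ directly and absorb the $\Psi_0$ residual into $\Psi_1$ using $\Psi_0\le\Psi_1$. First, $\Psi_0\le\Psi_1$ holds only if $\alpha_1\ge 1$; in general one only has $\Psi_0\le\alpha_1^{-1}\Psi_1$. More importantly, even with this substitution the drift becomes $-c_{1,1}\Psi_1 + C_{1,1}\alpha_1^{-1}\Psi_1$, and since $C_{1,1}$ involves $a_\f^2/\kappa$ (which is not small in general), the net coefficient can easily be positive --- the Gronwall argument does not close. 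Your other suggestion, splitting the cross-term $\E[e^{\beta\Psi_1}\Psi_0]$ using moment bounds on $\E e^{\beta'\Psi_0}$, runs into a doubling of the exponent under Cauchy--Schwarz or Young, so that doesn't close either. The paper's essential device, which you miss entirely, is the \emph{combined Lyapunov function} $g_1 = \Psi_1 + \beta_{1,0}\Psi_0$ with $\beta_{1,0}$ chosen large: this uses the \emph{separate} dissipation $\L^{\mueps}\Psi_0 \le -c_{0,1}\Psi_0 + C_{0,1}$ (from \eqref{ineq:L^epsilon.Psi_0}) to dominate the $C_{1,1}\Psi_0$ production term, yielding $\L^{\mueps}g_1 \le -cg_1 + C$ with no restriction on $a_\f$. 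The constant $C_1$ inside the exponential in \eqref{ineq:exponential-bound:H^1_epsilon} then arises from the equivalence $\Psi_1 \le g_1 \le (1+\beta_{1,0}/\alpha_1)\Psi_1$, not from ``dominating $\Psi_0$ by $\Psi_1$'' as you surmise. Without this combined Lyapunov function the exponential moment estimate does not follow.
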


\begin{proof} We apply $\L^{\mueps}$, (see \eqref{form:L^epsilon}), to $\Psi_1(u,\eta)$ to obtain
\begin{align*}
\L^{\mueps}\Psi_1(u,\eta)&=  -\kappa\|Au\|^2_{H}+(1-\kappa)\la \Tcal_{\mueps}\eta,\eta\ra_{\Moneeps}+\la \f'(u)\grad u,\grad u\ra_{H}+\frac{1}{2}\Tr(QAQ^*).
\end{align*}
In view of \nameref{cond:Q}, we readily have
\begin{align*}
\Tr(QAQ^*)<\infty.
\end{align*}
Recalling \eqref{ineq:<T_epsilon.eta,eta>_(M^n)} (for $\beta=1$), it holds that
\begin{align*}
\la \Tcal_{\mueps}\eta,\eta\ra_{\Moneeps}\le -\frac{1}{2}\delta\|\eta\|^2_{\Moneeps}.
\end{align*}
To deal with the nonlinear term, note that by \nameref{cond:phi:3} and Cauchy-Schwarz inequality,
\begin{align*}
\la \f'(u)\grad u,\grad u\ra_{H} \le a_\f\la A^{1/2}u,A^{1/2}u\ra_{H}=a_\f \la u,Au\ra_{H}\le \frac{2a_\f^2}{\kappa}\|u\|^2_H+\frac{1}{2}\kappa\|Au\|^2_{H}.
\end{align*}
It follows that
\begin{align}
\L^{\mueps}\Psi_1(u,\eta)&\le  -\frac{1}{2}\kappa\|Au\|^2_{H}-\frac{1}{2}(1-\kappa)\delta\|\eta\|^2_{\Moneeps}+\frac{2a_\f^2}{\kappa}\|u\|^2_H+\frac{1}{2}\Tr(QAQ^*) \notag \\
&\le -c_{1,1}\Psi_1(u,\eta)+C_{1,1}\Psi_0(u,\eta)+C_{1,1}.\label{ineq:L^epsilon.Psi_1}
\end{align}
As a consequence, we see that
\begin{align}
\d \Psi_1(U(t))&= \L^{\mueps}\Psi_1(U(t))\d t+ \la A^{1/2}u(t), A^{1/2}Q\d w(t)\ra_H \notag\\
&\le -\frac{1}{2}\kappa\|Au(t)\|^2_{H}\d t-\frac{1}{2}(1-\kappa)\delta\|\eta(t)\|^2_{\Moneeps}\d t+\frac{2a_\f^2}{\kappa}\|u(t)\|^2_H\d t\notag\\
&\qquad+\frac{1}{2}\Tr(QAQ^*)\d t+ \la A^{1/2}u(t), A^{1/2}Q\d w(t)\ra_H. \label{ineq:d.Psi_1}
\end{align}
Given the random initial condition $\rx$, it is clear that
\begin{align*}
&\E\Psi_1(U(t))+\int_0^t \frac{1}{2}\kappa\E\|Au(r)\|^2_H+\frac{1}{2}(1-\kappa)\delta\E\|\eta(r)\|^2_{\Moneeps}\d r \\
&\le \E\Psi_1(\rx)+\frac{2a_\f^2}{\kappa}\int_0^t\E\|u(r)\|^2_H\d r+\frac{1}{2}t\Tr(QAQ^*).
\end{align*}
In view of~\eqref{ineq:E.int_0^t|A^(1/2)u^epsilon(t)|-bound} combined with Poincare's inequality, we see that
\begin{align*}
\int_0^t\E\|u(r)\|^2_H\d r\le \frac{1}{\alpha_1}\int_0^t\E\|A^{1/2}u(r)\|^2_H\d r\le\frac{1}{\alpha_1\kappa}\Big[\E\Psi_0(\rx) +\Big(a_3|\domain|+\frac{1}{2}\Tr(QQ^*)\Big)t\Big].
\end{align*}
Altogether, we arrive at the estimate~\eqref{ineq:solution:|Phi|^2_(H^1_epsilon)-bound:random-initial-cond}, as claimed.

Turning to~\eqref{ineq:exponential-bound:H^1_epsilon}, for $\beta_{1,0}$ to be chosen later, we consider
\begin{equation} \label{form:g_1(u,eta)}
g_1(u,\eta)=\Psi_1(u,\eta)+\beta_{1,0}\Psi_0(u,\eta).
\end{equation}
In view of~\eqref{ineq:L^epsilon.Psi_0} and~\eqref{ineq:L^epsilon.Psi_1}, observe that
\begin{align*}
\L^{\mueps} g_1(u,\eta)&=  \L^{\mueps}\Psi_1(u,\eta)+\beta_{1,0}\L^{\mueps} \Psi_0(u,\eta)\\
&\le -c_{1,1}\Psi_1(u,\eta)+C_{1,1}\Psi_0(u,\eta)+C_{1,1}\\
&\qquad+\beta_{1,0}\big(-c_{0,1}\Psi_0(u,\eta)+ C_{0,1}\big).
\end{align*}
By picking $\beta_{1,0}$ sufficiently large (independent of $\varepsilon$), we obtain
\begin{align*}
\L^{\mueps} g_1(u,\eta)\le -c g_1(u,\eta)+C.
\end{align*}
Similarly to the proof of~\eqref{ineq:exponential-bound:H^0_epsilon}, we compute
\begin{align*}
\L^{\mueps} e^{\beta g_1(u,\eta)}= \beta e^{\beta g_1(u,\eta)}\L^{\mueps} g_1(u,\eta)+\frac{1}{2}\beta^2e^{\beta g_1(u,\eta)}\sum_{k\ge 1}\big{|} \la u,Qe_k\ra_{H^1}+\beta_{1,0}\la u,Qe_k\ra_H \big{|}^2.
\end{align*}
By~\nameref{cond:Q}, we have
\begin{align*}
\frac{1}{2}\sum_{k\ge 1}\big{|} \la u,Qe_k\ra_{H^1}+\beta_{1,0}\la u,Qe_k\ra_H \big{|}^2&\le \sum_{k\ge 1}|\la u,Qe_k\ra_{H^1}|^2+\sum_{k\ge 1}\beta_{1,0}^2|\la u,Qe_k\ra_H|^2 \\
&\le \Tr(QAQ^*)\|A^{1/2}u\|^2_H+\beta_{1,0}^2\Tr(QQ^*)\|u\|^2_H\\
&\le \Tr(QAQ^*)\Psi_1(u,\eta)+\beta_{1,0}^2\Tr(QQ^*)\Psi_0(u,\eta)\\
&\le C\,g_1(u,\eta),
\end{align*}
for some constant $C>0$ independent of $\beta$ and $\varepsilon$. It follows that
\begin{align*}
\L^{\mueps} e^{\beta g_1(u,\eta)}\le \beta e^{\beta g_1(u,\eta)}\Big(-c g_1(u,\eta)+\beta Cg_1(u,\eta)+C\Big).
\end{align*}
By choosing $\beta_1$ sufficiently small, observe that for all $\beta\in (0,\beta_1)$, we obtain the bound
\begin{align*}
\L^{\mueps} e^{\beta g_1(u,\eta)}\le \beta e^{\beta g_1(u,\eta)}\Big(-c g_1(u,\eta)+C\Big).
\end{align*}
We now invoke~\eqref{ineq:e^r(r-C)>e^r-C} to deduce further
\begin{align*}
\L^{\mueps} e^{\beta g_1(u,\eta)}\le -c e^{\beta g_1(u,\eta)}+C.
\end{align*}
By It\^o's formula, this yields 
\begin{align*}
\frac{\d}{\d t}\E \,e^{\beta g_1(U(t))}\le -c \, \E\, e^{\beta g_1(U(t))}+C,
\end{align*} 
implying
\begin{align*}
\E \,e^{\beta g_1(U(t))} \le e^{-ct}\E \,e^{\beta g_1(U_0)}+C.
\end{align*}
Recalling the expression~\eqref{form:g_1(u,eta)} of $g_1$, 
\begin{align*}
\Psi_1\le g_1\le \left(1+\frac{\beta_{1,0}}{\alpha_1}\right)\Psi_1,
\end{align*}
we immediately obtain~\eqref{ineq:exponential-bound:H^1_epsilon}, as claimed.

\end{proof}

\section{Uniform geometric ergodicity}\label{sec:ergodicity}

\subsection{Uniform mixing rate} \label{sec:ergodicity:mixing-rate}

In this section, we establish the uniqueness of invariant probability $\nueps$ and the exponential rate of convergence of the Markov semigroup $P_t^{\mueps}$ towards $\nueps$. For the convenience of the reader, we recall the notions of $d$--\emph{contracting} and $d$--\emph{small} sets
\cite{butkovsky2020generalized,hairer2011asymptotic}, which are key notions for proving \cref{thm:react-diff:epsilon:geometric-ergodicity}.

\begin{definition} \label{def:contracting:small}
A distance-like function $d^{\mueps}$ bounded by 1 is called \emph{contracting} for $P_t^{\mueps}$ if there exists $\gamma_1<1$ such that for any $U,\Ut\in \Hzeroeps$ with $d^{\mueps}(U,\Ut)<1$, it holds that
\begin{equation} \label{ineq:contracting:W_d<gamma.d}
\W_{d^{\mueps}}(P_t^{\mueps}(U,\cdot),P_t^{\mueps}(\Ut,\cdot))\le \gamma_1\, d^{\mueps}(U,\Ut).
\end{equation}
On the other hand, a set $B\subset\Hzeroeps$ is called $d^{\mueps}$-\emph{small} for $P_t^{\mueps}$ if for some $\gamma_2=\gamma_2(B)>0$, 
\begin{equation} \label{ineq:d-small:W_d<1-epsilon}
\sup_{U,\Ut\in B}\W_{d^{\mueps}}(P_t^{\mueps}(U,\cdot),P_t^{\mueps}(\Ut,\cdot))\le 1-\gamma_2.
\end{equation}
\end{definition}

In~\cref{prop:contracting-d-small} below, we establish that one can always tune $d_N^{\mueps}$ as in~\eqref{form:d_N:mu} such that $d^{\mueps}_N$ is contracting for $P^{\mueps}_t$ and that all bounded sets in $\Hzeroeps$ are $d_N^{\mueps}-$small. 

\begin{proposition} \label{prop:contracting-d-small} Assume that the hypotheses of \cref{prop:well-posed} and Assumption~\nameref{cond:Q:ergodicity} hold. Then

\begin{enumerate}[noitemsep,topsep=0pt,wide=0pt,label=\arabic*.,ref=\theassumption.\arabic*]
\item There exists $N_1$ and $t_1$ sufficiently large, independently of ${\mueps}$, such that for all $N\geq N_1$ and $t\ge t_1$, the distance $d_{N}^{\mueps}(U,\Ut)=(N\|U-\Ut\|^2_{\Hzeroeps})\mi 1$ as in~\eqref{form:d_N:mu} is contracting for $P_t^{\mueps}$ in the sense of~\cref{def:contracting:small}, namely, 
\begin{equation} \label{ineq:contracting:W_d<1/2.d}
\W_{d^{\mueps}_{N}}(P_t^{\mueps}(U,\cdot),P_t^{\mueps}(\Ut,\cdot))\le \frac{1}{2} d^{\mueps}_{N}(U,\Ut),\quad t\ge t_1,
\end{equation}
whenever $d^{\mueps}_N(U,\Ut)<1$.

\item For all $N,\,R>0$, there exists $t_2=t_2(R,N)$ such that for all $t\ge t_2$, the set $\mathcal{D}_R=\{U:\|U\|_{\Hzeroeps}\le R\}$ is $d_{N}^{\mueps}-$small for $P_t^{\mueps}$ in the sense of~\cref{def:contracting:small} with a constant $\gamma_2=\gamma_2(R)\in (0,1)$ independent of $\varepsilon$.
\end{enumerate}
\end{proposition}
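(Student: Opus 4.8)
The plan is to prove both assertions by a generalized asymptotic coupling argument combined with Girsanov's theorem, in the spirit of \cite{butkovsky2020generalized,hairer2006ergodicity,hairer2008spectral,nguyen2022ergodicity}, the essential new point being to keep every constant independent of $\varepsilon$. Fix $\varepsilon\in(0,1]$ and $U,\Ut\in\Hzeroeps$. I would let $\Phieps(\cdot;U)=(\ueps,\etaeps)$ solve \eqref{eqn:react-diff:epsilon:mu} from $U$ driven by $w$, and let $\Phiepstilde=(\uepstilde,\etaepstilde)$ solve, from $\Ut$ and driven by the \emph{same} $w$, the \emph{controlled} system obtained by adding a drift $Qv^\varepsilon\,\d t$ to the $\uepstilde$--equation, with $v^\varepsilon$ an $\mathcal F_t$--adapted control valued in the forced subspace $P_{\nbar}H$. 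Because the two copies share the noise, the difference $(\rhoeps,\zetaeps):=(\ueps-\uepstilde,\etaeps-\etaepstilde)$ solves the \emph{noiseless} system $\partial_t\rhoeps=-\kappa A\rhoeps-(1-\kappa)\int_0^\infty\close\mueps(s)A\zetaeps(s)\,\d s+\f(\ueps)-\f(\uepstilde)-Qv^\varepsilon$, $\partial_t\zetaeps=\Tcal_{\mueps}\zetaeps+\rhoeps$, with $(\rhoeps(0),\zetaeps(0))=U-\Ut$. Then, by Girsanov's theorem, with $\mathcal E^\varepsilon=\exp(-\int_0^t\la v^\varepsilon,\d w\ra_H-\tfrac12\int_0^t\|v^\varepsilon\|_H^2\,\d r)$ and $\d\wt\P=\mathcal E^\varepsilon\,\d\P$, the process $\Phiepstilde$ has law $P_t^{\mueps}(\Ut,\cdot)$ under $\wt\P$ while $\Phieps(\cdot;U)$ has law $P_t^{\mueps}(U,\cdot)$ under $\P$, so the two Wasserstein distances in question can be attacked by controlling $\E_\P[\mathcal E^\varepsilon\,d^{\mueps}_N(\Phieps(t;U),\Phiepstilde(t))]$ together with moments of the density $\mathcal E^\varepsilon$.

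The analytic heart is a \emph{pathwise} contraction estimate that is uniform in $\varepsilon$. I would set $V(t)=\tfrac12\|\rhoeps(t)\|_H^2+\tfrac{1-\kappa}{2}\|\zetaeps(t)\|_{\Mzeroeps}^2$ and test $\rhoeps$ against itself and $\zetaeps$ against $(1-\kappa)\zetaeps$ in $\Mzeroeps$. The two cross terms generated by $\int_0^\infty\mueps(s)\la A\zetaeps(s),\rhoeps\ra_H\,\d s=\la\zetaeps,\rhoeps\ra_{\Mzeroeps}$ cancel identically (exactly as in the computation of $\L^{\mueps}\Psi_0$ in \cref{lem:moment-bound:H^0_epsilon}); by \eqref{ineq:<T_epsilon.eta,eta>} one has $\la\Tcal_{\mueps}\zetaeps,\zetaeps\ra_{\Mzeroeps}\le-\tfrac{\delta}{2\varepsilon}\|\zetaeps\|_{\Mzeroeps}^2\le-\tfrac{\delta}{2}\|\zetaeps\|_{\Mzeroeps}^2$ (using $\varepsilon\le1$); and by \nameref{cond:phi:3}, $\la\f(\ueps)-\f(\uepstilde),\rhoeps\ra_H\le a_\f\|\rhoeps\|_H^2$, with no smallness of $\rhoeps$ required. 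Splitting $\rhoeps=P_{\nbar}\rhoeps+(I-P_{\nbar})\rhoeps$, the uncontrolled modes are strictly dissipative since $\kappa\alpha_{\nbar+1}\ge\kappa\alpha_{\nbar}>a_\f$ by \nameref{cond:Q:ergodicity}; on the controlled modes I would use that \nameref{cond:Q:ergodicity} makes $Q$ boundedly invertible on $P_{\nbar}H$ and choose $v^\varepsilon$ so that $\la Qv^\varepsilon,\rhoeps\ra_H$ dominates $a_\f\|P_{\nbar}\rhoeps\|_H^2$ up to a leakage into the uncontrolled modes absorbed by the dissipation margin $\kappa\alpha_{\nbar+1}-a_\f$. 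This yields $\tfrac{\d}{\d t}V\le-c_*V$ with $c_*>0$ depending only on $\kappa,\delta,a_\f,\alpha_{\nbar},a_Q$ and \emph{not} on $\varepsilon$, hence the pathwise bound $\|\Phieps(t;U)-\Phiepstilde(t)\|_{\Hzeroeps}\le C\,e^{-c_*t/2}\|U-\Ut\|_{\Hzeroeps}$. Since $\|v^\varepsilon(r)\|_H\le C\|\rhoeps(r)\|_H$, this also gives the deterministic bound $\int_0^\infty\|v^\varepsilon(r)\|_H^2\,\d r\le C\|U-\Ut\|_{\Hzeroeps}^2$ uniformly in $\varepsilon$, so that Novikov's condition holds, $\mathcal E^\varepsilon$ is a genuine martingale, and $\E_\P(\mathcal E^\varepsilon)^2\le M(\|U-\Ut\|_{\Hzeroeps})$ with $M$ uniform in $\varepsilon$.

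For Part 1, assume $d^{\mueps}_N(U,\Ut)<1$, i.e.\ $\|U-\Ut\|_{\Hzeroeps}<1/N$. Writing $\nu_1=\Law_{\wt\P}(\Phieps(t;U))$, the triangle inequality and $d^{\mueps}_N\le1$ give
\begin{align*}
\W_{d^{\mueps}_N}\big(P_t^{\mueps}(U,\cdot),P_t^{\mueps}(\Ut,\cdot)\big)\le\W_{\TV}\big(\Law_\P\Phieps(t;U),\nu_1\big)+\E_\P\big[\mathcal E^\varepsilon\,d^{\mueps}_N(\Phieps(t;U),\Phiepstilde(t))\big].
\end{align*}
By Pinsker's inequality together with the deterministic control-cost bound, the first term is $\le C'\|U-\Ut\|_{\Hzeroeps}=(C'/N)\,d^{\mueps}_N(U,\Ut)$, while the pathwise contraction and $\E_\P\mathcal E^\varepsilon=1$ bound the second by $C''e^{-c_*t/2}\,N\|U-\Ut\|_{\Hzeroeps}=C''e^{-c_*t/2}\,d^{\mueps}_N(U,\Ut)$. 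Choosing $N_1$ so large that $C'/N_1\le\tfrac14$ and then $t_1$ so large that $C''e^{-c_*t_1/2}\le\tfrac14$ (both $\varepsilon$--free) yields \eqref{ineq:contracting:W_d<1/2.d} with $\gamma_1=\tfrac12$. For Part 2, fix $N,R>0$ and $U,\Ut\in\mathcal D_R$; now $\|U-\Ut\|_{\Hzeroeps}\le 2R$ is not small, the change-of-measure cost $\W_{\TV}(\P,\wt\P)$ need not be small, and I would instead exhibit a positive-probability coupling event. From the uniform bound $\E_\P(\mathcal E^\varepsilon)^2\le M_R$ and the Paley--Zygmund inequality, $\P(\mathcal E^\varepsilon(t)\ge\tfrac12)\ge p_R>0$ uniformly in $\varepsilon$; meanwhile the pathwise contraction gives $\|\Phieps(t;U)-\Phiepstilde(t)\|_{\Hzeroeps}\le CRe^{-c_*t/2}<\tfrac1{2N}$ for all $t\ge t_2(R,N)$. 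Restricting $(\Phieps(t;U),\Phiepstilde(t))$ to $\{\mathcal E^\varepsilon(t)\ge\tfrac12\}$ and completing by a standard thinning produces a coupling of $P_t^{\mueps}(U,\cdot)$ and $P_t^{\mueps}(\Ut,\cdot)$ under which the two random variables lie within $1/(2N)$ in $\Hzeroeps$ with probability at least $p_R/2$; since $d^{\mueps}_N\le1$, this gives $\W_{d^{\mueps}_N}(P_t^{\mueps}(U,\cdot),P_t^{\mueps}(\Ut,\cdot))\le1-\gamma_2(R)$ for $t\ge t_2$, with $\gamma_2(R)\in(0,1)$ independent of $\varepsilon$, which is \eqref{ineq:d-small:W_d<1-epsilon}.

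I expect the main obstacle to be precisely this \emph{uniformity in $\varepsilon$}: the memory variable $\zetaeps$ lives in the $\varepsilon$--dependent spaces $M^\beta_{\mueps}$ and the dissipation rate $\delta/\varepsilon$ in \eqref{ineq:<T_epsilon.eta,eta>} diverges as $\varepsilon\to0$. The estimates must be organized so that this divergence is never a liability — which is why the memory cross terms are made to cancel identically, $\la\Tcal_{\mueps}\eta,\eta\ra$ is used only as a favorable term (bounded below by $-\tfrac{\delta}{2}\|\cdot\|^2$ since $\varepsilon\le1$), and the nonlinear difference is absorbed using only \nameref{cond:phi:3} and no smallness of $\rhoeps$; this makes $c_*$, $M_R$, $\gamma_1$, $\gamma_2$, $t_1$, $t_2$ all $\varepsilon$--free. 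A secondary, more routine, difficulty is the explicit design of the low-mode control so that the leakage it creates in the uncontrolled modes is dominated by the spectral margin $\kappa\alpha_{\nbar+1}-a_\f$, which is exactly where the threshold condition $\kappa\alpha_{\nbar}>a_\f$ of \nameref{cond:Q:ergodicity} is used.
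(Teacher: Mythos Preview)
Your proposal is correct and follows essentially the same generalized-coupling scheme as the paper: introduce a controlled copy with a low-mode feedback drift, establish the pathwise exponential contraction via the functional $\Psi_0$ (the memory cross-terms $\la\zetaeps,\rhoeps\ra_{\Mzeroeps}$ cancel identically and $\la\Tcal_{\mueps}\zetaeps,\zetaeps\ra_{\Mzeroeps}\le-\tfrac{\delta}{2}\|\zetaeps\|^2_{\Mzeroeps}$ uniformly for $\varepsilon\le1$), then compare laws by Girsanov with a deterministically bounded control cost. Two minor packaging differences are worth noting. First, your ``leakage into uncontrolled modes'' worry is a red herring: the paper simply adds the drift $\kappa\alpha_{\nbar}P_{\nbar}(u-\tilde u)$ to the $\tilde u$-equation, i.e.\ takes $v^\varepsilon=\kappa\alpha_{\nbar}Q^{-1}P_{\nbar}\rhoeps$ (well-defined and bounded by \nameref{cond:Q:ergodicity}), so $Qv^\varepsilon$ lands exactly in $P_{\nbar}H$ and the energy identity gives $\tfrac{\d}{\d t}\Psi_0\le -(\kappa\alpha_{\nbar}-a_\f)\|\rhoeps\|_H^2-\tfrac{\delta}{2}(1-\kappa)\|\zetaeps\|^2_{\Mzeroeps}$ with no spill. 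Second, for Part~2 the paper avoids Paley--Zygmund and the thinning construction altogether by invoking the ready-made total-variation bound $\W_{\TV}(\Law(\tilde w_{[0,t]}),\Law(w_{[0,t]}))\le 1-\tfrac12\exp\bigl(-\tfrac12\,\E\int_0^\infty\|\beta\|^2\bigr)$ from \cite[(A.2)]{butkovsky2020generalized}; combined with the triangle inequality and the pathwise contraction this yields $\W_{d_N^{\mueps}}\le 2NRCe^{-ct}+1-\tfrac12 e^{-CR^2}$ immediately. Your sub-coupling argument reaches the same conclusion but is a detour.
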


Next, we provide an estimate on $\E\, \exp\left({\beta\Psi_0(U(t))}\right)$ which is slightly different form \eqref{ineq:exponential-bound:H^0_epsilon}.  

\begin{lemma} \label{lem:exponential-bound:H^0_epsilon:beta/2}
Let $\x\in\Hzeroeps$. Assume that the conditions from \cref{prop:well-posed} hold. Then for all $\gamma$ sufficiently small, independently of ${\mueps}$, and any $\beta\in \big(0,\gamma(\emph{Tr}(QQ^*))^{-1}\big)$, it holds that
\begin{equation} \label{ineq:exponential-bound:H^0_epsilon:beta/2}
\E\, \exp\left({\beta\Psi_0(U(t))}\right)\le \tilde{C}_0\,\exp\left({\beta\Psi_0(\x){-\gamma t}}\right),\quad \x\in\Hzeroeps,\quad t\ge 0,
\end{equation}
for some positive constant $\tilde{C}_0=\tilde{C}_0(\beta,\gamma)$ independent of $\x,\, t$ and ${\mueps}$. In the above, $\Psi_0$ is as in \eqref{form:Psi_0}.
\end{lemma}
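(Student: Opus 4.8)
The plan is to re-run the Lyapunov argument from the proof of~\eqref{ineq:exponential-bound:H^0_epsilon}, but keeping all constants explicit and verifying their $\varepsilon$-independence; the only genuinely new feature is that the decay rate is now a prescribed small number $\gamma$, which is exactly why one must couple $\beta$ to $\gamma$ via $\beta<\gamma(\Tr(QQ^*))^{-1}$. Set $g(u,\eta)=\exp(\beta\Psi_0(u,\eta))$. Using the Frechet derivatives of $\Psi_0$ recorded in the proof of~\cref{lem:moment-bound:H^0_epsilon} and applying the generator, one obtains
\begin{align*}
\L^{\mueps} g(u,\eta)=\beta e^{\beta\Psi_0(u,\eta)}\Big(&-\kappa\|A^{1/2}u\|_H^2+(1-\kappa)\la\Tcal_{\mueps}\eta,\eta\ra_{\Mzeroeps}+\la\f(u),u\ra_H\\
&+\tfrac12\Tr(QQ^*)+\tfrac12\beta\|Qu\|_H^2\Big).
\end{align*}

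Next one would bound the five terms uniformly in $\varepsilon\in(0,1]$. By~\eqref{ineq:<T_epsilon.eta,eta>} together with $\varepsilon\le1$, $\la\Tcal_{\mueps}\eta,\eta\ra_{\Mzeroeps}\le-\tfrac{\delta}{2\varepsilon}\|\eta\|_{\Mzeroeps}^2\le-\tfrac{\delta}{2}\|\eta\|_{\Mzeroeps}^2$; by~\nameref{cond:phi:2}, $\la\f(u),u\ra_H\le a_3|\domain|$; by~\nameref{cond:Q} and Cauchy--Schwarz, $\tfrac12\beta\|Qu\|_H^2\le\tfrac12\beta\Tr(QQ^*)\|u\|_H^2$; and by Poincar\'e, $\kappa\|A^{1/2}u\|_H^2\ge\kappa\alpha_1\|u\|_H^2$. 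Since $\beta\Tr(QQ^*)<\gamma$ by hypothesis and $\gamma$ is taken smaller than $\min\{\kappa\alpha_1,\delta/2\}$ (this is the meaning of ``sufficiently small''), the It\^o correction is absorbed by the dissipation, and recalling $\Psi_0(u,\eta)=\tfrac12\|u\|_H^2+\tfrac12(1-\kappa)\|\eta\|_{\Mzeroeps}^2$, we get, with $C:=a_3|\domain|+\tfrac12\Tr(QQ^*)$,
\begin{align*}
\L^{\mueps} g(u,\eta)\le\beta e^{\beta\Psi_0(u,\eta)}\big(-2\gamma\Psi_0(u,\eta)+C\big).
\end{align*}
The elementary bound $e^{r}(a-br)\le b\,e^{(a-b)/b}$, valid for $b>0$ and $r\ge0$ (applied with $r=\beta\Psi_0$, $a=\gamma+\beta C$, $b=2\gamma$), then upgrades this to $\L^{\mueps}g\le-\gamma g+C'$ for a constant $C'=C'(\beta,\gamma)$ which, like everything above, is independent of $\varepsilon$, $\x$, and $t$.

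Finally, It\^o's formula applied to $t\mapsto g(U(t))$ (justified by the usual regularization as in the proof of~\eqref{ineq:exponential-bound:H^0_epsilon}), followed by taking expectations and using the generator estimate, gives $\frac{\d}{\d t}\E\, e^{\beta\Psi_0(U(t))}\le-\gamma\,\E\, e^{\beta\Psi_0(U(t))}+C'$; Gr\"onwall's inequality then yields~\eqref{ineq:exponential-bound:H^0_epsilon:beta/2} with $\tilde C_0=\tilde C_0(\beta,\gamma)$, the $\varepsilon$-independence being inherited from each step above. I do not expect a serious obstacle here; the two points that require attention are extracting an $\varepsilon$-uniform dissipation rate from the memory operator $\Tcal_{\mueps}$ (handled by the fact that $\delta/(2\varepsilon)\ge\delta/2$ for $\varepsilon\le1$ in~\eqref{ineq:<T_epsilon.eta,eta>}) and calibrating $\gamma$, and hence $\beta$ through $\beta<\gamma(\Tr(QQ^*))^{-1}$, small enough that both the noise correction and the nonlinearity are dominated at exactly the prescribed rate $\gamma$.
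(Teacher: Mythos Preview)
Your argument reproduces the proof of~\eqref{ineq:exponential-bound:H^0_epsilon} in \cref{lem:moment-bound:H^0_epsilon}, not the bound of \cref{lem:exponential-bound:H^0_epsilon:beta/2}. From $\L^{\mueps}g\le -\gamma g+C'$ and Gronwall you obtain
\[
\E\,e^{\beta\Psi_0(U(t))}\le e^{-\gamma t}e^{\beta\Psi_0(\x)}+C'/\gamma,
\]
which is precisely~\eqref{ineq:exponential-bound:H^0_epsilon}. The content of \cref{lem:exponential-bound:H^0_epsilon:beta/2}, as is clear from the paper's proof and from how the bound is invoked in the proof of \cref{thm:react-diff:epsilon:geometric-ergodicity}, is the \emph{stronger} estimate
\[
\E\,e^{\beta\Psi_0(U(t))}\le \tilde C_0\,e^{\beta\Psi_0(\x)\,e^{-\gamma t}},
\]
with the decay factor $e^{-\gamma t}$ sitting \emph{inside} the exponential. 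These are not equivalent: for fixed $t>0$ and $\Psi_0(\x)\to\infty$ your Gronwall bound grows like $e^{\beta\Psi_0(\x)}$, whereas the target grows like $e^{\beta e^{-\gamma t}\Psi_0(\x)}$, which is strictly smaller. In particular, only the latter can be combined with~\eqref{ineq:exponential-bound:H^0_epsilon} via Cauchy--Schwarz to produce~\eqref{ineq:E.e^(beta.Psi_0(U(t)))<e^(beta/2.U_0)}, where the exponent on the right is $\tfrac{\beta}{2}\Psi_0(\x)$ rather than $\beta\Psi_0(\x)$.

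The paper does not apply It\^o to $e^{\beta\Psi_0}$ at all. Instead it applies It\^o to the linear, time-weighted functional $\beta e^{\gamma(t-T)}\Psi_0(U(t))$, integrates over $[0,T]$, and observes that the resulting stochastic integral $M(T)$ appears together with $-\tfrac{\gamma}{2\beta\Tr(QQ^*)}\langle M\rangle(T)$. The exponential martingale inequality then gives a tail bound on $\beta\Psi_0(U(T))-\beta e^{-\gamma T}\Psi_0(\x)-\text{const}$, and the hypothesis $\beta<\gamma/\Tr(QQ^*)$ is exactly what allows one to integrate that tail and conclude $\E\exp\bigl(\beta\Psi_0(U(T))-\beta e^{-\gamma T}\Psi_0(\x)\bigr)\le \tilde C_0$. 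This supermartingale argument is the missing ingredient; your generator computation for $e^{\beta\Psi_0}$ cannot recover it.
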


The proof of \cref{prop:contracting-d-small} and \cref{lem:exponential-bound:H^0_epsilon:beta/2} will be deferred to the end of this section. In what follows, we will assume that \cref{prop:contracting-d-small} and \cref{lem:exponential-bound:H^0_epsilon:beta/2} hold and prove \cref{thm:react-diff:epsilon:geometric-ergodicity}; the proof employs the weak-Harris theorem proved in \cite[Theorem 4.8]{hairer2011asymptotic}. We return to the task of proving \cref{prop:contracting-d-small} and \cref{lem:exponential-bound:H^0_epsilon:beta/2} afterwards; of course, these proofs will be independent of \cref{thm:react-diff:epsilon:geometric-ergodicity}.

\begin{proof}[Proof of \cref{thm:react-diff:epsilon:geometric-ergodicity}]
First of all, fix $N$ sufficiently large satisfying \cref{prop:contracting-d-small}, $\beta$ sufficiently small satisfying \eqref{ineq:exponential-bound:H^0_epsilon} and \eqref{ineq:exponential-bound:H^0_epsilon:beta/2}. Leting $U(t)$ be the solution of~\eqref{eqn:react-diff:epsilon:mu} with initial condition $\x$, we invoke the Cauchy-Schwarz inequality, \eqref{ineq:exponential-bound:H^0_epsilon} and \eqref{ineq:exponential-bound:H^0_epsilon:beta/2} to see that
\begin{align*}
\E \exp\left({\beta \Psi_0(U(t))}\right) &\le  \Big(\E \exp\left({\frac{1}{4}\beta \Psi_0(U(t))}\right)\Big)^{1/2}\Big(\E \exp\left({\frac{7}{4}\beta \Psi_0(U(t))}\right)\Big)^{1/2}\\
&\le \Big(e^{-c_0t} \exp\left({\frac{1}{4}\beta \Psi_0(\x)}\right)+C_0\Big)^{1/2}\Big(\tilde{C}_0 \exp\left({\frac{7}{4}\beta \Psi_0(\x)e^{-\gamma t}}\right) \Big)^{1/2},
\end{align*}
where $c_0,C_0$ are as in~\eqref{ineq:exponential-bound:H^0_epsilon}, and $\tilde{C}_0,\gamma$ are as in~\eqref{ineq:exponential-bound:H^0_epsilon:beta/2}. By taking $t$ large enough and using Young's inequality, we may deduce a constant $t_3>0$ independent of ${\mueps}$ and $\x$ such that
\begin{align} \label{ineq:E.e^(beta.Psi_0(U(t)))<e^(beta/2.U_0)}
\E \exp\left({\beta \Psi_0(U(t))}\right) \le \frac{1}{8} \exp\left({\frac{1}{2}\beta \Psi_0(\x)}\right)+C_\beta, \quad t\ge t_3,
\end{align}
for some positive constant $C_\beta$ independent of $\x$, $t$ and ${\mueps}$.

For $\lambda>0$ to be chosen later, we introduce the distance
\begin{align*} 
d^{\mueps}_{N,\beta,\lambda}(U,\Ut)=\sqrt{d^{\mueps}_{N}(U,\Ut)\big(1+\lambda \exp\left({\beta\Psi_0(U)}\right)+\lambda \exp\left({\beta\Psi_0(\Ut)}\right)\big)}.
\end{align*}
We claim that there exist constants $\lambda,\alpha_*\in (0,1)$ and $T^*>0$ independent of $\varepsilon$ such that
\begin{align} \label{ineq:W(P_t(x),P_t(y))<d(x,y)}
\W_{d^{\mueps}_{N,\beta,\lambda}}\big(P^{\mueps}_t(\x,\cdot),P^{\mueps}_t(\y,\cdot)    \big)\le \alpha_* d^{\mueps}_{N,\beta/2,\lambda}(\x,\y),
\end{align}
for all $t\ge T^*$ and $\x,\y\in\Hzeroeps$.

There are three cases depending on the relative positions of $\x,\y$ with respect to $d_N^{\mueps}$: Region 1) $d^{\mueps}_N(\x,\y)<1$, Region 2a) $d^{\mueps}_N(\x,\y)\ge 1$ and $ \exp\left({\tfrac{1}{2}\beta\Psi_0(\x)}\right)+\exp\left({\tfrac{1}{2}\beta\Psi_0(\x)}\right)>2(1+2C_\beta)$, and Region 2b) $d^{\mueps}_N(\x,\y)\ge 1$ and $ \exp\left({\tfrac{1}{2}\beta\Psi_0(\x)}\right)+\exp\left({\tfrac{1}{2}\beta\Psi_0(\x)}\right)\le 2(1+2C_\beta)$.

\noindent\textbf{Region 1}: $\x,\y\in\mathcal{R}_1:=\{d^{\mueps}_N(\x,\y)<1\}$. In this case, we make use of~\eqref{ineq:contracting:W_d<1/2.d},~\eqref{ineq:E.e^(beta.Psi_0(U(t)))<e^(beta/2.U_0)} and the Cauchy-Schwarz inequality to obtain
\begin{align*}
&\big{|}\W_{d^{\mueps}_{N,\beta,\lambda}}\big(P^{\mueps}_t(\x,\cdot),P^{\mueps}_t(\y,\cdot)    \big)\big{|}^2\\
&\le \inf_{\pi}\int_{\Hzeroeps}d^{\mueps}_N(U,U')\pi(\d U,\d U')\int_{\Hzeroeps}\big( 1+\lambda \exp\left({\beta\Psi_0(U)}\right)+\lambda \exp\left({\beta\Psi_0(U')}\right)\big)  \pi(\d U,\d U')\\
&\le \frac{1}{2}d^{\mueps}_N(\x,\y)\left( 1 +\frac{1}{8}\lambda \exp\left({\frac{1}{2}\beta\Psi_0(\x)}\right)+\lambda\frac{1}{8}\exp\left({\frac{1}{2}\beta\Psi_0(\y)}\right)+2\lambda C_\beta\right),
\end{align*}
for $t\ge \max\{t_1,t_3\}$ where $t_1$ and $t_3$ are respectively as in \eqref{ineq:contracting:W_d<1/2.d} and \eqref{ineq:E.e^(beta.Psi_0(U(t)))<e^(beta/2.U_0)}. Note that the infimum runs over all coupling $\pi$ of $\big(P^{\mueps}_t(\x,\cdot),P^{\mueps}_t(\y,\cdot)    \big)$. Choosing $\lambda$ small enough so that
\begin{align*}
\frac{1}{2}(1+2\lambda C_\beta)\le \frac{3}{4}.
\end{align*}
It then follows that
\begin{align*}
\big{|}\W_{d^{\mueps}_{N,\beta,\lambda}}\big(P^{\mueps}_t(\x,\cdot),P^{\mueps}_t(\y,\cdot)    \big)\big{|}^2
&\le \frac{3}{4} d^{\mueps}_N(\x,\y)\left( 1 +\lambda \exp\left({\frac{1}{2}\beta\Psi_0(\x)}\right)+\lambda \exp\left({\frac{1}{2}\beta\Psi_0(\y)}\right)\right)\\
&=\frac{3}{4}d^{\mueps}_{N,\beta/2,\lambda}(\x,\y)^2.
\end{align*}

\noindent \textbf{Region 2a}: $\x,\y\in\mathcal{R}_2^a:=\{d^{\mueps}_N(\x,\y)\ge 1\}\cap\left\{\exp\left({\tfrac{1}{2}\beta\Psi_0(\x)}\right)+\exp\left({\tfrac{1}{2}\beta\Psi_0(\x)}\right)>2(1+2C_\beta)\right\}$. In this case, observe that for $\lambda<\frac{3}{8}$, we have
\begin{align*}
1+2C_\beta\le \left(\frac{7}{8}-\lambda\right)\left(\exp\left({\frac{1}{2}\beta\Psi_0(\x)}\right)+\exp\left({\frac{1}{2}\beta\Psi_0(\y)}\right)\right).
\end{align*}
Hence
\begin{align*}
&1+\frac{1}{8}\lambda \exp\left({\frac{1}{2}\beta\Psi_0(\x)}\right)+\frac{1}{8}\lambda \exp\left({\frac{1}{2}\beta\Psi_0(\y)}\right)+2\lambda C_\beta\\
&\le (1-\lambda)\left(1+\lambda \exp\left({\frac{1}{2}\beta\Psi_0(\x)}\right)+\lambda \exp\left({\frac{1}{2}\beta\Psi_0(\y)}\right)\right)\\
&=(1-\lambda) d^{\mueps}_{N,\beta/2,\lambda}(\x,\y)^2.
\end{align*}
It follows that 
\begin{align*}
\big{|}\W_{d^{\mueps}_{N,\beta,\lambda}}\big(P^{\mueps}_t(\x,\cdot),P^{\mueps}_t(\y,\cdot) \big)\big{|}^2
&\le 1+\frac{1}{8}\lambda \exp\left({\frac{1}{2}\beta\Psi_0(\x)}\right)+\frac{1}{8}\lambda \exp\left({\frac{1}{2}\beta\Psi_0(\y)}\right)+2\lambda C_\beta\\
&\le (1-\lambda) d^{\mueps}_{N,\beta/2,\lambda}(\x,\y)^2,
\end{align*}
for $t\ge t_3$.

\noindent \textbf{Region 2b}: $\x,\y\in\mathcal{R}_2^b:=\{d^{\mueps}_N(\x,\y)\ge 1\}\cap\{\exp\left({\tfrac{1}{2}\beta\Psi_0(\x)}\right)+\exp\left({\tfrac{1}{2}\beta\Psi_0(\x)}\right)\le 2(1+2C_\beta)\}$. In this case, we invoke~\eqref{ineq:E.e^(beta.Psi_0(U(t)))<e^(beta/2.U_0)} and the fact that any bounded set is $d^{\mueps}_N-$small for $P^{\mueps}_t$ (see \cref{prop:contracting-d-small}) to infer the existence of $\gamma_2\in (0,1)$ and a time $t_2>0$ independent of $\x,\y,t,\lambda$ and ${\mueps}$ such that
\begin{align*}
\sup_{\x,\y\in\tilde{\mathcal{R}}^b_2} \W_{d^\mueps}(P_t^{\mueps}(\x,\cdot),P_t^{\mueps}(\y,\cdot))\le 1-\gamma_2,
\end{align*}
for all $t\ge t_2$, where $\tilde{\mathcal{R}}_2^b:=\{\exp\left({\tfrac{1}{2}\beta\Psi_0(\x)}\right)+\exp\left({\tfrac{1}{2}\beta\Psi_0(\x)}\right)\le 2(1+2C_\beta)\}$. As a consequence, we obtain 
\begin{align*}
\big{|}\W_{d^{\mueps}_{N,\beta,\lambda}}\big(P^{\mueps}_t(\x,\cdot),P^{\mueps}_t(\y,\cdot)    \big)\big{|}^2
&\le \inf_{\pi}\int_{\Hzeroeps}d^{\mueps}_N(U,U')\pi(\d U,\d U')\int_{\Hzeroeps} \big(  1+\lambda e^{\beta\Psi_0(U)}+\lambda e^{\beta\Psi_0(U')}  \big) \pi(\d U,\d U')\\
&\le (1-\gamma_2)\left(1 +\frac{1}{8}\lambda \exp\left({\frac{1}{2}\beta\Psi_0(\x)}\right)+\lambda\frac{1}{8}\exp\left({\frac{1}{2}\beta\Psi_0(\y)}\right)+2\lambda C_\beta\right),
\end{align*}
for all $t\ge \max\{t_2,t_3\}$. We then choose $\lambda$ small enough such that
\begin{align*}
(1-\gamma_2)(1+2\lambda C_\beta)\le 1-\frac{1}{2}\gamma_2.
\end{align*}
It follows that
\begin{align*}
\big{|}\W_{d^{\mueps}_{N,\beta,\lambda}}\big(P^{\mueps}_t(\x,\cdot),P^{\mueps}_t(\y,\cdot)    \big)\big{|}^2
&\le \max\{1-\gamma_2/2,1/8  \}\left( 1 +\lambda \exp\left({\frac{1}{2}\beta\Psi_0(\x)}\right)+\lambda \exp\left({\frac{1}{2}\beta\Psi_0(\y)}\right)\right)\\
&=\max\{1-\gamma_2/2,1/8  \}d^{\mueps}_{N,\beta/2,\lambda}(\x,\y)^2.
\end{align*}

\noindent\textbf{Resolution}: In all three cases, we obtain~\eqref{ineq:W(P_t(x),P_t(y))<d(x,y)} for $T^*=\max\{t_1,t_2,t_3\}$, as claimed. Consequently, we arrive at the following bound
\begin{align}\label{ineq:react-diff:epsilon:geometric-ergodicity:lambda}
\W_{d_{N,\beta,\lambda}^{\mueps}}\big((P_t^{{\mueps}})^*\nu_1,(P_t^{{\mueps}})^*\nu_2\big)\le Ce^{-ct}\W_{d_{N,\beta/2,\lambda}^{\mueps}}\big(\nu_1,\nu_2\big),
\end{align}
for all $t\ge T^*$ and $\nu_1,\nu_2\in \Pcal r(\Hzeroeps)$.
Now observe that $d^{\mueps}_{N,\beta,\lambda}$ is equivalent to $d^{\mueps}_{N,\beta}$, that is
\begin{align*}
c\,d^{\mueps}_{N,\beta,\lambda}(U,\Ut)\le d^{\mueps}_{N,\beta}(U,\Ut)\le C d^{\mueps}_{N,\beta}(U,\Ut),
\end{align*}
for some positive constants $c,C$ depending only on $\lambda$, for all $U,\,\Ut\in\Hzeroeps$,  This, together with~\eqref{form:W_d:mu} and~\eqref{ineq:react-diff:epsilon:geometric-ergodicity:lambda}, produces~\eqref{ineq:react-diff:epsilon:geometric-ergodicity:beta/2}.

\end{proof}

We now turn to the proof of \cref{prop:contracting-d-small}. This requires a careful analysis on the two solutions $U(t;\x)$ and $U(t;\y)$ of~\eqref{eqn:react-diff:epsilon:mu}. We will employ a \emph{generalized coupling argument} as developed in \cite{butkovsky2020generalized,kulik2015generalized,nguyen2022ergodicity}, which will be suitable for handling the nonlinearity of the system. We do so by modifying \eqref{eqn:react-diff:epsilon:mu} as follows: First recall that $\alpha_{\nbar}$ is the eigenvalue of $Q$ corresponding to $k=\nbar$ (see \nameref{cond:Q}, \nameref{cond:Q:ergodicity}), {and} $P_{\nbar}$ is the projection of $H$ onto $H_{\nbar}=\text{span}\{e_1,\dots,e_{\nbar}\}$. Then we consider the solution $\Utilde(t;\x,\y)=\big(\utilde(t;\x,\y),\etatilde(t;\x,\y)\big)$ of the system
\begin{equation}\label{eqn:react-diff:mu:pi_n(u_x-u_xy)}
\begin{aligned}
\d\,\utilde(t)&=-\kappa A\utilde(t)\d t-(1-\kappa)\int_0^\infty\close{\mueps}(s)A\etatilde(t;s)\d s\,\d t+\f(\utilde(t))\d t+Q\d w(t)\\
&\quad+\kappa\alpha_{\nbar}P_{\nbar}\big(u(t;\x)-\utilde(t)\big)\d t,\\
\frac{\d}{\d t}\etatilde(t)&=\Tcal_{\mueps}\etatilde(t)+\utilde(t),\\
(\utilde(0),\etatilde(0))&=\y\in\Hzeroeps.
\end{aligned}
\end{equation}

The process $\Utilde(t;\x,\y)$ satisfies an important property, namely, upon making use of condition~\nameref{cond:Q:ergodicity}, it can be shown that $\Utilde(t;\x,\y)$ is driven close to $U(t;\x)$ as $t$ tends to infinity; this phenomenon is summarized in \cref{lem:dissipative-bound}. This property can then be leveraged to show that the deviation in the laws corresponding to $\Utilde(t;\x,\y)$ and $U(t;\y)$ can be effectively controlled by the difference in initial conditions; this is captured in~\cref{lem:error-in-law}.

\begin{lemma} \label{lem:dissipative-bound} 
Under the same hypothesis of~\cref{prop:well-posed}, suppose that Assumption~\nameref{cond:Q:ergodicity} holds. Let $U(t;\x)$ and $\Utilde(t;\x,\y)$ respectively be the solutions of~\eqref{eqn:react-diff:epsilon:mu} and~\eqref{eqn:react-diff:mu:pi_n(u_x-u_xy)} with initial conditions $\x,\,\y\in\Hzeroeps$. Then, 
\begin{equation}\label{ineq:dissipative-bound}
\|U(t;\x)-\Utilde(t;\x,\y)\|_{\Hzeroeps}\le Ce^{-c t} \|\x-\y\|_{\Hzeroeps} ,\quad t>0,
\end{equation}
for some positive constants $c,\,C$ independent of $\x,\,\y$, $t$ and $\varepsilon$.
\end{lemma}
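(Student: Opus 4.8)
The plan is to derive an energy identity for the difference $D(t) := U(t;\x) - \Utilde(t;\x,\y) = (d(t), \zeta(t))$ where $d(t) = u(t;\x) - \utilde(t)$ and $\zeta(t) = \eta(t;\x) - \etatilde(t)$. Subtracting \eqref{eqn:react-diff:mu:pi_n(u_x-u_xy)} from \eqref{eqn:react-diff:epsilon:mu}, the noise terms $Q\,\d w$ cancel exactly, so $D(t)$ solves the \emph{deterministic} (pathwise) system
\begin{align*}
\d d(t) &= -\kappa A d(t)\,\d t - (1-\kappa)\int_0^\infty \mueps(s) A\zeta(t;s)\,\d s\,\d t + \big(\f(u(t;\x)) - \f(\utilde(t))\big)\d t - \kappa\alpha_{\nbar}P_{\nbar} d(t)\,\d t,\\
\tfrac{\d}{\d t}\zeta(t) &= \Tcal_{\mueps}\zeta(t) + d(t),
\end{align*}
with $D(0) = \x - \y$. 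I would then test the first equation against $d(t)$ in $H$ and the second against $(1-\kappa)\zeta(t)$ in $\Mzeroeps$, and add. The cross terms $-(1-\kappa)\la A\zeta, d\ra$ and $(1-\kappa)\la d, \zeta\ra_{\Mzeroeps} = (1-\kappa)\int \mueps(s)\la A^{1/2}d, A^{1/2}\zeta(s)\ra\,\d s$ cancel against each other (this is the standard algebraic cancellation underlying the extended-variable formalism). This leaves
\begin{align*}
\tfrac12\tfrac{\d}{\d t}\Big(\|d\|_H^2 + (1-\kappa)\|\zeta\|_{\Mzeroeps}^2\Big)
= -\kappa\|A^{1/2}d\|_H^2 + \la \f(u(t;\x)) - \f(\utilde(t)), d\ra_H
- \kappa\alpha_{\nbar}\|P_{\nbar}d\|_H^2 + (1-\kappa)\la\Tcal_{\mueps}\zeta,\zeta\ra_{\Mzeroeps}.
\end{align*}

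The key step is to show the right-hand side is bounded by $-c\big(\|d\|_H^2 + (1-\kappa)\|\zeta\|_{\Mzeroeps}^2\big)$ for a constant $c>0$ \emph{independent of $\varepsilon$}. For the memory term, \eqref{ineq:<T_epsilon.eta,eta>} gives $(1-\kappa)\la\Tcal_{\mueps}\zeta,\zeta\ra_{\Mzeroeps} \le -\tfrac{(1-\kappa)\delta}{2\varepsilon}\|\zeta\|_{\Mzeroeps}^2 \le -\tfrac{(1-\kappa)\delta}{2}\|\zeta\|_{\Mzeroeps}^2$ since $\varepsilon \le 1$; so this term is already uniformly dissipative. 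For the nonlinear term, I would split $H = P_{\nbar}H \oplus (I-P_{\nbar})H$ and write $d = P_{\nbar}d + (I-P_{\nbar})d$. Using \nameref{cond:phi:3}, $\la \f(u(t;\x)) - \f(\utilde(t)), d\ra_H \le a_\f\|d\|_H^2$ by the mean-value theorem. On the high modes, $\|A^{1/2}(I-P_{\nbar})d\|_H^2 \ge \alpha_{\nbar}\|(I-P_{\nbar})d\|_H^2$, so $-\kappa\|A^{1/2}d\|_H^2 - \kappa\alpha_{\nbar}\|P_{\nbar}d\|_H^2 \le -\kappa\alpha_{\nbar}\|d\|_H^2$; combined with the nonlinear estimate this yields $-(\kappa\alpha_{\nbar} - a_\f)\|d\|_H^2$, and $\kappa\alpha_{\nbar} - a_\f > 0$ by the spectral condition in \nameref{cond:Q:ergodicity}. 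Setting $\Phi(t) := \|d(t)\|_H^2 + (1-\kappa)\|\zeta(t)\|_{\Mzeroeps}^2$ and $c := \min\{2(\kappa\alpha_{\nbar} - a_\f),\, \delta\}$, we obtain $\tfrac{\d}{\d t}\Phi(t) \le -c\,\Phi(t)$, and Gronwall gives $\Phi(t) \le e^{-ct}\Phi(0)$, which is \eqref{ineq:dissipative-bound} after taking square roots (noting $\Phi$ is comparable to $\|D\|_{\Hzeroeps}^2$ up to the $\kappa$-dependent constant).

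The main obstacle — which is really the only nonroutine point — is verifying that all constants are genuinely $\varepsilon$-independent. The term requiring care is the memory dissipation: a naive estimate would produce a rate $\delta/\varepsilon$, which blows up, but since we only need \emph{a} uniform positive lower bound and $\varepsilon\in(0,1]$, we simply discard the excess factor $1/\varepsilon \ge 1$ and keep rate $\delta$. A secondary technical point is that the computation above is formal and should be justified at the level of Galerkin approximations (or via a regularization argument), since $d(t)$ need only have the regularity of a weak solution; the cross-term cancellation and the integration-by-parts identity \eqref{ineq:<T_epsilon.eta,eta>} hold rigorously on the approximants and pass to the limit. One should also confirm that the feedback control term $\kappa\alpha_{\nbar}P_{\nbar}(u(t;\x) - \utilde(t))$ in \eqref{eqn:react-diff:mu:pi_n(u_x-u_xy)} is precisely $\kappa\alpha_{\nbar}P_{\nbar}d(t)$, so that it enters the energy estimate with the favorable sign as used above.
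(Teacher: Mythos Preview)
Your proposal is correct and follows essentially the same approach as the paper's proof: derive the deterministic equation for the difference, compute $\tfrac{\d}{\d t}\Psi_0$ (your $\Phi$ equals $2\Psi_0$), use \nameref{cond:phi:3} for the nonlinear term, use \eqref{ineq:<T_epsilon.eta,eta>} together with $\varepsilon\le 1$ for the memory term, and combine the feedback $-\kappa\alpha_{\nbar}\|P_{\nbar}d\|_H^2$ with $-\kappa\|A^{1/2}d\|_H^2$ and the spectral condition $\kappa\alpha_{\nbar}>a_\f$ from \nameref{cond:Q:ergodicity} to obtain the decay rate $c=\min\{2(\kappa\alpha_{\nbar}-a_\f),\delta\}$; this is exactly the paper's argument and the same constant. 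Your additional remarks on Galerkin justification and the explicit handling of the $1/\varepsilon$ factor are correct elaborations that the paper leaves implicit.
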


\begin{lemma} \label{lem:error-in-law} Under the same hypothesis of~\cref{prop:well-posed}, suppose that Assumption~\nameref{cond:Q:ergodicity} holds. Let $U(t;\x)$ and $\Utilde(t;\x,\y)$ respectively be the solutions of~\eqref{eqn:react-diff:epsilon:mu} and~\eqref{eqn:react-diff:mu:pi_n(u_x-u_xy)} with initial conditions $\x,\,\y\in\Hzeroeps$. Then,
there exists a positive constant $\gamma_1$ independent of $\varepsilon$ such that
\begin{equation} \label{ineq:error-in-law:1}
\W_{\emph{TV}}(\emph{Law}(\Utilde(t;\x,\y)),P_t^{\mueps}(\y,\,\cdot\,))\le \gamma_1\|\x-\y\|_{\Hzeroeps},\quad \x,\y\in\Hzeroeps,\, t\ge 0.
\end{equation}
Furthermore, for all $R>0$, 
\begin{equation}\label{ineq:error-in-law:2}
\W_\emph{TV}(\emph{Law}(\Utilde(t;\x,\y)),P_t^{\mueps}(\y,\,\cdot\,))\le 1-\gamma_2, \quad
\x,\,\y\in \mathcal{D}_R,\quad t\ge 0
\end{equation}
for some $\gamma_2=\gamma_2(R)\in(0,1)$ independent of $\varepsilon$.
\end{lemma}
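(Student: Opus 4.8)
The plan is to prove \cref{lem:error-in-law} by the \emph{generalized coupling} strategy built on Girsanov's theorem, following \cite{butkovsky2020generalized,kulik2015generalized,nguyen2022ergodicity}, with the control for the change of measure supplied precisely by the auxiliary system \eqref{eqn:react-diff:mu:pi_n(u_x-u_xy)}. The key observation is that \eqref{eqn:react-diff:mu:pi_n(u_x-u_xy)} is exactly \eqref{eqn:react-diff:epsilon:mu} started from $\y$, perturbed by the feedback drift $\kappa\alpha_{\nbar}P_{\nbar}\big(u(t;\x)-\utilde(t)\big)\,\d t$. I would remove this drift by an absolutely continuous shift of the driving Wiener process, and then bound both total variation quantities by the ``cost'' of that shift, tracking $\varepsilon$-uniformity throughout.

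First I would realize the feedback drift as $Q$ acting on an adapted control. Since $Q$ is symmetric and \nameref{cond:Q:ergodicity} is equivalent to $Q^{2}\ge a_Q^{2}P_{\nbar}$, the Douglas factorization lemma produces a bounded operator $\widetilde C$ with $\|\widetilde C\|\le 1/a_Q$ and $P_{\nbar}=Q\widetilde C$. Setting $v(t):=\kappa\alpha_{\nbar}\widetilde C\big(u(t;\x)-\utilde(t;\x,\y)\big)$, one gets $Qv(t)=\kappa\alpha_{\nbar}P_{\nbar}(u(t;\x)-\utilde(t))$ and $\|v(t)\|_{H}\le \tfrac{\kappa\alpha_{\nbar}}{a_Q}\|U(t;\x)-\Utilde(t;\x,\y)\|_{\Hzeroeps}$. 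Crucially, $U(\cdot;\x)$ and $\Utilde(\cdot;\x,\y)$ are driven by the \emph{same} $Q\,\d w$, so their difference solves a noise-free (random) equation; \cref{lem:dissipative-bound} then yields the \emph{pathwise}, $\varepsilon$-independent bound $\int_0^\infty\|v(t)\|_{H}^{2}\,\d t\le D:=C\|\x-\y\|_{\Hzeroeps}^{2}$, $\P$-a.s.

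Next I would run Girsanov. With $\mathcal E_t:=\exp\!\big(-\int_0^t\la v(r),\d w(r)\ra_{H}-\tfrac12\int_0^t\|v(r)\|_H^{2}\,\d r\big)$, the deterministic bound on $\int_0^\infty\|v\|_H^2$ makes Novikov's condition immediate, so $\mathcal E_t$ is a uniformly integrable martingale and $\d\widetilde\P/\d\P|_{\mathcal F_t}=\mathcal E_t$ defines a measure under which $\widetilde w(t):=w(t)+\int_0^t v(r)\,\d r$ is cylindrical Wiener; substituting $Q\,\d w=Q\,\d\widetilde w-Qv\,\d t$ into \eqref{eqn:react-diff:mu:pi_n(u_x-u_xy)} cancels the feedback term exactly, so under $\widetilde\P$ the process $\Utilde(\cdot;\x,\y)$ solves \eqref{eqn:react-diff:epsilon:mu} with datum $\y$, i.e.\ $\Law_{\widetilde\P}(\Utilde(t;\x,\y))=P_t^{\mueps}(\y,\cdot)$. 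It then remains to control $\W_{\TV}\big(\Law_{\P}(\Utilde(t;\x,\y)),\Law_{\widetilde\P}(\Utilde(t;\x,\y))\big)$ uniformly in $t$ and $\varepsilon$. For \eqref{ineq:error-in-law:1} I would use the data-processing inequality and Pinsker: this quantity squared is at most $\tfrac12 D_{\mathrm{KL}}\big(\P|_{\mathcal F_t}\,\|\,\widetilde\P|_{\mathcal F_t}\big)=\tfrac14\E_{\P}\int_0^t\|v\|_H^2\,\d r\le\tfrac14 D$, so $\W_{\TV}\le\tfrac12\sqrt D=\tfrac12\sqrt C\,\|\x-\y\|_{\Hzeroeps}$, giving $\gamma_1$ uniformly in $t$. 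For \eqref{ineq:error-in-law:2}, Pinsker is vacuous once $R$ is large, so instead I would lower bound the overlap mass: conditioning on $\Utilde(t)$ and using concavity of $x\mapsto x\wedge 1$ gives $\W_{\TV}\le 1-\E_{\P}[\mathcal E_t\wedge 1]\le 1-\E_{\P}[\mathcal E_\infty\wedge 1]$, while Cauchy--Schwarz applied to $\mathcal E_\infty=(\mathcal E_\infty\wedge 1)(\mathcal E_\infty\vee 1)$, together with $\E_{\P}[\mathcal E_\infty\vee 1]\le 2$ and Novikov for the half-control $-v/2$, gives $\E_{\P}[\mathcal E_\infty\wedge 1]\ge\tfrac12\E_{\P}[\sqrt{\mathcal E_\infty}]^2\ge\tfrac12 e^{-D/4}$; on $\mathcal D_R$ one has $\|\x-\y\|_{\Hzeroeps}\le 2R$, so $\W_{\TV}\le 1-\tfrac12 e^{-CR^2}=:1-\gamma_2(R)$ with $\gamma_2(R)\in(0,1)$, again uniform in $t$ and $\varepsilon$.

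The hard part will be two points. The first is the reduction to a \emph{deterministic} control cost: it hinges on recognizing that \nameref{cond:Q:ergodicity} furnishes, via Douglas factorization, a bounded right inverse of $Q$ on $P_{\nbar}H$, and that the noise cancellation in $U(\cdot;\x)-\Utilde(\cdot;\x,\y)$ turns the decay estimate of \cref{lem:dissipative-bound} into an a.s.\ sure bound on $\int_0^\infty\|v\|_H^2\,\d t$ — this is exactly what trivializes Novikov's condition and all exponential moment bounds for $\mathcal E$, and, most importantly, delivers the uniformity in $\varepsilon$. The second is obtaining \eqref{ineq:error-in-law:2} for an \emph{arbitrary} radius $R$: the crude estimate $\W_{\TV}\le\tfrac12\E_{\P}|1-\mathcal E_\infty|$ degenerates (exceeds $1$) when $\|\x-\y\|_{\Hzeroeps}$ is large, so one is forced to pass through the concavity lower bound on the overlap $\E_{\P}[\mathcal E_\infty\wedge 1]$, which remains bounded below by a strictly positive, $\varepsilon$-free constant on every bounded set.
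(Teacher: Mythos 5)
Your argument reproduces the paper's proof in both structure and substance: you construct the same Girsanov shift that cancels the feedback drift, you use \nameref{cond:Q:ergodicity} to produce a bounded right inverse of $Q$ on $P_{\nbar}H$ (the paper just writes $Q^{-1}P_{\nbar}$; your Douglas-factorization phrasing yields the same operator with the same bound $a_Q^{-1}$), you invoke \cref{lem:dissipative-bound} to control the cost of the shift, and you then compare total-variation distances of the two noise laws and push that through to the solution laws. The one genuine difference is that the paper treats the two required total-variation estimates as black boxes, citing \cite[Inequality (A.1), Theorem A.2, and Inequality (A.2)]{butkovsky2020generalized}, whereas you derive them in place: Pinsker for~\eqref{ineq:error-in-law:1}, and a concavity plus Cauchy--Schwarz argument for the overlap $\E[\mathcal E_\infty\wedge 1]\ge \tfrac12 e^{-D/4}$ for~\eqref{ineq:error-in-law:2}. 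That makes your version more self-contained, and your observation that the cost $\int_0^\infty\|v\|_H^2\,\d t$ is bounded \emph{pathwise} (not merely in expectation) because the noise cancels in the difference equation is exactly the feature that trivializes Novikov and delivers the $\varepsilon$-uniformity; the paper keeps this implicit by writing the bound as an expectation, but it is the same mechanism. The only place you are slightly terser than the paper is the final data-processing step, where the paper spells out a coupling argument resting on pathwise uniqueness of weak solutions in order to pass from total-variation distance of noise paths to total-variation distance of solution laws; you invoke ``data processing,'' which implicitly assumes the solution is a measurable function of the driving path (i.e.\ strong solutions). This is harmless here because pathwise uniqueness plus weak existence furnishes strong solutions, but it is worth making the link explicit, since the paper is operating in the weak-solution framework of \cref{defn:mild-soln}.
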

Lastly, to conclude the proof of \cref{prop:contracting-d-small}, we will employ the fact that $\W_{\TV}$ dominates $\W_{d_N^{\mueps}}$ as stated in \cref{lem:W_(d_N)<W_TV} below.
\begin{lemma} \label{lem:W_(d_N)<W_TV}
For all probability measures $\nu_1$, $\nu_2$ in $\Pcal r(\Hzeroeps)$, and $N>0$, 
\begin{equation} \label{ineq:W_(d_N)<W_TV}
\W_{d_N^{\mueps}}(\nu_1,\nu_2)<\W_{\emph{TV}}(\nu_1,\nu_2),
\end{equation}
where $\W_{d_N^{\mueps}}$ is the Wasserstein distance associated with $d_N^{\mueps}$ as in~\eqref{form:d_N:mu}.
\end{lemma}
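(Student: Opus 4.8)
The plan is to derive~\eqref{ineq:W_(d_N)<W_TV} by comparing $d_N^{\mueps}$ pointwise with the discrete metric $d(U,\tilde U):=\mathbf 1_{\{U\neq\tilde U\}}$, whose associated coupling distance~\eqref{form:W_d:mu} is precisely $\W_{\TV}$. First I would record that, for every $U,\tilde U\in\Hzeroeps$,
\begin{equation*}
d_N^{\mueps}(U,\tilde U)=\big(N\|U-\tilde U\|_{\Hzeroeps}\big)\wedge 1\ \le\ \mathbf 1_{\{U\neq\tilde U\}},
\end{equation*}
since $d_N^{\mueps}$ never exceeds $1$ and vanishes on the diagonal; moreover this inequality is \emph{strict} on the set $\{\,0<\|U-\tilde U\|_{\Hzeroeps}<1/N\,\}$, where the left-hand side is $<1$ while the right-hand side equals $1$.

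Next, for any pair $(X,Y)$ of random variables with $X\sim\nu_1$ and $Y\sim\nu_2$, taking expectations in the displayed bound gives $\E\,d_N^{\mueps}(X,Y)\le\P(X\neq Y)$, and taking the infimum over all such couplings yields $\W_{d_N^{\mueps}}(\nu_1,\nu_2)\le\W_{\TV}(\nu_1,\nu_2)$ by the coupling characterisation of the total variation distance together with~\eqref{form:W_d:mu}. To sharpen this into the strict inequality asserted in the lemma, I would evaluate the same estimate along a \emph{maximal} coupling $(X^*,Y^*)$ of $(\nu_1,\nu_2)$, i.e.\ one satisfying $\P(X^*\neq Y^*)=\W_{\TV}(\nu_1,\nu_2)$; then
\begin{equation*}
\W_{d_N^{\mueps}}(\nu_1,\nu_2)\ \le\ \E\big[d_N^{\mueps}(X^*,Y^*)\,\mathbf 1_{\{X^*\neq Y^*\}}\big]\ \le\ \P(X^*\neq Y^*)\ =\ \W_{\TV}(\nu_1,\nu_2),
\end{equation*}
and the last inequality is an equality only if $d_N^{\mueps}(X^*,Y^*)=1$ almost surely on the disagreement event $\{X^*\neq Y^*\}$, that is, only if on that event the two copies are a.s.\ separated by at least $1/N$ in $\Hzeroeps$.

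The step I expect to be the main obstacle is precisely excluding this last, degenerate possibility: starting from $\nu_1\ne\nu_2$ one must exhibit a coupling whose $\E\,d_N^{\mueps}$ is \emph{strictly} below $\W_{\TV}(\nu_1,\nu_2)$. The natural remedy is a local perturbation of the maximal coupling — on a subset of the disagreement event of small but positive probability, re-pair the two copies so that they land within $\Hzeroeps$-distance $1/N$, which on that subset drops the cost $d_N^{\mueps}$ from $1$ to a value $<1$ without increasing the disagreement probability, hence strictly decreases $\E\,d_N^{\mueps}$; combined with the non-strict bound of the previous paragraph this gives~\eqref{ineq:W_(d_N)<W_TV}. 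The delicate point is verifying that such a perturbation is always available — i.e.\ that the singular parts of two distinct probability measures on $\Hzeroeps$ cannot be uniformly $1/N$-separated in the relevant sense — and it is here that the argument must be carried out with care.
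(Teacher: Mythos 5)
Your first two paragraphs reproduce exactly the paper's argument: bound $d_N^{\mueps}(X,Y)\le\mathbf 1_{\{X\neq Y\}}$ pointwise, take expectations over an arbitrary coupling, and pass to the infimum. That is the entire proof in the paper, which in fact only concludes the non-strict inequality $\W_{d_N^{\mueps}}(\nu_1,\nu_2)\le\W_{\TV}(\nu_1,\nu_2)$; the ``$<$'' in the displayed statement should be read as ``$\le$'' (every subsequent use of the lemma, e.g.\ in the proof of \cref{prop:contracting-d-small}, only requires the non-strict version, and the trivial case $\nu_1=\nu_2$ already rules out strictness as literally stated).

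The obstacle you flag in your final paragraph is therefore not merely delicate — it is insurmountable, and you should not attempt to resolve it. Strict inequality genuinely fails for distinct measures: take $\nu_1=\delta_{U}$ and $\nu_2=\delta_{\tilde U}$ with $\|U-\tilde U\|_{\Hzeroeps}\ge 1/N$. The only coupling of two Dirac masses is the deterministic pair $(U,\tilde U)$, so $\W_{d_N^{\mueps}}(\nu_1,\nu_2)=d_N^{\mueps}(U,\tilde U)=1=\W_{\TV}(\nu_1,\nu_2)$. No perturbation of the maximal coupling is available here, because there is nothing to re-pair. So the correct resolution is to stop after your second paragraph and record the conclusion with ``$\le$''.
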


Note that with \cref{lem:dissipative-bound}, \cref{lem:error-in-law}, and \cref{lem:W_(d_N)<W_TV} in hand,   \cref{prop:contracting-d-small} will then follow.
Since the proof of \cref{lem:W_(d_N)<W_TV} is short, we supply it now for the sake of convenience.

\begin{proof}[Proof of \cref{lem:W_(d_N)<W_TV}]
Let $(X,Y)$ be any coupling of $(\nu_1,\nu_2)$. Recall that $d_N^{\mueps}(X,Y)$ is given by~\eqref{form:d_N:mu}. In particular, $d^{\mueps}_N\leq1$. We then see from~\eqref{form:W_d:mu:dual-Kantorovich} that
\begin{align*}
\W_{d^{\mueps}_N}(\nu_1,\nu_2)\le \E\, d_N^{\mueps}(X,Y)& =\E\big[d_N^{\mueps}(X,Y)\mathbf{1}_{\{X\neq Y\}}\big]+\E\big[d_N^{\mueps}(X,Y)\mathbf{1}_{\{X= Y\}}\big]\\
& =\E\big[d_N^{\mueps}(X,Y)\mathbf{1}_{\{X\neq Y\}}\big]\\
&\le \E\big[\mathbf{1}_{\{X\neq Y\}}\big],
\end{align*}
Since the last implication above holds for any such pair $(X,Y)$, we invoke~\eqref{form:W_d:mu:dual-Kantorovich} again to deduce
\begin{align*}
\W_{d_N}(\nu_1,\nu_2) \le \W_{\TV}(\nu_1,\nu_2),
\end{align*}
as desired.
\end{proof}

Assuming that \cref{lem:dissipative-bound} and  \cref{lem:error-in-law} hold, let us now prove \cref{prop:contracting-d-small}. The proofs of \cref{lem:dissipative-bound} and  \cref{lem:error-in-law} will be given at the end of this section. Note that the argument we employ for proving \cref{prop:contracting-d-small} is based on the one presented in \cite[Theorem 2.4]{butkovsky2020generalized}.

\begin{proof}[Proof of \cref{prop:contracting-d-small}]
First we prove 1. For this, let us fix $N$ to be chosen later. Let $\x,\y\in\Hzeroeps$ be such that $d_N^{\mueps}(\x,\y)<1$. Upon recalling~\eqref{form:d_N:mu}, we see that 
\begin{align*}
d^{\mueps}_N(\x,\y)=N\|\x-\y\|_{\Hzeroeps}<1.
\end{align*}
We may thus apply the triangle inequality to deduce
\begin{align} 
&\W_{d^{\mueps}_{N}}\big(P^{\mueps}_t(\x,\cdot),P^{\mueps}_t(\y,\cdot)   \big) \nt \\
&\le \W_{d^{\mueps}_N}\big(P^{\mueps}_t(\x,\cdot),\text{Law}(\Utilde(t;\x,\y))  \big) +\W_{d^{\mueps}_{N}}\big( \text{Law}(\Utilde(t;\x,\y)), P^{\mueps}_t(\y,\cdot)\big),\label{ineq:contracting:a}
\end{align}
where $\Utilde(t;\x,\y)$ is the solution of~\eqref{eqn:react-diff:mu:pi_n(u_x-u_xy)}. To estimate the second term on the right-hand side of~\eqref{ineq:contracting:a}, we invoke~\eqref{ineq:error-in-law:1} and~\eqref{ineq:W_(d_N)<W_TV} to see that
\begin{align*}
\W_{d^{\mueps}_{N}}\big( \text{Law}(\Utilde(t;\x,\y)), P^{\mueps}_t(\y,\cdot)\big)\le \gamma_1\|\x-\y\|_{\Hzeroeps}.
\end{align*}
On the other hand, using the formula~\eqref{form:W_d:mu:dual-Kantorovich} as well as estimate~\eqref{ineq:dissipative-bound}, we see that
\begin{align}
\W_{d^{\mueps}_N}\big(P^{\mueps}_t(\x,\cdot),\text{Law}(\Utilde(t;\x,\y))  \big) &\le \E \big[N\|U(t;\x)-\Utilde(t;\x,\y)\|_{\Hzeroeps} \mi 1  \big]\notag\\
&\le N\E\|U(t;\x)-\Utilde(t;\x,\y)\|_{\Hzeroeps}\notag\\
&\le CNe^{-ct}\|\x-\y\|_{\Hzeroeps}.\label{ineq:contraction:b}
\end{align}
Together with~\eqref{ineq:contracting:a}, we obtain
\begin{align*}
\W_{d^{\mueps}_{N}}\big(P^{\mueps}_t(\x,\cdot),P^{\mueps}_t(\y,\cdot)   \big)&\le CNe^{-ct}\|\x-\y\|_{\Hzeroeps}+\gamma_1\|\x-\y\|_{\Hzeroeps}\\
&=\left(Ce^{-ct}+\frac{\gamma_1}{N}\right)N\|\x-\y\|_{\Hzeroeps}\\
&= \left(Ce^{-ct}+\frac{\gamma_1}{N}\right) d_{N}^{\mueps}(\x,\y).
\end{align*}
In the above, we emphasize that $c,C$ are independent of $N$, $t$, $\x,\,\y$, and $\mueps$. We now choose $t_1$ and $N_1$ sufficiently large such that
\begin{align*}
Ce^{-ct_1}<\frac{1}{4},\qquad \frac{\gamma_1}{N_1}<\frac{1}{4},
\end{align*}
both hold. This implies that
\begin{align*}
\W_{d^{\mueps}_{N}}\big(P^{\mueps}_t(\x,\cdot),P^{\mueps}_t(\y,\cdot)   \big) <\frac{1}{2}d^{\mueps}_{N}(\x,\y),
\end{align*}
for all $t\ge t_1$ and $N>N_1$, which yields~\eqref{ineq:contracting:W_d<1/2.d}.

Now we prove 2. Turning to $d_{N}^{\mueps}-$smallness of $\mathcal{D}_R$, we combine~\eqref{ineq:contracting:a}--\eqref{ineq:contraction:b} with \eqref{ineq:error-in-law:2} and~\eqref{ineq:W_(d_N)<W_TV} to obtain 
\begin{align*}
\W_{d^{\mueps}_{N}}\big(P^{\mueps}_t(\x,\cdot),P^{\mueps}_t(\y,\cdot)   \big)&\le \W_{d^{\mueps}_{N}}\big(P^{\mueps}_t(\x,\cdot),\text{Law}(\Utilde(t;\x,\y))  \big) +\W_{d^{\mueps}_{N}}\big( \text{Law}(\Utilde(t;\x,\y)), P^{\mueps}_t(\y,\cdot)\big)\\
&\le N e^{-ct}\|\x-\y\|_{\Hzeroeps}+1-\gamma_2\\
&\le 2NR e^{-ct}+1-\gamma_2,
\end{align*}
for all $\x,\y\in \mathcal{D}_R$ and $t\ge 0$. We now pick $t_2=t_2(R,N)$ sufficiently large such that
\begin{align*}
2NR e^{-ct_2}<\frac{1}{2}\gamma_2.
\end{align*}
As a consequence
\begin{align*}
\W_{d^{\mueps}_{N}}\big(P^{\mueps}_t(\x,\cdot),P^{\mueps}_t(\y,\cdot)   \big)\le 1-\frac{1}{2}\gamma_2,
\end{align*}
for all $t\ge t_2$ and $\x,\y\in B^{\mueps}_R$. This establishes the $d^{\mueps}_{N}-$smallness of $B^{\mueps}_R$ for $P_t^{\mueps}$, thus completing the proof.
\end{proof}

We now turn to the exponential decay of $\|U(t;\x)-\Utilde(t;\x,\y)\|_{\Hzeroeps}$ and prove \cref{lem:dissipative-bound}. The argument relies crucially on condition~\nameref{cond:Q:ergodicity}.
\begin{proof}[Proof of \cref{lem:dissipative-bound}]
Recalling the systems~\eqref{eqn:react-diff:epsilon:mu} and~\eqref{eqn:react-diff:mu:pi_n(u_x-u_xy)}, we set $\ubar=u-\utilde$ and $\zetaeps=\eta-\etatilde$. Observe that $\big(\ubar(t),\zetaeps(t)\big)$ solves the equation
\begin{align*}
\frac{\d}{\d t}\ubar(t)&=-\kappa A\ubar(t)-(1-\kappa)\int_0^\infty\close{\mueps}(s)A\zetaeps(t;s)\d s+\f(u(t))-\f(\utilde(t))-\kappa\alpha_{\nbar}P_{\nbar}\ubar(t),\\
\frac{\d}{\d t}\zetaeps(t)&=\Tcal_{\mueps}\zetaeps(t)+\ubar(t),\\
(\ubar(0),\zetaeps(0))&=\x-\y\in\Hzeroeps.
\end{align*}
We recall
\eqref{form:Psi_0} and compute
\begin{align*}
\frac{\d}{\d t}\Psi_0(\ubar(t),\zetaeps(t))&= -\kappa\|A^{1/2}\ubar(t)\|^2_H+(1-\kappa)\la \Tcal_{\mueps}\zetaeps(t),\zetaeps(t)\ra_{\Mzeroeps}\\
&\qquad+\la \f(u(t))-\f(\utilde(t)),\ubar(t)\ra_H-\kappa\alpha_{\nbar}\|P_{\nbar}\ubar(t)\|^2_H.
\end{align*}
Since $\varepsilon$, we may invoke \eqref{ineq:<T_epsilon.eta,eta>} to obtain
\begin{align*}
\la \Tcal_{\mueps}\zetaeps(t),\zetaeps(t)\ra_{\Mzeroeps}\le -\frac{1}{2}\delta\|\zetaeps(t)\|^2_{\Mzeroeps}.
\end{align*}
In light of \nameref{cond:phi:3}, it holds that
\begin{align*}
\la \f(u(t))-\f(\utilde(t)),\ubar(t)\ra_H\le a_\f\|\ubar(t)\|^2_H.
\end{align*}
To further bound the above right-hand side, we recall from condition~\nameref{cond:Q:ergodicity} that
$$\kappa\alpha_{\nbar}>a_\f.$$
As a consequence
\begin{align*}
-\kappa\|A^{1/2}\ubar(t)\|^2_H&+\la \f(u(t))-\f(\utilde(t)),\ubar(t)\ra_H-\kappa\alpha_{\nhat}\|P_{\nbar}\ubar(t)\|^2_H\\
&\le -\kappa\|(I-P_{\nbar})A^{1/2}\ubar(t)\|^2_H+a_\f\|\ubar(t)\|^2_H-\kappa\alpha_{\nbar}\|P_{\nbar}\ubar(t)\|^2_H\\
&\le -(\kappa\alpha_{\nbar}-a_\f)\|\ubar(t)\|^2_H.
\end{align*}
Altogether, we arrive at
\begin{align*}
\frac{\d}{\d  t}\Psi_0(\ubar(t),\zetaeps(t))&\le  -(\kappa\alpha_{\nbar}-a_\f)\|\ubar(t)\|^2_H-\frac{1}{2}(1-\kappa)\delta \|\zetaeps(t)\|^2_{\Mzeroeps}\\
&\le -\min\{2(\kappa\alpha_{\nbar}-a_\f),\delta \}\Psi_0(\ubar(t),\zetaeps(t)).
\end{align*}
In particular
\begin{align*}
\Psi_0(\ubar(t),\zetaeps(t)) &\le e^{-\min\{2(\kappa\alpha_{\nbar}-a_\f),\delta\}t}\Psi_0(\ubar(0),\zetaeps(0)).
\end{align*}
Since
\begin{align*}
\frac{1}{2}(1-\kappa)\|\x\|^2_{\Hzeroeps}\le \Psi_0(\x)\le \frac{1}{2}\|\x\|^2_{\Hzeroeps},
\end{align*}
we deduce
\begin{align*}
\|{(\ubar(t),\zetaeps(t))}\|^2_{\Hzeroeps} &\le\frac{1}{1-\kappa} e^{-\min\{2(\alpha_{\nbar}-a_\f),\delta\}t}\|{(\ubar(0),\zetaeps(0))}\|^2_{\Hzeroeps}\\
&=\frac{1}{1-\kappa} e^{-\min\{2(\alpha_{\nbar}-a_\f),\delta\}t}\|\x-\y\|^2_{\Hzeroeps},
\end{align*}
which implies the estimate \eqref{ineq:dissipative-bound}, as desired.

\end{proof}

Next, we consider the difference $\Utilde(t;\x,\y)-U(t;\y)$ and prove \cref{lem:error-in-law}.

\begin{proof}[Proof of \cref{lem:error-in-law}]
Let us first consider the following cylindrical Wiener process
\begin{equation} \label{form:w.tilde}
\d\wtilde(t)=\beta(t)\d t+\d w(t),
\end{equation}
where
\begin{equation} \label{form:beta(t)}
\beta(t)= \kappa\alpha_{\nbar} Q^{-1}P_{\nbar}(u(t)-\utilde(t)),
\end{equation}
and $u(t)=\pi_1 U(t;\x)$ and $\utilde(t)=\pi_1\Utilde(t;\x,\y)$. The aim is to compare the law of $\wtilde$ with that of $w$ over $[0,t]$. 
Indeed, thanks to~\nameref{cond:Q:ergodicity}, $Q$ is invertible on $H_{\nbar}=\text{span}\{e_1,\dots,e_{\nbar}\}$. We may then employ~\eqref{ineq:dissipative-bound} from \cref{lem:dissipative-bound} to infer
\begin{align*}
\|\beta(t)\|^2_H=\|\kappa\alpha_{\nbar} Q^{-1}P_{\nbar}(u(t)-\utilde(t))\|_H^2&\le (\kappa\alpha_{\nbar}a_Q^{-1})^2\|P_{\nbar}(u(t)-\utilde(t))\|_H^2\\
&\le C e^{-c t}\|\x-\y\|^2_{\Hzeroeps},
\end{align*}
where $C=C(\nbar,Q,\kappa,\delta),c=c(\nbar,Q,\kappa,\delta)$ are positive constants independent of $\x,\,\y,\,t$, and ${\mueps}$. As a consequence
\begin{align} \label{ineq:int.beta}
\E\int_0^\infty\close \|\beta(t)\|^2_H\d t
& =\E \int_0^\infty\close\|\kappa\alpha_{\nbar} Q^{-1}P_{\nbar}(u(t)-\utilde(t))\|^2_H\d t \le C\|\x-\y\|^2_{\Hzeroeps}.
\end{align}
In light of \cite[Inequality (A.1) and Theorem A.2]{butkovsky2020generalized} together with~\eqref{ineq:int.beta}, it holds that
\begin{align} \label{ineq:error-in-law:1:w}
\W_{\TV}(\Law(\wtilde_{[0,t]}),\Law(w_{[0,t]}))&\le\frac{1}{2}\Big(\E\int_0^\infty\close \|\beta(t)\|^2_H\d t\Big)^{1/2} \le C\|\x-\y\|_{\Hzeroeps}.
\end{align}
On the other hand, we invoke \cite[Inequality (A.2)]{butkovsky2020generalized} to see that for all $\x,\y\in \mathcal{D}_R$
\begin{equation}\label{ineq:error-in-law:2:w}
\begin{aligned} 
\W_{\TV}(\Law(\wtilde_{[0,t]}),\Law(w_{[0,t]}))&\le 1-\frac{1}{2}e^{-\frac{1}{2}\E\int_0^\infty \|\beta(t)\|^2_H\d t}\le 1-\frac{1}{2}e^{-C\|\x-\y\|^2_{\Hzeroeps}}\le 1-\frac{1}{2}e^{-CR^2}.
\end{aligned}
\end{equation}

Next, we observe that
\begin{equation} \label{ineq:Law.P_t<Law.w}
\W_{\TV}(\Law(\Utilde(t;\x,\y)),P_t(\y,\cdot))\le\W_{\TV}(\Law(\wtilde_{[0,t]}),\Law(w_{[0,t]})).
\end{equation} 
Indeed, to see that \eqref{ineq:Law.P_t<Law.w} holds, consider any coupling $(\tilde{W},W)$ of $(\Law(\wtilde_{[0,t]}),\Law(w_{[0,t]}))$ and denote by $\Xtil(t;\x,\y)$, $X(t;\y)$ respectively the solutions of~\eqref{eqn:react-diff:mu:pi_n(u_x-u_xy)} and~\eqref{eqn:react-diff:epsilon:mu} associated with $\tilde{W}$ and $W$. It is clear that $(\Xtil(t;\x,\y),X(t;\y))$ is a coupling for $(\Utilde(t;\x,\y),U(t;\y))$. By uniqueness of weak solutions, we have
\begin{align*}
\{\Wtilde(r)=W(r): r\in[0,t]\}\subseteq\{\Xtil(t;\x,\y)=X(t;\y)\},
\end{align*}
which implies
\begin{align*}
\boldsymbol{1}_{\left\{\Wtilde(r)\neq W(r), \text{ for some }r\in[0,t]\right\}}\ge \boldsymbol{1}_{\left\{\Xtil(t;\x,\y)\neq X(t;\y)\right\}}.
\end{align*}
In particular, we have
\begin{align*}
\E\boldsymbol{1}_{\left\{\Wtilde(r)\neq W(r), \text{ for some }r\in[0,t]\right\}}\ge \W_{\TV}(\Law(\Utilde(t;\x,\y)),P_t(\y,\cdot)).
\end{align*}
Since the coupling $(\Wtilde,W)$ was arbitrary, we deduce~\eqref{ineq:Law.P_t<Law.w}.

To conclude the proof, we combine~\eqref{ineq:error-in-law:1:w}, \eqref{ineq:error-in-law:2:w}, \eqref{ineq:Law.P_t<Law.w} to deduce~\eqref{ineq:error-in-law:1}--\eqref{ineq:error-in-law:2}, as desired.

\end{proof}

Lastly, we provide the proof of \cref{lem:exponential-bound:H^0_epsilon:beta/2}.

\begin{proof}[Proof of~\cref{lem:exponential-bound:H^0_epsilon:beta/2}]
First of all, by It\^o's formula and~\eqref{ineq:L^epsilon.Psi_0}, we readily have
\begin{align}
\d \Psi_0(U(t))
&=\L^{\mueps}\Psi_0(U(t))\d t+\la u(t),Q\d w(t)\ra_H \notag\\
&\le -\kappa\|A^{1/2}u\|^2_H\d t-\frac{1}{2}(1-\kappa)\delta\|\eta\|^2_{\Mzeroeps}\d t+\Big(a_3|\domain|+\frac{1}{2}\Tr(QQ^*)\Big)\d t+\la u(t),Q\d w(t)\ra_H,\label{ineq:d.Psi_0}
\end{align}
Let $\gamma\in(0,1),\,\beta>0$ be given and to be chosen later. Fixing $T>0$, in view of~\eqref{ineq:d.Psi_0}, we have
\begin{align*}
&\d \Big(\beta e^{\gamma(t-T)}\Psi_0(U(t))\Big)\\
&\le\beta \gamma e^{\gamma(t-T)}\Psi_0(U(t))\d t -\beta\kappa e^{\gamma(t-T)}\|A^{1/2}u(t)\|^2_H\d t-\frac{1}{2}\beta(1-\kappa)\delta  e^{\gamma(t-T)}\|\eta(t)\|^2_{\Mzeroeps}\d t\\
&\qquad+\beta e^{\gamma(t-T)} C\d t+\beta e^{\gamma(t-T)}\la u(t),Q\d w(t)\ra_H.
\end{align*}
In the above, $C$ is a positive constant independent of $\varepsilon$. By choosing $\gamma$ small enough, namely, 
\begin{align*}
\gamma< \frac{1}{4} \min\{2\kappa\alpha_1,(1-\kappa)\delta\},
\end{align*}
it holds that
\begin{align*}
\gamma \Psi_0(U(t)) -\kappa \|A^{1/2}u(t)\|^2_H-\frac{1}{2}(1-\kappa)\delta  \|\eta(t)\|^2_{\Mzeroeps}\le - \gamma\Psi_0(U(t)).
\end{align*}
Also
\begin{align*}
-\beta\gamma \Psi_0(U(t)) \le -\frac{1}{2}\beta \gamma \|u(t)\|^2_H &=  -\frac{1}{2}\cdot\frac{\gamma}{\beta\Tr(QQ^*)}\cdot \beta^2\Tr(QQ^*) \|u(t)\|^2_H\\
&\le -\frac{1}{2}\frac{\gamma}{\beta\Tr(QQ^*)}\beta^2\sum_{k\ge 1}| \la u(t),Qe_k\ra_H|^2.
\end{align*}
In the above, we recall that $\Tr(QQ^*)<\infty$ as in~\nameref{cond:Q}. As a consequence, for $t\in [0,T]$, we obtain
\begin{align*}
&\d \Big(\beta e^{\gamma(t-T)}\Psi_0(U(t))\Big)\\
&\le \beta e^{\gamma(t-T)} C\d t+\beta e^{\gamma(t-T)}\la u(t),Q\d w(t)\ra_H-\frac{1}{2}\frac{\gamma}{\beta\Tr(QQ^*)}\beta^2e^{2\gamma(t-T)}\sum_{k\ge 1}| \la u(t),Qe_k\ra_H|^2\d t.
\end{align*}
Integrating the above inequality with respect to time $t\in [0,T]$ yields
\begin{align*}
&\beta\Psi_0(U(T))-\beta e^{-\gamma T}\Psi_0(\x)-\frac{\beta}{\gamma}C\big(1-e^{-\gamma T}\big) \\
&\le \int_0^T\beta e^{\gamma(t-T)}\la u(t),Q\d w(t)\ra_H-\frac{1}{2}\frac{\gamma}{\beta\Tr(QQ^*)}\int_0^T\beta^2e^{2\gamma(t-T)}\sum_{k\ge 1}| \la u(t),Qe_k\ra_H|^2\d t\\
&\le M(T)-\frac{1}{2}\frac{\gamma}{\beta\Tr(QQ^*)}\la M\ra(T),
\end{align*}
where $M(r)$ denotes the semi-Martingale given by
\begin{align*}
M(r)= \int_0^r\beta e^{\gamma (t-T)}\la u(t),Q\d w(t)\ra_H.
\end{align*}
Recall the exponential Martingale inequality that
\begin{align*}
\P\Big(\sup_{r\ge 0}\Big[M(r)-\frac{c}{2}\la M\ra(r)\Big] >R\Big)\le e^{-c\,R}, \quad R\ge 0,
\end{align*}
whence,
\begin{align*}
\P\Big(\beta\Psi_0(U(T))-\beta e^{-\gamma T}\Psi_0(\x)-\frac{\beta}{\gamma}\big(1-e^{-\gamma T}\big)\ge R\Big) \le e^{- \frac{\gamma}{\beta\Tr(QQ^*)}R},
\end{align*}
for any $R\ge 0$. We now choose $\beta=\beta(\gamma)$ small enough such that
\begin{align*}
\beta<\frac{\gamma}{\Tr(QQ^*)}.
\end{align*}
This implies that
\begin{align*}
\E\exp\left( \beta\Psi_0(U(T))-\beta e^{-\gamma T}\Psi_0(\x)-\frac{\beta}{\gamma}\big(1-e^{-\gamma T}\big)\right)\le C,
\end{align*}
for some positive constant $C=C(\gamma,\beta)$ independent of $\varepsilon$. This produces~\eqref{ineq:exponential-bound:H^0_epsilon:beta/2}, thereby concluding the proof.
\end{proof}

\section{Short memory limit as $\varepsilon\to 0$} \label{sec:epsilon->0:inv-measure}

In this section, we proceed to prove the main theorems concerning the convergence of~\eqref{eqn:react-diff:epsilon:mu} toward~\eqref{eqn:react-diff} as $\varepsilon\to 0$.

\subsection{Short memory limit over a finite time-horizon} \label{sec:epsilon->0:finite-time}

This section is dedicated to the proof of \cref{thm:epsilon->0:solution->limit-solution:finite-time}, which concerns the limit~\eqref{lim:epsilon->0:solutions->limit-solution} in any finite time window. In order to establish~\eqref{lim:epsilon->0:solutions->limit-solution}, we must navigate two natural difficulties: one coming from the nonlinear structure of the potential, $\f$, and another coming from memory. The difficulties concerning $\f$ have been dealt with in the past (see \cite{cerrai2020convergence,hottovy2015smoluchowski,nguyen2018small}) and is quite standard to deal with. The significant difficulty in our case derives from the memory terms, which require a more delicate analysis that depends on the dimension of the spatial. To be more precise, in general dimension $d$, we may control the potential $\f$ by making use of~\nameref{cond:phi:3}, whereas we will employ~\eqref{cond:phi:O(|x|)} together with the first part of ~\cref{lem:react-diff:phi.Lipschit:|u(t)-u(s)|-bound} in order to estimate the memory. However, in dimension $d\le3$, when $\f$ satisfies~\eqref{cond:phi:agmon}, we must make use of the condition~\eqref{cond:U^epsilon_0:agmon} regarding higher moments in $H$ and $H^1$, as well as the second part of~\cref{lem:react-diff:phi.Lipschit:|u(t)-u(s)|-bound} in order to overcome both the memory and the non-linear structure of the potential. In essence, the argument we develop here can be viewed as a properly stochastic version of the one developed in~\cite[Lemma 5.5]{conti2006singular} for the deterministic analog of our problem.

\begin{proof}[Proof of~\cref{thm:epsilon->0:solution->limit-solution:finite-time} ]

Setting $\zeps(t) = \ueps(t)-u^0(t)$ and  $\zetaeps(t) = \etaeps(t)-\etazero(t)$, the pair $(\zeps(t),\zetaeps(t))$ obeys the following system in $\Hzeroeps$
 \begin{equation} \label{eqn:phi-lipschitz:difference}
\begin{aligned}
\frac{\d}{\d t} \zeps(t)&=-\kappa A\zeps(t)-(1-\kappa)\int_0^\infty\close \mu_\varepsilon(s)A\etaeps(s)\d s+(1-\kappa)A u^0(t)+\f(\ueps(t))-\f(u^0(t)) , \\
\frac{\d}{\d t}\zetaeps(t)&=\Tcal_{\mueps}\zetaeps(t)+\zeps(t),\\
(\zeps(0),\zetaeps(0))&=(0,0)\in\Hzeroeps.
\end{aligned}
\end{equation}
Multiplying the first equation with $\zeps(t)$ in $H^0$ and the second equation with $(1-\kappa)\zetaeps(t)$ in $\M^0_\varepsilon$ and taking expectation, we obtain the relation (recalling $\Psi_0$ as in~\eqref{form:Psi_0})
\begin{equation} \label{eqn:E.Psi_1(u.bar(T),eta.bar(T))}
\begin{aligned}
\frac{\d}{\d t}\E\Psi_0(\zeps(t),\zetaeps(t))
&=-\kappa\E\|A^{1/2}\zeps(t)\|^2_H-(1-\kappa)\E\int_0^\infty\close \mu_\varepsilon(s)\la A^{1/2}\etazero(t;s),A^{1/2}\zeps(t) \ra_H\d s\\
&\qquad+(1-\kappa)\E\la A^{1/2}u^0(t),A^{1/2}\zeps(t)\ra_H+\E\la \Tcal_{\mueps}\zetaeps(t),\zetaeps(t)\ra_{\Mzeroeps}\\
&\qquad+\E\la \f(\ueps(t))-\f(u^0(t)),\zeps(t)\ra_H.
\end{aligned}
\end{equation}
Since $\f$ satisfies~\nameref{cond:phi:3}, we readily have
\begin{align*}
\la \f(\ueps(t))-\f(u^0(t)),\zeps(t)\ra_H\leq a_\f\|\zeps\|^2_H,
\end{align*}
We note that $-\kappa\E\|A^{1/2}\zeps(t)\|^2_H$ is negligible. So is the inner product involving $\Tcal_{\mueps}$, by virtue of~\eqref{ineq:<T_epsilon.eta,eta>}. For the other two remaining terms, using the normalized condition~\nameref{cond:mu:int.s.mu(s)=1}, namely,
$$\int_0^\infty\close s\mu_\varepsilon(s)\d s=\int_0^\infty \close s \frac{1}{\varepsilon^2}\mu\big(\frac{s}{\varepsilon}\big)\d s=\int_0^\infty \close s\mu(s)\d s=1,$$
we rewrite them as following
\begin{align*}
\MoveEqLeft[5]-\int_0^\infty\close \mu_\varepsilon(s)\la A^{1/2}\etazero(t;s),A^{1/2}\zeps(t) \ra_H\d s+\la A^{1/2}u^0(t),A^{1/2}\zeps(t)\ra_H\\
&=-\int_0^\infty\close \mu_\varepsilon(s)\la A^{1/2}\etazero(t;s),A^{1/2}\zeps(t) \ra_H\d s+\int_0^\infty\close s\mu_\varepsilon(s)\la A^{1/2}u^0(t),A^{1/2}\zeps(t)\ra_H\d s\\
&=-\int_{t\varepsilon^\gamma}^\infty\close \mu_\varepsilon(s)\la A^{1/2}\etazero(t;s),A^{1/2}\zeps(t) \ra_H \d s+\int_{t\varepsilon^\gamma}^\infty \close s\mu_\varepsilon(s) \la A^{1/2}u^0(t) ,A^{1/2}\zeps(t)\ra_H \d s\\
&\qquad\qquad - \int_0^{t\varepsilon^\gamma}\close\close \mu_\varepsilon(s)\la A^{1/2}\etazero(t;s)-s A^{1/2}u^0(t),A^{1/2}\zeps(t) \ra_H \d s\\
&=-I_1^\varepsilon(t)+I_2^\varepsilon(t)-I_3^\varepsilon(t).
\end{align*}
In the above, $\gamma\in(0,1)$ is a positive constant to be chosen later. With regard to the first term $I_1^\varepsilon$, thanks to~\eqref{form:mu_epsilon} and \eqref{ineq:mu}, we note that
\begin{align} \label{ineq:int.mu_epsilon(s)ds}
\int_{t\varepsilon^\gamma}^\infty \mu_\varepsilon(s)\d s = \frac{1}{\varepsilon} \int_{t\varepsilon^{\gamma-1}}^\infty \mu(s)\d s \leq \frac{\mu(0)}{\delta\varepsilon}e^{-\delta t\varepsilon^{\gamma-1}}. 
\end{align}
Hence, we estimate $I^\varepsilon_1(t)$ using the Cauchy-Schwarz inequality and obtain
\begin{align*}
\big{|}\E I^\varepsilon_1(t)\big{|}&=\big{|}\E\int_{t\varepsilon^\gamma}^\infty\close \mu_\varepsilon(s)\la A^{1/2}\etazero(t;s),A^{1/2}\zeps(t) \ra_H \d s\big{|}\\
&\leq \left(\E\int_{t\varepsilon^{\gamma}}^\infty\mu_\varepsilon(s)\d s\right)^{1/2} \left(\E\int_{t\varepsilon^{\gamma}}^\infty \mu_{\varepsilon}(s)|\la A^{1/2}\eta^{0,\varepsilon}(t;s),A^{1/2}\zeps(t)\ra_H|^2\d s\right)^{1/2}\\
&\leq c\frac{e^{-\delta t\varepsilon^{\gamma-1}/2}}{\varepsilon^{1/2}}\E\big[\|\etazero(t)\|_{M^0_\varepsilon}\|A^{1/2}\zeps(t)\|_H\big]\\
&\le c\frac{e^{-\delta t\varepsilon^{\gamma-1}/2}}{\varepsilon^{1/2}}\big(\E\|\etazero(t)\|^2_{M^0_\varepsilon}\big)^{1/2}\big(\E\|A^{1/2}\zeps(t)\|_H^2\big)^{1/2}.
\end{align*}
To estimate the two expected values appearing in the right-hand side above, we invoke~\cref{lem:moment-boud:H^1_epsilon}, part 1, and~\cref{lem:react-diff:|u|^2_(H^1)-bound.and.int_0^t|Au(s)|^2ds-bound:random-initial-cond} to deduce
\begin{equation} \label{ineq:|A^(1/2)ubar(t)-bound|}
\begin{aligned}
\E\|A^{1/2}\zeps(t)\|^2_H\le \E\|A^{1/2}\ueps(t)\|^2_H+\E\|A^{1/2}u^0(t)\|^2_H
&\le c(\E\|\rx^\varepsilon\|^2_{\Honeeps}+t)\le c(1+t),
\end{aligned}
\end{equation}
for some $c>0$, independent of $U_0^\varepsilon$ and $t$. Note that in obtaining the final inequality we employed~\eqref{cond:U^epsilon_0:O(|x|)}. 

Likewise, in view of~\cref{lem:uniform-energy-bounds}, we have
\begin{equation}\label{ineq:|eta^0(t)|_(M_0^epsilon)-bound|}
\begin{aligned}
\E\|\etazero(t)\|^2_{\Mzeroeps}&\le e^{-\frac{\delta}{2\varepsilon}t}\E\|\pi_2\rx^\varepsilon\|^2_{\Mzeroeps}+c(\E\|\pi_1\rx^\varepsilon\|^2_{H^1}+1)\varepsilon\le c\big( e^{-\frac{\delta}{2\varepsilon}t}+\varepsilon\big).
\end{aligned}
\end{equation}
Upon combining \eqref{ineq:|A^(1/2)ubar(t)-bound|} and \eqref{ineq:|eta^0(t)|_(M_0^epsilon)-bound|} with the elementary inequality $\sqrt{a+b}\le \sqrt{a}+\sqrt{b}$, for $a,b\geq0$, we arrive at
\begin{align*}
\Big(\E\|A^{1/2}\zeps(t)\|^2_H\E\|\etazero(t)\|^2_{\Mzeroeps}\Big)^{1/2}
&\le c(1+T)\big( e^{-\frac{\delta}{4\varepsilon}t}+\varepsilon^{1/2}\big).
\end{align*}
Returning to $I^\varepsilon_1(t)$, it is now clear that
\begin{align*}
|\E\,I^\varepsilon_1(t)|&\le c(1+T)\frac{e^{-\delta t\varepsilon^{\gamma-1}/2}}{\varepsilon^{1/2}}\big(\E\|A^{1/2}\zeps(t)\|_H^2\big)^{1/2}\big(\E\|\etazero(t)\|^2_{M^0_\varepsilon}\big)^{1/2}\\
&\le c(1+T)\frac{e^{-\delta t\varepsilon^{\gamma-1}/2}}{\varepsilon^{1/2}}\big( e^{-\frac{\delta}{4\varepsilon}t}+\varepsilon^{1/2}\big),
\end{align*}
where we emphasize that $c>0$ is independent of $\varepsilon$ and $T$. We now integrate with respect to time $t$ on $[0,T]$ to see that
\begin{equation}  \label{ineq:epsilon->0:I_1(t)}
\big{|}\int_0^T\E\,I^\varepsilon_1(t) \d t\big{|}\le c(1+T)\int_0^T \frac{e^{-\delta t\varepsilon^{\gamma-1}/2}}{\varepsilon^{1/2}}\big( e^{-\frac{\delta}{4\varepsilon}t}+\varepsilon^{1/2}\big)\d t\le c(T)\big(\varepsilon^{1/2}+\varepsilon^{1-\gamma}\big).
\end{equation}

With regard to $I^\varepsilon_2(t)$, we combine~\eqref{ineq:int.mu_epsilon(s)ds} and Holder's inequality to estimate
\begin{align*}
|\E\,I^\varepsilon_2(t)|&=\big{|}\E\int_{t\varepsilon^{\gamma}}^\infty\close s\mu_\varepsilon(s) \la A^{1/2}u^0(t) ,A^{1/2}\zeps(t)\ra \d s\big{|}\\
& \leq \int_{t\varepsilon^{\gamma}}^\infty\close s\mu_\varepsilon(s)\d s\Big(\E\|A^{1/2}u^0(t)\|^2_H\Big)^{1/2}\Big(\E\| A^{1/2}\zeps(t)\|^2_H\Big)^{1/2}.
\end{align*}
To bound the integral involving $\mu_\varepsilon$, we again employ~\eqref{form:mu_epsilon}, \eqref{ineq:mu} and Holder's inequality to see that
\begin{align*}
\int_{t\varepsilon^{\gamma}}^\infty\close s\mu_\varepsilon(s)\d s&\le \Big(\int_{t\varepsilon^{\gamma}}^\infty\close s^2\mu_\varepsilon(s)\d s\Big)^{1/2}\Big(\int_{t\varepsilon^{\gamma}}^\infty\close \mu_\varepsilon(s)\d s\Big)^{1/2}\\
&=\Big(\varepsilon\int_{t\varepsilon^{\gamma-1}}^\infty\close\close s^2\mu(s)\d s\Big)^{1/2}\Big(\int_{t\varepsilon^{\gamma}}^\infty\close \mu_\varepsilon(s)\d s\Big)^{1/2},
\end{align*}
which, in view of~\eqref{ineq:int.mu_epsilon(s)ds}, is further dominated by
\begin{align*}
c\Big(\varepsilon\int_0^\infty\close s^2\mu(s)\d s\Big)^{1/2}\,\frac{e^{-\delta t\varepsilon^{\gamma-1}/2}}{\varepsilon^{1/2}}\le c\,e^{-\delta t\varepsilon^{\gamma-1}/2}.
\end{align*}
Also, similar to~\eqref{ineq:|A^(1/2)ubar(t)-bound|}, we have
\begin{align*}
\Big(\E\|A^{1/2}u^0(t)\|^2_H\Big)^{1/2}\Big(\E\| A^{1/2}\zeps(t)\|^2_H\Big)^{1/2}\le c(\E\|\rx^\varepsilon\|^2_{\Honeeps}+t)
&\le c(1+t).
\end{align*}
For the last inequality above, note that we once again employed the uniform-in-$\varepsilon$ bound in $\Honeeps$ for $\rx^\varepsilon$  implied by~\eqref{cond:U^epsilon_0:O(|x|)}. It follows that
\begin{align*}
|\E\,I^\varepsilon_2(t)|\le c(T)e^{-\delta t\varepsilon^{\gamma-1}/2},
\end{align*}
which in turn implies
\begin{equation}  \label{ineq:epsilon->0:I_2(t)}
\big{|}\int_0^T\E\,I^\varepsilon_2(t) \d t\big{|}\le c(T)\int_0^T e^{-\delta t\varepsilon^{\gamma-1}/2}\d t\le c(T)\frac{2}{\delta\varepsilon^{\gamma-1}}\left(1-e^{-\frac{\delta T\varepsilon^{\gamma-1}}2}\right)\leq c\varepsilon^{1-\gamma}.
\end{equation}

Concerning the last term $I_3^\varepsilon(t)$, we first recall that $\etazero(t)$ satisfies its corresponding evolution equation from the Cauchy problem~\eqref{eqn:react-diff:eta^0_epsilon}. Thus, in light of~\eqref{form:eta(t)-representation},  $\etazero(t;s)$ is explicitly given by
\begin{align*}
\etazero(t;s)=\int_0^s u^0(t-r)\d r,\quad \text{for all}\ 0\le s\le t\,\varepsilon^{\gamma}\le t.
\end{align*} 
Using the above formula, we can rewrite $I^\varepsilon_3(t)$ as
\begin{align*}
\MoveEqLeft[5] \int_0^{t\,\varepsilon^{\gamma}}\close\close \mu_\varepsilon(s)\la A^{1/2}\etazero(t;s)-s\, A^{1/2}u^0(t),A^{1/2}\zeps(t) \ra_H \d s \\
 & =  \int_0^{t\,\varepsilon^{\gamma}}\close \mu_\varepsilon(s)\Big\la \int_0^s\big[ A^{1/2}u^0(t-r)- A^{1/2}u^0(t)\big] \d r,A^{1/2}\zeps(t)\Big \ra_H \d s \\
 &=  \int_0^{t\,\varepsilon^{\gamma}}\close \mu_\varepsilon(s)\int_0^s \la  u^0(t-r)- u^0(t) ,A\zeps(t) \ra_H\d r\, \d s.
 \end{align*}
We then apply the Cauchy-Schwarz inequality repeatedly to estimate
\begin{align}
|\E\, I^\varepsilon_3(t)|&\le\E\Big[\|A\zeps(t)\|_H  \int_0^{t\,\varepsilon^{\gamma}}\close \mu_\varepsilon(s)\int_0^s \|u^0(t-r)- u^0(t)\|_H \d r\, \d s\Big] \nonumber\\
&\le \big(\E\|A\zeps(t)\|_H^2\big)^{1/2} \left[\E\left( \int_0^{t\,\varepsilon^{\gamma}}\close \mu_\varepsilon(s)\int_0^s \|u^0(t-r)- u^0(t)\|_H \d r\, \d s\right)^2\right]^{1/2}\nonumber\\
&\le \big(\E\|A\zeps(t)\|_H^2\big)^{1/2} t^{1/2}\varepsilon^{\gamma/2}\Big(\int_0^{t\,\varepsilon^{\gamma}} \close\mu_\varepsilon^2(s)\E\big{|}\int_0^s \|u^0(t-r)- u^0(t)\|_H \d r\big{|}^{2}\, \d s\Big)^{1/2}\nonumber\\
&\le \varepsilon^{\gamma/2}t^{1/2}\big(\E\|A\zeps(t)\|_H^2\big)^{1/2} \Big( \int_0^{t\,\varepsilon^{\gamma}}\close\close s\mu_\varepsilon^2(s)\int_0^s \E\|u^0(t-r)- u^0(t)\|_H^2 \d r\, \d s\Big)^{1/2}.\label{ineq:I^epsilon_3(t)}
\end{align}
Now, we further study the final term on the  above right-hand side. To do so, we will consider two cases depending on the structure of the non-linear potential $\f$ and dimension of the spatial domain: either $d\geq 4$ and $\f$ satisfies~\eqref{cond:phi:O(|x|)}, or $d= 3$ and $\f$ satisfies~\eqref{cond:phi:agmon}, or $d=1,2$.

\noindent\textbf{Case 1}. Then $d\geq1$ and $\f$ satisfies~\eqref{cond:phi:O(|x|)}. In this case, we invoke the first assertion in ~\cref{lem:react-diff:phi.Lipschit:|u(t)-u(s)|-bound} to see that
\begin{equation} \label{ineq:I^epsilon_3(t):lipschitz}
\begin{aligned}
\MoveEqLeft[5]\int_0^{t\varepsilon^{\gamma}}\close\close s\mu_\varepsilon^2(s)\int_0^s \E\|u^0(t-r)- u^0(t)\|_H^2 \d r\, \d s\\
&\le  C(t+1)\big(\E\|\pi_1\rx^\varepsilon\|^2_{H^1}+1\big)\int_0^{t\varepsilon^{\gamma}}\close\close s\mu_\varepsilon^2(s)\int_0^s r\,\d r\, \d s\\
&\le C(t+1)\Big(\E\|\pi_1\rx^\varepsilon\|^2_{H^1}+1\Big)\int_0^{t\varepsilon^{\gamma}} \close s^3\mu_\varepsilon(s)^2\d s\\
&\le C (t+1)\int_0^{t\varepsilon^{\gamma-1}}\close\close s^3\mu(s)^2\d s\le C \int_0^{\infty}\close s^3\mu(s)^2\d s\le C(t+1),
\end{aligned}
\end{equation}
where to obtain the third inequality we again employed the uniform-in-$\varepsilon$ bound in $\Honeeps$ for $\rx^\varepsilon$  implied by~\eqref{cond:U^epsilon_0:O(|x|)}. Combining~\eqref{ineq:I^epsilon_3(t)}--\eqref{ineq:I^epsilon_3(t):lipschitz}, we obtain the bound
\begin{align*}
|\E\, I^\varepsilon_3(t)|\le c\varepsilon^{\gamma/2}(t+1)\big(\E\|A\zeps(t)\|_H^2\big)^{1/2}.
\end{align*}
This together with~\eqref{ineq:solution:|Phi|^2_(H^1_epsilon)-bound:random-initial-cond} and \eqref{ineq:react-diff:|Phi|^2_(H^1_epsilon)-bound:random-initial-cond} implies
\begin{equation} \label{ineq:epsilon->0:I_3(t)}
\begin{aligned}
\int_0^T|\E\, I^\varepsilon_3(t)|\d t &\le c\varepsilon^{\gamma/2}\int_0^T (t+1) \big(\E\|A\zeps(t)\|_H^2\big)^{1/2}\d t\le c(1+T^3)\varepsilon^{\gamma/2}.
\end{aligned}
\end{equation}
Collecting everything, it follows from~\eqref{eqn:E.Psi_1(u.bar(T),eta.bar(T))},~\eqref{ineq:epsilon->0:I_1(t)},~\eqref{ineq:epsilon->0:I_2(t)} and~\eqref{ineq:epsilon->0:I_3(t)} that
\begin{align*}
\E\Psi_0(\zeps(t),\zetaeps(t)) \le a_\f\int_0^t \|\zeps(r)\|^2_H\d r+c(1+T^3)\big(\varepsilon^{1/2}+\varepsilon^{1-\gamma} +\varepsilon^{\gamma/2}\big).
\end{align*}
By Gronwall's inequality, we obtain 
\begin{align*} 
\E\Psi_0(\zeps(t),\zetaeps(t))\le C e^{cT}\big(\varepsilon^{1/2}+\varepsilon^{1-\gamma} +\varepsilon^{\gamma/2}\big),
\end{align*}
which holds for $t\in [0,T]$ and $\gamma\in(0,1)$. To optimize the powers on $\varepsilon$, we note that
\begin{align}\label{ineq:gamma=1/3}
\max_{\gamma\in(0,1)}\min\{1/2,1-\gamma,\gamma/2\}=1/3,
\end{align}
whence
\begin{align} \label{ineq:E.Psi_1(u.bar(T),eta.bar(T))<epsilon}
\sup_{t\in [0,T]}\E\Psi_0(\zeps(t),\zetaeps(t))\le C e^{cT}\varepsilon^{1/3}.
\end{align}
This yields~\eqref{lim:epsilon->0:solutions->limit-solution}.

\noindent\textbf{Case 2.} Then $d= 3$ and $\f$ satisfies~\eqref{cond:phi:agmon}. In this case, we invoke the third assertion in~\cref{lem:react-diff:phi.Lipschit:|u(t)-u(s)|-bound} to estimate
\begin{equation} \label{ineq:I^epsilon_3(t):agmon}
\begin{aligned}
&\int_0^s \E\|u^0(t-r)- u^0(t)\|_H^2 \d r\, \d s\\
&\le c(t^2+1)\big(\E\|\pi_1\rx^\varepsilon\|^{10}_{H^1}+\E \|\pi_1\rx^\varepsilon\|^{20}_H+1\big)\int_0^{t\varepsilon^{\gamma}}\close s\mu_\varepsilon^2(s)\int_0^s r\,\d r\, \d s\\
&\le c(t^2+1)\int_0^{t\varepsilon^{\gamma}} \close s^3\mu_\varepsilon^2(s)\d s= c(t^2+1) \int_0^{t\varepsilon^{\gamma-1}}\close\close s^3\mu(s)\d s\le c(t^2+1) \int_0^{\infty}\close s^3\mu(s)\d s\le c(t^2+1).
\end{aligned}
\end{equation}
To obtain the second inequality, we employed the uniform bound~\eqref{cond:U^epsilon_0:agmon}. An argument similar to~\eqref{ineq:epsilon->0:I_3(t)} also yields the estimate
\begin{equation*}
\int_0^T|\E\, I^\varepsilon_3(t)|\d t \le c\varepsilon^{\gamma/2}\int_0^T(t^{3/2}+1)\big(\E\|A\zeps(t)\|_H^2\big)^{1/2}\d t\le c(1+T^3)\varepsilon^{\gamma/2}.
\end{equation*}
We now combine the above estimate with~\eqref{eqn:E.Psi_1(u.bar(T),eta.bar(T))},~\eqref{ineq:epsilon->0:I_1(t)}, and~\eqref{ineq:epsilon->0:I_2(t)} to obtain
\begin{align*}
\sup_{t\in [0,T]}\E\Psi_0(\zeps(t),\zetaeps(t))\le Ce^{cT}\big(\varepsilon^{1/2}+\varepsilon^{1-\gamma} +\varepsilon^{\gamma/2}\big).
\end{align*}
Using~\eqref{ineq:gamma=1/3} again, we arrive at
\begin{align*}
\sup_{t\in [0,T]}\E\Psi_0(\zeps(t),\zetaeps(t))\le C e^{cT}\varepsilon^{1/3}.
\end{align*}
This establishes limit~\eqref{lim:epsilon->0:solutions->limit-solution} when $\f$ satisfies~\eqref{cond:phi:agmon}.

\noindent\textbf{Case 3.} Then $d=1,2$. Similar to Case 2, we invoke \eqref{ineq:react-diff:phi.Lipschit:|u(t)-u(s)|-bound:agmon:d=1-2} with \eqref{cond:U^epsilon_0:agmon:d=1-2} to obtain
\begin{align*}
&\int_0^s \E\|u^0(t-r)- u^0(t)\|_H^2 \d r\, \d s\\
&\le c(t^2+1)\big(\E\|\pi_1\rx^\varepsilon\|^{2 \lceil p_0\rceil }_{H^1}+\E \|\pi_1\rx^\varepsilon\|^{4 \lceil p_0\rceil}_H+1\big)\int_0^{t\varepsilon^{\gamma}}\close s\mu_\varepsilon^2(s)\int_0^s r\,\d r\, \d s\le c(t^2+1).
\end{align*}
Reasoning as in Case 2 also produces the bound
\begin{align*}
\sup_{t\in [0,T]}\E\Psi_0(\zeps(t),\zetaeps(t))\le C e^{cT}\varepsilon^{1/3},
\end{align*}
which establishes limit~\eqref{lim:epsilon->0:solutions->limit-solution} when $d=1,2$. The proof is thus finished.

\end{proof}

\begin{remark} \label{rem:epsilon->0:lipschitz}  In the proof of ~\cref{thm:epsilon->0:solution->limit-solution:finite-time} above, we  made use of a bound for $\E\|u^0(t-r)- u^0(t)\|_H^2 $, where $0\le r\le t$ (see the last two assertions in \cref{lem:react-diff:phi.Lipschit:|u(t)-u(s)|-bound}). The deterministic analog of \cref{lem:react-diff:phi.Lipschit:|u(t)-u(s)|-bound}, part 3 can be found in \cite[Lemma 5.5]{conti2006singular}; in this setting, such a bound can be obtained by studying $\|\partial_t u^0(t)\|_H$. However, this approach is clearly not applicable in the stochastic setting due to the fact that Brownian motion does not posses a traditional
time-derivative. This indicates the crucial role played by \cref{lem:react-diff:phi.Lipschit:|u(t)-u(s)|-bound} in the stochastic setting.
\end{remark}


\subsection{Uniform moment bounds on $\nueps$} \label{sec:ergodicity:moment-bound:nu^mu}

In this subsection, we proceed to show that the unique invariant measure $\nueps$ obtained from Theorem \ref{thm:react-diff:epsilon:geometric-ergodicity} satisfies uniform exponential moment bound in $\Honeeps$. More specifically, we state the following regularity theorem whose result combined with \cref{thm:epsilon->0:solution->limit-solution:finite-time} will be employed to establish \cref{thm:epsilon->0:invariant-measures->limit-measure}.

\begin{theorem} \label{thm:exponential-bound:H^1_epsilon:nu^mu}
Let $\nueps$ be the unique invariant probability measure of $P_t^{\mueps}$. Then, for all $\beta>0$ sufficiently small,
\begin{align} \label{ineq:exponential-bound:H^1_epsilon:nu^mu}
\sup_{\varepsilon\in(0,1]}\int_{\Hzeroeps} \exp\big\{ \beta \|U\|^2_{\Honeeps}\big\}\nueps(\textup{d} U)<\infty.
\end{align}

\end{theorem}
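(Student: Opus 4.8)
## Proof proposal

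The plan is to establish the uniform exponential bound by combining the exponential supermartingale estimate \eqref{ineq:exponential-bound:H^1_epsilon} from \cref{lem:moment-boud:H^1_epsilon} with the invariance of $\nueps$, using a truncation/Fatou argument to handle the fact that we do not a priori know $\nueps$ is supported on $\Honeeps$. The key point is that every constant appearing in \eqref{ineq:exponential-bound:H^1_epsilon}, namely $\beta$, $c_1$, $C_1$, is independent of $\varepsilon$, so the resulting bound on the invariant measure will be uniform in $\varepsilon$ as well.

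First I would record the building blocks. From \cref{thm:react-diff:epsilon:geometric-ergodicity} we know $\nueps$ exists and is unique, and from \cref{lem:moment-bound:H^0_epsilon}, estimate \eqref{ineq:exponential-bound:H^0_epsilon}, integration against the invariant measure $\nueps$ (which is invariant so $\int P_t^{\mueps} f\,d\nueps = \int f\,d\nueps$) yields, after sending $t\to\infty$ in $\E_{\nueps}\exp(\beta\Psi_0(U(t)))\le e^{-c_0 t}\E_{\nueps}\exp(\beta\Psi_0(U_0)) + C_0$, the bound $\int \exp(\beta\Psi_0)\,d\nueps \le C_0/(1-e^{-c_0 t})$ for each $t$; more carefully one argues $\int\exp(\beta\Psi_0)\,d\nueps \le C_0 + e^{-c_0 t}\int\exp(\beta\Psi_0)\,d\nueps$, and provided the integral is finite one concludes $\int\exp(\beta\Psi_0)\,d\nueps\le C_0/(1-e^{-c_0})$, uniformly in $\varepsilon$. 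Finiteness itself follows because $\Psi_0$ is continuous and bounded on bounded sets of $\Hzeroeps$ and one can first run the argument with $\exp(\beta\Psi_0)$ replaced by $\exp(\beta\Psi_0)\wedge M$ and let $M\to\infty$ via monotone convergence, using that the right-hand side of \eqref{ineq:exponential-bound:H^0_epsilon} does not see $M$. This establishes the $\Hzeroeps$-level uniform exponential bound on $\nueps$.

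Next I would upgrade to $\Honeeps$. The issue is that \eqref{ineq:exponential-bound:H^1_epsilon} requires $U_0\in L^2(\Omega;\Honeeps)$, whereas $\nueps$ is only known to live on $\Hzeroeps$. To bypass this, one uses that $\nueps$ is invariant and that the dynamics instantaneously regularizes: for $\nueps$-a.e. initial condition $U_0\in\Hzeroeps$, the solution $U(t;U_0)$ lies in $\Honeeps$ for a.e. $t>0$ (this is a consequence of the parabolic smoothing built into \eqref{eqn:react-diff:epsilon:mu} together with the moment bound \eqref{ineq:solution:|Phi|^2_(H^1_epsilon)-bound:random-initial-cond}, which shows $\int_0^t\E\|Au(r)\|_H^2\,dr<\infty$, hence $u(r)\in H^2\subset H^1$ for a.e. $r$; similarly for $\eta$). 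Fix such a small time $t_0>0$ with $U(t_0;U_0)\in\Honeeps$ $\nueps\otimes\P$-a.s. and with $\E_{\nueps}\|U(t_0)\|^2_{\Honeeps}<\infty$ — the latter obtained by integrating \eqref{ineq:solution:|Phi|^2_(H^1_epsilon)-bound:random-initial-cond} in $t$ over a short interval and using the $\Hzeroeps$-moment bound on $\nueps$ just established. Then set $\tilde\nu^\varepsilon := \Law(U(t_0))$ with $U_0\sim\nueps$; by invariance $\tilde\nu^\varepsilon = \nueps$, and $\tilde\nu^\varepsilon\in\Pcal r(\Honeeps)$ with finite second moment. Applying \eqref{ineq:exponential-bound:H^1_epsilon} with the random initial condition $U(t_0)\sim\nueps$ (legitimate since it has finite second $\Honeeps$-moment), integrating against $\nueps$ and using invariance at the later time together with a truncation argument as above, one gets
\begin{align*}
\int_{\Hzeroeps}\exp\big(\beta\Psi_1(U)\big)\,\nueps(\d U) \le e^{-c_1 t}\int_{\Hzeroeps}\exp\big(\beta C_1\Psi_1(U)\big)\,\nueps(\d U) + C_1,
\end{align*}
and then closes the bound by a standard interpolation/absorption: either choose $\beta$ small enough that $\beta C_1$ is still admissible and $e^{-c_1 t}$ small enough to absorb (rescaling $\beta\mapsto\beta/C_1$ if needed), or iterate. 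Since $\|U\|^2_{\Honeeps}\le c\,\Psi_1(U)$ with $c$ independent of $\varepsilon$, this yields \eqref{ineq:exponential-bound:H^1_epsilon:nu^mu}.

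The main obstacle is the regularity bootstrap, i.e. rigorously justifying that $\nueps$ may be treated as supported on $\Honeeps$ with a finite second moment there, so that \cref{lem:moment-boud:H^1_epsilon} applies with a random initial condition. This is exactly the kind of step that requires care: one must combine the instantaneous $H^1$-smoothing of \eqref{eqn:react-diff:epsilon:mu} with the already-established $\Hzeroeps$ exponential bound on $\nueps$, and the truncation (Fatou / monotone convergence) device to avoid circularity, all while tracking that no constant picks up $\varepsilon$-dependence — the $\varepsilon^{-1}$ factors appearing in \eqref{ineq:<T_epsilon.eta,eta>} only help (they make the memory dissipation stronger), and the parabolic smoothing constant for $u$ does not involve $\varepsilon$, so uniformity is preserved throughout.
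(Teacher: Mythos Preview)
Your closing argument---using invariance together with the exponential Lyapunov bound \eqref{ineq:exponential-bound:H^1_epsilon} and a truncation/monotone convergence device---is fine, and matches what the paper does in its second step. The gap is in the regularity bootstrap you outline to get there.

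The ``instantaneous regularization'' claim fails for the memory component. The variable $\eta$ solves a pure transport equation, cf.~\eqref{eqn:eta:Cauchy-problem}, and has no smoothing mechanism whatsoever. Concretely, from the representation \eqref{form:eta(t)-representation} one has $\eta(t,s)=\eta_0(s-t)+\int_0^t u(t-r)\,\d r$ for $s>t$, so the large-$s$ tail of $\eta(t,\cdot)$ is simply a translate of $\eta_0$. If $\eta_0\in\Mzeroeps\setminus\Moneeps$, then $\int_t^\infty\mueps(s)\|A\eta(t,s)\|_H^2\,\d s$ contains the term $\int_0^\infty\mueps(s+t)\|A\eta_0(s)\|_H^2\,\d s$, which is $e^{-\delta t/\varepsilon}\|\eta_0\|^2_{\Moneeps}$ at best and hence still infinite. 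So $\eta(t)\notin\Moneeps$ for any $t>0$, and you cannot conclude that $U(t_0)\in\Honeeps$ for $\nueps$-a.e.\ $U_0$. The estimate \eqref{ineq:solution:|Phi|^2_(H^1_epsilon)-bound:random-initial-cond} you invoke already \emph{assumes} $U_0\in L^2(\Omega;\Honeeps)$, so citing it here is circular; and \eqref{ineq:E.int_0^t|A^(1/2)u^epsilon(t)|-bound} only gives $u(r)\in H^1$ and $\eta(r)\in\Mzeroeps$, not $\eta(r)\in\Moneeps$.

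The paper avoids this entirely by never trying to smooth a generic $\Hzeroeps$ initial datum. Instead it runs Krylov--Bogoliubov from the \emph{zero} initial condition (trivially in $\Honeeps$), and shows that the time-averaged laws $\nueps_T$ of \eqref{form:nu.time-average} are tight in $\Hzeroeps$ via the compact embedding $\Z^1_{\mueps}=H^1\times\Ecal^1_{\mueps}\hookrightarrow\Hzeroeps$. The nontrivial ingredient here is \cref{lem:T:bound}, which supplies uniform-in-$t$ bounds on $\|\Tcal_{\mueps}\etaeps(t;0)\|_{\Mzeroeps}$ and the tail function $\T^{\mueps}_{\etaeps(t)}$---these are exactly what is needed to control the $\Ecal^1_{\mueps}$ part of the norm, and they rely on the explicit representation \eqref{form:eta(t)-representation} with $\eta_0=0$. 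By uniqueness, the weak limit is $\nueps$, and passing the $\Honeeps$ moment bound (valid along the sequence since we started at zero) through the weak limit via monotone convergence gives $\nueps(\Honeeps)=1$ with $\int\|U\|^2_{\Honeeps}\,\nueps(\d U)<\infty$. Only then does the paper invoke \eqref{ineq:exponential-bound:H^1_epsilon}, exactly as you propose.
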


The proof of \cref{thm:exponential-bound:H^1_epsilon:nu^mu} essentially consists of two steps: first, we prove that $\nueps(\Honeeps)=1$. Then, we establish the exponential moment bound \eqref{ineq:exponential-bound:H^1_epsilon:nu^mu} by invoking the result from \cref{lem:moment-boud:H^1_epsilon}. We remark that while it is a consequence of \cref{thm:react-diff:epsilon:geometric-ergodicity} that $\nueps$ is full in $\Hzeroeps$, it is not a priori clear from the moment estimates in \cref{sec:moment} that $\nueps$ is also supported in $\Honeeps$. To overcome the regularity issue, we shall follow along the line of the existence of an invariant measure presented in \cite[Section 4]{glatt2024paperII}, while making use of the compact embedding $Z^1_{\mueps}\subset \Hzeroeps$. Here we recall the space $Z^1_{\mueps}=H^1\times \Ecal_{\mueps}^1$ from \eqref{form:space:H_epsilon}. We introduce the following family of measures $\{\numu_T\}_{T>0}$ given by
\begin{equation} \label{form:nu.time-average}
\nueps_T(\cdot)=\frac{1}{T}\int_0^T\close P_t^{\mueps}(0,\cdot)\d t.
\end{equation}
We now state an auxiliary result giving stronger bounds in $\Tcal_\mu$ and $\T^\mu$ uniform in time $t$, where $\Tcal_\mu$ and $\T^\mu$ are as in \eqref{form:Tcal} and \eqref{form:tailfunction}, respectively.

\begin{lemma} \label{lem:T:bound}
Under the same hypothesis of~\cref{prop:well-posed}, let $(\ueps(t;0),\etaeps(t;0))$ be the solution of \eqref{eqn:react-diff:epsilon:mu} with zero initial condition in $\Hzeroeps$. Then there exists a constant $c$ independent of $t$ such that the following bounds hold for all $t\geq0$:
$$\E\| \Tcal_{\mueps}\etaeps(t;0)\|^2_{\Mzeroeps}\le c\qquad\text{and}\qquad \E\sup_{r\ge 1}r\T^{\mueps}_{\etaeps(t)}(r)\le c.$$
\end{lemma}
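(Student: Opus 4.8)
The plan is to use the explicit representation formula \eqref{form:eta(t)-representation}. With zero initial history $\eta_0=0$, the solution $\eta(t):=\etaeps(t;0)$ of the transport equation satisfies $\eta(t,s)=\int_0^{\min(s,t)}\ueps(t-r)\,\d r$; in particular $\partial_s\eta(t,s)=\ueps(t-s)\mathbf{1}_{\{s<t\}}$ (so $\eta(t)\in\dom(\Tcal_{\mueps})$, since $\eta(t,0)=0$ and $\ueps\in L^2_{\mathrm{loc}}([0,\infty);H^1)$), and by Cauchy--Schwarz one has the pathwise bound $\|A^{1/2}\eta(t,s)\|_H^2\le \min(s,t)\int_{(t-s)_+}^{t}\|A^{1/2}\ueps(\rho)\|_H^2\,\d\rho\le s\int_{(t-s)_+}^{t}\|A^{1/2}\ueps(\rho)\|_H^2\,\d\rho$ for every $s>0$. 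The only input I would import is the uniform second-moment bound $C_*:=\sup_{\varepsilon\in(0,1]}\sup_{\rho\ge0}\E\|A^{1/2}\ueps(\rho)\|_H^2<\infty$, which follows from the exponential moment estimate \eqref{ineq:exponential-bound:H^1_epsilon} of \cref{lem:moment-boud:H^1_epsilon} applied with $U_0=0$, together with Jensen's inequality and $\Psi_1\ge\tfrac12\|A^{1/2}u\|_H^2$.

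For the first bound, since $\Tcal_{\mueps}\eta(t)=-\partial_s\eta(t,\cdot)$ we get $\|\Tcal_{\mueps}\eta(t)\|_{\Mzeroeps}^2=\int_0^t\mueps(s)\|A^{1/2}\ueps(t-s)\|_H^2\,\d s$; taking expectations and using $\E\|A^{1/2}\ueps(t-s)\|_H^2\le C_*$ gives $\E\|\Tcal_{\mueps}\eta(t)\|_{\Mzeroeps}^2\le C_*\int_0^\infty\mueps(s)\,\d s=C_*\varepsilon^{-1}\int_0^\infty\mu(\sigma)\,\d\sigma$, which is finite (by the exponential decay of $\mu$ in \nameref{cond:mu}) and independent of $t$.

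For the tail functional, split $\T^{\mueps}_{\eta(t)}(r)$ into the integrals over $(0,1/r)$ and $(r,\infty)$ and bound $r$ times each one \emph{pathwise}, uniformly in $r\ge1$. On $(0,1/r)\subseteq(0,1)$ we have $\mueps(s)\le\mueps(0)$ and $\|A^{1/2}\eta(t,s)\|_H^2\le s\int_{(t-1)_+}^{t}\|A^{1/2}\ueps(\rho)\|_H^2\,\d\rho$, hence $r\int_0^{1/r}\|A^{1/2}\eta(t,s)\|_H^2\mueps(s)\,\d s\le\tfrac12\mueps(0)\int_{(t-1)_+}^{t}\|A^{1/2}\ueps(\rho)\|_H^2\,\d\rho$. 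On $(r,\infty)$ the exponential decay \eqref{ineq:mu_epsilon} gives $\mueps(s)\le\mueps(0)e^{-\delta s/(2\varepsilon)}e^{-\delta r/(2\varepsilon)}$ for $s\ge r$, so the growing prefactor $r$ is absorbed: $r\int_r^\infty\|A^{1/2}\eta(t,s)\|_H^2\mueps(s)\,\d s\le\mueps(0)\bigl(\sup_{\rho\ge0}\rho e^{-\delta\rho/(2\varepsilon)}\bigr)\int_0^\infty\|A^{1/2}\eta(t,s)\|_H^2e^{-\delta s/(2\varepsilon)}\,\d s$. Taking $\sup_{r\ge1}$ (still a pathwise inequality) and then $\E$: the near-zero term is bounded by $\tfrac12\mueps(0)\,C_*$ uniformly in $t$, while for the tail term one uses $\E\|A^{1/2}\eta(t,s)\|_H^2\le C_* s^2$ (from the pathwise bound, since the interval $[(t-s)_+,t]$ has length $\min(s,t)\le s$) against the weight $e^{-\delta s/(2\varepsilon)}$, which integrates to a finite $t$-independent constant. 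Summing the two pieces yields $\E\sup_{r\ge1}r\T^{\mueps}_{\eta(t)}(r)\le c$.

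The main obstacle is that the estimate is on $\E\sup_r$, not $\sup_r\E$, so a naive interchange fails; the pathwise bounds above are designed precisely to sidestep this, and the key mechanism is that the exponential decay of $\mueps(s)$ lets one trade the prefactor $r$ in the definition of the tail functional for the quadratic growth in $s$ of $\E\|A^{1/2}\eta(t,s)\|_H^2$, which is integrable against $\mueps$. A secondary point is that uniformity in $t$ rests on the uniform-in-time moment $C_*$, which is why one must invoke the exponential moment estimate of \cref{lem:moment-boud:H^1_epsilon} rather than the linearly growing bound \eqref{ineq:solution:|Phi|^2_(H^1_epsilon)-bound:random-initial-cond}. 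The resulting constant $c$ does depend on $\varepsilon$ (through $\mueps(0)=\mu(0)\varepsilon^{-2}$ and the rate $\delta/\varepsilon$), consistent with the statement, which only asserts independence of $t$.
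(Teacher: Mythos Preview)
Your proof is correct, but it proceeds along a different route from the paper's. The paper does not use the pointwise-in-time moment bound $C_*=\sup_\rho\E\|A^{1/2}\ueps(\rho)\|_H^2<\infty$; instead it introduces an auxiliary lemma (\cref{lem:int_0^t e^(delta/epsilon)|A^(1/2)u|ds}) establishing the exponentially weighted bound $\E\int_0^t e^{qr}\|A^{1/2}\ueps(r)\|_H^2\,\d r\le \|U_0\|^2_{\Hzeroeps}+ce^{qt}$ for $0<q\le\delta$, and then feeds this into both estimates after a change of variable. For the tail functional, rather than splitting $(0,1/r)$ and $(r,\infty)$ and treating them separately, the paper factors the pathwise bound as $\|A^{1/2}\eta(t,s)\|_H^2\le s\,e^{\delta s/2}\cdot I(s\wedge t)$ with $I(\sigma)=e^{-\delta\sigma/2}\int_0^\sigma\|A^{1/2}\ueps(t-r')\|_H^2\,\d r'$, uses the elementary identity \eqref{eq:tech:2} to dominate $I(s\wedge t)$ by the $s$-independent quantity $\int_0^t e^{-\delta r'/2}\|A^{1/2}\ueps(t-r')\|_H^2\,\d r'$, and then bounds $r\int_{(0,1/r)\cup(r,\infty)}s\,e^{-\delta s/2}\,\d s$ uniformly in $r\ge 1$ in one stroke. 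Your approach is more elementary in that it bypasses both the auxiliary lemma and the identity \eqref{eq:tech:2}, relying only on the $H^1$ exponential moment \eqref{ineq:exponential-bound:H^1_epsilon} already proved in \cref{lem:moment-boud:H^1_epsilon}; the trade-off is that you must treat the two pieces of the tail functional by separate mechanisms. Both arguments correctly produce a $t$-independent (but $\varepsilon$-dependent) constant.
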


\begin{remark}
It is important to point out that the constant $c$ appearing in the results of \cref{lem:T:bound} may depend on $\varepsilon$ as $\varepsilon\to 0$. Nevertheless, this does not affect the argument that $\nueps(\Honeeps)=1$ presented in the proof of \cref{thm:exponential-bound:H^1_epsilon:nu^mu}.
\end{remark}

Assuming that \cref{lem:T:bound} is true, we shall now give the proof of \cref{thm:exponential-bound:H^1_epsilon:nu^mu} giving the uniform exponential moment bound on $\nueps$.

\begin{proof}[Proof of \cref{thm:exponential-bound:H^1_epsilon:nu^mu}] First of all, we establish that $\nueps(\Honeeps)=1$. To see this, recalling the space $\Z^1_{\mueps}=H^1\times \Ecal^1_{\mueps}$ as in \eqref{form:space:H_epsilon}, since $\Ecal^1_{\mueps}$ is compactly embedded into $\Mzeroeps$  \cite{gatti2004exponential,joseph1989heat,joseph1990heat}, it is clear that the following bounded set
$$\mathcal{B}_R=\{(u,\eta)\in \Z^1_{\mueps} :\|(u,\eta)\|_{\Z^1_{\mueps} }\le R\},$$
is precompact in $\Hzeroeps=H\times \Mzeroeps$. Using Markov's inequality, we estimate
\begin{align}\label{eq:time:avg}
\nueps_t(\mathcal{B}_R^c)=\frac{1}{t}\int_0^t P^{\mueps}_s(0;\mathcal{B}_R^c)\d s\le \frac{1}{t}\int_0^t \frac{\E\|A^{1/2}\ueps(s)\|^2_H+\E\|\etaeps(s)\|^2_{\Ecal^1_{\mueps} }}{R^2}\d s.
\end{align}
To bound the term $\E\|A^{1/2}\ueps(s)\|^2_H$, we invoke \eqref{ineq:E.int_0^t|A^(1/2)u^epsilon(t)|-bound} with zero initial condition to deduce
\begin{equation}\label{ineq:int_0^t|A^(1/2)u^epsilon|ds<t}
\int_0^t\E\|A^{1/2}\ueps(r)\|^2_H\d r\le C(1+t).
\end{equation}
With regards to the second term, $\E\|\etaeps(s)\|^2_{\Ecal^1_{\mueps}}$, appearing in \eqref{eq:time:avg}, thanks to \cref{lem:moment-boud:H^1_epsilon} and \cref{lem:T:bound}, we infer the existence of a positive constant $c$, independent of $t$, $R$, and such that
    $$
        \int_0^t \E\|\etaeps(s)\|^2_{\Ecal^1_{\mueps} }\d s\le ct. 
    $$
It follows that there exists $t_R>1$, sufficiently large, such that 
$$\nueps_t(\mathcal{B}_R)\le \frac{c}{R^2},$$
for all $t\geq t_R$. This, in turn, implies that $\{\nueps_t\}_{t>0}$ is tight in $\Hzeroeps$. An application of Krylov-Bogoliubov's Theorem and the uniqueness of $\nueps$ implies that $\nueps_t$ converges weakly to $\nueps$ as $t$ tends to infinity. On the one hand, as a consequence of weak convergence, for each $R>0$,
\begin{align*}
\int_{\Hzeroeps}\big(\|U\|^2_{\Honeeps}\mi R  \big)\nueps_t(\d U)\to \int_{\Hzeroeps}\big(\|U\|^2_{\Honeeps}\mi R  \big)\nueps(\d U),\quad t\to\infty.
\end{align*}
On the other hand, we invoke \eqref{ineq:E.int_0^t|A^(1/2)u^epsilon(t)|-bound} and \eqref{ineq:solution:|Phi|^2_(H^1_epsilon)-bound:random-initial-cond} to deduce
\begin{align*}
\int_{\Hzeroeps}\big(\|U\|^2_{\Honeeps}\mi R  \big)\nueps_t(\d U) \le \frac{1}{t}\int_0^t \E\|\ueps(s)\|^2_{H^1}+\E\|\etaeps(r)\|^2_{\Moneeps}\d r\le c,
\end{align*}
for some positive constant $c$ independent of $t$ and $R$. It follows that 
\begin{align*}
\int_{\Hzeroeps}\big(\|U\|^2_{\Honeeps}\mi R  \big)\nueps(\d U)\le c(\varepsilon),
\end{align*}
whence
\begin{align*}
\int_{\Hzeroeps}\|U\|^2_{\Honeeps}\nueps(\d U)\le c(\varepsilon),
\end{align*}
by virtue of the Monotone Convergence Theorem. In particular, this implies that $\nueps$ is full in $\Honeeps$, as claimed.

Now the $\varepsilon$-uniform estimate \eqref{ineq:exponential-bound:H^1_epsilon:nu^mu} can be established by invoking the exponential moment bound \eqref{ineq:exponential-bound:H^1_epsilon}. Since the argument is standard and can be found in literature, we omit the details and refer the reader to \cite{weinan2001gibbsian} for instance.
\end{proof}

In order to prove \cref{lem:T:bound}, we need to obtain estimates for $\ueps(t)$ in {the} $H^1$--norm. More precisely, we have the following result, whose argument is similar to that of \cite[Lemma 4.2]{glatt2024paperII}.

\begin{lemma} \label{lem:int_0^t e^(delta/epsilon)|A^(1/2)u|ds}
Under the same hypothesis of \cref{prop:well-posed}, let $\Ueps(t)=(\ueps(t),\etaeps(t))$ denote the solution of \eqref{eqn:react-diff:epsilon:mu} corresponding to initial condition $U_0\in\Hzeroeps$. Then for all $0<q\le\delta$, where $\delta$ is the constant from \eqref{ineq:<T_epsilon.eta,eta>}, there exists a positive constant $c=c(U_0,q)$ independent of $t$ and $\varepsilon$ such that 
\begin{equation}\label{ineq:int_0^t e^(delta/epsilon)|A^(1/2)u|ds}
\E\int_0^t e^{qr}\|A^{1/2}\ueps(r)\|^2_H\emph{d} r\leq\|\x\|^2_{\Hzeroeps}+ c\,e^{qt}.
\end{equation}

\end{lemma}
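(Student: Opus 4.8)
The plan is to run a weighted Lyapunov estimate for the functional $\Psi_0$ from~\eqref{form:Psi_0}, leveraging the dissipation already isolated in~\eqref{ineq:L^epsilon.Psi_0}. Concretely, apply It\^o's formula to $r\mapsto e^{qr}\Psi_0(U(r))$ along the solution of~\eqref{eqn:react-diff:epsilon:mu}. Since $\d\Psi_0(U(r)) = \L^{\mueps}\Psi_0(U(r))\d r + \la u(r),Q\d w(r)\ra_H$, one obtains
\begin{align*}
\d\big(e^{qr}\Psi_0(U(r))\big) = e^{qr}\big[q\Psi_0(U(r)) + \L^{\mueps}\Psi_0(U(r))\big]\d r + e^{qr}\la u(r),Q\d w(r)\ra_H .
\end{align*}
The crux is that the drift integrand still retains a coercive multiple of $\|A^{1/2}u\|_H^2$. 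Indeed, writing $C := a_3|\domain| + \tfrac12\Tr(QQ^*)$ and combining $q\Psi_0(u,\eta) = \tfrac{q}{2}\|u\|_H^2 + \tfrac{q}{2}(1-\kappa)\|\eta\|_{\Mzeroeps}^2$ with~\eqref{ineq:L^epsilon.Psi_0},
\begin{align*}
q\Psi_0(u,\eta) + \L^{\mueps}\Psi_0(u,\eta) \le \Big(\tfrac{q}{2} - \tfrac{\delta}{2}\Big)(1-\kappa)\|\eta\|_{\Mzeroeps}^2 + \tfrac{q}{2}\|u\|_H^2 - \kappa\|A^{1/2}u\|_H^2 + C .
\end{align*}
For $q\le\delta$ the memory term is nonpositive, and for $q$ in the stated range (using Poincar\'e $\|u\|_H^2\le\alpha_1^{-1}\|A^{1/2}u\|_H^2$, shrinking $q$ below $\kappa\alpha_1$ if necessary) the $u$-terms are bounded by $-\tfrac{\kappa}{2}\|A^{1/2}u\|_H^2$, so the whole integrand is $\le -\tfrac{\kappa}{2}\|A^{1/2}u\|_H^2 + C$.

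Next I would take expectations. To handle the local martingale rigorously, localize with stopping times $\tau_n = \inf\{r\ge0 : \|u(r)\|_H > n\}$; on $[0,t\wedge\tau_n]$ the stochastic integral is a genuine martingale, so
\begin{align*}
e^{q(t\wedge\tau_n)}\E\Psi_0(U(t\wedge\tau_n)) + \tfrac{\kappa}{2}\E\int_0^{t\wedge\tau_n} e^{qr}\|A^{1/2}u(r)\|_H^2\d r \le \E\Psi_0(U_0) + C\int_0^t e^{qr}\d r .
\end{align*}
Passing $n\to\infty$ via monotone convergence on the integral term and Fatou on the boundary term — legitimate since~\eqref{ineq:E.int_0^t|A^(1/2)u^epsilon(t)|-bound} of~\cref{lem:moment-bound:H^0_epsilon} already gives $\E\int_0^t\|A^{1/2}u(r)\|_H^2\d r<\infty$ — then dropping the nonnegative term $e^{qt}\E\Psi_0(U(t))$ and using $\int_0^t e^{qr}\d r\le q^{-1}e^{qt}$ together with $\E\Psi_0(U_0)\le\tfrac12\|\x\|_{\Hzeroeps}^2$, one arrives at~\eqref{ineq:int_0^t e^(delta/epsilon)|A^(1/2)u|ds} with $c = c(\x,q)$ obtained by absorbing the factor $\tfrac{2}{\kappa}$ into a constant depending on $U_0$ (here $e^{qt}\ge1$ is used).

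Finally, $\varepsilon$-uniformity is automatic: the only $\varepsilon$-dependence entering~\eqref{ineq:L^epsilon.Psi_0} is through~\eqref{ineq:<T_epsilon.eta,eta>}, whose dissipation rate $\delta/\varepsilon$ satisfies $\delta/\varepsilon\ge\delta$ for $\varepsilon\in(0,1]$, so replacing it by $\delta$ only weakens the estimate while purging $\varepsilon$ from every constant; all remaining constants ($\kappa,\alpha_1,\delta,a_3|\domain|,\Tr(QQ^*)$) are manifestly $\varepsilon$-independent. I expect the only genuinely delicate points to be (i) checking the super-Lyapunov inequality, i.e.\ that the extra $q\Psi_0$ is dominated by the available dissipation for $q$ in the admissible range, and (ii) the localization argument needed to take expectations of the It\^o formula — both of which are controlled by the a priori bounds already in hand. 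The remainder is routine bookkeeping.
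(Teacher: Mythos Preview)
Your overall strategy --- It\^o's formula applied to $e^{qr}\Psi_0(U(r))$ --- is exactly the paper's, and the localization/Fatou argument you sketch is more careful than the paper's treatment and is fine.

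The gap is in how you absorb $\tfrac{q}{2}\|u\|_H^2$. You invoke Poincar\'e to feed it into $\kappa\|A^{1/2}u\|_H^2$, which succeeds only when $q\le\kappa\alpha_1$ (at best $q<2\kappa\alpha_1$ if you accept a smaller retained coefficient). But the lemma asserts the bound for \emph{every} $q\in(0,\delta]$, nothing in the hypotheses relates $\delta$ to $\kappa\alpha_1$, and the lemma is subsequently applied with $q=\delta$ in the proof of \cref{lem:T:bound}. Your parenthetical ``shrinking $q$ below $\kappa\alpha_1$ if necessary'' is therefore not permitted: $q$ is given, not yours to shrink.

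The remedy is to avoid the already-weakened estimate~\eqref{ineq:L^epsilon.Psi_0}, which has discarded the nonlinear dissipation. Go back to \nameref{cond:phi:2} directly: $\langle\f(u),u\rangle_H\le -a_2\|u\|_{L^{p_0+1}}^{p_0+1}+a_3|\domain|$. Since $p_0>1$, Young's inequality gives
\[
\tfrac{q}{2}\|u\|_H^2 - a_2\|u\|_{L^{p_0+1}}^{p_0+1} \le C(q,a_2,|\domain|),
\]
valid for \emph{any} $q>0$, at the cost of a $q$-dependent additive constant. This leaves the full $-\kappa\|A^{1/2}u\|_H^2$ untouched, and the rest of your argument then goes through for the entire range $q\le\delta$. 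This is precisely what the paper does.
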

\begin{proof}
We apply Ito's formula to $e^{qt}\Psi_0(\Ueps(t))$ to see that
\begin{align*}
\d \big(e^{qt}\Psi_0(\Ueps(t))\big)&=e^{qt}\Big[q\Psi_0(\Ueps(t))-\kappa\|A^{1/2}\ueps(t)\|^2_H+(1-\kappa)\la \Tcal_{\mueps}\etaeps(t),\etaeps(t)\ra_{\Mzeroeps}\\
&\qquad+\la\f(\ueps(t)),\ueps(t)\ra_H+\frac{1}{2}\Tr(QQ^*)\Big]\d t +e^{qt}\la \ueps(t),Q\d w(t)\ra_H.
\end{align*}
Thanks to \eqref{ineq:<T_epsilon.eta,eta>} and the assumption $q\le\delta$, we have
$$\frac{1}{2}(1-\kappa)q\|\eta\|^2_{\Mzeroeps}+ (1-\kappa)\la \Tcal_{\mueps}\eta,\eta\ra_{\Mzeroeps}\le 0.$$
Also, we invoke \nameref{cond:phi:2} and Young's inequality to see that
\begin{align*}
\frac{q}{2}\|u\|^2_H+\la\f(u),u\ra_H
&\le \frac{q}{2 }\|u\|^2_H-a_2\|u\|^{p_0+1}_{L^{p_0+1}(\domain)}+a_3|\domain|\\
&\le a_2\big(\frac{q}{2a_2 }\big)^{\frac{p_0+1}{p_0-1}}|\domain|+a_3|\domain|.
\end{align*}
We then infer the existence of a positive constant $c=c(q,\f,\domain,Q)$, independent of $t$ and $\varepsilon$, such that
\begin{equation*}
e^{qt}\E\Psi_0(\Ueps(t))+\E\int_0^t e^{qr}\|A^{1/2}\ueps(r)\|_H^2\d r\le \|\x\|^2_{\Hzeroeps}+c\int_0^t e^{qr}\d r\le\|\x\|^2_{\Hzeroeps}+ c\,e^{qt}.
\end{equation*}
This implies \eqref{ineq:int_0^t e^(delta/epsilon)|A^(1/2)u|ds}, as claimed.
\end{proof}
Finally, we complete the proof of \cref{thm:exponential-bound:H^1_epsilon:nu^mu} by supplying the proof of \cref{lem:T:bound}, whose argument is similar to that of \cite[Lemma 4.1]{glatt2024paperII}. {In order to prove \cref{lem:T:bound}, we recall the following elementary fact for locally integrable functions $f:\rbb\to[0,\infty)$ which are non-negative: for all $0<s\le t$ and $\delta>0$:}
\begin{align}\label{eq:tech:2}
e^{-\frac{\delta}{2}s} \int_0^s f(t-r)\d r&=\int_0^s e^{-\frac{\delta}{2}r} f(t-r)\d r-\frac{\delta}{2}\int_0^s e^{-\frac{\delta}{2}r} \int_0^r f(t-r')\d r'\d r  \notag \\
&\le \int_0^t e^{-\frac{\delta}{2}r} f(t-{r})\d r.
\end{align} 
\begin{proof}[Proof of \cref{lem:T:bound}]
Given the initial condition $\eta_0=0$, we recast \eqref{form:eta(t)-representation} as
\begin{align}
\etaeps(t,s)&=\chi_{(0,t]}(s)\int_0^s \ueps(t-r)\d r+
\chi_{(t,\infty)}(s)\int_0^t \ueps(t-r)\d r= \int_0^{s\wedge t}u(t-r)\d r.\label{form:eta(t)-representation:eta_0=0}
\end{align} 
This implies that $\Tcal_{\mueps}\etaeps(t)$ can be written explicitly as
$$\Tcal_{\mueps}\etaeps(t,s)=-\ueps(t-s)\chi_{(0,t]}(s).$$
It follows that 
$$\E\| \Tcal_{\mueps}\etaeps(t)\|^2_{\Mzeroeps}=\E\int_0^t\mueps(s)\|A^{1/2}\ueps(t-s)\|^2_H\d s=\E\int_0^t\mueps(t-s)\|A^{1/2}\ueps(s)\|^2_H\d s,$$
where we have simply made a change of variable in the last equality. By \eqref{ineq:mu_epsilon}
$$\mueps(s)\le \mueps(0)e^{-\delta s}.$$
Thus
\begin{align*}
\E\| \Tcal_{\mueps}\etaeps(t)\|^2_{\Mzeroeps}\le\mueps(0)e^{-\delta t}\E\int_0^te^{\delta s}\|A^{1/2}\ueps(s)\|^2_H\d s.
\end{align*}
In light of \cref{lem:int_0^t e^(delta/epsilon)|A^(1/2)u|ds} (with $q=\delta$), the claimed uniform-in-time on $\E\| \Tcal_{\mueps}\etaeps(t)\|^2_{\Mzeroeps}$ follows.

With regards to $\sup_{r\ge 0}r\T^{\mueps}_{\etaeps(t)}(r)$, we first note that \eqref{form:eta(t)-representation:eta_0=0} {and the Cauchy-Schwarz inequality together yield} 
\begin{align*}
\|A^{1/2}\etaeps(t,s)\|^2_H&= \int_0^{s\mi t}\close\int_0^{s\mi t}\close \la A^{1/2}\ueps(t-r_1),A^{1/2}\ueps(t-r_2)\ra_H \d r_1\d r_2\\
&\le (s\mi t)\int_0^{s\mi t}\close\|A^{1/2}\ueps(t-r')\|^2_H\d r'\\
&\le  s\int_0^{s\mi t}\close\|A^{1/2}\ueps(t-r')\|^2_H\d r'.
\end{align*}
Now for $s\in[0,t]$, let
    \[
        I(s):=e^{-\frac{\delta}{2}s}\int_0^{s}\|A^{1/2}\ueps(t-r')\|_H\d r'.
    \]
Recalling the expression of $\T^{\mueps}_{\etaeps(t)}$ in \eqref{form:tailfunction} and the fact that $\mueps(s)\le \mueps(0)e^{-\delta s}$, we may estimate for all $r\ge 1$ as follows:
\begin{align*}
r\T^{\mueps}_{\etaeps(t)}(r)&= r\int_{(0,\frac{1}{r})\cup(r,\infty)}\close\close\mueps(s)\|A^{1/2}\etaeps(t,s)\|^2_H\d s\\
&\le r\mueps(0)\int_{(0,\frac{1}{r})\cup(r,\infty)}\close \close e^{-\delta s}s\int_0^{s\mi t}\close\|A^{1/2}\ueps(t-r;)\|^2_H\d r'\,\d s\\
&=r\mueps(0)\int_{(0,\frac{1}{r})\cup(r,\infty)}\close\close s\,e^{-\frac{\delta}{2}s} e^{-\frac{\delta}{2}s} \int_0^{s\mi t}\close\|A^{1/2}\ueps(t-r')\|^2_H\d r'\,\d s\\
&\le r\mueps(0)\int_{(0,\frac{1}{r})\cup(r,\infty)}\close\close s\,e^{-\frac{\delta}{2}s}  e^{-\frac{\delta}{2}(s\mi t)} \int_0^{s\mi t}\close\|A^{1/2}\ueps(t-r')\|^2_H\d r'\,\d s\\
&=r\mueps(0)\int_{(0,\frac{1}{r})\cup(r,\infty)}\close\close s\,e^{-\frac{\delta}{2}s}{I(s\mi t)}\d s.
\end{align*}
By \eqref{eq:tech:2}, we have
$$I(s\mi t)\le \int_0^t e^{-\frac{\delta}{2}r'} \|A^{1/2}\ueps(t-r')\|^2_H\d r'.$$
It follows that
$$ r\T_{\etaeps(t)}^{\mueps}(r)\le r\mueps(0)\int_{(0,\frac{1}{r})\cup(r,\infty)}\close\close s\,e^{-\frac{\delta}{2}s} \d s \int_0^t e^{-\frac{\delta}{2} r'}\|A^{1/2}\ueps(t-r')\|^2_H\d r'.$$
For $r\ge 1$, we have
\begin{align*}
r\int_{(0,\frac{1}{r})\cup(r,\infty)}\close\close s\,e^{-\frac{\delta}{2}s} \d s<c(\delta)<\infty.
\end{align*}
Note that $c(\delta)$ is independent of $r$. Thus, taking supremum on $r\ge1$ yields
\begin{align*}
\sup_{r\ge 1} r\T_{\etaeps(t)}^\mu(r)&\le \sup_{r\ge 1} r \mueps(0)\int_{(0,\frac{1}{r})\cup(r,\infty)}\close\close s\,e^{-\frac{\delta}{2}s}\d s\int_0^t e^{-\frac{\delta}{2}r'}\|A^{1/2}\ueps(t-r')\|^2_H\d r'\\&\le c(\mueps) \int_0^t e^{-\frac{\delta}{2}r'}\|A^{1/2}\ueps(t-r')\|^2_H\d r'\\
&=c(\mueps)e^{-\frac{\delta}{2}t} \int_0^t e^{\frac{\delta}{2}r'}\|A^{1/2}\ueps(r')\|^2_H\d r'.
\end{align*}
Taking expectation on both sides and employing \cref{lem:int_0^t e^(delta/epsilon)|A^(1/2)u|ds} with $q=\delta/2$, we obtain the estimate
\begin{align*}
\E\sup_{r\ge 1}r\T^{\mueps} _{\etaeps(t)}(r)&\le  c(
\mueps),
\end{align*}
where $c(\mueps)$ is independent of $t$, as desired.
\end{proof}


\subsection{Short memory limit over an infinite time-horizon} \label{sec:epsilon->0:large-time}

We now turn our attention to the main result of the paper, namely we will prove that the marginal measure $\pi_1^{-1}\nueps$ converges to $\nu^0$, the invariant probability of~\eqref{eqn:react-diff} as $\varepsilon\to 0$. The general approach that we apply here has been recently developed in previous works \cite{cerrai2020convergence,foldes2015ergodic,foldes2019large,glatt2021mixing} and allows the proof of \cref{thm:epsilon->0:invariant-measures->limit-measure} to be summarized in three steps.

\hfill

\noindent\emph{Step 1}. Demonstrate that for any time $t>0$, the process $\Phieps(t)$ converges to $\Phizero(t)$ in $\Hzeroeps-$norm provided that the initial conditions are identically distributed from suitable random variables; this is guaranteed by~\cref{thm:epsilon->0:solution->limit-solution:finite-time}.

\hfill

\noindent\emph{Step 2}. Deduce that $\Phieps(t)$  converges to $\Phizero(t)$ with respect to the distance-like function $ d_{N,\beta}^{\mueps}$ given by \eqref{form:d_N,beta:mu} (see~\cref{lem:epsilon->0:alpha_0(solutions-limit-solution)->0} below). This limit is established by having access to exponential moment bounds that are established later in~\cref{sec:0-equation} and~\cref{sec:nu^(0-epsilon)}.

\hfill

\noindent\emph{Step 3}. Finally, combine invariance and the geometric ergodicity of for~\eqref{eqn:react-diff:epsilon:mu} guaranteed by \cref{thm:react-diff:epsilon:geometric-ergodicity}, to deduce that $\W_{d_{N,\beta}}(\pi_1^{-1}\nueps,\nu^0)$ is bounded by $\W_{d_{N,\beta}^{\mueps}}(\nueps,\nuzero)$. Due to \cref{lem:epsilon->0:alpha_0(solutions-limit-solution)->0}, the latter quantity converges to zero as $\varepsilon\to 0$, and subsequently $\W_{d_{N,\beta}}(\pi_1^{-1}\nueps,\nu^0)$ as well.

\hfill

The crucial lemma that is invoked in \textit{Step 2} asserts the convergence of $\Phieps(t)$ toward $\Phizero(t)$ in $d^{\mueps}_{N,\beta}$. We state this result now, but defer its proof to the end of the section.

\begin{lemma} \label{lem:epsilon->0:alpha_0(solutions-limit-solution)->0}
Under the same hypothesis of \cref{thm:epsilon->0:invariant-measures->limit-measure}, suppose that $\rx^\varepsilon=(\pi_1\rx^\varepsilon,\pi_2\rx^\varepsilon)\in L^2(\Omega;\Honeeps)$ is a random variable distributed as~$\nuzero$, the invariant probability for~\eqref{eqn:react-diff:eta^0_epsilon} as in~\cref{thm:existence-inv-measure:nu^(0-epsilon)}. Let $\Phieps (t)=(\ueps (t),\etaeps(t))$ and $\Phizero(t)= (u^0(t),\etazero(t))$ respectively be the solutions of~\eqref{eqn:react-diff:epsilon:mu} and~\eqref{eqn:react-diff:eta^0_epsilon} both with initial condition $\rx^\varepsilon$. Then, for all $N$ sufficiently large and $\beta$ sufficiently small independent of $\varepsilon$, the following holds for all $T>0$:
\begin{equation} \label{lim:epsilon->0:alpha_0(solutions-limit-solution)->0}
\sup_{0\le t\le T}\E\, d^{\mueps}_{N,\beta} \big(\Phieps(t),\Phizero(t)\big)\le C e^{cT}\varepsilon^{1/12},\quad\text{as}\quad \varepsilon\to 0,
\end{equation}
where $d^{\mueps}_{N,\beta} $ is the distance defined in~\eqref{form:d_N,beta:mu}, $c$ and $C$ are positive constants independent of $T$ and $\varepsilon$.
\end{lemma}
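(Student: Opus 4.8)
The plan is to reduce the bound on $\E\, d^{\mueps}_{N,\beta}(\Phieps(t),\Phizero(t))$ to the $L^2$-type estimate already obtained in \cref{thm:epsilon->0:solution->limit-solution:finite-time}, paying careful attention to the fact that $d^{\mueps}_{N,\beta}$ is the square root of a product involving exponential weights. Recalling the definition \eqref{form:d_N,beta:mu}, namely $d^{\mueps}_{N,\beta}(U,\tilde U)=\sqrt{d^{\mueps}_N(U,\tilde U)[1+e^{\beta\Psi_0(U)}+e^{\beta\Psi_0(\tilde U)}]}$, I would first apply the Cauchy--Schwarz inequality in the form
\begin{align*}
\E\, d^{\mueps}_{N,\beta}(\Phieps(t),\Phizero(t)) \le \Big(\E\, d^{\mueps}_N(\Phieps(t),\Phizero(t))\Big)^{1/2}\Big(\E\big[1+e^{\beta\Psi_0(\Phieps(t))}+e^{\beta\Psi_0(\Phizero(t))}\big]\Big)^{1/2}.
\end{align*}
The second factor is bounded uniformly in $\varepsilon$ and $t$: for $\Phieps$ this follows from the exponential moment bound \eqref{ineq:exponential-bound:H^0_epsilon} in \cref{lem:moment-bound:H^0_epsilon}, together with the fact that $\rx^\varepsilon$ is distributed as $\nuzero$, whose exponential moments are controlled (this is where the forthcoming estimates in \cref{sec:nu^(0-epsilon)} on $\nuzero$ enter — one needs $\int e^{\beta'\Psi_0}\, \d\nuzero<\infty$ for some $\beta'>\beta$). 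For $\Phizero$, the analogous exponential moment bound for the memoryless system \eqref{eqn:react-diff:eta^0_epsilon} must be invoked; since $\Psi_0$ only sees the $H$-norm of $u^0$ through its first component plus the $\Mzeroeps$-norm of $\etazero$, and $\etazero$ is the image of $u^0$ under the contraction semigroup $\Tcal_{\mueps}$, this reduces to exponential moments of $\|u^0(t)\|_H$, which hold by the standard reaction-diffusion estimates (cf.\ the analog of \cref{lem:moment-bound:H^0_epsilon}). All of these are uniform in $\varepsilon$ because the decay rate $\delta/\varepsilon$ only helps.

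For the first factor, since $d^{\mueps}_N(U,\tilde U)=(N\|U-\tilde U\|_{\Hzeroeps})\wedge 1\le N\|U-\tilde U\|_{\Hzeroeps}$, I would bound
\begin{align*}
\E\, d^{\mueps}_N(\Phieps(t),\Phizero(t)) \le N\,\E\|\Phieps(t)-\Phizero(t)\|_{\Hzeroeps} \le N\Big(\E\|\Phieps(t)-\Phizero(t)\|^2_{\Hzeroeps}\Big)^{1/2},
\end{align*}
and then apply \eqref{lim:epsilon->0:solutions->limit-solution} from \cref{thm:epsilon->0:solution->limit-solution:finite-time}, which gives $\E\|\Phieps(t)-\Phizero(t)\|^2_{\Hzeroeps}\le Ce^{cT}\varepsilon^{1/3}$ uniformly for $t\le T$. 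Taking the square root produces $\varepsilon^{1/6}$, and combining with the uniformly bounded second factor yields $\varepsilon^{1/6}$, which is better than the claimed $\varepsilon^{1/12}$; the weaker exponent $1/12$ is presumably stated because the square-root structure of $d^{\mueps}_{N,\beta}$ combined with a further Cauchy--Schwarz split (needed if one insists on matching exponential weights $e^{\beta\Psi_0}$ against $e^{2\beta\Psi_0}$ moment bounds, as is done in the proof of \cref{thm:react-diff:epsilon:geometric-ergodicity}) halves the exponent once more. To be safe I would carry out the computation so that the final rate is $\varepsilon^{1/12}$, absorbing all $T$-dependence into $Ce^{cT}$. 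One must check that \eqref{lim:epsilon->0:solutions->limit-solution} is applicable here: the hypothesis requires $\rx^\varepsilon$ to satisfy the uniform moment bounds \eqref{cond:U^epsilon_0:O(|x|)}, \eqref{cond:U^epsilon_0:agmon} (or \eqref{cond:U^epsilon_0:agmon:d=1-2}); this is precisely guaranteed by \cref{thm:exponential-bound:H^1_epsilon:nu^mu}-type bounds for $\nuzero$, which will be established in \cref{sec:nu^(0-epsilon)}.

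The main obstacle is therefore not the current lemma's elementary estimates but the \emph{uniform-in-$\varepsilon$ exponential moment bounds for the auxiliary invariant measure $\nuzero$ of the system \eqref{eqn:react-diff:eta^0_epsilon}}, in both $\Hzeroeps$ and $\Honeeps$, since $\nuzero$ lives on a space whose norm depends on $\varepsilon$ through the weight $\mueps$. Establishing that $\limsup_{\varepsilon\to 0}\E_{\nuzero}[\,\cdot\,]<\infty$ for the relevant higher moments of $\pi_1$ in $H^1$ and $H$ requires the arguments deferred to \cref{sec:nu^(0-epsilon)}, which in turn rely on the fact that $\etazero$ is slaved to $u^0$ and that the memory norm $\|\etazero\|_{\Mzeroeps}$ picks up a factor of $\varepsilon$ (as in \eqref{ineq:|eta^0(t)|_(M_0^epsilon)-bound|}), so that the marginal behaves essentially like the memoryless invariant measure $\nu^0$. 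Once those moment bounds are in hand, the present lemma follows by the Cauchy--Schwarz splitting described above. I would state the proof in that order: first quote the moment bounds on $\Phieps(t;\rx^\varepsilon)$, $\Phizero(t;\rx^\varepsilon)$ and on $\nuzero$; then perform the two-step Cauchy--Schwarz; then invoke \eqref{lim:epsilon->0:solutions->limit-solution} and collect exponents.
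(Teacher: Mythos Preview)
Your approach is essentially identical to the paper's: Cauchy--Schwarz splits $\E\, d^{\mueps}_{N,\beta}$ into $(\E\, d^{\mueps}_N)^{1/2}$ times a uniformly bounded exponential-moment factor, the latter controlled for $\Phieps$ via \eqref{ineq:exponential-bound:H^0_epsilon} and for $\Phizero$ via \eqref{ineq:u^0,eta^0:exponential-estimate}, in both cases feeding in the $\nuzero$-moment bounds \eqref{lim:epsilon->0:invariant-measures:energy-exponential-bound} and \eqref{ineq:exponential-moment:nu^(0-epsilon):Psitilde_0} from \cref{sec:nu^(0-epsilon)}. Your exponent bookkeeping has a slip: your own Cauchy--Schwarz already places $(\E\, d^{\mueps}_N)^{1/2}$ on the right, so once $\E\, d^{\mueps}_N\le N(\E\|\Phieps-\Phizero\|^2_{\Hzeroeps})^{1/2}\le C e^{cT}\varepsilon^{1/6}$ you obtain $\varepsilon^{1/12}$ directly --- there is no ``further halving'' and no sharper $\varepsilon^{1/6}$ rate available from this argument.
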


Assuming that \cref{lem:epsilon->0:alpha_0(solutions-limit-solution)->0} holds, let us prove \cref{thm:epsilon->0:invariant-measures->limit-measure}.

\begin{proof}[Proof of~\cref{thm:epsilon->0:invariant-measures->limit-measure}] Fix $N,\beta>0$ such that \eqref{ineq:react-diff:epsilon:geometric-ergodicity},~\eqref{ineq:react-diff:epsilon:geometric-ergodicity:beta/2} and \eqref{lim:epsilon->0:alpha_0(solutions-limit-solution)->0} hold true for $N$ and $2\beta$. Let $\nuzero$ be the invariant probability of~\eqref{eqn:react-diff:eta^0_epsilon} as in~\cref{thm:existence-inv-measure:nu^(0-epsilon)}. Since $\pi_1^{-1}\nuzero\sim\nu^0$ by~\cref{lem:pi_1nu^(0-epsilon)=nu^0}, we apply \cref{lem:W_0(nu_1-nu_2)<W_epsilon(nu_1-nu_2)} to estimate
\begin{align*}
\W_{d_{N,\beta}}(\pi_1^{-1}\nueps,\nu^0)\le 
\W_{d^{\mueps}_{N,\beta}}(\nueps,\nuzero).
\end{align*} 
By the invariance of $\nueps$ and $\nuzero$ corresponding to $P^{\mueps}_t$ and $\Pzero_t$, respectively, we invoke~\cref{lem:W_epsilon:triangle-ineq} to see that
\begin{align*}
\W_{d^{\mueps}_{N,\beta}}(\nueps,\nuzero)&= \W_{d^{\mueps}_{N,\beta}}\big((P^{\mueps}_t)^*\nueps,(\Pzero_t)^*\nuzero\big)\\
&\le  C\Big(\W_{d^{\mueps}_{N,2\beta}}\big((P^{\mueps}_t)^*\nueps,(P^{\mueps}_t)^*\nuzero)+\W_{d^{\mueps}_{N,2\beta}}\big((P^{\mueps}_t)^*\nuzero,(\Pzero_t)^*\nuzero\big)\Big)\\
&\le Ce^{-ct}\W_{d^{\mueps}_{N,\beta}}(\nueps,\nuzero)+C\W_{d^{\mueps}_{N,2\beta}}\big((P^{\mueps}_t)^*\nuzero,(\Pzero_t)^*\nuzero\big).
\end{align*}
Note that in obtaining the final inequality above, we applied~\eqref{ineq:react-diff:epsilon:geometric-ergodicity:beta/2} for $t\ge T^*=T^*(N,\beta)$. Since $c$ and $C$ are independent of the measures $\nu^\varepsilon$, for all $\varepsilon\ge 0$, there exists a sufficiently large time, $t^*$, such that $Ce^{-ct^*}<1/2$. It follows that 
$$\W_{d^{\mueps}_{N,\beta}}(\nueps,\nuzero)\le \frac{1}{2}\W_{d^{\mueps}_{N,\beta}}(\nueps,\nuzero)+C\W_{d^{\mueps}_{N,2\beta}}\big((P^{\mueps}_t)^*\nuzero,(\Pzero_t)^*\nuzero\big),$$
for all $t\ge t^*$. Thus, thanks to~\eqref{form:W_d:mu}, we have
$$\frac{1}{2} \W_{d^{\mueps}_{N,\beta}}(\nueps,\nuzero)\le C\W_{d^{\mueps}_{N,2\beta}}\big((P^{\mueps}_t)^*\nuzero,(\Pzero_t)^*\nuzero\big)\le C\,\E\, d^{\mueps}_{N,2\beta} \big(\Phieps(t),\Phizero(t)\big),$$
where $\Phieps(0)=\Phizero(0)$ are identically distributed as $\nuzero$. Consequently, by virtue of \eqref{lim:epsilon->0:alpha_0(solutions-limit-solution)->0} from~\cref{lem:epsilon->0:alpha_0(solutions-limit-solution)->0}, we obtain 
\begin{align} \label{ineq:W_0(nu^epsilon,nu^0)<d_epsilon}
\W_{d_{N,\beta}}(\pi_1^{-1}\nueps,\nu^0)\le C\,\E\, d^{\mueps}_{N,2\beta} \big(\Phieps(t),\Phizero(t)\big) \le Ce^{ct}\varepsilon^{1/12},\quad \text{as} \quad\varepsilon\to0,
\end{align}
for all sufficiently large $t$, as desired.
\end{proof}

Finally, we logically conclude the proof of \cref{thm:epsilon->0:invariant-measures->limit-measure} by providing the proof of~\cref{lem:epsilon->0:alpha_0(solutions-limit-solution)->0}.

\begin{proof}[Proof of~\cref{lem:epsilon->0:alpha_0(solutions-limit-solution)->0}] 
Recalling \eqref{form:d_N,beta:mu}, we see from the Cauchy-Schwarz inequality that
\begin{align*}
\big{|}\E\, d^{\mueps}_{N,\beta} \big(\Phieps(t),\Phizero(t)\big)\big{|}^2
& \le \left(\E d_N^\mu(U,\Ut)\right)\left(\E\big[1+\exp\left({\beta\Psi_0(U)}\right)+\exp\left({\beta\Psi_0(\Ut)}\right)\big]\right)\\
&\le N \E\|\Phieps(t)-\Phizero(t)\|_{\Hzeroeps}\Big( 1+ \E\exp\left({\beta\Psi_0(\Phieps(t))}\right)+\E\exp\left({\beta\Psi_0(\Phizero(t))}\right) \Big)\\
&=N I_1^\varepsilon(t)(1+ I_2^\varepsilon(t)+ I_3^\varepsilon(t)).
\end{align*}
We note that $\rx^\varepsilon\sim\nuzero$ satisfies the hypothesis of \cref{thm:epsilon->0:solution->limit-solution:finite-time}, thanks to~\eqref{lim:epsilon->0:invariant-measures:H^1_epsilon-bound} and~\eqref{lim:epsilon->0:invariant-measures:energy-exponential-bound}. In view of~\cref{thm:epsilon->0:solution->limit-solution:finite-time}, we readily obtain 
$$ I_1^\varepsilon(t)\le C e^{cT}\varepsilon^{1/6},\quad\text{as}\quad\varepsilon\to 0,$$
for all $t\leq T$. To estimate $I_2^\varepsilon(t)$, we employ~\eqref{ineq:exponential-bound:H^0_epsilon} to infer that for all sufficiently small $\beta>0$ sufficiently small, independent of $\varepsilon$, we have
\begin{align*}
I_2^\varepsilon(t)= \E\exp\left({\beta\Psi_0(\Phieps(t))}\right)&\le e^{-ct} \E\exp\left({\beta\Psi_0(\rx^\varepsilon)}\right)+C\le e^{-ct} \int_{\Hzeroeps}\close \exp\left({\beta\|U\|^2_{\Hzeroeps}}\right)\nuzero(\d U)+C\le C,
\end{align*}
for all $t\leq T$ and some positive constants $c,C$ independent of $\varepsilon$ and $t$ by virtue of the $\varepsilon$-uniform exponential moment~\eqref{lim:epsilon->0:invariant-measures:energy-exponential-bound} with respect to $\nuzero$.

Similarly, let $\Psitilde_0(U)= \frac{1}{2}\kappa_1\|u\|^2_H+\frac{1}{2}\kappa_2\|\eta\|^2_{\Mzeroeps}$ be as in~\eqref{form:Psitilde_0}. Thanks to \eqref{ineq:u^0,eta^0:exponential-estimate} from~\cref{lem:epsilon->0:invariant-measure:enegery-exponential-bound}, there exist $\kappa_1, \kappa_2>0$, sufficiently small, such that
\begin{align*}
\E \exp\left({ \Psitilde_0(\Phizero(t))}\right)\le e^{-ct}\E \exp\left({ \Psitilde_0(\rx^\varepsilon)}\right)+C,
\end{align*}
for all $t\leq T$. Thus, upon choosing $\beta>0$ small enough such that
$$\beta\Psi_0(U)=\frac{1}{2}\beta\big(\|u\|^2_H+(1-\kappa)\|\eta\|^2_{\Mzeroeps}\big)\le \frac{1}{2}\kappa_1\|u\|^2_H+\frac{1}{2}\kappa_2\|\eta\|^2_{\Mzeroeps}=\Psitilde_0(U),$$
we obtain
\begin{align*}
I_3^\varepsilon(t)\le \E \exp\left( \Psitilde_0(\Phizero(t))\right)&\le e^{-ct}\E \exp\left({ \Psitilde_0(\rx^\varepsilon)}\right)+C\\
&= e^{-ct} \int_{\Hzeroeps} \exp\left({\Psitilde_0(U)}\right)\nuzero(\d U)+C\le C,
\end{align*}
for all $t\leq T$. Note that in the final relation above, we invoked the uniform exponential moment~\eqref{ineq:exponential-moment:nu^(0-epsilon):Psitilde_0} with respect to $\nuzero$.

Lastly, we collect the above estimates to arrive at
\begin{align*}
&\big{|}\E\, d^{\mueps}_{N,\beta} \big(\Phieps(t),\Phizero(t)\big)\big{|}^2\le Ce^{cT}\varepsilon^{1/6}, \quad \text{as} \quad\varepsilon\to0,
\end{align*}
for all $0\leq t\leq T$. This establishes~\eqref{lim:epsilon->0:alpha_0(solutions-limit-solution)->0}, as desired.
\end{proof}

Reasoning as in the proofs of~\cref{thm:epsilon->0:solution->limit-solution:finite-time} and~\cref{lem:epsilon->0:alpha_0(solutions-limit-solution)->0}, we also obtain the following result.

\begin{corollary}\label{cor:ineq:d_epsilon(u^epsilon_x,u^0_x)} Under the same hypothesis as \cref{lem:epsilon->0:alpha_0(solutions-limit-solution)->0}, one has that for all $N$ sufficiently large and $\beta$ sufficiently small, independently of $\varepsilon$, the following holds:
\begin{equation} \label{ineq:d_epsilon(u^epsilon_x,u^0_x)}
\sup_{0\le t\le T}\E\,d^{\mueps}_{N,\beta}\big(\Phieps(t;\x^\varepsilon),\Phizero(t;\x^\varepsilon)\big)\le Ce^{cT}\varepsilon^{1/12},
\end{equation}
for all $T$, $R$ and deterministic initial conditions $\{\x^\varepsilon\}_{\varepsilon\in (0,1)}$ such that $\|\x^\varepsilon\|_{\Honeeps}\le R$, where the positive constants $c$ and $C$ are independent of $T$ and $\varepsilon$.
\end{corollary}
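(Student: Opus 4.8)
The plan is to run the proof of \cref{lem:epsilon->0:alpha_0(solutions-limit-solution)->0} essentially verbatim, the only change being that the random initial datum distributed as $\nuzero$ is replaced by the deterministic datum $\x^\varepsilon$, for which the moment and exponential-moment inputs required there become immediate consequences of $\|\x^\varepsilon\|_{\Honeeps}\le R$. Concretely, recalling the definition~\eqref{form:d_N,beta:mu} of $d^{\mueps}_{N,\beta}$ and applying the Cauchy--Schwarz inequality exactly as in that proof, one obtains
\begin{align*}
\big|\E\,d^{\mueps}_{N,\beta}\big(\Phieps(t;\x^\varepsilon),\Phizero(t;\x^\varepsilon)\big)\big|^2
\le N\,I_1^\varepsilon(t)\,\big(1+I_2^\varepsilon(t)+I_3^\varepsilon(t)\big),
\end{align*}
where $I_1^\varepsilon(t)=\E\|\Phieps(t;\x^\varepsilon)-\Phizero(t;\x^\varepsilon)\|_{\Hzeroeps}$, $I_2^\varepsilon(t)=\E\exp(\beta\Psi_0(\Phieps(t;\x^\varepsilon)))$ and $I_3^\varepsilon(t)=\E\exp(\beta\Psi_0(\Phizero(t;\x^\varepsilon)))$. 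It then suffices to bound each $I_j^\varepsilon$ uniformly in $t\le T$.

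For $I_1^\varepsilon$ I would invoke \cref{thm:epsilon->0:solution->limit-solution:finite-time}. Its hypotheses on $\{\x^\varepsilon\}$ hold uniformly in $\varepsilon$: since $\|\x^\varepsilon\|_{\Honeeps}\le R$ deterministically and, as $0<\alpha_1\le\alpha_2\le\cdots$, there are $\varepsilon$-independent continuous embeddings $H^1\hookrightarrow H$ and $M^1_{\mueps}\hookrightarrow M^0_{\mueps}$ (with constant $\alpha_1^{-1/2}$), we get both $\E[\|\pi_1\x^\varepsilon\|^2_{H^1}+\|\pi_2\x^\varepsilon\|^2_{M^0_{\mueps}}]\le cR^2$, verifying~\eqref{cond:U^epsilon_0:O(|x|)}, and that each higher $H$- and $H^1$-moment of $\pi_1\x^\varepsilon$ appearing in~\eqref{cond:U^epsilon_0:agmon} (when $d=3$) or~\eqref{cond:U^epsilon_0:agmon:d=1-2} (when $d=1,2$) is dominated by a power of $R$. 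Hence~\eqref{lim:epsilon->0:solutions->limit-solution} applies and gives $\sup_{0\le t\le T}\E\|\Phieps(t;\x^\varepsilon)-\Phizero(t;\x^\varepsilon)\|^2_{\Hzeroeps}\le Ce^{cT}\varepsilon^{1/3}$, so by Jensen's inequality $\sup_{0\le t\le T}I_1^\varepsilon(t)\le Ce^{cT}\varepsilon^{1/6}$.

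For $I_2^\varepsilon$ I would use the exponential moment bound~\eqref{ineq:exponential-bound:H^0_epsilon} of \cref{lem:moment-bound:H^0_epsilon}: for $\beta$ small enough, $\E\exp(\beta\Psi_0(\Phieps(t)))\le e^{-c_0t}\exp(\beta\Psi_0(\x^\varepsilon))+C_0$, and since $\Psi_0(\x^\varepsilon)\le\frac12\|\x^\varepsilon\|^2_{\Hzeroeps}\le cR^2$ (again by the $\varepsilon$-uniform embedding $\Honeeps\hookrightarrow\Hzeroeps$), we obtain $I_2^\varepsilon(t)\le e^{c\beta R^2}+C_0=:C_R$ for all $t\ge0$ and $\varepsilon\in(0,1]$. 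For $I_3^\varepsilon$, which concerns the solution of the auxiliary memoryless system~\eqref{eqn:react-diff:eta^0_epsilon}, I would argue identically using the $\varepsilon$-uniform exponential estimate~\eqref{ineq:u^0,eta^0:exponential-estimate} of \cref{lem:epsilon->0:invariant-measure:enegery-exponential-bound} after choosing $\beta$ so small that $\beta\Psi_0\le\Psitilde_0$; the initial exponential moment $\exp(\Psitilde_0(\x^\varepsilon))$ is again bounded by $e^{cR^2}$, so $I_3^\varepsilon(t)\le C_R$ uniformly. Combining the three bounds yields $\sup_{0\le t\le T}\big|\E\,d^{\mueps}_{N,\beta}(\Phieps(t;\x^\varepsilon),\Phizero(t;\x^\varepsilon))\big|^2\le N C_R\,Ce^{cT}\varepsilon^{1/6}$, and taking square roots gives~\eqref{ineq:d_epsilon(u^epsilon_x,u^0_x)}. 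The only point requiring genuine care — the "main obstacle", modest as it is — is confirming that all constants entering \cref{thm:epsilon->0:solution->limit-solution:finite-time} and the two exponential-moment lemmas remain uniform in $\varepsilon$; this is automatic here precisely because $\{\x^\varepsilon\}$ is confined to one $\varepsilon$-independent ball of radius $R$ in $\Honeeps$, whose embedding into $\Hzeroeps$ and into the memory spaces $M^\beta_{\mueps}$ has $\varepsilon$-independent norm. Everything else is a line-by-line repetition of the proofs of \cref{thm:epsilon->0:solution->limit-solution:finite-time} and \cref{lem:epsilon->0:alpha_0(solutions-limit-solution)->0}.
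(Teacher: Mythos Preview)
Your proposal is correct and follows precisely the approach the paper indicates: the paper does not give a separate proof of this corollary but simply states that it follows by ``reasoning as in the proofs of \cref{thm:epsilon->0:solution->limit-solution:finite-time} and \cref{lem:epsilon->0:alpha_0(solutions-limit-solution)->0}''. You have spelled out exactly that reasoning, correctly noting that the deterministic bound $\|\x^\varepsilon\|_{\Honeeps}\le R$ supplies, via the $\varepsilon$-uniform embeddings $H^1\hookrightarrow H$ and $M^1_{\mueps}\hookrightarrow M^0_{\mueps}$, all the moment and exponential-moment inputs that were previously drawn from $\nuzero$.
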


\subsubsection{Proof of~\cref{thm:epsilon->0:sup_t.E[f(u^epsilon(t))-f(u^0(t))]}}\label{sec:proof-of-E[u^epsilon-u^0]}
To prove this theorem, we will first prove that the \eqref{lim:epsilon->0:W_0(P^epsilon_t,P^0_t)} holds by making use of the results on geometric ergodicity involving $\nu^\varepsilon$ and $\nu^0$, as well as the convergence of the first marginal, $\pi_1^{-1}\nu^\varepsilon$, towards $\nu^0$. Second, we will employ the estimate~\eqref{ineq:W_0(nu_1,nu_2):dual} to show that for any $d_{N,\beta}$-Lipschitz function, $f$, the difference in terms of $f$, as in~\eqref{lim:epsilon->0:sup_t.E[f(u^epsilon(t))-f(u^0(t))]}, can be bounded by the corresponding difference in $\W_{d_{N,\beta}}$.

 We begin by fixing $T_1>0$, to be chosen later. We invoke~\eqref{form:W_0(nu_1,nu_2)} and~\eqref{ineq:d_0(x1,x2)<d_epsilon(x1,x2)} so that
\begin{align}
\sup_{0\le t\le T_1}\W_{d_{N,\beta}}(\pi_1^{-1}(\Peps_t)^*\delta_{\x^\varepsilon},(P_t^0)^*\delta_{\pi_1 \x^\varepsilon})
&\le \sup_{0\le t\le T_1}\E\, d_{N,\beta}(\ueps(t;\x^\varepsilon),u^0(t;\pi_1\x^\varepsilon)) \notag\\
&\le\sup_{0\le t\le T_1}\E \, d^{\mueps}_{N,\beta}(\Phieps(t;\x^\varepsilon),\Phizero(t;\x^\varepsilon)).\label{ineq:W_0(P^epsilon_T*,P^0_T*)}
\end{align}
Now we recall that $\Phizero(t;\x^\varepsilon)$ is the solution of~\eqref{eqn:react-diff:eta^0_epsilon} with initial condition $\x^\varepsilon$. On the other hand, by the generalized triangle inequality~\eqref{ineq:W_0:triangle-ineq}, for each $t\ge 0$, it holds that
\begin{equation} \label{ineq:W_0(P^epsilon_t,P^0_t):triangle}
\begin{aligned}
&\W_{d_{N,\beta}}(\pi_1^{-1}(\Peps_t)^*\delta_{\x^\varepsilon},(P_t^0)^*\delta_{\pi_1 \x^\varepsilon})\\&\le C\Big(\W_{d_{N,2\beta}}(\pi_1^{-1}(\Peps_t)^*\delta_{\x^\varepsilon},\pi_1^{-1}\nueps)+\W_{d_{N,4\beta}}(\pi_1^{-1}\nueps,\nu^0)+\W_{d_{N,4\beta}}(\nu^0,(P_t^0)^*\delta_{\pi_1 \x^\varepsilon})\Big).
\end{aligned}
\end{equation}
In view of~\eqref{ineq:react-diff:geometric-ergodicity:beta/2}, we readily see that
\begin{align*}
\W_{d_{N,4\beta}}(\nu^0,(P_t^0)^*\delta_{\pi_1 \x^\varepsilon})&=\W_{d_{N,4\beta}}((P^0_t)^*\nu^0,(P_t^0)^*\delta_{\pi_1 \x^\varepsilon}) \\
&\le Ce^{-c(t/2)}\W_{d_{N,2\beta}}((P^0_{t/2})^*\nu^0,(P^0_{t/2})^*\delta_{\pi_1\x^\varepsilon})\leq Ce^{-ct}\W_{d_{N,\beta}}(\nu^0,\delta_{\pi_1\x^\varepsilon}),
\end{align*}
for $t\geq\tilde{T}$, where $\tilde{T}$ is as in \eqref{ineq:react-diff:geometric-ergodicity:beta/2}.

For an arbitrary random variable $X\sim\nu^0$, we recall \eqref{form:d_N,beta} and use~\eqref{form:W_0(nu_1,nu_2)}, Holder's inequality, the fact that $\|\x^\varepsilon\|_{\Honeeps}\le R$, and the exponential moment bound~\eqref{ineq:react-diff:exponential-bound:nu^0} for sufficiently small $\beta$ to obtain
\begin{align*}
&\big{|}\W_{d_{N,\beta}}(\nu^0,\delta_{\pi_1\x^\varepsilon})\big{|}^2\\
&\le  N\left(\E\|X-\pi_1\x^\varepsilon\|_H\right)\left[1+\exp\left({\frac{\beta}{2}\|\pi_1\x^\varepsilon\|^2_H}\right)+\E\exp\left(\frac{\beta}{2}\|X\|^2_H\right)\right] \\
&\le N \Big( \int_H\|u\|_H\nu^0(\d u)+\|\pi_1\x^\varepsilon\|_H  \Big)\left[1+\exp\left({\frac{\beta}{2}\|\pi_1\x^\varepsilon\|^2_H}\right)+\int_H \exp\left({\frac{\beta}{2}\|u\|^2_H}\right)\nu^0(\d u)\right]\notag\\
&\le c(R,N,\beta).
\end{align*}
It follows that for $t\geq \tilde{T}$ with $\tilde{T}$ given by \eqref{ineq:react-diff:geometric-ergodicity:beta/2} we have
\begin{equation} \label{ineq:W_0(P^epsilon_t,P^0_t):triangle:1}
\W_{d_{N,2\beta}}(\nu^0,(P_t^0)^*\delta_{\pi_1 \x^\varepsilon})\le Ce^{-ct},\quad t\ge \tilde{T},
\end{equation}
where $C=C(R,N,\beta)$ and $c=c(R,N,\beta)$ do not depend on $\varepsilon$ and time $t$.

Concerning $\W_{d_{N,2\beta}}(\pi_1^{-1}(\Peps_t)^*\delta_{\x^\varepsilon},\pi_1^{-1}\nueps)$, we invoke~\eqref{ineq:W_0(nu_1,nu_2)<W_epsilon(nu_1,nu_2)}  together with~\eqref{ineq:react-diff:epsilon:geometric-ergodicity:beta/2} to deduce 
\begin{align*}
\W_{d_{N,2\beta}}(\pi_1^{-1}(\Peps_t)^*\delta_{\x^\varepsilon},\pi_1^{-1}\nueps)&\le \W_{d^{\mueps}_{N,2\beta}}((\Peps_t)^*\delta_{\x^\varepsilon},\nueps)\\
&=\W_{d_{N,2\beta}^{\mueps}}((\Peps_t)^*\delta_{\x^\varepsilon},(\Peps_t)^*\nueps)\\
&\le Ce^{-ct}\W_{d_{N,\beta}^{\mueps}}(\delta_{\x^\varepsilon},\nueps),\quad 
\end{align*}
for all $t\ge T^*$, where $T^*$ is the time asserted by \cref{thm:react-diff:epsilon:geometric-ergodicity}. Now, we may argue as in the above estimate for $\W_{d_{N,2\beta}}(\nu^0,\delta_{\pi_1\x^\varepsilon})$ and employ~\eqref{form:d_N,beta:mu}, \eqref{form:W_d:mu}, and Holder's inequality to see that for an arbitrary random variable $X\sim\nueps$ we have
\begin{align*}
&\big{|}\W_{d^{\mueps}_{N,\beta}}(\delta_{\x^\varepsilon},\nueps)\big{|}^2\\
&\le  N\left(\E\|X-\x^\varepsilon\|_{\Hzeroeps}\right)\left[1+\exp\left({\frac{\beta}{2}\|\x^\varepsilon\|^2_{\Hzeroeps}}\right)+\E\exp\left({\frac{\beta}{2}\|X\|^2_{\Hzeroeps}}\right)\right]   \\
&\le N \left( \int_{\Hzeroeps}\close\|\x\|_H\nueps(\d \x)+\|\x^\varepsilon\|_{\Hzeroeps} \right)\left[1+\exp\left({\frac{\beta}{2}\|\x^\varepsilon\|^2_{\Hzeroeps}}\right)+\int_{\Hzeroeps}\close \exp\left({\frac{\beta}{2}\|\x\|^2_{\Hzeroeps}}\right)\nueps(\d \x)\right]\notag\\
&\le c(R,N,\beta).
\end{align*}
In the above right-hand side, we employed the $\varepsilon$-uniform exponential moment bound of $\nueps$ (see \eqref{ineq:exponential-bound:H^1_epsilon:nu^mu}), for sufficiently small $\beta$ that is independent of $\varepsilon$. It follows that
\begin{equation} \label{ineq:W_0(P^epsilon_t,P^0_t):triangle:2}
\W_{d_{N,2\beta}}(\pi_1^{-1}(\Peps_t)^*\delta_{\x^\varepsilon},\pi_1^{-1}\nueps)\le Ce^{-ct}\W_{d_{N,\beta}^{\mueps}}(\delta_{\x^\varepsilon},\nueps) \le Ce^{-ct},
\end{equation}
for all $t\ge T^*$, for some constants $C$ and $c$ are independent of $\varepsilon$ and $t$. 

Suppose that $T_1\ge \max\{\tilde{T},T^*\}$. Upon collecting~\eqref{ineq:W_0(P^epsilon_t,P^0_t):triangle}, \eqref{ineq:W_0(P^epsilon_t,P^0_t):triangle:1}, \eqref{ineq:W_0(P^epsilon_t,P^0_t):triangle:2}, we infer 
\begin{align*}
\W_{d_{N,\beta}}\big(\pi_1^{-1}(\Peps_t)^*\delta_{\x^\varepsilon},(P_t^0)^*\delta_{\pi_1 \x^\varepsilon}\big)&\le C\Big(e^{-cT_1}+\W_{d_{N,2\beta}}\big(\pi_1^{-1}\nueps,\nu^0\big)\Big),
\end{align*}
for all $t\ge T_1$. Together with~\eqref{ineq:W_0(P^epsilon_T*,P^0_T*)}, we arrive at the bound
\begin{align*}
&\sup_{t\ge 0}\W_{d_{N,\beta}}\big(\pi_1^{-1}(\Peps_t)^*\delta_{\x^\varepsilon},(P_t^0)^*\delta_{\pi_1 \x^\varepsilon}\big)\\
&\le C\Big(e^{-cT_1}+\W_{d_{N,2\beta}}\big(\pi_1^{-1}\nueps,\nu^0\big)\Big)+\sup_{0\le t\le T_1}\E \, d^{\mueps}_{N,\beta}(\Phieps(t;\x^\varepsilon),\Phizero(t;\x^\varepsilon)).
\end{align*}
In light of~\eqref{ineq:W_0(nu^epsilon,nu^0)<d_epsilon} and~\eqref{ineq:d_epsilon(u^epsilon_x,u^0_x)}, for all $T_1$ sufficiently large, it holds that
\begin{align*}
\sup_{t\ge 0}\W_{d_{N,\beta}}\big(\pi_1^{-1}(\Peps_t)^*\delta_{\x^\varepsilon},(P_t^0)^*\delta_{\pi_1 \x^\varepsilon}\big)\le C\big(e^{-cT_1}+e^{\tilde{c}T_1}\varepsilon^{1/12}\big).
\end{align*}
We emphasize that in the above, $C$, $c$ and $\tilde{c}$ are positive constants independent of $T_1$ and $\varepsilon$.

To conclude~\eqref{lim:epsilon->0:W_0(P^epsilon_t,P^0_t)}, fixing any $\lambda_1\in (0,1/12)$ and setting $T_1=\tilde{c}^{-1}\log (\varepsilon^{-\lambda_1})$, we deduce
\begin{align*}
\sup_{t\ge 0}\W_{d_{N,\beta}}\big(\pi_1^{-1}(\Peps_t)^*\delta_{\x^\varepsilon},(P_t^0)^*\delta_{\pi_1 \x^\varepsilon}\big)\le C\big(\varepsilon^{\frac{c}{\tilde{c}}\lambda_1}+\varepsilon^{1/12-\lambda_1}\big).
\end{align*}
This establishes the limit~\eqref{lim:epsilon->0:W_0(P^epsilon_t,P^0_t)} by taking $\lambda_1=\frac{1/12}{1+c/\tilde{c}}$ and $\lambda_*=1/12-\lambda_1$. 

Turning to~\eqref{lim:epsilon->0:sup_t.E[f(u^epsilon(t))-f(u^0(t))]}, fix a function $f\in C_b(H)$ such that $L_f=[f]_{\text{Lip}_{d_{N,\beta}}}<\infty$. By virtue of~\eqref{ineq:W_0(nu_1,nu_2):dual}, we have
\begin{align*}
\big{|}\E f(\ueps(t;\x^\varepsilon))-\E f(u^0(t;\pi_1\x^\varepsilon))\big{|}&=\big{|}\int_{H}f(u)\pi_1^{-1}(P^{\mueps}_t)^*\delta_{\x^\varepsilon}(\d u)-\int_H f(v)(P^0_t)^*\delta_{\pi_1 \x^\varepsilon}(\d v) \big{|}\\
&\le L_f\W_{d_{N,\beta}}(\pi_1^{-1}(\Peps_t)^*\delta_{\x^\varepsilon},(P_t^0)^*\delta_{\pi_1 \x^\varepsilon}).
\end{align*}
In light of~\eqref{lim:epsilon->0:W_0(P^epsilon_t,P^0_t)}, we immediately obtain limit~\eqref{lim:epsilon->0:sup_t.E[f(u^epsilon(t))-f(u^0(t))]}, thus completing the proof.
\qed



\section{Estimates for stochastically forced memoryless system}\label{sec:0-equation} 
In this section, we collect important results concerning~\eqref{eqn:react-diff} that were invoked in~\cref{sec:epsilon->0:inv-measure} to obtain the convergence of $\nueps$ towards $\nu^0$. 

We begin by establishing~\cref{thm:react-diff:geometric-ergodicity}, where we assert that $\nu^0$ is the unique invariant probability measure in $H$ for~\eqref{eqn:react-diff}. We point out that the proof of~\cref{thm:react-diff:geometric-ergodicity} is similar to that of \cref{thm:react-diff:epsilon:geometric-ergodicity} and makes use of the framework of the weak Harris theorem (see~\cite{cerrai2020convergence,hairer2008spectral,hairer2011theory}). 
For the sake of completeness, we provide a sketch of the argument after stating~\cref{thm:react-diff:geometric-ergodicity} below.

\begin{theorem} \label{thm:react-diff:geometric-ergodicity}
Assume the hypotheses of~\cref{thm:react-diff:epsilon:geometric-ergodicity}. Then for all $N$ sufficiently large and $\beta$ sufficiently small, there exists a positive constant $\tilde{T}=\tilde{T}(N,\beta)$ large enough such that the following holds for all $\nu_1,\nu_2\in \Pcal r(H)$,
\begin{equation} \label{ineq:react-diff:geometric-ergodicity:beta/2}
\W_{d_{N,\beta}} \big((P^0_t)^*\nu_1,(P^0_t)^*\nu_2\big)\le Ce^{-c t}\W_{d_{N,\beta/2}}(\nu_1,\nu_2),\quad t\ge \tilde{T},
\end{equation}
for some positive constants $c=c(N,\beta), \,C=C(N,\beta)$ independent of $\nu_1,\nu_2$ and $t$. Here, $\W_{d_{N,\beta}}$ is the Wasserstein distance as in~\eqref{form:W_0(nu_1,nu_2)}. Consequently
\begin{equation} \label{ineq:react-diff:geometric-ergodicity}
\W_{d_{N,\beta}} \big((P^0_t)^*\nu_1,(P^0_t)^*\nu_2\big)\le Ce^{-c t}\W_{d_{N,\beta}}(\nu_1,\nu_2),\quad t\ge \tilde{T}.
\end{equation}
\end{theorem}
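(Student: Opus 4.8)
The plan is to mirror the proof of \cref{thm:react-diff:epsilon:geometric-ergodicity}, dropping the memory variable entirely and replacing the extended phase space $\Hzeroeps$ by $H$ throughout. The memoryless system \eqref{eqn:react-diff} is Markovian in $H$ by itself, so the weak-Harris machinery of \cite[Theorem 4.8]{hairer2011asymptotic} applies directly once the two structural ingredients --- a $d_N$-contracting property for $P^0_t$ and $d_N$-smallness of bounded balls --- together with suitable exponential moment bounds, are in place. I would first record the moment bounds: the $H$-analogues of \cref{lem:moment-bound:H^0_epsilon} and \cref{lem:exponential-bound:H^0_epsilon:beta/2} hold for \eqref{eqn:react-diff} with $\kappa=1$ and no memory term (these are standard and in fact $\varepsilon$-free, since the memory contribution is absent); in particular one has $\E\exp(\beta\Psi(u^0(t)))\le e^{-c_0t}\E\exp(\beta\Psi(u_0))+C_0$ and the sharper bound $\E\exp(\beta\Psi(u^0(t)))\le \tilde C_0\exp(\beta\Psi(u_0)-\gamma t)$, where $\Psi(u)=\tfrac12\|u\|_H^2$. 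These follow from applying the generator of \eqref{eqn:react-diff} to $\exp(\beta\Psi)$, using \nameref{cond:phi:2} to absorb $\la\f(u),u\ra_H$, \nameref{cond:Q} to control the It\^o correction, and Gronwall, exactly as in \cref{sec:moment}.

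Next I would establish the contraction and smallness of $P^0_t$ with respect to $d_N(u,v)=N\|u-v\|_H\wedge 1$. Following the generalized coupling argument of \cite[Theorem 2.4]{butkovsky2020generalized} (as used for \cref{prop:contracting-d-small}), introduce the shifted process $\tilde u^0(t;u_0,v_0)$ solving \eqref{eqn:react-diff} with the extra drift $\alpha_{\nbar}P_{\nbar}(u^0(t;u_0)-\tilde u^0(t))$ and initial datum $v_0$; here one uses $\alpha_{\nbar}>a_\f$ from \nameref{cond:Q:ergodicity} (with $\kappa=1$). The $H$-analogue of \cref{lem:dissipative-bound} gives $\|u^0(t;u_0)-\tilde u^0(t;u_0,v_0)\|_H\le Ce^{-ct}\|u_0-v_0\|_H$ via an energy estimate using $-\|A^{1/2}\bar u\|_H^2+a_\f\|\bar u\|_H^2-\alpha_{\nbar}\|P_{\nbar}\bar u\|_H^2\le -(\alpha_{\nbar}-a_\f)\|\bar u\|_H^2$; the $H$-analogue of \cref{lem:error-in-law} gives, via Girsanov and \cite[(A.1),(A.2)]{butkovsky2020generalized}, that $\W_{\TV}(\Law(\tilde u^0(t)),P^0_t(v_0,\cdot))\le\gamma_1\|u_0-v_0\|_H$ and, for $u_0,v_0$ in a ball of radius $R$, $\le 1-\gamma_2(R)$. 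Combining these with $\W_{d_N}\le\W_{\TV}$ (\cref{lem:W_(d_N)<W_TV}, verbatim) and the triangle inequality yields: $d_N$ is contracting for $P^0_t$ for $N,t$ large, and all bounded balls are $d_N$-small for $P^0_t$ at large times, uniformly.

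Finally I would run the three-region argument from the proof of \cref{thm:react-diff:epsilon:geometric-ergodicity} verbatim, with $d_{N,\beta,\lambda}(u,v)=\sqrt{d_N(u,v)(1+\lambda e^{\beta\Psi(u)}+\lambda e^{\beta\Psi(v)})}$: Region~1 ($d_N<1$) uses the contraction plus the sharpened exponential moment, Region~2a uses the largeness of $e^{\frac12\beta\Psi(u)}+e^{\frac12\beta\Psi(v)}$, and Region~2b uses $d_N$-smallness on the compact sublevel set. Choosing $\lambda,\beta$ small and $\tilde T$ large gives $\W_{d_{N,\beta,\lambda}}((P^0_t)^*\nu_1,(P^0_t)^*\nu_2)\le\alpha_*\W_{d_{N,\beta/2,\lambda}}(\nu_1,\nu_2)$ for $t\ge\tilde T$, and the equivalence of $d_{N,\beta,\lambda}$ with $d_{N,\beta}$ promotes this to \eqref{ineq:react-diff:geometric-ergodicity:beta/2}; the consequence \eqref{ineq:react-diff:geometric-ergodicity} follows since $d_{N,\beta/2}\le d_{N,\beta}$. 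I expect no genuine obstacle here --- the argument is a strict simplification of the $\varepsilon$-dependent case already carried out --- so the only real ``work'' is bookkeeping: verifying that every step of \cref{prop:contracting-d-small}, \cref{lem:dissipative-bound}, \cref{lem:error-in-law} and the proof of \cref{thm:react-diff:epsilon:geometric-ergodicity} survives the removal of the $M^0_{\mueps}$ component and the specialization $\kappa=1$, which it does because the memory term was always handled by the dissipative bound \eqref{ineq:<T_epsilon.eta,eta>} and never helped the nonlinearity estimate.
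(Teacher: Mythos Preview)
Your proposal is correct and matches the paper's own approach essentially line for line: the paper also gives only a sketch, listing the same three ingredients (Lyapunov bounds as in \cref{lem:moment-bound:H^0_epsilon} and \cref{lem:exponential-bound:H^0_epsilon:beta/2}, $d_N$-contractivity and $d_N$-smallness via the generalized coupling of \cref{prop:contracting-d-small}), and then defers to the three-region argument of \cref{thm:react-diff:epsilon:geometric-ergodicity} / \cite[Theorem 4.8]{hairer2011asymptotic}. One cosmetic point: the sharper exponential bound from the martingale inequality is $\E\exp(\beta\Psi(u^0(t)))\le \tilde C_0\exp\big(\beta\Psi(u_0)e^{-\gamma t}\big)$ rather than $\exp(\beta\Psi(u_0)-\gamma t)$, but this does not affect the argument since only the combined estimate \eqref{ineq:E.e^(beta.Psi_0(U(t)))<e^(beta/2.U_0)} is used downstream.
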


\begin{proof}[Sketch of the proof of~\cref{thm:react-diff:geometric-ergodicity}] We note that~\eqref{ineq:react-diff:geometric-ergodicity} is a consequence of~\eqref{ineq:react-diff:geometric-ergodicity:beta/2} owing to the fact that $d_{N,\beta}(u,v)\ge d_{N,\beta/2}(u,v)$. 

In order to prove~\eqref{ineq:react-diff:geometric-ergodicity}, similarly to the proof of~\cref{thm:react-diff:epsilon:geometric-ergodicity}, it is sufficient to gather the following three ingredients: Lyapunov structure, $d_N$-contractivity, and $d_N$-smalleness.

\noindent\textbf{Lyapunov structure.} For all $\beta$ sufficiently small, the followings hold for all initial conditions $u_0\in H$
\begin{equation} \label{ineq:Lyapunov-bound:0-eq}
\E\, e^{\frac{\beta}{2}\|u^0(t)\|^2_H}\le  e^{-c t}e^{\frac{\beta}{2}\|u_0\|^2_H}+C,
\end{equation}
and
\begin{align}\label{ineq:Lyapunov-bound:0-eq:b}
\E\, e^{\frac{\beta}{2}\|u^0(t)\|^2_H}\le  Ce^{\frac{\beta}{2}\|u_0\|^2_He^{-ct}},
\end{align}
where $c,C$ do not depend on $t$ and $u_0$. A slightly more generalized statement of~\eqref{ineq:Lyapunov-bound:0-eq} with $u_0$ being random is given below in \cref{lem:react-diff:exponential-bound}.

\noindent\textbf{$d_N$-contractivity.} The distance-like function $d_N$ as in~\eqref{form:d_0(x,y)} is contracting for $P^0_t$, that is, for all $N$ and $t$ sufficiently large, we have
\begin{equation} \label{ineq:contracting:W_d<1/2.d:u^0}
\W_{d_N}\big(P_t^0(u,\cdot),P_t^0(v,\cdot)\big)\le \frac{1}{2} d_{N}(u,v),
\end{equation}
for all $u,\,v$ such that $d_N(u,v)<1$.

\noindent\textbf{$d_N$-smallness.} For all $R>0$ and $N>0$, there exists a positive constant $t_0=t_0(R,N)$ sufficiently large and $\gamma_0=\gamma_0(R)\in (0,1)$ such that
\begin{equation} \label{ineq:error-in-law:u^0}
\sup_{u,v\in B_R}\W_{d_N}\big(P_t^0(u,\cdot),P_t^0(v,\cdot)\big) \le 1-\gamma_0, \quad t\ge t_0,
\end{equation}
where $B_R=\{u\in H:\|u\|\le R\}$.

The relation~\eqref{ineq:Lyapunov-bound:0-eq} can be established by employing an entirely similar to the one made for proving \eqref{ineq:exponential-bound:H^0_epsilon} in \cref{lem:moment-bound:H^0_epsilon}. On the other hand, \eqref{ineq:Lyapunov-bound:0-eq:b} follows along the lines of the proof of \cref{lem:exponential-bound:H^0_epsilon:beta/2} and exploits the exponential martingale inequality. 

To verify~\eqref{ineq:contracting:W_d<1/2.d:u^0} and \eqref{ineq:error-in-law:u^0}, we can employ the same strategy that was used to prove~\cref{prop:contracting-d-small} by making use of condition~\nameref{cond:Q:ergodicity}. 

Lastly, we may then mimic the proof of \cref{thm:react-diff:epsilon:geometric-ergodicity} (see \cite[Theorem 4.8]{hairer2011asymptotic}) by combining the above properties in order to conclude~\eqref{ineq:react-diff:geometric-ergodicity}, where $d_{N,\beta}$ is given by~\eqref{form:d_N,beta}.
\end{proof}

We now turn our attention to the solution $u^0$ of~\eqref{eqn:react-diff} with respect to random initial conditions. In the series of lemmas below, we collect some useful moment bounds on $u^0$. We have employed these results in~\cref{sec:epsilon->0:inv-measure} to establish the convergence of $\numu$ toward $\nu^0$.

\begin{lemma} \label{lem:react-diff:energy-bound}
Assume the hypotheses of~\cref{prop:well-posed} and let $\ru\in L^2(\Omega;H)$. Let $u^0(t;u_0)$ denote the solution of~\eqref{eqn:react-diff} corresponding to the random initial condition $\ru$. Then
\begin{equation} \label{ineq:react-diff:energy-bound:random-initial}
\begin{aligned}
\E\|u^0 (t)\|^2_H+2\int_0^t\E\|A^{1/2}u^0 (s)\|^2_H+a_2\E\|u^0 (s)\|^{p_0+1}_{L^{p_0+1}}\emph{d}s
\le\E\|\ru\|^2_H+2\Big(a_3|\domain|+\emph{Tr}(QQ^*)\Big) t,
\end{aligned}
\end{equation}
for all $t\ge 0$, and
\begin{equation}\label{ineq:react-diff:u^0_t<e^(-ct)}
\E\|u^0 (t)\|^2_H\le e^{-2\alpha_1 t}\E\|\ru\|^2_H+\frac{a_3|\domain|+\emph{Tr}(QQ^*)}{\alpha_1},
\end{equation}
where $\alpha_1$ is the first eigenvalue of $A$, $p_0$ is as in \nameref{cond:phi:1}, and $a_2,a_3$ are as in~\nameref{cond:phi:2}. Furthermore, for all even integers $n\ge 2$, there exist positive constants $c_n^0, C_n^0$, independent of $t$ and $u_0$, such that
\begin{align} \label{ineq:react-diff:moment-bound:H:n}
\E\|u^0 (t)\|^n_H\le e^{-c^0_n t}\E\|\ru\|^n_H+C^0_n.
\end{align}
\end{lemma}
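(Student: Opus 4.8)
The plan is to run the standard energy method directly on \eqref{eqn:react-diff}, entirely parallel to the proof of \cref{lem:moment-bound:H^0_epsilon} but stripped of the memory component, applying It\^o's formula to suitable powers of $\|u^0(t)\|_H$ and exploiting the coercivity \nameref{cond:phi:2} of the potential together with $\Tr(QQ^*)<\infty$ from \nameref{cond:Q}. Throughout, the passage from the (only weak) solution of \cref{prop:well-posed} to the It\^o computations is justified by the usual Galerkin/localization argument, exactly as in the construction of solutions.

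For \eqref{ineq:react-diff:energy-bound:random-initial}, It\^o's formula applied to $\|u^0(t)\|_H^2$ along \eqref{eqn:react-diff} gives
\begin{align*}
\d\|u^0(t)\|_H^2=\Big(-2\|A^{1/2}u^0(t)\|_H^2+2\la\f(u^0(t)),u^0(t)\ra_H+\Tr(QQ^*)\Big)\d t+2\la u^0(t),Q\d w(t)\ra_H.
\end{align*}
By \nameref{cond:phi:2} we have $\la\f(u^0),u^0\ra_H\le-a_2\|u^0\|_{L^{p_0+1}}^{p_0+1}+a_3|\domain|$, while $\Tr(QQ^*)<\infty$ by \nameref{cond:Q}; taking expectations removes the martingale term and integrating in time yields \eqref{ineq:react-diff:energy-bound:random-initial}. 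For \eqref{ineq:react-diff:u^0_t<e^(-ct)}, discard the nonnegative term $a_2\E\|u^0\|_{L^{p_0+1}}^{p_0+1}$ and invoke the Poincar\'e inequality $\|A^{1/2}u^0\|_H^2\ge\alpha_1\|u^0\|_H^2$ to obtain $\tfrac{\d}{\d t}\E\|u^0(t)\|_H^2\le-2\alpha_1\E\|u^0(t)\|_H^2+2(a_3|\domain|+\Tr(QQ^*))$, and Gronwall's inequality gives the claimed exponential decay.

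For \eqref{ineq:react-diff:moment-bound:H:n} with $n$ even, apply It\^o's formula to $\|u^0(t)\|_H^n=(\|u^0(t)\|_H^2)^{n/2}$. Using $D_u(\|u\|_H^n)=n\|u\|_H^{n-2}u$ and $D_{uu}(\|u\|_H^n)=n\|u\|_H^{n-2}\mathrm{Id}+n(n-2)\|u\|_H^{n-4}u\ts u$, the drift is
\begin{align*}
n\|u^0\|_H^{n-2}\big(-\|A^{1/2}u^0\|_H^2+\la\f(u^0),u^0\ra_H\big)+\tfrac12 n\|u^0\|_H^{n-2}\Tr(QQ^*)+\tfrac12 n(n-2)\|u^0\|_H^{n-4}\la QQ^*u^0,u^0\ra_H.
\end{align*}
Drop the dissipative term $-n\|u^0\|_H^{n-2}\|A^{1/2}u^0\|_H^2\le0$. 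Since $\domain$ is bounded and $p_0+1>2$, one has the embedding $\|u^0\|_H\le c\|u^0\|_{L^{p_0+1}(\domain)}$, so \nameref{cond:phi:2} gives $n\|u^0\|_H^{n-2}\la\f(u^0),u^0\ra_H\le -c\|u^0\|_H^{n+p_0-1}+C\|u^0\|_H^{n-2}$; the remaining two terms are bounded by $C_n\|u^0\|_H^{n-2}$ using \nameref{cond:Q}. Because $n+p_0-1>n>n-2$, Young's inequality absorbs all the subcritical powers into $-\tfrac{c}{2}\|u^0\|_H^{n+p_0-1}$ up to an additive constant, and a further application of Young's inequality gives $-\tfrac{c}{2}\|u^0\|_H^{n+p_0-1}\le -c^0_n\|u^0\|_H^n+C$. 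Taking expectations yields $\tfrac{\d}{\d t}\E\|u^0(t)\|_H^n\le-c^0_n\E\|u^0(t)\|_H^n+C^0_n$, and Gronwall's inequality concludes.

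The main obstacle is the $n$-th moment bound: one must correctly account for the It\^o correction $\tfrac12\Tr(D_{uu}(\|\cdot\|_H^n)QQ^*)$ and verify that the supercritical dissipation $\|u^0\|_H^{n+p_0-1}$ supplied by \nameref{cond:phi:2} genuinely dominates this correction, the cross term $\|u^0\|_H^{n-2}\|A^{1/2}u^0\|_H^2$-free remainder, and the lower-order powers after Young's inequality, with all constants independent of the (random) initial datum $\ru$. Everything else is routine.
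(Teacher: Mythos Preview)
Your proposal is correct and follows essentially the same route as the paper: It\^o's formula on $\|u^0\|_H^2$ for \eqref{ineq:react-diff:energy-bound:random-initial}, Poincar\'e plus Gronwall for \eqref{ineq:react-diff:u^0_t<e^(-ct)}, and It\^o on $\|u^0\|_H^n$ for \eqref{ineq:react-diff:moment-bound:H:n}. The only minor difference is in how you extract the dissipation for the higher moments: you discard the Laplacian term and rely on the $L^{p_0+1}$ coercivity from \nameref{cond:phi:2} (via the embedding $L^{p_0+1}(\domain)\hookrightarrow H$) to produce the supercritical damping $-c\|u^0\|_H^{n+p_0-1}$, whereas the paper keeps the Laplacian term, bounds $\la\f(u^0),u^0\ra_H\le a_3|\domain|$ crudely, uses $\|Qu^0\|_H^2\le\Tr(QQ^*)\|u^0\|_H^2$ for the It\^o correction, and applies Poincar\'e to get $-c\|u^0\|_H^n$ directly; both arguments are valid and equally short.
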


\begin{proof}
By It\^o's formula
\begin{align*}
\frac{1}{2}\d\|u^0(t)\|^2_H+\|A^{1/2}u^0(t)\|^2_H\d t&=\la\f(u^0(t)),u^0(t)\ra_H\d t+\la u^0(t),Q\d w(t)\ra_H+\Tr(QQ^*)\d t.
\end{align*}
Then taking expectation yields
\begin{align*}
\frac{1}{2}\frac{\d}{\d t}\E\|u^0 (t)\|^2_H+\E\|A^{1/2}u^0 (t)\|^2_H&=\E\la\f(u^0 (t)),u^0 (t)\ra_H+\Tr(QQ^*).
\end{align*}
In view of~\nameref{cond:phi:2}, we see that
\begin{align*}
\la\f(u),u\ra_H\le -a_2\|u\|^{p_0+1}_{L^{p_0+1}(\domain)}+a_3|\domain|.
\end{align*}
Thus
\begin{align}
\frac{1}{2}\frac{\d}{\d t}\E\|u^0 (t)\|^2_H+\E\|A^{1/2}u^0 (t)\|^2_H\le -a_2\|u^0 \|^{p+1}_{L^{p+1}(\domain)}+a_3|\domain| +\Tr(QQ^*).\notag
\end{align}
Integrating over the interval $[0,t]$ furnishes~\eqref{ineq:react-diff:energy-bound:random-initial}. On the other hand, by Poincare's inequality, $\|A^{1/2}u\|^2_H\ge \alpha_1\|u\|^2_H$. This, in turn, produces~\eqref{ineq:react-diff:u^0_t<e^(-ct)} by virtue of Gronwall's inequality.

Turning to~\eqref{ineq:react-diff:moment-bound:H:n}, the case $n=2$ was already proved in~\eqref{ineq:react-diff:u^0_t<e^(-ct)}. Consider any even integer $n\ge 4$, we apply It\^o's formula to $\|u^0(t)\|^n_H$ and obtain
\begin{align}
\d \|u^0(t)\|^n_H&= 2n\|u^0(t)\|^{n-2}_H\big(-\|A^{1/2}u^0(t)\|^2\d t+\la\f(u^0(t)),u^0(t)\ra_H\d t+\Tr(QQ^*)\d t\big)\nt\\
&\qquad+2n(n-2)\|u^0(t)\|^{n-4}_H\|Qu^0(t)\|^2_Hdt+2n\|u^0(t)\|^{n-2}_H\la u^0(t),Q\d w(t)\ra_H. \label{ineq:d.|u^0|^n_H}
\end{align}
Observe that 
\begin{align}\label{est:double:star}
\|Qu^0(t)\|^2_H \le \Tr(QQ^*)\|u^0(t)\|^2_H.
\end{align}
Thus, upon invoking \eqref{est:double:star}, taking expectations on both sides of~\eqref{ineq:d.|u^0|^n_H}, then applying Young's inequality, we obtain
\begin{align*}
\frac{\d}{\d t}\E \|u^0(t)\|^n_H \le -c \E\|u^0(t)\|^n_H+C,
\end{align*}
for some positive constants $c,C$ independent of $t$ and $u_0$. An application of Gronwall's inequality yields~\eqref{ineq:react-diff:moment-bound:H:n}, as desired.
\end{proof}

We also obtain the following result.

\begin{lemma}\label{lem:react-diff:exponential-bound}
Assume the hypotheses of~\cref{prop:well-posed} and let $\ru\in L^2(\Omega; H)$. Then there exists $\beta_0>0$ such that for all $0<\beta\leq\beta_0$, there exist positive constants $c$ and $C$, independent of $\ru$ and $t$, such that
\begin{equation} \label{ineq:react-diff:exponential-bound}
\E\, \exp\left({\beta\|u^0 (t)\|^2_H}\right)\le e^{-ct} \E\, \exp\left({\beta\|\ru\|^2_H}\right)+C.
\end{equation}
Consequently
\begin{align} \label{ineq:react-diff:exponential-bound:nu^0}
\int_H \exp\left({\beta\|u\|^2_H}\right)\nu^0(\emph{d} u)<\infty,
\end{align}
where $\nu^0$ is the unique invariant probability measure of~\eqref{eqn:react-diff}.
\end{lemma}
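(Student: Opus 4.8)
The plan is to mimic the argument used for \eqref{ineq:exponential-bound:H^0_epsilon} in \cref{lem:moment-bound:H^0_epsilon}, now applied to the (simpler) memoryless system \eqref{eqn:react-diff}, and then to deduce \eqref{ineq:react-diff:exponential-bound:nu^0} from \eqref{ineq:react-diff:exponential-bound} by integrating against the invariant measure $\nu^0$. First I would apply It\^o's formula to $g(u)=\exp(\beta\|u\|_H^2)$ along $u^0(t)$. Writing $\L^0$ for the generator of \eqref{eqn:react-diff}, one has
\begin{align*}
\L^0 g(u)&=\beta e^{\beta\|u\|_H^2}\Big(-2\|A^{1/2}u\|_H^2+2\la\f(u),u\ra_H+\Tr(QQ^*)\Big)\\
&\qquad+2\beta^2 e^{\beta\|u\|_H^2}\sum_{k\ge1}\la u,Qe_k\ra_H^2.
\end{align*}
Using \nameref{cond:phi:2} we bound $2\la\f(u),u\ra_H\le -2a_2\|u\|_{L^{p_0+1}}^{p_0+1}+2a_3|\domain|\le 2a_3|\domain|$; using \nameref{cond:Q} we bound $2\beta^2\sum_k\la u,Qe_k\ra_H^2=2\beta^2\|Qu\|_H^2\le 2\beta^2\Tr(QQ^*)\|u\|_H^2$, and this last term is absorbed into the dissipative term $-2\|A^{1/2}u\|_H^2\le -2\alpha_1\|u\|_H^2$ (Poincar\'e) provided $\beta<\alpha_1/\Tr(QQ^*)=:\beta_0$. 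This yields
\begin{align*}
\L^0 g(u)\le -c\,e^{\beta\|u\|_H^2}\big(\|u\|_H^2-C\big)
\end{align*}
for constants $c,C$ depending only on $\beta,\alpha_1,Q,\f$ but not on $u$ or $t$.

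Next I would invoke the elementary inequality \eqref{ineq:e^r(r-C)>e^r-C}, i.e.\ $e^{\beta r}(r-C)\ge e^{\beta r}-\tilde C$ for all $r\ge0$ and a suitable $\tilde C=\tilde C(\beta,C)$, to conclude $\L^0 g(u)\le -c\,e^{\beta\|u\|_H^2}+C$. Taking expectations (the stochastic integral has mean zero, after a standard localization argument that is justified exactly as in \cref{lem:moment-bound:H^0_epsilon} since the relevant moments of $u^0$ are finite by \cref{lem:react-diff:energy-bound}), one obtains the differential inequality
\begin{align*}
\frac{\d}{\d t}\E\,e^{\beta\|u^0(t)\|_H^2}\le -c\,\E\,e^{\beta\|u^0(t)\|_H^2}+C,
\end{align*}
and Gronwall's lemma gives \eqref{ineq:react-diff:exponential-bound}.

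For the consequence \eqref{ineq:react-diff:exponential-bound:nu^0}, the standard route is to truncate: set $g_R(u)=\exp(\beta(\|u\|_H^2\wedge R))$, which is bounded and continuous, apply invariance of $\nu^0$ and \eqref{ineq:react-diff:exponential-bound} to get $\int_H g_R\,\d\nu^0=\int_H P^0_t g_R\,\d\nu^0\le e^{-ct}\int_H g_R\,\d\nu^0+C'$ (here one uses $g_R\le g$ pointwise so the estimate for $g$ controls $P^0_t g_R$), hence $\int_H g_R\,\d\nu^0\le C'/(1-e^{-ct})$ uniformly in $R$ for $t$ fixed large; letting $R\to\infty$ and applying the Monotone Convergence Theorem gives the claimed finiteness. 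The only mild obstacle is the localization needed to kill the martingale term and to justify differentiating under the expectation — but this is entirely routine here because all polynomial moments of $\|u^0(t)\|_H$ are already controlled by \cref{lem:react-diff:energy-bound}, and the same device is used without comment in \cref{lem:moment-bound:H^0_epsilon}; no new difficulty arises. Since the argument is a direct adaptation of \cref{lem:moment-bound:H^0_epsilon}, I would keep the write-up brief and refer back to that proof for the details.
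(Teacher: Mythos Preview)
Your derivation of \eqref{ineq:react-diff:exponential-bound} is correct and is exactly the route the paper indicates: it omits the proof and refers back to \cref{lem:moment-bound:H^0_epsilon}, which you have faithfully specialized to the memoryless system.

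There is, however, a small circularity in your deduction of \eqref{ineq:react-diff:exponential-bound:nu^0}. From $g_R\le g$ you get $P^0_t g_R(u)\le P^0_t g(u)\le e^{-ct}g(u)+C$, but the right-hand side involves $g$, not $g_R$; integrating against $\nu^0$ then gives $\int g_R\,\d\nu^0\le e^{-ct}\int g\,\d\nu^0+C$, which presupposes the finiteness you are trying to prove. The inequality $\int P^0_t g_R\,\d\nu^0\le e^{-ct}\int g_R\,\d\nu^0+C'$ that you wrote does not follow. The standard fix, used elsewhere in the paper (e.g.\ in the proof of \cref{thm:exponential-bound:H^1_epsilon:nu^mu} and \cref{lem:epsilon->0:invariant-measure:enegery-exponential-bound}), is to start from the zero initial condition: \eqref{ineq:react-diff:exponential-bound} with $u_0=0$ gives $\sup_{t\ge0}\E\,e^{\beta\|u^0(t;0)\|_H^2}\le 1+C$, hence $\int g_R\,\d\nu^0_T\le 1+C$ for the time-averaged measures $\nu^0_T$, uniformly in $T$ and $R$; pass to the weak limit $T\to\infty$ and then $R\to\infty$ by monotone convergence.
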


We omit the proof of \cref{lem:react-diff:exponential-bound} since it makes use of arguments similar to ones made for the proofs of~\cref{lem:moment-bound:H^0_epsilon}. We next turn to estimates of $u^0$ in $H^1$. In \cref{lem:react-diff:|u|^2_(H^1)-bound.and.int_0^t|Au(s)|^2ds-bound:random-initial-cond}, we assert standard energy bounds of $u^0$ in $H^1$, whose argument is similar to that of~\cref{lem:moment-boud:H^1_epsilon}.

\begin{lemma} \label{lem:react-diff:|u|^2_(H^1)-bound.and.int_0^t|Au(s)|^2ds-bound:random-initial-cond} 
Assume the hypotheses of~\cref{prop:well-posed} and let $\ru\in L^2(\Omega;H^1)$. Then there exists a positive constant $c$, independent of $\ru$, such that
\begin{equation}\label{ineq:react-diff:|Phi|^2_(H^1_epsilon)-bound:random-initial-cond}
\E\|u^0 (t)\|^2_{H^1}+\int_0^t\E\|u^0 (s)\|^2_{H^2}\emph{ d} s\le c\big(\E\|\ru\|^2_{H^1}+t),
\end{equation}
and 
\begin{equation} \label{ineq:react-diff:|u^0|_H1<c}
\E\|u^0 (t)\|^2_{H^1}\le c\big(\E\|\ru\|^2_{H^1}+1),
\end{equation}
hold for all $t\geq0$.
\end{lemma}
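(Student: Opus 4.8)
The plan is to carry out the $H^1$ energy estimate directly for \eqref{eqn:react-diff}, in close parallel with the proof of \cref{lem:moment-boud:H^1_epsilon} but without the memory variable. Writing $\L^0$ for the generator of \eqref{eqn:react-diff} and applying It\^o's formula to $\|u^0(t)\|_{H^1}^2=\langle Au^0(t),u^0(t)\rangle_H$, one computes
\[
\L^0\|u\|_{H^1}^2=-2\|Au\|_H^2+2\langle \f'(u)\grad u,\grad u\rangle_H+\Tr(QAQ^*),
\]
where the nonlinear term comes from the Dirichlet integration-by-parts identity $\langle Au,\f(u)\rangle_H=\langle \f'(u)\grad u,\grad u\rangle_H$ and $\Tr(QAQ^*)<\infty$ by~\nameref{cond:Q}. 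Using~\nameref{cond:phi:3} and then Young's inequality, $2\langle \f'(u)\grad u,\grad u\rangle_H\le 2a_\f\langle u,Au\rangle_H\le \|Au\|_H^2+C\|u\|_H^2$, so that
\[
\L^0\|u\|_{H^1}^2\le -\|Au\|_H^2+C\|u\|_H^2+\Tr(QAQ^*).
\]
As usual this formal computation is made rigorous on the Faedo--Galerkin approximants from \cref{prop:well-posed}, with the estimates passing to the limit by lower semicontinuity of the norms; I would suppress this step as routine.

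To obtain~\eqref{ineq:react-diff:|Phi|^2_(H^1_epsilon)-bound:random-initial-cond}, I take expectations (the stochastic integral vanishes) to get
\[
\frac{\d}{\d t}\E\|u^0(t)\|_{H^1}^2+\E\|u^0(t)\|_{H^2}^2\le C\E\|u^0(t)\|_H^2+C,
\]
then integrate on $[0,t]$ and control $\int_0^t\E\|u^0(s)\|_H^2\d s$ via~\eqref{ineq:react-diff:energy-bound:random-initial} of \cref{lem:react-diff:energy-bound} together with Poincar\'e's inequality $\|u\|_H^2\le \alpha_1^{-1}\|A^{1/2}u\|_H^2$; this yields $\int_0^t\E\|u^0(s)\|_H^2\d s\le c(\E\|\ru\|_H^2+t)\le c(\E\|\ru\|_{H^1}^2+t)$, and hence~\eqref{ineq:react-diff:|Phi|^2_(H^1_epsilon)-bound:random-initial-cond}.

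For the uniform-in-time bound~\eqref{ineq:react-diff:|u^0|_H1<c} I would instead retain the dissipative structure. Since $\|Au\|_H^2\ge\alpha_1\|u\|_{H^1}^2$, the differential inequality above gives
\[
\frac{\d}{\d t}\E\|u^0(t)\|_{H^1}^2\le -\alpha_1\E\|u^0(t)\|_{H^1}^2+C\E\|u^0(t)\|_H^2+C,
\]
and now I would feed in~\eqref{ineq:react-diff:u^0_t<e^(-ct)}, namely $\E\|u^0(t)\|_H^2\le e^{-2\alpha_1 t}\E\|\ru\|_H^2+C$. Applying Gronwall's inequality, the resulting convolution $\int_0^t e^{-\alpha_1(t-s)}e^{-2\alpha_1 s}\d s$ is bounded uniformly in $t$, whence $\E\|u^0(t)\|_{H^1}^2\le e^{-\alpha_1 t}\E\|\ru\|_{H^1}^2+c(\E\|\ru\|_H^2+1)\le c(\E\|\ru\|_{H^1}^2+1)$, which is~\eqref{ineq:react-diff:|u^0|_H1<c}. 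I do not expect a genuine obstacle here; the only points needing care are the integration-by-parts identity for $\langle Au,\f(u)\rangle_H$ and the absorption of the nonlinear term, which is precisely where~\nameref{cond:phi:3} is used, together with the observation that the \emph{uniform} bound must borrow the $H$-decay estimate~\eqref{ineq:react-diff:u^0_t<e^(-ct)} rather than the crude time-integrated one.
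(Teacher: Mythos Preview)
Your proposal is correct and follows essentially the same route as the paper: It\^o's formula on $\|u^0\|_{H^1}^2$, the identity $\langle Au,\f(u)\rangle_H=\langle \f'(u)\grad u,\grad u\rangle_H$, then \nameref{cond:phi:3} plus Young's inequality to absorb the nonlinearity into $\|Au\|_H^2$, and finally Gronwall together with the $H$-decay~\eqref{ineq:react-diff:u^0_t<e^(-ct)} for the uniform bound. The only cosmetic difference is that for~\eqref{ineq:react-diff:|Phi|^2_(H^1_epsilon)-bound:random-initial-cond} the paper feeds in~\eqref{ineq:react-diff:u^0_t<e^(-ct)} pointwise and then integrates, whereas you invoke the time-integrated estimate~\eqref{ineq:react-diff:energy-bound:random-initial} plus Poincar\'e; both are equivalent here, and the paper likewise packages the convolution bound for~\eqref{ineq:react-diff:|u^0|_H1<c} as the elementary \cref{lem:tech:1}.
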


\begin{proof}
By It\^o's formula
\begin{align*}
&\frac{1}{2}\d\|u^0(t)\|^2_{H^1}+\|Au^0(t)\|^2_H\d t\\
&=\la\f'(u^0(t))\grad u^0(t),\grad u^0(t)\ra_H\d t+\la u(t),Q\d w(t)\ra_{H^1}+\Tr(QAQ^*)\d t.
\end{align*}
Upon recalling \nameref{cond:Q}, it follows that
\begin{align*}
\frac{1}{2}\frac{\d}{\d t}\E\|u^0(t)\|^2_{H^1}+\E\|Au^0(t)\|^2_H=\E\la\f'(u^0(t))\grad u^0(t),\grad u^0(t)\ra_H+\Tr(QAQ^*).
\end{align*}
By making use of~\nameref{cond:phi:3} and the Cauchy-Schwarz inequality, we estimate 
\begin{align*}
\la\f'(u)\grad u,\grad u\ra_H\le a_\f\|A^{1/2}u\|^2_H=a_\f\la u,Au\ra_H\le \frac{1}{2} a_\f^2\|u\|^2_H+\frac{1}{2}\|Au\|^2_H.
\end{align*}
Hence
\begin{align*}
\frac{\d}{\d t}\E\|u^0(t)\|^2_{H^1}+\E\|Au^0(t)\|^2_H=a_\f^2\E\|u^0(t)\|^2_H+2\Tr(QAQ^*).
\end{align*}
We now invoke~\eqref{ineq:react-diff:u^0_t<e^(-ct)} to infer the existence of a positive constant $c$ such that
\begin{align*}
\frac{\d}{\d t}\E\|u^0(t)\|^2_{H^1}+\E\|Au^0(t)\|^2_H\le c\Big( e^{-2\alpha_1 t}\E\|\ru\|^2_H+1\Big).
\end{align*}
Integrating the above equation with respect to time $t$ yields~\eqref{ineq:react-diff:|Phi|^2_(H^1_epsilon)-bound:random-initial-cond}. We conclude the proof with an application of Gronwall's inequality together with \cref{lem:tech:1} to produce~\eqref{ineq:react-diff:|u^0|_H1<c}.
\end{proof}

Next, we assert higher moment bounds in $H^1$ for $\nu^0$. This property was employed in establishing the quantiative convergence result asserted in~\cref{thm:epsilon->0:invariant-measures->limit-measure}.

\begin{lemma} \label{lem:react-diff:|u|^n_H1-bound:nu^0}
Assume the hypotheses of~\cref{thm:react-diff:epsilon:geometric-ergodicity} and let $\ru\in L^2(\Omega;H^1)$. Then, for all even integer $n\ge 2$, there exists a positive constant $c=c(n)$ independent of $\ru$ and $t$ such that
\begin{equation}\label{ineq:react-diff:|Phi|^n_(H^1)|Phi|^2_(H^2)-bound:random-initial-cond}
\E\|u^0 (t)\|^n_{H^1}+\int_0^t\E\|u^0 (s)\|^{n-2}_{H^1}\|u^0 (s)\|^2_{H^2}\emph{ d} s\le c\big(\E\|\ru\|^n_{H^1}+\E \|\ru\|^{2n}_H+1)(t+1).
\end{equation}
As a consequence, for $p>0$, it holds that
\begin{align} \label{ineq:react-diff:|u|^n_H1-bound:nu^0}
\int_H \|u\|^{p}_{H^1}\nu^0(\emph{d} u)<\infty.
\end{align}
\end{lemma}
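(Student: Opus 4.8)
The plan is to run the $H^1$ energy estimate at the level of the $n$-th power, mimicking the structure of the proof of \cref{lem:react-diff:|u|^2_(H^1)-bound.and.int_0^t|Au(s)|^2ds-bound:random-initial-cond} (which is essentially the case $n=2$) but squared up. Writing $X(t)=\|u^0(t)\|^2_{H^1}=\la Au^0(t),u^0(t)\ra_H$ and applying It\^o's formula to $X^{n/2}$ for even $n\ge 4$ (rigorously on Galerkin approximations, passing to the limit as in \cite{glatt2008stochastic,glatt2024paperII}), one obtains, after taking expectations and discarding the resulting martingale,
\begin{equation*}
\frac{\d}{\d t}\E\|u^0(t)\|^n_{H^1}+n\,\E\big[\|u^0(t)\|^{n-2}_{H^1}\|u^0(t)\|^2_{H^2}\big]
= n\,\E\big[\|u^0(t)\|^{n-2}_{H^1}\la\f'(u^0(t))\grad u^0(t),\grad u^0(t)\ra_H\big]+R_n(t),
\end{equation*}
where $\|u^0\|_{H^2}=\|Au^0\|_H$ and $R_n(t)$ gathers the It\^o correction and the trace contributions, which by Cauchy--Schwarz and \nameref{cond:Q} obey $R_n(t)\le C_n\Tr(QAQ^*)\,\E\|u^0(t)\|^{n-2}_{H^1}$.

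The main step is to absorb the nonlinear term into the weighted dissipation $\|u^0\|^{n-2}_{H^1}\|u^0\|^2_{H^2}$. Using \nameref{cond:phi:3} one has $\la\f'(u^0)\grad u^0,\grad u^0\ra_H\le a_\f\|u^0\|^2_{H^1}$, and then the interpolation $\|u^0\|^2_{H^1}=\la Au^0,u^0\ra_H\le\|u^0\|_{H^2}\|u^0\|_H$ followed by Young's inequality bounds $n a_\f\|u^0\|^{n-2}_{H^1}\|u^0\|^2_{H^1}$ by $\tfrac{n}{2}\|u^0\|^{n-2}_{H^1}\|u^0\|^2_{H^2}$ plus a remainder that is a pure power of $\|u^0\|_H$; the term $R_n(t)$ is controlled by $\|u^0\|^{n-2}_{H^1}\le\delta\|u^0\|^n_{H^1}+C_\delta$ together with the Poincar\'e inequality $\|u^0\|^2_{H^2}\ge\alpha_1\|u^0\|^2_{H^1}$, which lets a fraction of the remaining dissipation reabsorb the $\delta\|u^0\|^n_{H^1}$ term and leaves a constant. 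This produces a differential inequality of the form
\begin{equation*}
\frac{\d}{\d t}\E\|u^0(t)\|^n_{H^1}+c\,\E\big[\|u^0(t)\|^{n-2}_{H^1}\|u^0(t)\|^2_{H^2}\big]\le C\,\E\|u^0(t)\|^{2n}_H+C .
\end{equation*}
Integrating on $[0,t]$ and invoking \eqref{ineq:react-diff:moment-bound:H:n} from \cref{lem:react-diff:energy-bound} to bound $\int_0^t\E\|u^0(s)\|^{2n}_H\,\d s\le c\big(\E\|\ru\|^{2n}_H+1\big)(1+t)$, and recalling the $n=2$ case from \eqref{ineq:react-diff:|Phi|^2_(H^1_epsilon)-bound:random-initial-cond}, gives \eqref{ineq:react-diff:|Phi|^n_(H^1)|Phi|^2_(H^2)-bound:random-initial-cond}.

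For the consequence \eqref{ineq:react-diff:|u|^n_H1-bound:nu^0} it suffices, via $\|u\|^p_{H^1}\le 1+\|u\|^n_{H^1}$ with $n$ even and $n\ge p$, to treat even integers $p=n$. Since $\nu^0$ is the unique invariant measure of \eqref{eqn:react-diff} by \cref{thm:react-diff:geometric-ergodicity}, I would argue via the time-averaged measures $\nu^0_T:=\frac1T\int_0^T P^0_t(0,\cdot)\,\d t$ rather than starting from a $\nu^0$-distributed datum. Applying the bound just obtained with the deterministic datum $\ru=0$ gives $c\int_0^t\E[\|u^0(s;0)\|^{n-2}_{H^1}\|u^0(s;0)\|^2_{H^2}]\,\d s\le c(1+t)$, whence, by $\|u^0\|^{n-2}_{H^1}\|u^0\|^2_{H^2}\ge\alpha_1\|u^0\|^n_{H^1}$, one has $\frac1T\int_0^T\E\|u^0(s;0)\|^n_{H^1}\,\d s\le c$ uniformly in $T$, so $\int_H\big(\|u\|^n_{H^1}\wedge R\big)\,\nu^0_T(\d u)\le c$ uniformly in $T$ and $R$. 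The family $\{\nu^0_T\}$ is tight in $H$ (from $\frac1T\int_0^T\E\|u^0(s;0)\|^2_{H^2}\,\d s\le c$, via the compact embedding $H^2\hookrightarrow H$), so by Krylov--Bogoliubov and uniqueness $\nu^0_T\rightharpoonup\nu^0$; using lower semicontinuity and boundedness of $u\mapsto\|u\|^n_{H^1}\wedge R$ on $H$ and then $R\to\infty$ by monotone convergence yields $\int_H\|u\|^n_{H^1}\,\nu^0(\d u)<\infty$.

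\textbf{Main obstacle.} The delicate point is the treatment of the nonlinear term $n\|u^0\|^{n-2}_{H^1}\la\f'(u^0)\grad u^0,\grad u^0\ra_H$: it must be absorbed into the weighted coercive term $\|u^0\|^{n-2}_{H^1}\|u^0\|^2_{H^2}$ \emph{while leaving the error in a norm that is already controlled}, which forces the use of the interpolation $\|u^0\|^2_{H^1}\le\|u^0\|_{H^2}\|u^0\|_H$ and a carefully tuned Young's inequality, and then the reliance on the $H$-moment bounds of \cref{lem:react-diff:energy-bound}. A secondary subtlety is the apparent circularity in the $\nu^0$-moment bound — one cannot run the argument starting from $u_0\sim\nu^0$ since the finiteness of its $H^1$-moments is exactly what is being proved — which is resolved by starting from the deterministic datum $u_0=0$ and exploiting that the $(1+t)$ growth in \eqref{ineq:react-diff:|Phi|^n_(H^1)|Phi|^2_(H^2)-bound:random-initial-cond} is killed after dividing by $T$.
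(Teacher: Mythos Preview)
Your argument for \eqref{ineq:react-diff:|Phi|^n_(H^1)|Phi|^2_(H^2)-bound:random-initial-cond} is essentially the paper's: It\^o on $\|u^0\|^n_{H^1}$, \nameref{cond:phi:3} for the nonlinearity, the interpolation $\|u^0\|^2_{H^1}=\la u^0,Au^0\ra_H\le\|u^0\|_H\|u^0\|_{H^2}$ followed by Young to push the error into $\|u^0\|^{2n}_H$, and then \eqref{ineq:react-diff:moment-bound:H:n} to close. (The paper frames this as an induction on even $n$, but in fact never invokes the inductive hypothesis; your direct treatment of the trace terms via $\|u^0\|^{n-2}_{H^1}\le\delta\|u^0\|^n_{H^1}+C_\delta$ and Poincar\'e is exactly what is implicitly happening there.)

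For \eqref{ineq:react-diff:|u|^n_H1-bound:nu^0} you take a genuinely different route. The paper first shows $\nu^0(H^1)=1$ via invariance and \eqref{ineq:react-diff:energy-bound:random-initial}, and then applies Birkhoff's ergodic theorem to $\|u_0\|^n_{H^1}\wedge R$ with $u_0\sim\nu^0$, bounding the time average by \eqref{ineq:react-diff:|Phi|^n_(H^1)|Phi|^2_(H^2)-bound:random-initial-cond}. Your approach---run the estimate from $u_0=0$, pass to the Krylov--Bogoliubov averages $\nu^0_T$, use tightness plus uniqueness of $\nu^0$, and invoke Portmanteau for the bounded l.s.c.\ functional $\|\cdot\|^n_{H^1}\wedge R$---is equally valid and is in fact the method the paper itself uses for the analogous statements about $\nueps$ and $\nuzero$ (cf.\ the proofs of \cref{thm:exponential-bound:H^1_epsilon:nu^mu} and \cref{lem:epsilon->0:invariant-measures:H^1_epsilon-bound}). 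Your version has the advantage of sidestepping the separate $\nu^0(H^1)=1$ step and the appeal to Birkhoff; the paper's version has the mild advantage of not needing to re-establish tightness of $\{\nu^0_T\}$.
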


The argument is similar to the proof of~\cref{lem:moment-boud:H^1_epsilon} except tailored to system~\eqref{eqn:react-diff}.

\begin{proof}[Proof of \cref{lem:react-diff:|u|^n_H1-bound:nu^0}]

To establish~\eqref{ineq:react-diff:|Phi|^n_(H^1)|Phi|^2_(H^2)-bound:random-initial-cond}, we will proceed by induction on $n=2,4,\dots$ For the base case $n=2$,~\eqref{ineq:react-diff:|Phi|^n_(H^1)|Phi|^2_(H^2)-bound:random-initial-cond} was actually proven in~\cref{lem:react-diff:|u|^2_(H^1)-bound.and.int_0^t|Au(s)|^2ds-bound:random-initial-cond}.

Suppose \eqref{ineq:react-diff:|Phi|^n_(H^1)|Phi|^2_(H^2)-bound:random-initial-cond} holds for all $2,...,n-2$, we proceed to show it also holds for $n\ge 4$. We first apply It\^o's formula to $\|u^0(t)\|^n_{H^1}$ to obtain the identity
\begin{align*}
\d\|u^0(t)\|^{n}_{H^1}&=-2n\|u^0(t)\|^{n-2}_{H^1}\|u^0(t)\|^2_{H^2}\d t+2n\|u^0(t)\|^{n-2}_{H^1}\la \f'(u^0(t))\grad u^0(t),\grad u^0(t)\ra_{H}\d t\\
&\qquad+n\|u^0(t)\|^{n-2}_{H^1}\Tr(QAQ^*)\d t+2n(n-2)\|u^0(t)\|^{n-4}_{H^1}\|Qu^0(t)\|^2_{H^1}\d t\\
&\qquad +2n\|u^0(t)\|^{n-2}_{H^1}\la u^0(t),Q\d w(t)\ra_{H^1}.
\end{align*}
In view of~\nameref{cond:phi:3}, we estimate the term involving $\f$ using Young's and Holder's inequalities as follows:
\begin{align*}
\|u^0(t)\|^{n-2}_{H^1}\la \f'(u^0(t))\grad u^0(t),\grad u^0(t)\ra_{H}
&\le a_\f\|u^0(t)\|^{n-2}_{H^1}\la A^{1/2}u^0(t),A^{1/2}u^0(t)\ra_{H}\\
&=a_\f\|u^0(t)\|^{n-2}_{H^1}\la u^0(t),Au^0(t)\ra_{H}\\
&\le a_\f^2 \|u^0(t)\|^{n-2}_{H^1}\|u^0(t)\|^{2}_{H}+\frac{1}{4}\|u^0(t)\|^{n-2}_{H^1}\|u^0(t)\|^{2}_{H^2}\\
&\le \frac{\alpha_1}{4}\|u^0(t)\|^{n}_{H^1}+C\|u^0(t)\|^{2n}_{H}+\frac{1}{4}\|u^0(t)\|^{n-2}_{H^1}\|u^0(t)\|^{2}_{H^2}\\
&\le C\|u^0(t)\|^{2n}_{H}+\frac{1}{2}\|u^0(t)\|^{n-2}_{H^1}\|u^0(t)\|^{2}_{H^2}.
\end{align*}
Concerning the quadratic variation term, we see that
\begin{align*}
\|u^0(t)\|^{n-4}_{H^1}\|Qu^0(t)\|^2_{H^1}\le \Tr(QQ^*)\|u^0(t)\|^{n-2}_{H^1}.
\end{align*}
Altogether, we infer the existence of positive constants $c$ and $C$ such that
\begin{align*}
\E\|u^0(t)\|^{n}_{H^1}+c\int_0^t \E\|u^0(s)\|^{n-2}_{H^1}\|u^0(s)\|^{2}_{H^2}\d s\le \E\|\ru\|^{n}_{H^1}+C\int_0^t\E \|u^0(s)\|^{2n}_{H}\d s+Ct.
\end{align*}
Now we invoke~\eqref{ineq:react-diff:moment-bound:H:n} to control the time-integral on the right-hand side by
\begin{align*}
\int_0^t\E\|u^0(s)\|^{2n}_H\d s\le \E\|\ru\|^{2n}_H\int_0^t e^{-cs}\d s+Ct\le C\big(\E \|\ru\|^{2n}_H+t\big).
\end{align*}
We therefore arrive at the bound
\begin{align*}
\E\|u^0(t)\|^{n}_{H^1}+c\int_0^t \E\|u^0(s)\|^{n-2}_{H^1}\|u^0(s)\|^{2}_{H^2}\d s
&\le  C\big(\E\|\ru\|^n_{H^1}+\E \|\ru\|^{2n}_H+1\big)(t+1),
\end{align*}
which establishes~\eqref{ineq:react-diff:|Phi|^n_(H^1)|Phi|^2_(H^2)-bound:random-initial-cond}, as desired. 

Regarding~\eqref{ineq:react-diff:|u|^n_H1-bound:nu^0}, we first prove that $\nu^0(H^1)=1$. To see this, for $R>0$, we employ~\eqref{ineq:react-diff:energy-bound:random-initial} to estimate
\begin{align}\label{est:star2}
\int_H P_t\|u\|^2_H  \nu^0 (\d u)+\int_H\int_0^t P_s(\|A^{1/2}u\|^2_H\mi R)\d s\nu(\d u)\le  \int_H \|u\|^2_H  \nu^0 (\d u)+Ct.
\end{align} 
Since $\nu^0$ possesses exponential moment bounds in $H$ (see \eqref{ineq:react-diff:exponential-bound:nu^0}) by invariance we have
\begin{align*}
\int_H P_t\|u\|^2_H  \nu^0 (\d u)=\int_H \|u\|^2_H  \nu^0 (\d u).
\end{align*}
In light of \eqref{est:star2}, this implies
\begin{align*}
\int_H\int_0^t P_s(\|A^{1/2}u\|^2_H\mi R)\d s\nu(\d u)\le  Ct.
\end{align*}
By invariance of $P_t$ once again, we have
\begin{align*}
\int_H\int_0^t P_t(\|A^{1/2}u\|^2_H\mi R)\nu(\d u) = t \int_H(\|A^{1/2}u\|^2_H\mi R)\nu(\d u).
\end{align*}
It follows that
\begin{align*}
\int_H(\|A^{1/2}u\|^2_H\mi R)\nu(\d u)\le C.
\end{align*}
Since the above inequality holds for $C$ uniformly with respect to $R$, the Monotone Convergence Theorem implies
\begin{align*}
\int_H \|A^{1/2}u\|^2_H\nu(\d u)\le C.
\end{align*}
Hence, $\nu^0(H^1)=1$, as desired.

Now we prove \eqref{ineq:react-diff:|u|^n_H1-bound:nu^0}. For this, let us fix $R>0$. Then by Birkhoff's Ergodic Theorem we have that
\begin{align*}
\lim_{t\to\infty}\frac{1}{t}\int_0^t P_s(\|u_0\|^n_{H^1}\mi R)\d s= \int_{H_1}\big(\|u\|_{H^1}^n\mi R\big)\nu^0(\d u)\quad \text{holds}\ \nu^0\text{-a.s.}
\end{align*}
In view of~\eqref{ineq:react-diff:|Phi|^n_(H^1)|Phi|^2_(H^2)-bound:random-initial-cond} we have
\begin{align*}
\frac{1}{t}\int_0^t P_s(\|u_0\|^n_{H^1}\mi R)\d s \le \frac{C}{t}(\|u_0\|^n_{H^1}+\|u_0\|^{2n}_H+1)(t+1) \le C(\|u_0\|^n_{H^1}+\|u_0\|^{2n}_H+1).
\end{align*}
It follows that  
\begin{align*}
\int_{H_1}\big(\|u\|^n_{H^1}\mi R\big)\nu^0(\d u)\le C(\|u_0\|^n_{H^1}+\|\ru\|^{2n}_H+1),
\end{align*}
holds for all $R>0$. Hence, by applying the Monotone Convergence Theorem once again, we deduce~\eqref{ineq:react-diff:|u|^n_H1-bound:nu^0}, which completes the proof.
\end{proof}

In the following lemma concerning $u^0$, we assert that the difference $u^0(t)-u^0(s)$ is controlled by the length $|t-s|$. This property was crucially applied in the proof of~\cref{thm:epsilon->0:solution->limit-solution:finite-time}.

\begin{lemma} \label{lem:react-diff:phi.Lipschit:|u(t)-u(s)|-bound} Assume the hypotheses of~\cref{prop:well-posed}. Let $\ru\in L^2(\Omega;H^1)$ and $T>0$. Then

\begin{enumerate}[noitemsep,topsep=0pt,wide=0pt,label=\arabic*.,ref=\theassumption.\arabic*]

\item When $d\ge 4$, suppose that $\f(x)$ satisfies~\eqref{cond:phi:O(|x|)}. Then there exists a constant $c=c(\f)$ independent of $T$ such that
\begin{equation} \label{ineq:react-diff:phi.Lipschit:|u(t)-u(s)|-bound}
\E\|u^0 (t)-u^0 (s)\|^2_H\le c(t-s)(T+1)\big(\E\|\ru\|^2_{H^1}+1\big),
\end{equation}
for all $0\le s\le t\le T$.

\item When $d=1,2$, there exists a constant $c=c(\f)$ independent of $T$ such that 
\begin{equation} \label{ineq:react-diff:phi.Lipschit:|u(t)-u(s)|-bound:agmon:d=1-2}
\E\|u^0 (t)-u^0 (s)\|^2_H\le c(t-s)(T^2+1)\big(\E\|\ru\|^{2\lceil p_0 \rceil}_{H^1}+\E \|\ru\|^{4\lceil p_0 \rceil}_H+1\big),
\end{equation}
for all $0\le s\le t\le T$. In the above, $p_0$ is the constant as in \nameref{cond:phi:1}-\nameref{cond:phi:2}.

\item When $d\le 3$, suppose that $\f$ satisfies \eqref{cond:phi:agmon}. Then there exists a constant $c=c(\f)$ independent of $T$ such that 
\begin{equation} \label{ineq:react-diff:phi.Lipschit:|u(t)-u(s)|-bound:agmon}
\E\|u^0 (t)-u^0 (s)\|^2_H\le c(t-s)(T^2+1)\big(\E\|\ru\|^{10}_{H^1}+\E \|\ru\|^{20}_H+1\big),
\end{equation}
for all $0\le s\le t\le T$.
\end{enumerate}

\end{lemma}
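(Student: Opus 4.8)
The plan is to exploit the variational form of \eqref{eqn:react-diff}. For $\ru\in L^2(\Omega;H^1)$ the energy estimate \eqref{ineq:react-diff:|Phi|^2_(H^1_epsilon)-bound:random-initial-cond} of \cref{lem:react-diff:|u|^2_(H^1)-bound.and.int_0^t|Au(s)|^2ds-bound:random-initial-cond} guarantees $u^0(r)\in H^2$ for a.e.\ $r$, so the identity
\[
u^0(t)-u^0(s)=-\int_s^t Au^0(r)\,\d r+\int_s^t \f(u^0(r))\,\d r+\int_s^t Q\,\d w(r)
\]
holds in $H$ for $0\le s\le t\le T$, and I would bound the three contributions to $\E\|u^0(t)-u^0(s)\|_H^2$ separately. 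For the linear term, Cauchy--Schwarz in time and \eqref{ineq:react-diff:|Phi|^2_(H^1_epsilon)-bound:random-initial-cond} give $\E\bigl\|\int_s^t Au^0(r)\,\d r\bigr\|_H^2\le (t-s)\int_0^T\E\|u^0(r)\|_{H^2}^2\,\d r\le c(t-s)(T+1)(\E\|\ru\|_{H^1}^2+1)$. For the stochastic term the It\^o isometry yields $\E\bigl\|\int_s^t Q\,\d w(r)\bigr\|_H^2=(t-s)\Tr(QQ^*)$, which is finite since \nameref{cond:Q} gives $\Tr(QQ^*)\le\alpha_1^{-1}\Tr(QAQ^*)<\infty$. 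Both of these bounds are uniform across the three regimes; all the dimension dependence of the statement is carried by the nonlinear term.

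For the nonlinear term I would write $\E\bigl\|\int_s^t\f(u^0(r))\,\d r\bigr\|_H^2\le (t-s)\int_s^t\E\|\f(u^0(r))\|_H^2\,\d r$ and split into the three cases. When $d\ge 4$ and $\f$ obeys \eqref{cond:phi:O(|x|)} we have $\|\f(u^0)\|_H^2\lesssim\|u^0\|_H^2+|\domain|$, and since $\E\|u^0(r)\|_H^2\le\E\|\ru\|_H^2+C$ uniformly in $r$ by \eqref{ineq:react-diff:u^0_t<e^(-ct)}, this term is $\le c(t-s)^2(\E\|\ru\|_H^2+1)\le c(t-s)(T+1)(\E\|\ru\|_{H^1}^2+1)$, which together with the previous two bounds yields \eqref{ineq:react-diff:phi.Lipschit:|u(t)-u(s)|-bound}. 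When $d=1,2$, the embedding $H^1(\domain)\hookrightarrow L^q(\domain)$ for every finite $q$ gives $\|\f(u^0)\|_H^2\lesssim\|u^0\|_{L^{2p_0}}^{2p_0}+1\lesssim\|u^0\|_{H^1}^{2\lceil p_0\rceil}+1$, and inserting the moment bound \eqref{ineq:react-diff:|Phi|^n_(H^1)|Phi|^2_(H^2)-bound:random-initial-cond} of \cref{lem:react-diff:|u|^n_H1-bound:nu^0} with $n=2\lceil p_0\rceil$ produces precisely the moments $\E\|\ru\|_{H^1}^{2\lceil p_0\rceil}$ and $\E\|\ru\|_H^{4\lceil p_0\rceil}$ in \eqref{ineq:react-diff:phi.Lipschit:|u(t)-u(s)|-bound:agmon:d=1-2}, with the factor $T^2+1$ arising from the $(r+1)$-growth of those moments integrated over $[0,T]$ together with the factor $(t-s)\le T$. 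The case $d=3$ with fifth-order growth \eqref{cond:phi:agmon} is the delicate one: here $\|\f(u^0)\|_H^2\lesssim\|u^0\|_{L^{10}}^{10}+1$, and I would use the borderline Sobolev embedding $H^{6/5}(\domain)\hookrightarrow L^{10}(\domain)$, valid in $d=3$, together with the interpolation inequality $\|u\|_{H^{6/5}}\le\|u\|_{H^1}^{4/5}\|u\|_{H^2}^{1/5}$ (immediate from the spectral description $\|u\|_{H^s}^2=\sum_k\alpha_k^s|\la u,e_k\ra_H|^2$ and H\"older on the sum), so that $\|\f(u^0)\|_H^2\lesssim\|u^0\|_{H^1}^{8}\|u^0\|_{H^2}^{2}+1$; feeding this into \eqref{ineq:react-diff:|Phi|^n_(H^1)|Phi|^2_(H^2)-bound:random-initial-cond} with $n=10$ then furnishes the moments $\E\|\ru\|_{H^1}^{10}$ and $\E\|\ru\|_H^{20}$ of \eqref{ineq:react-diff:phi.Lipschit:|u(t)-u(s)|-bound:agmon}.

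The main obstacle is precisely the $d=3$ case, and it is essentially of a bookkeeping nature: the exponent $10$ is critical, so the nonlinearity must be dominated in a way that involves the highest norm $\|u^0\|_{H^2}$ only to the power $2$ — this is all that the time-integrated estimate \eqref{ineq:react-diff:|Phi|^n_(H^1)|Phi|^2_(H^2)-bound:random-initial-cond} controls — which forces the split $\|u^0\|_{H^1}^{8}\|u^0\|_{H^2}^{2}$ and hence the exponents $10,20$ in the hypothesis, explaining why \cref{lem:react-diff:|u|^n_H1-bound:nu^0} was established for arbitrary even $n$ (and, similarly, why dimensions $d=1,2$ require moments up to order $2\lceil p_0\rceil,4\lceil p_0\rceil$). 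A further routine but necessary check is that $\f(u^0(r))\in H$ for a.e.\ $r$, so that the displayed variational identity is legitimate in $H$; this follows in each dimension from the same Sobolev embeddings together with the a.e.\ finiteness of $\|u^0(r)\|_{H^2}$ provided by \cref{lem:react-diff:|u|^2_(H^1)-bound.and.int_0^t|Au(s)|^2ds-bound:random-initial-cond}. The remaining steps—collecting the three bounds and absorbing powers of $(t-s)\le T$—are elementary; the argument is the stochastic analogue of the noiseless computation in \cite[Lemma 5.5]{conti2006singular}, the new ingredient being the replacement of time-derivative estimates on $u^0$ by the moment bounds above (cf.\ \cref{rem:epsilon->0:lipschitz}).
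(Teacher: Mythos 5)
Your proposal is correct and follows the paper's argument in all essentials: decompose $u^0(t)-u^0(s)$ via the evolution equation, extract a factor of $(t-s)$ by Cauchy--Schwarz in time (the paper does the equivalent via the weak formulation tested against $u^0(t)-u^0(s)$ with Young's inequality), and then feed the three case-dependent pointwise bounds on $\|\f(u^0)\|_H^2$ into the $H^1$-moment estimate of \cref{lem:react-diff:|u|^n_H1-bound:nu^0}. The only cosmetic deviation is in how the nonlinearity is dominated: for $d=3$ you use $H^{6/5}\hookrightarrow L^{10}$ together with the interpolation $\|u\|_{H^{6/5}}\le\|u\|_{H^1}^{4/5}\|u\|_{H^2}^{1/5}$, whereas the paper combines Agmon's inequality $\|u\|_{L^\infty}\lesssim\|u\|_{H^1}^{1/2}\|u\|_{H^2}^{1/2}$ with $H^1\hookrightarrow L^6$ and the split $\|u\|_{L^{10}}^{10}\le\|u\|_{L^\infty}^4\|u\|_{L^6}^6$; both routes produce the identical bound $\|u^0\|_{H^1}^8\|u^0\|_{H^2}^2$, and similarly for $d=1,2$.
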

 
\begin{proof}
First note that $\ru\in H^1$ ensures that $u^0(\cdot) \in L^2([0,T];\H^2)$ $\P-$a.s., by virtue of~\eqref{ineq:react-diff:|Phi|^2_(H^1_epsilon)-bound:random-initial-cond}. Furthermore, since $u^0$ is a mild solution of~\eqref{eqn:react-diff}, it is also a weak solution \cite{da2014stochastic}. Thus
\begin{align*}
\|u^0 (t)-u^0 (s)\|^2_H&=\int_s^t \la Au^0 (r),u^0 (t)-u^0 (s)\ra_H\d r+\int_s^t\la \f(u^0 (r)),u^0 (t)-u^0 (s)\ra_H\d r\\
&\qquad+\int_s^t \la u^0 (t)-u^0 (s),Q\d w(r)\ra_H,
\end{align*}
for $0\le s\le t\le T$. By Young's inequality, we have
\begin{align*}
\E\int_s^t \la Au^0 (r),u^0 (t)-u^0 (s)\ra_H\d r&\le \frac{1}{4}\E\|u^0 (t)-u^0 (s)\|^2_H+\E\Big(\int_s^t\|Au^0 (r)\|\d r\Big)^2\\
&\le \frac{1}{4}\E\|u^0 (t)-u^0 (s)\|^2_H+(t-s)\int_s^t\E\|Au^0 (r)\|_H^2\d r.
\end{align*}
Similarly, by It\^o's isometry, we have
\begin{align*}
\E\int_s^t \la u^0 (t)-u^0 (s),Q\d w(r)\ra_H&\le \frac{1}{4}\E\|u^0 (t)-u^0 (s)\|^2_H+\E\Big\|\int_s^t Q\d w(r)\Big\|^2_H\\
&= \frac{1}{4}\E\|u^0 (t)-u^0 (s)\|^2_H+(t-s)\text{tr}(QQ^*).
\end{align*}
Concerning $\f$, we also use Young's inequality to obtain
\begin{align*}
\E\int_s^t\la \f(u^0 (r)),u^0 (t)-u^0 (s)\ra_H\d r&\le \frac{1}{4}\E\|u^0 (t)-u^0 (s)\|^2_H+\E\Big(\int_s^t \|\f(u^0 (r))\|_H\d r\Big)^2\\
&\le \frac{1}{4}\E\|u^0 (t)-u^0 (s)\|^2_H+\int_s^t\E \|\f(u^0 (r))\|^2_H\d r.
\end{align*}
Altogether, upon recalling $\Tr(QQ^*)<\infty$ as in~\nameref{cond:Q}, we arrive at the estimate 
\begin{align}
\frac{1}{4}\E\|u^0 (t)-u^0 (s)\|^2_H&\le (t-s)\Big(\int_0^T \big(\E\|Au^0 \|^2_H+\E\|\f(u^0 (r))\|^2_H\big)\d r +\Tr(QQ^*)\Big)\nonumber\\
&\le c(t-s)\Big(\E\|\ru\|^2_{H^1}+T+\int_0^T \E\|\f(u^0 (r))\|^2_H\d r +\Tr(QQ^*)\Big)\label{ineq:react-diff:u_0(t)-u_0(s)},
\end{align}
where in the last implication above, we employed \eqref{ineq:react-diff:|Phi|^2_(H^1_epsilon)-bound:random-initial-cond} from~\cref{lem:react-diff:|u|^2_(H^1)-bound.and.int_0^t|Au(s)|^2ds-bound:random-initial-cond}. It therefore remains to find a suitable bound on the integral involving $\f$. To this end, we delineate between three cases: $d\geq 4$, $d=1,2$ and $d=3$.

\noindent\textbf{Case 1}. We assume $d\geq 4$ and $\f(x)=O(|x|)$ as $|x|\to\infty$. Then there exists a constant $c(\f)>0$ such that for all $x\in\rbb$, $\f(x)^2\le c(\f)(x^2+1)$. As a consequence, by virtue of~\eqref{ineq:react-diff:u^0_t<e^(-ct)}, we obtain
\begin{align*}
\int_0^T \E\|\f(u^0 (y))\|^2_H\d y\le c(\f)\int_0^T\close\big(\E\|u^0 (r)\|^2_H+1\big)\d r\le c(\f)\big(\E\|\ru\|^2_H+T\big).
\end{align*}
We then combine the above estimate with~\eqref{ineq:react-diff:u_0(t)-u_0(s)} to establish~\eqref{ineq:react-diff:phi.Lipschit:|u(t)-u(s)|-bound}, namely, there exists a positive constant $c=c(\f)$ such that
\begin{align*}
\E\|u^0 (t)-u^0 (s)\|^2_H\le c(t-s)(T+1)\big(\E\|\ru\|^2_{H^1}+1\big),
\end{align*}
which completes part 1.

\noindent\textbf{Case 2}. Considering dimension $d=1, 2$, on the one hand, when $d=1$, we employ \eqref{cond:phi:1} and  $H^1\subset L^\infty$ to deduce
\begin{align*}
\|\f(u)\|^2_H\le c(1+\|u\|^{2p_0}_{H^1})\le c(1+\|u\|^{2 \lceil p_0\rceil-2}_{H^1}\|u\|^2_{H^2}).
\end{align*}
On the other hand, when $d=2$, we recall Agmon's inequality that
\begin{align*}
\|u\|_{L^\infty}\le c\|u\|^\theta_{H^{s_1}}\|u\|^{1-\theta}_{H^{s_2}},
\end{align*}
holds for all $\theta\in[0,1]$, $s_1<1<s_2$ such that
\begin{align*}
1=\theta s_1 +(1-\theta)s_2.
\end{align*}
In particular, choosing
\begin{align*}
\theta=\frac{2p_0-1}{2p_0},\quad s_1=\frac{2p_0-2}{2p_0-1},\quad s_2=2,
\end{align*}
implies
\begin{align*}
\|\f(u)\|^2_H\le c(1+\|u\|^{2p_0}_{L^\infty}) \le c(1+\|u\|^{2p_0-1}_{H^{\frac{2p_0-2}{2p_0-1}}}\|u\|_{H^2}) \le c(1+\|u\|^{2\lceil p_0 \rceil-2}_{H^1}\|u\|^2_{H^2}).
\end{align*}
An application of~\cref{lem:react-diff:|u|^n_H1-bound:nu^0} (with $n=2\lceil p_0\rceil$) gives
\begin{align*}
\int_0^T\E\|\f(u^0 (r))\|^2_H\d r\le c\Big(T+\big(\E\|\ru\|^{2\lceil p_0 \rceil}_{H^1}+\E \|\ru\|^{4\lceil p_0 \rceil}_H+1)\int_0^T(r+1)\d r\Big),
\end{align*} 
which together with~\eqref{ineq:react-diff:u_0(t)-u_0(s)} produces the estimate~\eqref{ineq:react-diff:phi.Lipschit:|u(t)-u(s)|-bound:agmon:d=1-2}.

\noindent\textbf{Case 3}. In dimension $d\le3$, $\f$ satisfies~\eqref{cond:phi:agmon}, i.e.,  $\f(x)=O(|x|^5)$ as $|x|\to\infty$. Similar to Case 2, we employ Agmon's inequality to see that
\begin{align*}
\|\f(u)\|^2_H\le c(1+\|u\|^6_{L^6}\|u\|^4_{L^\infty})\le c(1+\|A^{1/2}u\|^8_H\|Au\|^2_H).
\end{align*}
In light of~\cref{lem:react-diff:|u|^n_H1-bound:nu^0} (with $n=10$), we obtain
\begin{align*}
\int_0^T\E\|\f(u^0 (r))\|^2_H\d r\le c\Big(T+\big(\E\|\ru\|^{10}_{H^1}+\E \|\ru\|^{20}_H+1)\int_0^T(r+1)\d r\Big),\end{align*}
which together with~\eqref{ineq:react-diff:u_0(t)-u_0(s)} produces the estimate~\eqref{ineq:react-diff:phi.Lipschit:|u(t)-u(s)|-bound:agmon}. The proof is thus complete.
\end{proof}



\section{Estimates on the auxiliary system~\eqref{eqn:react-diff:eta^0_epsilon}} \label{sec:nu^(0-epsilon)}

In this section, we collect useful estimates on the auxiliary system~\eqref{eqn:react-diff:eta^0_epsilon}. We will first state and prove a lemma concerning the norm of $\etazero(t)$ in $M^0_{\mu_\varepsilon}$ and $M^1_{\mu_\varepsilon}$ subject to random initial conditions; an estimate in $\Mzeroeps$ was employed to prove~\cref{thm:epsilon->0:solution->limit-solution:finite-time}, whereas a bound in $\Moneeps$ was used to prove the existence of an invariant probability measure $\nuzero$ for~\eqref{eqn:react-diff:eta^0_epsilon}. For this purpose, let us recall the notation $\rx=(\pi_1\rx,\pi_2\rx)$, where $\pi_1,\pi_2$ denote the coordinate-projections over $\Honeeps$ onto the first and second components of elements, respectively, where $\mu_\varepsilon$ is given as in~\eqref{form:mu_epsilon}.

\begin{lemma} \label{lem:uniform-energy-bounds}
Assume the hypotheses of \cref{prop:well-posed} and let $\rx\in L^2(\Omega;\Honeeps)$. Let $(u^0(t),\etazero(t))$ denote the solution of~\eqref{eqn:react-diff:eta^0_epsilon} corresponding to initial condition $\rx$. Then there exists a positive constant $c$, independent of $\rx,\,t$, and $\varepsilon$, such that
\begin{equation} \label{ineq:int_0^t.E|eta|_(M^1_epsilon)ds<epsilon(|x|+t)}
\int_0^t \E\|\etazero(r)\|^2_{\Mieps}\emph{d} r\le \varepsilon\,c(\E\|\rx\|^2_{\Hieps}+t),\qquad i=0,1,
\end{equation}
and
\begin{equation} \label{ineq:react-diff:eta^0-bound:random-initial-cond} 
\E\|\etazero(t)\|^2_{\Mzeroeps}\le e^{-\frac{\delta}{2\varepsilon}t}\E\|\pi_2\rx\|^2_{\Mzeroeps}+c(\E\|\pi_1\rx\|^2_{H^1}+1)\varepsilon.
\end{equation}
\end{lemma}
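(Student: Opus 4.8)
\textbf{Proof strategy for \cref{lem:uniform-energy-bounds}.}
The plan is to work directly with the explicit representation formula~\eqref{form:eta(t)-representation} for $\etazero(t,s)$ in terms of the memoryless solution $u^0$, exploiting the fact that $\etazero$ solves the transport equation $\frac{\d}{\d t}\etazero=\Tcal_{\mueps}\etazero+u^0$ driven by $u^0$. For the bound~\eqref{ineq:react-diff:eta^0-bound:random-initial-cond}, I would apply It\^o's formula (in fact just the deterministic chain rule, since $\etazero$ evolves by an ODE in $\Mzeroeps$ with the random forcing entering only through $u^0$) to $\|\etazero(t)\|^2_{\Mzeroeps}$. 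This yields
\begin{align*}
\frac{\d}{\d t}\|\etazero(t)\|^2_{\Mzeroeps}=2\la \Tcal_{\mueps}\etazero(t),\etazero(t)\ra_{\Mzeroeps}+2\la u^0(t),\etazero(t)\ra_{\Mzeroeps}.
\end{align*}
Using~\eqref{ineq:<T_epsilon.eta,eta>}, the first term is bounded above by $-\frac{\delta}{\varepsilon}\|\etazero(t)\|^2_{\Mzeroeps}$, and Young's inequality on the second term absorbs half of this dissipation while producing a term $\frac{\varepsilon}{\delta}\|u^0(t)\|^2_{\Mzeroeps}$. The key observation here is that $\|u^0(t)\|^2_{\Mzeroeps}=\|A^{1/2}u^0(t)\|^2_H\int_0^\infty\mueps(s)\d s$, and by~\eqref{form:mu_epsilon} together with~\nameref{cond:mu} one has $\int_0^\infty\mueps(s)\d s=\frac{1}{\varepsilon}\int_0^\infty\mu(s/\varepsilon)\varepsilon^{-1}\d s=\frac{1}{\varepsilon}\int_0^\infty\mu(r)\d r$, so this constant scales like $\varepsilon^{-1}$; but we should be careful --- actually $\mu$ itself need not be integrable against the constant $1$, so the cleaner route is to interpret $\la u^0(t),\etazero(t)\ra_{\Mzeroeps}=\int_0^\infty\mueps(s)\la A^{1/2}u^0(t),A^{1/2}\etazero(t,s)\ra_H\d s$ and use the pointwise-in-$s$ structure. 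After establishing a Gronwall-type inequality with dissipation rate $\frac{\delta}{2\varepsilon}$ and forcing $c\,\varepsilon\,\E\|A^{1/2}u^0(t)\|_H^2$, integrating and invoking~\eqref{ineq:react-diff:|u^0|_H1<c} from \cref{lem:react-diff:|u|^2_(H^1)-bound.and.int_0^t|Au(s)|^2ds-bound:random-initial-cond} (which gives $\E\|u^0(t)\|^2_{H^1}\le c(\E\|\pi_1\rx\|^2_{H^1}+1)$ uniformly in $t$) yields~\eqref{ineq:react-diff:eta^0-bound:random-initial-cond}.

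For the time-integrated bound~\eqref{ineq:int_0^t.E|eta|_(M^1_epsilon)ds<epsilon(|x|+t)}, I would repeat the same energy computation in $\Mieps$ for $i=0,1$, using~\eqref{ineq:<T_epsilon.eta,eta>_(M^n)} in place of~\eqref{ineq:<T_epsilon.eta,eta>}. Denoting $\varphi(t)=\E\|\etazero(t)\|^2_{\Mieps}$, the differential inequality takes the form $\varphi'(t)\le -\frac{\delta}{\varepsilon}\varphi(t)+c\,\varepsilon\,\E\|u^0(t)\|^2_{H^{i+1}}$ after absorbing the cross term. Integrating over $[0,t]$ and using $\int_0^t\varphi'(r)\d r=\varphi(t)-\varphi(0)\ge -\varphi(0)=-\E\|\pi_2\rx\|^2_{\Mieps}$, we obtain $\frac{\delta}{\varepsilon}\int_0^t\varphi(r)\d r\le \E\|\pi_2\rx\|^2_{\Mieps}+c\,\varepsilon\int_0^t\E\|u^0(r)\|^2_{H^{i+1}}\d r$, i.e. $\int_0^t\varphi(r)\d r\le \frac{\varepsilon}{\delta}\E\|\pi_2\rx\|^2_{\Mieps}+c\,\frac{\varepsilon^2}{\delta}\int_0^t\E\|u^0(r)\|^2_{H^{i+1}}\d r$. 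For $i=0$, the running integral $\int_0^t\E\|A^{1/2}u^0(r)\|_H^2\d r\le c(\E\|\pi_1\rx\|^2_H+t)$ follows from~\eqref{ineq:react-diff:energy-bound:random-initial}; for $i=1$, $\int_0^t\E\|Au^0(r)\|^2_H\d r\le c(\E\|\pi_1\rx\|^2_{H^1}+t)$ follows from~\eqref{ineq:react-diff:|Phi|^2_(H^1_epsilon)-bound:random-initial-cond}. Both of these, combined with $\varepsilon\in(0,1]$ and the obvious bound $\E\|\pi_2\rx\|^2_{\Mieps}\le \E\|\rx\|^2_{\Hieps}$, yield~\eqref{ineq:int_0^t.E|eta|_(M^1_epsilon)ds<epsilon(|x|+t)}; note the extra factor $\varepsilon^2/\varepsilon=\varepsilon$ in front of the $u^0$-integral is exactly what produces the advertised $\varepsilon(\cdots)$ scaling.

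The main subtlety --- and the step requiring the most care --- is tracking the $\varepsilon$-dependence correctly in the cross term $\la u^0(t),\etazero(t)\ra_{\Mieps}$. Since $\|u^0(t)\|^2_{\Mieps}$ involves the (possibly non-integrable or $\varepsilon^{-1}$-large) total mass $\int_0^\infty\mueps(s)\d s$, one cannot naively Young-split without losing a power of $\varepsilon$. The correct bookkeeping is to split as $2\la u^0,\etazero\ra_{\Mieps}\le \frac{\delta}{2\varepsilon}\|\etazero\|^2_{\Mieps}+\frac{2\varepsilon}{\delta}\|u^0\|^2_{\Mieps}$ and then note that this {\it last term still contains the bad constant}; instead one should observe that $\etazero$ vanishes at $s=0$ and grows at most linearly in $s$ for small $s$ by~\eqref{form:eta(t)-representation}, so that $\|\etazero(t)\|^2_{\Mieps}$ is genuinely controlled by $\int_0^\infty s^2\mueps(s)\d s\cdot\sup_{r\le t}\|u^0(r)\|^2_{H^{i+1}}=\int_0^\infty s^2\mu(s)\d s\cdot\sup_{r\le t}\|u^0(r)\|^2_{H^{i+1}}$ on short time scales --- this is where the clean $\varepsilon$-independent moment $\int_0^\infty s^2\mu(s)\d s<\infty$ (a consequence of~\nameref{cond:mu}) enters. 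Alternatively, and more robustly, one can bound the cross term by testing against the transport structure and using~\eqref{eq:tech:2}: writing $\etazero(t,s)=\int_0^{s\wedge t}u^0(t-r)\d r$ when $\pi_2\rx$ is suppressed (and adding back the $e^{-\delta t/\varepsilon}$-decaying initial contribution), one gets $\|A^{(i+1)/2}\etazero(t,s)\|_H^2\le s\int_0^{s\wedge t}\|A^{(i+1)/2}u^0(t-r)\|_H^2\d r$ pointwise, and then $\int_0^\infty\mueps(s)\|A^{(i+1)/2}\etazero(t,s)\|_H^2\d s\le\big(\int_0^\infty s\,\mueps(s)\d s\big)\cdot\big(\text{running integral of }\|u^0\|^2_{H^{i+1}}\big)$ after an application of~\eqref{eq:tech:2}; since $\int_0^\infty s\,\mueps(s)\d s=\int_0^\infty s\mu(s)\d s$ is $\varepsilon$-independent (and equals $1$ under~\nameref{cond:mu:int.s.mu(s)=1}, though we do not need that here), this route gives~\eqref{ineq:int_0^t.E|eta|_(M^1_epsilon)ds<epsilon(|x|+t)} directly with the correct powers. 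I expect this representation-formula argument, rather than the differential-inequality argument, to be the path of least resistance, with~\eqref{eq:tech:2} doing the key work of converting the pointwise bound into an integrated one.
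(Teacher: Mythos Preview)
Your differential-inequality strategy in the first two paragraphs is exactly the paper's proof, but you have miscounted an $\varepsilon$ and then, in your third paragraph, talked yourself into a non-problem. Under \nameref{cond:mu} one has $\mu(s)\le\mu(0)e^{-\delta s}$, so $\mu\in L^1(\rbb^+)$ (your worry that ``$\mu$ need not be integrable'' is unfounded), and by~\eqref{form:mu_epsilon}, $\int_0^\infty\mueps(s)\,\d s=\varepsilon^{-1}\|\mu\|_{L^1(\rbb^+)}$. Hence the ``naive'' Young split you dismiss gives
\begin{align*}
\la u^0,\etazero\ra_{\Mieps}\le \frac{\delta}{4\varepsilon}\|\etazero\|^2_{\Mieps}+\frac{\varepsilon}{\delta}\Big(\int_0^\infty\mueps(s)\,\d s\Big)\|u^0\|^2_{H^{i+1}}=\frac{\delta}{4\varepsilon}\|\etazero\|^2_{\Mieps}+\frac{\|\mu\|_{L^1(\rbb^+)}}{\delta}\|u^0\|^2_{H^{i+1}},
\end{align*}
and the $\varepsilon$'s cancel exactly. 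The forcing term in the differential inequality is therefore $c\,\|u^0\|^2_{H^{i+1}}$ with an $\varepsilon$-\emph{independent} constant, not $c\varepsilon\,\|u^0\|^2_{H^{i+1}}$ as you wrote.

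The $\varepsilon$ appearing in the final bounds comes entirely from the fast dissipation rate $\frac{\delta}{2\varepsilon}$: for~\eqref{ineq:int_0^t.E|eta|_(M^1_epsilon)ds<epsilon(|x|+t)} one integrates $\frac{\delta}{2\varepsilon}\int_0^t\varphi(r)\,\d r\le\varphi(0)+c\int_0^t\E\|u^0(r)\|^2_{H^{i+1}}\d r$ and multiplies through by $\frac{2\varepsilon}{\delta}$, then invokes~\eqref{ineq:react-diff:energy-bound:random-initial} or~\eqref{ineq:react-diff:|Phi|^2_(H^1_epsilon)-bound:random-initial-cond}; for~\eqref{ineq:react-diff:eta^0-bound:random-initial-cond} one applies Gronwall, uses~\eqref{ineq:react-diff:|u^0|_H1<c} for the uniform-in-$t$ bound on $\E\|u^0(t)\|^2_{H^1}$, and notes $\int_0^t e^{-\frac{\delta}{2\varepsilon}(t-r)}\d r\le\frac{2\varepsilon}{\delta}$. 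With this correction your first two paragraphs already constitute the paper's complete proof; the representation-formula detour in your third paragraph is unnecessary.
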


\begin{proof}
For each $i=0,1$, it follows from~\eqref{eqn:react-diff:eta^0_epsilon} that
\begin{align*}
\frac{1}{2}\frac{\d}{\d t}\|\etazero(t)\|^2_{\Mieps}&=\la \Tcal_{\mueps}\etazero(t),\etazero(t)\ra_{\Mieps}+\la \etazero(t),u^0(t)\ra_{\Mieps}.
\end{align*}
We then invoke~\eqref{ineq:<T_epsilon.eta,eta>_(M^n)} to see that
\begin{align*}
\la \Tcal_{\mueps}\etazero(t),\etazero(t)\ra_{\Mieps}\le -\frac{\delta}{2\varepsilon}\|\etazero(t)\|^2_{\Mieps}.
\end{align*}
An applications of Young's inequality yields
\begin{align*} 
\la \etazero(t),u^0(t)\ra_{\Mieps}&\le \frac{\delta}{4\varepsilon}\|\etazero(t)\|^2_{\Mieps}+\frac{\varepsilon}{\delta}\int_0^\infty\mu_\varepsilon(s)\d s\|u^0 (t)\|^2_{H^{i+1}}\\
&=  \frac{\delta}{4\varepsilon}\|\etazero (t)\|^2_{\Mieps}+\frac{\|\mu\|_{L^1(\rbb^+)}}{\delta}\|u^0 (t)\|^2_{H^{i+1}}.
\end{align*}
Hence
\begin{align} \label{ineq:d/dt.eta^0<-eta^0+u}
\frac{1}{2}\frac{\d}{\d t}\|\etazero (t)\|^2_{\Mieps}
&\le  -\frac{\delta}{4\varepsilon}\|\etazero (t)\|^2_{\Mieps}+\frac{\|\mu\|_{L^1(\rbb^+)}}{\delta}\|u^0 (t)\|^2_{H^{i+1}}.
\end{align}
Integrating the above inequality over the interval $[0,t$] yields
\begin{align*}
\|\etazero (t)\|^2_{\Mieps} +\frac{\delta}{2\varepsilon}\int_0^t\|\etazero (r)\|^2_{\Mieps}\d r\le \|\pi_2\rx\|^2_{\Mieps} +2\frac{\|\mu\|_{L^1(\rbb^+)}}{\delta}\int_0^t\|u^0 (r)\|^2_{H^{i+1}}\d r.
\end{align*}
In particular, we have
\begin{align*}
\int_0^t\E\|\etazero (r)\|^2_{\Mieps}\d r\le \frac{2\varepsilon}{\delta}\Big(\E\|\pi_2\rx\|^2_{\Mieps} +2\frac{\|\mu\|_{L^1(\rbb_+)}}{\delta}\int_0^t\E\|u^0 (r)\|^2_{H^{i+1}}\d r\Big).
\end{align*}
We now invoke either~\eqref{ineq:react-diff:energy-bound:random-initial}  or~\eqref{ineq:react-diff:|Phi|^2_(H^1_epsilon)-bound:random-initial-cond}, depending on either $i=0$ or $i=1$, respectively, together with the above estimate to produce the desired bound~\eqref{ineq:int_0^t.E|eta|_(M^1_epsilon)ds<epsilon(|x|+t)}.

Concerning \eqref{ineq:react-diff:eta^0-bound:random-initial-cond}, since $\rx\in\Honeeps$, we combine~\eqref{ineq:d/dt.eta^0<-eta^0+u} for $i=0$ with~\eqref{ineq:react-diff:|u^0|_H1<c}  to see that
\begin{align*}
\frac{1}{2}\frac{\d}{\d t}\E\|\etazero (t)\|^2_{\Mzeroeps}\le -\frac{\delta}{4\varepsilon}\E\|\etazero (t)\|^2_{\Mzeroeps}+c(\E\|\pi_1\rx\|^2_{H^1}+1).
\end{align*}
This implies
\begin{align*}
\E\|\etazero (t)\|^2_{\Mzeroeps}\le e^{-\frac{\delta}{2\varepsilon}t}\E\|\pi_2\rx\|^2_{\Mzeroeps}+c(\E\|\pi_1\rx\|^2_{H^1}+1)\int_0^te^{-\frac{\delta}{2\varepsilon}(t-r)}\d r,
\end{align*}
which is~\eqref{ineq:react-diff:eta^0-bound:random-initial-cond}, and we are done.
\end{proof}

Through~\cref{thm:existence-inv-measure:nu^(0-epsilon)} below, we establish that there exists at least one invariant probability measure for~\eqref{eqn:react-diff:eta^0_epsilon}. 

\begin{theorem} \label{thm:existence-inv-measure:nu^(0-epsilon)}
Assume the hypotheses of~\cref{prop:well-posed}. Then there exists at least one invariant probability measure, $\nuzero$, for the Markov semigroup correpsonding to~\eqref{eqn:react-diff:eta^0_epsilon}.
\end{theorem}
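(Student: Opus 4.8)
The plan is to establish existence of $\nuzero$ by the Krylov--Bogoliubov averaging procedure in the phase space $\Hzeroeps$, exploiting the compact embedding $\Z^1_{\mueps}\subset\Hzeroeps$ (which holds since $H^1$ is compactly embedded in $H$ and, by \cite{gatti2004exponential,joseph1989heat,joseph1990heat}, $\Ecal^1_{\mueps}$ is compactly embedded in $\Mzeroeps$) to extract a tight sequence of time averages. First I would record that the Markov semigroup $\Pzero_t$ associated with \eqref{eqn:react-diff:eta^0_epsilon} is Feller: its $u^0$-component solves the decoupled equation \eqref{eqn:react-diff}, for which continuous dependence of solutions on the initial datum in $H$ is classical, while $\etazero$ is recovered from $u^0$ through the linear, contractive representation \eqref{form:eta(t)-representation}; hence $(u^0(t;\cdot),\etazero(t;\cdot))$ depends continuously on $(u_0,\eta_0)\in\Hzeroeps$. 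Then, starting from the trivial datum $(0,0)\in\Hzeroeps$, I would introduce the family
\begin{equation*}
\nuzero_T(\cdot)=\frac{1}{T}\int_0^T\Pzero_t\big((0,0),\cdot\big)\,\d t,\qquad T>0.
\end{equation*}

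The core of the argument is tightness of $\{\nuzero_T\}_{T\ge 1}$ in $\Hzeroeps$. Fix $R>0$ and consider the precompact set $\mathcal{B}_R=\{(u,\eta)\in\Z^1_{\mueps}:\|u\|^2_{H^1}+\|\eta\|^2_{\Ecal^1_{\mueps}}\le R^2\}$. By Markov's inequality,
\begin{equation*}
\nuzero_T(\mathcal{B}_R^c)\le\frac{1}{TR^2}\int_0^T\big(\E\|u^0(s)\|^2_{H^1}+\E\|\etazero(s)\|^2_{\Ecal^1_{\mueps}}\big)\,\d s,
\end{equation*}
so it suffices to bound the two time integrals by $c(\mueps)(1+T)$. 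For $u^0$ this is immediate from \eqref{ineq:react-diff:|Phi|^2_(H^1_epsilon)-bound:random-initial-cond} in \cref{lem:react-diff:|u|^2_(H^1)-bound.and.int_0^t|Au(s)|^2ds-bound:random-initial-cond}. For $\etazero$ I would split the $\Ecal^1_{\mueps}$-norm as in \eqref{form:space:L:norm} into $\|\etazero\|^2_{\Moneeps}$, $\|\Tcal_{\mueps}\etazero\|^2_{\Mzeroeps}$, and $\sup_{r\ge 1}r\T^{\mueps}_{\etazero}(r)$. The first piece follows by integrating \eqref{ineq:d/dt.eta^0<-eta^0+u} with $i=1$ and invoking \eqref{ineq:react-diff:|Phi|^2_(H^1_epsilon)-bound:random-initial-cond} to control $\int_0^t\E\|u^0(s)\|^2_{H^2}\,\d s$. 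The remaining two pieces are handled exactly as in the proof of \cref{lem:T:bound}: since $\etazero(0)=0$, formula \eqref{form:eta(t)-representation} gives $\etazero(t,s)=\int_0^{s\wedge t}u^0(t-r)\,\d r$, hence $\Tcal_{\mueps}\etazero(t,s)=-u^0(t-s)\chi_{(0,t]}(s)$, and the bound $\mueps(s)\le\mueps(0)e^{-\delta s}$ together with the elementary inequality \eqref{eq:tech:2} reduce everything to the exponentially weighted estimate $\E\int_0^t e^{qr}\|A^{1/2}u^0(r)\|^2_H\,\d r\le\|u_0\|^2_H+c\,e^{qt}$ for $0<q\le\delta$. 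This last bound is the $u^0$-analogue of \cref{lem:int_0^t e^(delta/epsilon)|A^(1/2)u|ds}, obtained by applying It\^o's formula to $e^{qt}\|u^0(t)\|^2_H$ and using \nameref{cond:phi:2} with Young's inequality to absorb the $q$-term into $-a_2\|u^0\|^{p_0+1}_{L^{p_0+1}(\domain)}$. Combining these estimates gives $\nuzero_T(\mathcal{B}_R^c)\le c(\mueps)/R^2$ for all $T\ge 1$, so $\{\nuzero_T\}_{T\ge 1}$ is tight.

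Finally, Prokhorov's theorem yields a sequence $T_k\to\infty$ along which $\nuzero_{T_k}\rightharpoonup\nuzero$ for some $\nuzero\in\Pcal r(\Hzeroeps)$, and the standard Krylov--Bogoliubov closing step --- pairing with an arbitrary $f\in C_b(\Hzeroeps)$, noting that $(\Pzero_s)^*\nuzero_T$ differs from $\nuzero_T$ by $T^{-1}$ times a difference of two integrals of $\Pzero_t f((0,0),\cdot)$ over time intervals of length $s$, and invoking the Feller property --- shows $\nuzero$ is invariant for $\Pzero_t$. I expect the only non-routine point to be the verification of the time-averaged bound on $\E\|\etazero(t)\|^2_{\Ecal^1_{\mueps}}$, i.e. reproving the analogue of \cref{lem:T:bound} for the memoryless system; since no $\varepsilon$-uniformity is asserted in this theorem, the constants may depend on $\varepsilon$, which removes the main difficulty encountered elsewhere in the paper.
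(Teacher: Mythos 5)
Your proposal is correct and follows the same Krylov--Bogoliubov route the paper takes: tightness of the time-averaged measures started from the zero datum, obtained via the compact embedding $\Z^1_{\mueps}\subset\Hzeroeps$ together with uniform-in-time bounds on $\E\|\etazero_0(t)\|^2_{\Ecal^1_{\mueps}}$ derived by adapting \cref{lem:T:bound} and the $M^1_{\mueps}$-estimate \eqref{ineq:int_0^t.E|eta|_(M^1_epsilon)ds<epsilon(|x|+t)}. You supply slightly more detail than the paper's sketch (in particular, spelling out the $u^0$-analogue of \cref{lem:int_0^t e^(delta/epsilon)|A^(1/2)u|ds} and the Feller closing step), but there is no substantive difference.
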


The argument of \cref{thm:existence-inv-measure:nu^(0-epsilon)} is similar to the proof of \cref{thm:exponential-bound:H^1_epsilon:nu^mu}. See also the discussion on the existence of $\nueps$ in \cite{glatt2024paperII}. For the reader's convenience, we briefly summarize the argument from \cite{glatt2024paperII} without providing additional details: following the classical Krylov-Bogoliubov argument (see, for instance, \cite{da2014stochastic}), it suffices to show that the family of measures $\{\nuzero_T\}_{T>0}$ given by
\begin{equation} \label{form:nu.epsilon.time-average}
\nuzero_T(\cdot)=\frac{1}{T}\int_0^T\close \Pzero_t(0,\cdot)\d t,
\end{equation}
is tight in $\Pcal r(\Hzeroeps)$. Here $\Pzero_t(0,\cdot)$ is the Markov semigroup associated with the solution $\big(u^0_0(t),\etazero_0(t)\big)$ of~\eqref{eqn:react-diff:eta^0_epsilon} with the zero initial condition in $\Hzeroeps$. We recall that the embedding $\Z^1_{\mueps}=H^1\times \Ecal^1_{\mueps}\subset\Hzeroeps$ is compact (see, for instance, \cite{gatti2004exponential,joseph1989heat,joseph1990heat}). Hence, the tightness of $\{\nuzero_T\}$ is a consequence of the moment bounds~\eqref{form:space:L:norm} in the $\Ecal^1_{\mueps}$-norm.
In particular, we may then adapt the proof of~\cref{lem:T:bound} 
to infer 
\begin{align*}
\sup_{t\geq0}\E\| \Tcal_{\mueps}\etazero_0(t)\|^2_{\Mzeroeps}\le c\qquad\text{and}\qquad \sup_{t\geq0}\left(\E\sup_{r\ge 1}r\T^{\mueps}_{\etazero_0(t)}(r)\right)\le c,
\end{align*}
for some positive constant $c$. Combined with the estimate~\eqref{ineq:int_0^t.E|eta|_(M^1_epsilon)ds<epsilon(|x|+t)} in $\Moneeps$, we employ the same argument as in the proof of \cref{thm:exponential-bound:H^1_epsilon:nu^mu} 
to deduce that $\{\nuzero_T\}$ is tight. Consequently, $\nuzero_T$ converges weakly to a probability measure $\nuzero$, which is invariant for~\eqref{eqn:react-diff:eta^0_epsilon}.

\begin{remark}
Under the same hypothesis of~\cref{thm:react-diff:epsilon:geometric-ergodicity}, it can be shown that $\nuzero$ is indeed the only invariant probability for $\Pzero_t$. However, since we do not need the uniqueness of $\nuzero$ to establish the convergence results as $\varepsilon\to 0$, we omit the formal statement and proof of this fact.
\end{remark}

In what follows, we collect useful properties of $\nuzero$. The first of which asserts that $\pi_1^{-1}\nuzero\sim\nu^0$.

\begin{lemma} \label{lem:pi_1nu^(0-epsilon)=nu^0} Assume the hypotheses of~\cref{thm:react-diff:epsilon:geometric-ergodicity}. Let $\nuzero$ be the unique invariant probability measure associated to~\eqref{eqn:react-diff:eta^0_epsilon} guaranteed by~\cref{thm:existence-inv-measure:nu^(0-epsilon)} and let $\nu^0$ be the unique invariant probability measure for~\eqref{eqn:react-diff}. Then, $\pi_1^{-1}\nuzero\sim\nu^0$.
\end{lemma}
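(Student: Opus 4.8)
\textbf{Proof plan for \cref{lem:pi_1nu^(0-epsilon)=nu^0}.}
The claim is that the first marginal $\pi_1\nuzero$ of the invariant measure $\nuzero$ for the auxiliary system~\eqref{eqn:react-diff:eta^0_epsilon} coincides with $\nu^0$, the unique invariant measure for the memoryless equation~\eqref{eqn:react-diff}. The key structural observation is that in the coupled system~\eqref{eqn:react-diff:eta^0_epsilon}, the $u^0$-equation is \emph{autonomous}: the evolution of $u^0$ does not see $\etazero$ at all, and $(u^0(t),\etazero(t))$ is Markov precisely because $u^0$ alone is already Markov (as noted in the text following~\eqref{eqn:react-diff:eta^0_epsilon}). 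Consequently, for any $(u_0,\eta_0)\in\Hzeroeps$ and any bounded measurable $f$ on $H$, one has $\Pzero_t (f\circ\pi_1)(u_0,\eta_0)=\E f(u^0(t;u_0))=P^0_t f(u_0)$, i.e., the projection $\pi_1$ intertwines the two Markov semigroups: $\pi_1 (\Pzero_t)^* = (P^0_t)^* \pi_1$ in the sense of push-forwards.

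First I would make this intertwining precise: for $\mu\in\Pcal r(\Hzeroeps)$ and Borel $\B_1\subseteq H$,
\[
\big(\pi_1(\Pzero_t)^*\mu\big)(\B_1)=\int_{\Hzeroeps}\Pzero_t(U_0,\pi_1^{-1}\B_1)\,\mu(\d U_0)=\int_{\Hzeroeps}\P(u^0(t;\pi_1 U_0)\in\B_1)\,\mu(\d U_0)=\big((P^0_t)^*(\pi_1\mu)\big)(\B_1),
\]
where the middle equality uses that $u^0(t;\pi_1 U_0)$ depends on $U_0$ only through $\pi_1 U_0$. Next, applying this with $\mu=\nuzero$ and using that $\nuzero$ is invariant for $\Pzero_t$ (from \cref{thm:existence-inv-measure:nu^(0-epsilon)}), I get
\[
(P^0_t)^*(\pi_1\nuzero)=\pi_1(\Pzero_t)^*\nuzero=\pi_1\nuzero,
\]
for every $t\ge0$; that is, $\pi_1\nuzero$ is an invariant probability measure for $P^0_t$ on $H$. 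Finally, I would invoke the uniqueness of the invariant measure for~\eqref{eqn:react-diff}, which holds under the hypotheses of \cref{thm:react-diff:epsilon:geometric-ergodicity} (see \cref{thm:react-diff:geometric-ergodicity}, or the classical references \cite{da1996ergodicity,da2014stochastic,hairer2011theory} cited there), to conclude $\pi_1\nuzero=\nu^0$.

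The only mild technical point — not really an obstacle — is justifying the measurability and the pushforward identity rigorously, i.e., that $U_0\mapsto\P(u^0(t;\pi_1 U_0)\in\B_1)$ is measurable and that Fubini applies; this is standard given the well-posedness and continuous dependence on initial data from \cref{prop:well-posed}. There is genuinely no hard analytic content here: the lemma is a soft consequence of the one-way coupling $u^0\to\etazero$ in~\eqref{eqn:react-diff:eta^0_epsilon} together with uniqueness of $\nu^0$. I would keep the write-up to a short paragraph establishing the intertwining and then citing uniqueness.
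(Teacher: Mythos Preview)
Your proposal is correct and follows essentially the same approach as the paper: both exploit that the $u^0$-equation in~\eqref{eqn:react-diff:eta^0_epsilon} is autonomous to show that $\pi_1\nuzero$ is invariant for $P^0_t$, and then invoke uniqueness of $\nu^0$. The paper writes the invariance check via test functions $f$ rather than Borel sets, but the content is identical.
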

\begin{proof}
By the uniqueness of $\nu^0$, it suffices to prove that $\pi_1^{-1}\nuzero$ is invariant for~\eqref{eqn:react-diff}. To see this, by the invariant property of $\nuzero$ for~\eqref{eqn:react-diff:eta^0_epsilon} and the fact that $u^0(t)$ does not depend on $\etazero(t)$, we have
\begin{align*}
\int_{H}f(u)\pi_1^{-1}\nuzero(\d u)&=\int_{\Hzeroeps}f(u)\nuzero(\d u,\d\eta)\\
&=\int_{\Hzeroeps}\E_{(u,\eta)} f(u^0(t))\nuzero(\d u,\d\eta)\\
&=\int_{H\times\Mzeroeps}\close\close\close\E_{u} f(u^0(t))\nuzero(\d u,\d\eta)=\int_H \E_u f(u^0(t))\pi_1^{-1}\nuzero(\d u),
\end{align*}
for all $t\ge 0$. As a consequence, $\pi_1^{-1}\nuzero$ is invariant probability for the system~\eqref{eqn:react-diff}.
\end{proof}
Next, we establish a moment bound in $\Honeeps$ with respect to $\nuzero$.

\begin{lemma} \label{lem:epsilon->0:invariant-measures:H^1_epsilon-bound}
Let $\nuzero$ be the invariant probability for~\eqref{eqn:react-diff:eta^0_epsilon} as in~\cref{thm:existence-inv-measure:nu^(0-epsilon)}. Then
\begin{equation} \label{lim:epsilon->0:invariant-measures:H^1_epsilon-bound}
\limsup_{\varepsilon\to 0}\left(\int_{\Hzeroeps}\close \|U\|^2_{\Honeeps}+\|\pi_1U\|^p_{H^1}\right)\nuzero(\emph{d} U)<\infty,
\end{equation}
for any $p\ge 1$. Consequently, there exists $\varepsilon_0>0$ such that $\nuzero(\Honeeps)=1$, for all $\varepsilon\in(0,\varepsilon_0)$.
\end{lemma}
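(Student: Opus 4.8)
The plan is to establish \eqref{lim:epsilon->0:invariant-measures:H^1_epsilon-bound} by combining the time-averaged representation of $\nuzero$ with the uniform-in-$\varepsilon$ moment bounds already collected for the auxiliary system \eqref{eqn:react-diff:eta^0_epsilon}. Recall from \cref{thm:existence-inv-measure:nu^(0-epsilon)} that $\nuzero$ arises as a weak limit of the time averages $\nuzero_T$ in \eqref{form:nu.epsilon.time-average}, so that for each fixed $\varepsilon$ and each $R>0$,
\begin{align*}
\int_{\Hzeroeps}\big(\|U\|^2_{\Honeeps}\mi R\big)\nuzero(\d U)
=\lim_{T\to\infty}\frac{1}{T}\int_0^T \E\big(\|\Phizero_0(t)\|^2_{\Honeeps}\mi R\big)\d t,
\end{align*}
where $\Phizero_0$ denotes the solution of \eqref{eqn:react-diff:eta^0_epsilon} started from $0\in\Hzeroeps$. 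I would first bound the right-hand side uniformly in $T$, $R$, \emph{and} $\varepsilon$. For the $H^1$-component of $\|U\|^2_{\Honeeps}$, apply \eqref{ineq:react-diff:|u^0|_H1<c} from \cref{lem:react-diff:|u|^2_(H^1)-bound.and.int_0^t|Au(s)|^2ds-bound:random-initial-cond} with zero initial data to get $\E\|u^0(t)\|^2_{H^1}\le c$ uniformly in $t$; note this constant is independent of $\varepsilon$ since the memoryless equation \eqref{eqn:react-diff} does not see $\varepsilon$. For the $\Moneeps$-component, apply \eqref{ineq:int_0^t.E|eta|_(M^1_epsilon)ds<epsilon(|x|+t)} with $i=1$ and zero initial data: $\int_0^t\E\|\etazero(r)\|^2_{\Moneeps}\d r\le \varepsilon\,c\,t$, hence the time average $\frac1T\int_0^T\E\|\etazero(r)\|^2_{\Moneeps}\d r\le \varepsilon c\le c$ for $\varepsilon\in(0,1)$. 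Passing to the limit $T\to\infty$ and then $R\to\infty$ via the Monotone Convergence Theorem yields $\int_{\Hzeroeps}\|U\|^2_{\Honeeps}\nuzero(\d U)\le c$ with $c$ independent of $\varepsilon$, which gives the first half of \eqref{lim:epsilon->0:invariant-measures:H^1_epsilon-bound} after taking $\limsup_{\varepsilon\to 0}$.

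For the term $\|\pi_1 U\|^p_{H^1}$ with $p\ge 1$, I would reduce to even integers $n=2\lceil p/2\rceil\ge p$ and use \eqref{ineq:react-diff:|u|^n_H1-bound:nu^0}, or rather its proof mechanism: by \cref{lem:pi_1nu^(0-epsilon)=nu^0} we have $\pi_1\nuzero\sim\nu^0$, so $\int_{\Hzeroeps}\|\pi_1 U\|^p_{H^1}\nuzero(\d U)=\int_H\|u\|^p_{H^1}\nu^0(\d u)$, which is finite by \eqref{ineq:react-diff:|u|^n_H1-bound:nu^0}. Crucially this bound depends only on $\nu^0$, hence is \emph{automatically} independent of $\varepsilon$; combined with Jensen's inequality to pass from the even integer $n$ down to $p$, this controls the $\limsup_{\varepsilon\to0}$ of the second term. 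Putting both pieces together establishes \eqref{lim:epsilon->0:invariant-measures:H^1_epsilon-bound}.

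For the final assertion that $\nuzero(\Honeeps)=1$ for all sufficiently small $\varepsilon$: since the integrals in \eqref{lim:epsilon->0:invariant-measures:H^1_epsilon-bound} are finite and $\limsup$-bounded, there exists $\varepsilon_0>0$ and $M<\infty$ such that $\int_{\Hzeroeps}\|U\|^2_{\Honeeps}\nuzero(\d U)\le M$ for all $\varepsilon\in(0,\varepsilon_0)$. A finite integral of $\|\cdot\|^2_{\Honeeps}$ forces $\|U\|_{\Honeeps}<\infty$ for $\nuzero$-a.e.\ $U$, i.e.\ $\nuzero(\Honeeps)=1$; here one uses that $\Honeeps$ is precisely the set where the (measurable, lower semicontinuous, possibly $+\infty$-valued) functional $U\mapsto\|U\|^2_{\Honeeps}$ is finite. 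Strictly speaking one should first justify that the map $U\mapsto \|U\|^2_{\Honeeps}$ is Borel measurable on $\Hzeroeps$ (it is, as an increasing limit of the bounded continuous truncations $\|U\|^2_{\Honeeps}\mi R$), and that the uniform-in-$\varepsilon$ bounds above are applied to a version of the integral justified through the weak-limit / Monotone Convergence argument rather than directly to $\nuzero$.

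\textbf{Main obstacle.} The delicate point is maintaining \emph{uniformity in $\varepsilon$} of all constants as $\varepsilon\to 0$: the $\Moneeps$-norm carries the weight $\mueps(s)=\varepsilon^{-2}\mu(s/\varepsilon)$, whose mass is $\|\mueps\|_{L^1}=\varepsilon^{-1}\|\mu\|_{L^1}$, so naive bounds blow up. The resolution, already built into \eqref{ineq:int_0^t.E|eta|_(M^1_epsilon)ds<epsilon(|x|+t)}, is that the favorable factor of $\varepsilon$ coming from the dissipation rate $\delta/\varepsilon$ in \eqref{ineq:<T_epsilon.eta,eta>_(M^n)} exactly compensates — but one must be careful that the $u^0$-moment bounds fed into that estimate ($\E\|u^0\|^2_{H^1}\le c$, and the higher $H^1$-moments of $\nu^0$) are genuinely $\varepsilon$-free, which they are since $u^0$ solves the $\varepsilon$-independent equation \eqref{eqn:react-diff}. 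A secondary technical nuisance is that $\nuzero$ is only asserted to be \emph{an} invariant measure (not unique) and is obtained as a subsequential weak limit; so the moment bounds must be transferred to $\nuzero$ through lower semicontinuity under weak convergence (i.e.\ $\int \Phi\,d\nuzero\le\liminf\int\Phi\,d\nuzero_T$ for $\Phi$ lsc and nonnegative) rather than by invariance identities, exactly as in the proof of \cref{thm:exponential-bound:H^1_epsilon:nu^mu}.
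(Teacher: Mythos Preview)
Your proposal is correct and follows essentially the same approach as the paper: both use the time-averaged measures $\nuzero_T$ with zero initial data, bound the $H^1$-component via \eqref{ineq:react-diff:|u^0|_H1<c} and the $\Moneeps$-component via \eqref{ineq:int_0^t.E|eta|_(M^1_epsilon)ds<epsilon(|x|+t)}, pass to the limit by truncation and the Monotone Convergence Theorem, and handle the $\|\pi_1 U\|^p_{H^1}$ term by identifying $\pi_1\nuzero$ with $\nu^0$ via \cref{lem:pi_1nu^(0-epsilon)=nu^0} and invoking \eqref{ineq:react-diff:|u|^n_H1-bound:nu^0}. Your discussion of the measurability and lower semicontinuity issues is in fact more careful than the paper's presentation.
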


\begin{proof} The fact that $\nuzero$ is full in $\Honeeps$ follows from limit~\eqref{lim:epsilon->0:invariant-measures:H^1_epsilon-bound}. It thus suffices to prove~\eqref{lim:epsilon->0:invariant-measures:H^1_epsilon-bound}.

By virtue of \cref{lem:pi_1nu^(0-epsilon)=nu^0}, $\pi_1^{-1}\nuzero\sim \nu^0$ where $\nu^0$ is the unique invariant probability measure for~\eqref{eqn:react-diff} guaranteed by \cref{thm:react-diff:geometric-ergodicity}. Thanks 
to~\eqref{ineq:react-diff:|u|^n_H1-bound:nu^0}, we readily have 
\begin{align*}
\limsup_{\varepsilon\to 0}\int_{\Hzeroeps}\close\|\pi_1U\|^p_{H^1}\nuzero(\d U) =\limsup_{\varepsilon\to 0}\int_{H}\|u\|^p_{H^1}\nu^0(\d u) <\infty,
\end{align*}
for any $p\ge 1$. Upon recalling the time average measures, $\nuzero_t$, as in~\eqref{form:nu.epsilon.time-average}, we apply~\eqref{ineq:react-diff:|u^0|_H1<c} and~\eqref{ineq:int_0^t.E|eta|_(M^1_epsilon)ds<epsilon(|x|+t)} with  $\ru=0$ and $\rx=0$, respectively, to infer that
\begin{align*}
\int_{\Hzeroeps}\close\|U\|^2_{\Honeeps}\nu^\varepsilon_t(\d U)&=\frac{1}{t}\int_0^t\left(\E\|u^0_0(r)\|^2_{H^1}+\E\|\etazero_0(r)\|^2_{\Moneeps}\right)\d r<\frac{1}{t} c\,t=c,
\end{align*}
holds for some positive constant $c$ that is independent of $\varepsilon$ and $t$. 

On the other hand, given any $R>0$, since $\nuzero_t$ converges weakly to $\nuzero$, one has
\begin{align*}
\lim_{t\to\infty}\int_{\Hzeroeps}\Big(\|U\|^2_{\Honeeps}\mi R\Big)\nuzero_t(\d U)\int_{\Hzeroeps}\big(\|U\|^2_{\Honeeps}\mi R\Big)\nuzero(\d U).
\end{align*}
It then follows that
\begin{align*}
\int_{\Hzeroeps}\left(\|U\|^2_{\Honeeps}\mi R\right)\nuzero(\d U)\le  c,
\end{align*}
for all sufficiently small $\varepsilon$. Upon passing to the limit $R\to\infty$, the Monotone Convergence Theorem then implies
\begin{align*}
\int_{\Hzeroeps}\|U\|^2_{\Honeeps}\nuzero(\d U)\le c,
\end{align*}
for all sufficiently small $\varepsilon$, which yields~\eqref{lim:epsilon->0:invariant-measures:H^1_epsilon-bound}, as desired.
\end{proof}

Lastly, we turn to exponential estimates for $\nuzero$. To state our result, for $\kappa_1,\,\kappa_2>0$, we introduce the following $\Psitilde_0:\Hzeroeps\mapsto\rbb$ given by
\begin{equation} \label{form:Psitilde_0}
\Psitilde_0(u,\eta)=\frac{1}{2}\kappa_1\|u\|^2_H+\frac{1}{2}\kappa_2\|\eta\|^2_{\Mzeroeps}.
\end{equation}
We note that $\Psi_0$ given in~\eqref{form:Psi_0} is a special case of \eqref{form:Psitilde_0}. In~\cref{lem:epsilon->0:invariant-measure:enegery-exponential-bound} below we establish exponential estimates for $\Psitilde_0$ evaluated along the solution pair $(u^0,\etazero)$ of~
\eqref{eqn:react-diff:eta^0_epsilon}. \cref{lem:epsilon->0:invariant-measure:enegery-exponential-bound} was applied in the proof of~\cref{lem:epsilon->0:alpha_0(solutions-limit-solution)->0}. 

\begin{lemma} \label{lem:epsilon->0:invariant-measure:enegery-exponential-bound}
 Under the same hypothesis of \cref{prop:well-posed}, let $\rx$ be a random variable in $L^2(\Omega,\Hzeroeps)$. Then, for all $\kappa_1$ sufficiently small, there exist positive constants $\kappa_2$, $c$ and $C$ independent of $\rx$ and $t$ such that
\begin{equation} \label{ineq:u^0,eta^0:exponential-estimate}
\E \exp\left(\Psitilde_0(u^0 (t),\etazero (t))\right)\le e^{-ct}\E \exp\left({\Psitilde_0(\rx)}\right)+C.
\end{equation}
Moreover, there exists $\beta_0>0$ such that
\begin{equation} \label{lim:epsilon->0:invariant-measures:energy-exponential-bound}
\sup_{\varepsilon\in(0,1)}\int_{\Hzeroeps}\exp\big\{\beta\|U\|^2_{\Hzeroeps}\big\}\nuzero(\emph{d}U)<\infty,
\end{equation}
for all $\beta\in(0,\beta_0)$, where $\nuzero$ is the invariant probability measure associated to~\eqref{eqn:react-diff:eta^0_epsilon} guaranteed by in~\cref{thm:existence-inv-measure:nu^(0-epsilon)}.
\end{lemma}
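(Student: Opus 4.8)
The plan is to mimic the exponential-moment arguments already carried out in \cref{lem:moment-bound:H^0_epsilon} and \cref{lem:exponential-bound:H^0_epsilon:beta/2}, but now applied to the memoryless pair $(u^0,\etazero)$ solving~\eqref{eqn:react-diff:eta^0_epsilon}, and being careful that no constant degenerates as $\varepsilon\to0$. First I would compute the action of the generator of~\eqref{eqn:react-diff:eta^0_epsilon} on $\Psitilde_0$. Since $u^0$ satisfies the memoryless equation $\d u^0=-Au^0\d t+\f(u^0)\d t+Q\d w$, the drift contribution from the $u^0$-block is $-\kappa_1\|A^{1/2}u^0\|_H^2+\kappa_1\la\f(u^0),u^0\ra_H+\tfrac12\kappa_1\Tr(QQ^*)$, which by~\nameref{cond:phi:2} and Poincar\'e is bounded above by $-c\kappa_1\|u^0\|_H^2+C\kappa_1$. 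For the $\etazero$-block, the key structural input is~\eqref{ineq:<T_epsilon.eta,eta>}, giving $\la\Tcal_{\mueps}\etazero,\etazero\ra_{\Mzeroeps}\le-\tfrac{\delta}{2\varepsilon}\|\etazero\|_{\Mzeroeps}^2$, while the coupling term $\la u^0,\etazero\ra_{\Mzeroeps}$ is absorbed by Young's inequality: $\kappa_2\la u^0,\etazero\ra_{\Mzeroeps}\le\tfrac{\delta\kappa_2}{4\varepsilon}\|\etazero\|_{\Mzeroeps}^2+\tfrac{\varepsilon\kappa_2}{\delta}\|\mu\|_{L^1}\|u^0\|_H^2$ (using $\int_0^\infty\mueps(s)\d s=\varepsilon^{-1}\|\mu\|_{L^1}$). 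Choosing $\kappa_2$ proportional to $\kappa_1$ ensures the $\|u^0\|_H^2$ term coming out of the $\eta$-coupling is dominated by the good dissipative term $-c\kappa_1\|u^0\|_H^2$; the factor $\varepsilon$ in front of that cross term actually helps rather than hurts, which is why uniformity in $\varepsilon$ holds. The net conclusion is a Lyapunov inequality $\mathcal{L}^{\mueps,0}\Psitilde_0\le -c\,\Psitilde_0+C$ with $c,C$ independent of $\varepsilon$, where $\mathcal{L}^{\mueps,0}$ is the generator of~\eqref{eqn:react-diff:eta^0_epsilon}.

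Next I would pass from the linear Lyapunov bound to the exponential one exactly as in the proof of~\eqref{ineq:exponential-bound:H^0_epsilon}: apply It\^o's formula to $\exp(\Psitilde_0(u^0(t),\etazero(t)))$, noting that the It\^o correction produces only $\tfrac12\sum_k|\kappa_1\la u^0,Qe_k\ra_H|^2\le\tfrac12\kappa_1^2\Tr(QQ^*)\|u^0\|_H^2$, which is absorbed into the dissipative term provided $\kappa_1$ is taken sufficiently small (uniformly in $\varepsilon$). Using the elementary inequality~\eqref{ineq:e^r(r-C)>e^r-C} yields $\tfrac{\d}{\d t}\E\exp(\Psitilde_0(u^0(t),\etazero(t)))\le-c\,\E\exp(\Psitilde_0(u^0(t),\etazero(t)))+C$, and Gronwall's inequality gives~\eqref{ineq:u^0,eta^0:exponential-estimate}. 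For the uniform moment bound~\eqref{lim:epsilon->0:invariant-measures:energy-exponential-bound} over $\nuzero$, I would combine~\eqref{ineq:u^0,eta^0:exponential-estimate} with the invariance of $\nuzero$ and a truncation/Fatou argument along the time-averaged measures $\nuzero_T$ from~\eqref{form:nu.epsilon.time-average}, exactly as in the proof of \cref{thm:exponential-bound:H^1_epsilon:nu^mu}: since $\Psitilde_0(U)\ge c\beta\|U\|_{\Hzeroeps}^2$ for suitable small $\beta$ and $\kappa_1,\kappa_2$, applying~\eqref{ineq:u^0,eta^0:exponential-estimate} with zero initial data to $\nuzero_T$, letting $T\to\infty$, and invoking the Monotone Convergence Theorem yields $\int_{\Hzeroeps}\exp(\beta\|U\|_{\Hzeroeps}^2)\nuzero(\d U)\le C$ with $C$ independent of $\varepsilon$.

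The only genuine subtlety — and the step I would watch most carefully — is verifying that the constants $c$ and $C$ in the Lyapunov inequality do not blow up as $\varepsilon\to0$. This requires noting that the only place $\varepsilon$ enters is through $\la\Tcal_{\mueps}\etazero,\etazero\ra_{\Mzeroeps}\le-\tfrac{\delta}{2\varepsilon}\|\etazero\|_{\Mzeroeps}^2$ (a \emph{stronger} dissipation as $\varepsilon\to0$) and through the $\int_0^\infty\mueps=\varepsilon^{-1}\|\mu\|_{L^1}$ factor in the Young estimate of the coupling term; but that factor is matched by the extra $\varepsilon$ one gains from splitting the $\tfrac{\delta}{2\varepsilon}$-dissipation into a $\tfrac{\delta}{4\varepsilon}$-piece used for absorption, leaving a clean $\varepsilon\|\mu\|_{L^1}/\delta$-coefficient in front of $\|u^0\|_H^2$ that is harmless for $\varepsilon\in(0,1)$. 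Everything else — the nonlinearity handled by~\nameref{cond:phi:2}, the trace bounds by~\nameref{cond:Q}, the exponentiation trick — is $\varepsilon$-independent and routine. I would also remark that one cannot here differentiate $u^0$ in time (as in the deterministic analog), which is precisely why the argument must go through the generator and It\^o's formula rather than through a pathwise estimate.
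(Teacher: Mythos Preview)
Your plan is essentially correct and matches the paper's approach: compute the generator of~\eqref{eqn:react-diff:eta^0_epsilon} on $\exp(\Psitilde_0)$, use~\nameref{cond:phi:2} and~\eqref{ineq:<T_epsilon.eta,eta>}, absorb the cross term and the It\^o correction by choosing $\kappa_1,\kappa_2$ small, then apply~\eqref{ineq:e^r(r-C)>e^r-C} and Gronwall; for~\eqref{lim:epsilon->0:invariant-measures:energy-exponential-bound} pass through the time-averages $\nuzero_T$ with zero initial data exactly as you describe.

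One minor computational slip to fix: since the $\Mzeroeps$-inner product carries an $A^{1/2}$ (see~\eqref{def:Mnorm}), the Young estimate on the coupling term actually gives
\[
\kappa_2\la u^0,\etazero\ra_{\Mzeroeps}\le \frac{\delta\kappa_2}{4\varepsilon}\|\etazero\|_{\Mzeroeps}^2+\frac{\kappa_2\|\mu\|_{L^1(\rbb^+)}}{\delta}\|A^{1/2}u^0\|_H^2,
\]
with no leftover factor of $\varepsilon$ (the $\varepsilon$ from the Young split cancels against $\int_0^\infty\mueps=\varepsilon^{-1}\|\mu\|_{L^1}$), and with $\|A^{1/2}u^0\|_H^2$ rather than $\|u^0\|_H^2$. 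This does not affect your argument: the paper absorbs this term directly into the full dissipation $-\kappa_1\|A^{1/2}u^0\|_H^2$ by taking $\kappa_2<\kappa_1\delta/(4\|\mu\|_{L^1})$, before invoking Poincar\'e. So the ``$\varepsilon$ helps'' remark is inaccurate, but the uniformity in $\varepsilon$ still holds because the resulting coefficient is $\varepsilon$-independent.
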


\begin{proof} Let $\L^{0,\varepsilon}$ denote the generator associated with~\eqref{eqn:react-diff:eta^0_epsilon}. By It\^o's formula applied to $\exp\left({\Psitilde_0(u,\eta)}\right)$ we obtain
\begin{align*}
\L^{0,\varepsilon} \exp\left({\Psitilde_0(u,\eta)}\right)&= \exp\left({\Psitilde_0(u,\eta)}\right)\Big(-\kappa_1\|A^{1/2}u\|^2_H+\kappa_1\la \f(u),u\ra_H+\frac{1}{2}\kappa_1\Tr(QQ^*)+\frac{1}{2}\kappa_1^2\|Qu\|_{H}^2\\
&\qquad\qquad\qquad\qquad\quad+\kappa_2\la \Tcal_{\mueps}\eta,\eta\ra_{\Mzeroeps}+\kappa_2\la \eta,u\ra_{\Mzeroeps}\Big).
\end{align*}
By~\nameref{cond:phi:2}, we readily have
\begin{align*}
\la \f(u),u\ra_H\le a_3|\domain|.
\end{align*}
Suppose that $\kappa_1>0$ satisfies
\begin{align*}
\kappa_1<\frac{\alpha_1}{2\Tr(QQ^*)}.
\end{align*}
Then by Poincare's inequality
\begin{align*}
\frac{1}{2}\kappa_1^2\|Qu\|_{H}^2\le \frac{1}{2}\kappa_1^2\Tr(QQ^*)\|u\|_{H}^2\le \frac{1}{4}\|A^{1/2}u\|^2_H.
\end{align*}
Next, we invoke Young's inequality and~\eqref{ineq:<T_epsilon.eta,eta>} to estimate
\begin{align*}
\la \Tcal_{\mueps}\eta,\eta\ra_{\Mzeroeps}+\la \eta,u\ra_{\Mzeroeps}&\le-\frac{\delta}{2\varepsilon}\|\eta\|^2_{\Mzeroeps}+\int_0^\infty\mu_\varepsilon(s)\la A^{1/2}\eta(s),A^{1/2}u\ra_H\d s\\
&\le -\frac{\delta}{4\varepsilon}\|\eta\|^2_{\Mzeroeps}+\frac{\varepsilon}{\delta}\int_0^\infty\close\mu_\varepsilon(s)\d s\,\|A^{1/2}u\|^2_H\\
&= -\frac{\delta}{4\varepsilon}\|\eta\|^2_{\Mzeroeps}+\frac{\|\mu\|_{L^1(\rbb^+)}}{\delta}\|A^{1/2}u\|^2_H.
\end{align*}
So that, provided $\kappa_2$ satisfies
\begin{align*}
\kappa_2<\frac{\kappa_1\delta}{4\|\mu\|_{L^1(\rbb^+)}},
\end{align*}
we have
\begin{align*}
\kappa_2\la \Tcal_{\mueps}\eta,\eta\ra_{\Mzeroeps}+\kappa_2\la \eta,u\ra_{\Mzeroeps}\le -\kappa_2\frac{\delta}{4\varepsilon}\|\eta\|^2_{\Mzeroeps}+\frac{1}{4}\kappa_1\|A^{1/2}u\|^2_H.
\end{align*}
We now collect everything to infer the existence of positive constants $C$ and $c$ independent of $\varepsilon$, $t$ and $\rx$ such that
\begin{align*}
\frac{\d}{\d t}\E \exp\left({\Psitilde_0(u^0(t),\etazero(t))}\right)\le -c\E \exp\left({\Psitilde_0(u^0(t),\etazero(t))}\right)\left(\Psitilde_0(u^0(t),\etazero(t))-C\right).
\end{align*}
Then~\eqref{ineq:u^0,eta^0:exponential-estimate} can be established using the same argument as in the proof of~\eqref{ineq:exponential-bound:H^0_epsilon} of \cref{lem:moment-bound:H^0_epsilon}.

Turning to~\eqref{lim:epsilon->0:invariant-measures:energy-exponential-bound}, in view of~\eqref{ineq:u^0,eta^0:exponential-estimate} corresponding to zero initial condition, observe that
\begin{align*}
\int_{\Hzeroeps}\exp\left({\Psitilde_0(\x)}\right)\nuzero_t(\d\x)=\frac{1}{t}\int_0^t \E \exp\left({\Psitilde_0(u^0_0(r),\etazero_0(r))}\right)\d r\le \frac{C}{t}\int_0^t e^{-ct}\d r +C\le C.
\end{align*}
Following the same argument as in the proof of ~\cref{lem:epsilon->0:invariant-measures:H^1_epsilon-bound} for the function $\exp\left({\Psitilde_0(u,\eta)}\right)$, we obtain
\begin{align} \label{ineq:exponential-moment:nu^(0-epsilon):Psitilde_0}
\sup_{\varepsilon\in (0,1)} \int_{\Hzeroeps}\exp\left({\Psitilde_0(U)}\right)\nuzero(\d U)<\infty.
\end{align}
Recalling $\Psitilde_0$ as in~\eqref{form:Psitilde_0}, this produces~\eqref{lim:epsilon->0:invariant-measures:energy-exponential-bound}, as claimed.
\end{proof}


\subsection*{Acknowledgments}
The authors would like to thank anonymous referees for their providing a thorough review of this work. We appreciate their careful reading and insightful comments, which have improved the manuscript. The work of N.E. Glatt-Holtz was partially supported under the
National Science Foundation grants DMS-1313272, DMS-1816551, DMS-2108790,
and under a Simons Foundation travel support award 515990. The work of V.R. Martinez was partially supported by NSF-DMS 2213363 and NSF-DMS 2206491, PSC-CUNY Award 65187-00 53, which is jointly funded by The Professional Staff Congress and The City University of New York, and the Dolciani Halloran Foundation.

\appendix

\section{Estimates on Wasserstein distances} \label{sec:Wasserstein}

In this section, we collect some useful estimates on Wasserstein distances $\W_{d^{\mueps}_{N,\beta}}$ and $\W_{d_{N,\beta}}$ as in~\eqref{form:W_d:mu} and~\eqref{form:W_0(nu_1,nu_2)}, respectively. We have employed the results below to study the singular limit as $\varepsilon\to 0$ and prove \cref{thm:epsilon->0:invariant-measures->limit-measure} in \cref{sec:epsilon->0:inv-measure}.

In \cref{lem:W_0(nu_1-nu_2)<W_epsilon(nu_1-nu_2)} below, we assert that $\W_{d^{\mueps}_{N,\beta}}$ dominates $\W_{d_{N,\beta}}$.

\begin{lemma} \label{lem:W_0(nu_1-nu_2)<W_epsilon(nu_1-nu_2)} Given $\varepsilon\in(0,1]$, it holds that
\begin{align} \label{ineq:d_0(x1,x2)<d_epsilon(x1,x2)}
d_{N,\beta}(\pi_1U_1,\pi_1U_2)\le d^{\mueps}_{N,\beta}(U_1,U_2),
\end{align}
for all $U_1,U_2\in\Hzeroeps$, where $d^{\mueps}_{N,\beta}$ and $d_{N,\beta}$ are as in~\eqref{form:d_N,beta:mu} and~\eqref{form:d_N,beta}, respectively. As a consequence, let $\nu_1,\nu_2$ be two probability measures in $\Pcal r(\Hzeroeps)$, then
\begin{equation}\label{ineq:W_0(nu_1,nu_2)<W_epsilon(nu_1,nu_2)}
\W_{d_{N,\beta}}(\pi_1^{-1}\nu_1,\pi_1^{-1}\nu_2)\le \W_{d^{\mueps}_{N,\beta}}(\nu_1,\nu_2).
\end{equation}
\end{lemma}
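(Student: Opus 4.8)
The plan is to establish the pointwise bound \eqref{ineq:d_0(x1,x2)<d_epsilon(x1,x2)} directly from the definitions of the two distance-like functions, and then deduce the Wasserstein inequality \eqref{ineq:W_0(nu_1,nu_2)<W_epsilon(nu_1,nu_2)} by pushing couplings forward under the projection $\pi_1$.

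First I would record two elementary monotonicity facts. Writing $U_j=(u_j,\eta_j)\in\Hzeroeps=H\times\Mzeroeps$ (recall \eqref{form:space:H_epsilon}), the product structure of the norm gives
$\|\pi_1U_1-\pi_1U_2\|_H=\|u_1-u_2\|_H\le\|U_1-U_2\|_{\Hzeroeps}$,
so comparing \eqref{form:d_0(x,y)} with \eqref{form:d_N:mu} yields $d_N(\pi_1U_1,\pi_1U_2)\le d_N^{\mueps}(U_1,U_2)$. Second, from \eqref{form:Psi_0} and $\kappa\in(0,1)$ we have $\tfrac{\beta}{2}\|\pi_1U\|_H^2=\tfrac{\beta}{2}\|u\|_H^2\le\tfrac{\beta}{2}\|u\|_H^2+\tfrac{\beta}{2}(1-\kappa)\|\eta\|_{\Mzeroeps}^2=\beta\Psi_0(U)$, hence
$1+e^{\frac{\beta}{2}\|\pi_1U_1\|_H^2}+e^{\frac{\beta}{2}\|\pi_1U_2\|_H^2}\le 1+e^{\beta\Psi_0(U_1)}+e^{\beta\Psi_0(U_2)}$.
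Multiplying these two inequalities (all quantities being nonnegative) and comparing the definitions \eqref{form:d_N,beta} and \eqref{form:d_N,beta:mu} gives $d_{N,\beta}(\pi_1U_1,\pi_1U_2)^2\le d^{\mueps}_{N,\beta}(U_1,U_2)^2$, which is \eqref{ineq:d_0(x1,x2)<d_epsilon(x1,x2)} after taking square roots.

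For the Wasserstein consequence, I would fix an arbitrary coupling $(X,Y)$ of $(\nu_1,\nu_2)$ on $\Hzeroeps$; since $\pi_1$ is continuous, $(\pi_1X,\pi_1Y)$ is a coupling of $(\pi_1\nu_1,\pi_1\nu_2)$ on $H$. By the definition \eqref{form:W_0(nu_1,nu_2)} of the coupling distance together with \eqref{ineq:d_0(x1,x2)<d_epsilon(x1,x2)},
\[
\W_{d_{N,\beta}}(\pi_1\nu_1,\pi_1\nu_2)\le\E\,d_{N,\beta}(\pi_1X,\pi_1Y)\le\E\,d^{\mueps}_{N,\beta}(X,Y).
\]
Taking the infimum over all couplings $(X,Y)$ of $(\nu_1,\nu_2)$ yields \eqref{ineq:W_0(nu_1,nu_2)<W_epsilon(nu_1,nu_2)}.

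I do not anticipate any substantive obstacle here: the statement is a pure projection/monotonicity fact, and the only points deserving a word of justification are the norm domination $\|u\|_H\le\|(u,\eta)\|_{\Hzeroeps}$, immediate from \eqref{form:space:H_epsilon}, and the bound $\tfrac12\|u\|_H^2\le\Psi_0(u,\eta)$, immediate from \eqref{form:Psi_0}; both factors entering the square roots are nonnegative, so multiplying the inequalities is legitimate.
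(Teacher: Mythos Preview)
Your proof is correct and follows essentially the same approach as the paper: establish the pointwise inequality \eqref{ineq:d_0(x1,x2)<d_epsilon(x1,x2)} from the two monotonicity facts $\|\pi_1U_1-\pi_1U_2\|_H\le\|U_1-U_2\|_{\Hzeroeps}$ and $\tfrac{\beta}{2}\|\pi_1U\|_H^2\le\beta\Psi_0(U)$, then deduce \eqref{ineq:W_0(nu_1,nu_2)<W_epsilon(nu_1,nu_2)} from the coupling definition. Your version is slightly more explicit about the push-forward of couplings under $\pi_1$, which the paper leaves implicit.
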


\begin{proof} Given two elements $U_1,U_2\in\Hzeroeps$, from~\eqref{form:d_N,beta} and~\eqref{form:Psi_0}, we see that
\begin{align*}
d_{N,\beta}(\pi_1U_1,\pi_1U_2)&=\sqrt{(1\mi N\|\pi_1U_1-\pi_1U_2\|_H)\big[1+e^{\frac{\beta}{2}\|\pi_1U_1\|^2_H } +e^{\frac{\beta}{2}\|\pi_1U_2\|^2_H } \big]}\\
&\le \sqrt{(1\mi N\|U_1-U_2\|_{\Hzeroeps})\big[1+e^{\beta\Psi_0(U_1) } +e^{\beta\Psi_0(U_2) } \big]}=d^{\mueps}_{N,\beta}(\pi_1U_1,\pi_1U_2).
\end{align*}
This proves~\eqref{ineq:d_0(x1,x2)<d_epsilon(x1,x2)}. By the expressions~\eqref{form:W_d:mu} and~\eqref{form:W_0(nu_1,nu_2)}, we immediately obtain~\eqref{ineq:W_0(nu_1,nu_2)<W_epsilon(nu_1,nu_2)}.
\end{proof}

Next, we establish a generalized triangle inequality for $\W_{d^{\mueps}_{N,\beta}}$.

\begin{lemma} \label{lem:W_epsilon:triangle-ineq}
For all $N,\beta,\varepsilon>0$, there exists a positive constant $C$, 
 such that 
\begin{equation} \label{ineq:W_epsilon:triangle-ineq}
\W_{d^{\mueps}_{N,\beta/2}}(\nu_1,\nu_3)\le C\Big(\W_{d^{\mueps}_{N,\beta}}(\nu_1,\nu_2)+\W_{d^{\mueps}_{N,\beta}}(\nu_2,\nu_3)\Big),
\end{equation}
for all $\nu_1,\nu_2,\nu_3\in \Pcal r(\Hzeroeps)$.
\end{lemma}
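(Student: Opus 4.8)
\textbf{Proof proposal for Lemma~\ref{lem:W_epsilon:triangle-ineq}.}
The plan is to reduce the ``generalized triangle inequality'' to a pointwise quasi-metric inequality for the distance-like function $d^{\mueps}_{N,\beta/2}$ and then lift it to the Wasserstein level via a standard gluing argument for couplings. First I would record the pointwise claim: there is a constant $C=C(N,\beta)$ such that for all $U_1,U_2,U_3\in\Hzeroeps$,
\begin{align*}
d^{\mueps}_{N,\beta/2}(U_1,U_3)\le C\Big(d^{\mueps}_{N,\beta}(U_1,U_2)+d^{\mueps}_{N,\beta}(U_2,U_3)\Big).
\end{align*}
Recall from~\eqref{form:d_N,beta:mu} that $d^{\mueps}_{N,\beta}(U,\Ut)^2 = d_N^{\mueps}(U,\Ut)\big[1+e^{\beta\Psi_0(U)}+e^{\beta\Psi_0(\Ut)}\big]$, where $d_N^{\mueps}(U,\Ut)=N\|U-\Ut\|_{\Hzeroeps}\wedge 1$ is itself a genuine metric bounded by $1$. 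The point of lowering the exponent from $\beta$ to $\beta/2$ on the left is to absorb the cross terms that arise when one tries to dominate $e^{\frac{\beta}{2}\Psi_0(U_1)}$ by the right-hand side: if $d_N^{\mueps}(U_1,U_2)=1$ or $d_N^{\mueps}(U_2,U_3)=1$ the inequality is trivial since the left side is at most $\sqrt{1+e^{\frac\beta2\Psi_0(U_1)}+e^{\frac\beta2\Psi_0(U_3)}}$ and one of the summands on the right already contributes a full factor $\sqrt{1+e^{\beta\Psi_0(U_i)}}$, which dominates $\sqrt{e^{\frac\beta2\Psi_0(U_i)}}$ up to a constant; whereas if both $d_N^{\mueps}$-distances are $<1$, then $\|U_1-U_3\|_{\Hzeroeps}$ is bounded by $\|U_1-U_2\|_{\Hzeroeps}+\|U_2-U_3\|_{\Hzeroeps}$ and one uses that $\Psi_0$ is locally Lipschitz-controlled so that $e^{\frac\beta2\Psi_0(U_1)}\le e^{\frac\beta2\Psi_0(U_2)}\cdot e^{\frac\beta2|\Psi_0(U_1)-\Psi_0(U_2)|}$; since $\|U_1-U_2\|$ is small (at most $1/N$), the correction factor is bounded, giving the claim with a constant depending on $N,\beta$. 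This is essentially the computation in~\cite[Proposition~A.3 or its analogues]{glatt2021mixing}, and I would cite that for the routine details.

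Next I would promote the pointwise inequality to the Wasserstein distances. Given $\nu_1,\nu_2,\nu_3\in\Pcal r(\Hzeroeps)$, fix near-optimal couplings: a coupling $\pi_{12}$ of $(\nu_1,\nu_2)$ with $\E_{\pi_{12}}\,d^{\mueps}_{N,\beta}(X_1,X_2)\le \W_{d^{\mueps}_{N,\beta}}(\nu_1,\nu_2)+\epsilon$ and a coupling $\pi_{23}$ of $(\nu_2,\nu_3)$ with $\E_{\pi_{23}}\,d^{\mueps}_{N,\beta}(X_2,X_3)\le \W_{d^{\mueps}_{N,\beta}}(\nu_2,\nu_3)+\epsilon$. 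By the gluing lemma (disintegrate $\pi_{12}$ and $\pi_{23}$ along their common $\nu_2$-marginal), there is a triple $(X_1,X_2,X_3)$ on a common probability space whose $(1,2)$-marginal is $\pi_{12}$ and whose $(2,3)$-marginal is $\pi_{23}$. Then $(X_1,X_3)$ is a coupling of $(\nu_1,\nu_3)$, so using the pointwise inequality and linearity of expectation,
\begin{align*}
\W_{d^{\mueps}_{N,\beta/2}}(\nu_1,\nu_3)\le \E\,d^{\mueps}_{N,\beta/2}(X_1,X_3)\le C\big(\E\,d^{\mueps}_{N,\beta}(X_1,X_2)+\E\,d^{\mueps}_{N,\beta}(X_2,X_3)\big),
\end{align*}
and letting $\epsilon\to0$ yields~\eqref{ineq:W_epsilon:triangle-ineq}. (Lower semicontinuity of $d^{\mueps}_{N,\beta/2}$ and measurability are enough to justify that the infimum defining $\W_{d^{\mueps}_{N,\beta/2}}$ is bounded by the particular coupling's cost; this is standard.)

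The main obstacle is the pointwise inequality, specifically keeping track of how the locally-Lipschitz control on $\Psi_0$ interacts with the two regimes of $d_N^{\mueps}$. The subtlety is that $\Psi_0(u,\eta)=\tfrac12\|u\|_H^2+\tfrac12(1-\kappa)\|\eta\|_{\Mzeroeps}^2$ is only locally Lipschitz (quadratic growth), so the exponential factor $e^{\frac\beta2|\Psi_0(U_1)-\Psi_0(U_2)|}$ is not uniformly bounded in general --- it is bounded only because in the nontrivial regime $\|U_1-U_2\|_{\Hzeroeps}\le 1/N$ forces $|\Psi_0(U_1)-\Psi_0(U_2)|\le \tfrac1N(\|U_1\|_{\Hzeroeps}+\|U_2\|_{\Hzeroeps})+\tfrac{1}{2N^2}$, which is \emph{not} bounded unless one is careful. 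The resolution, which is the technical heart of~\cite[Appendix~A]{glatt2021mixing}, is that in this regime one does not bound $e^{\frac\beta2\Psi_0(U_1)}$ by $C\,e^{\frac\beta2\Psi_0(U_2)}$ directly, but instead uses the halved exponent: one writes, say, $d^{\mueps}_{N,\beta/2}(U_1,U_3)^2 \le d_N^{\mueps}(U_1,U_3)\big[1+e^{\frac\beta2\Psi_0(U_1)}+e^{\frac\beta2\Psi_0(U_3)}\big]$ and bounds $d_N^{\mueps}(U_1,U_3)\,e^{\frac\beta2\Psi_0(U_1)}$ by Young's inequality splitting $\tfrac\beta2 = \beta - \tfrac\beta2$, so that the ``extra'' $e^{-\frac\beta2\Psi_0}$ is absorbed into the small factor $d_N^{\mueps}(U_1,U_2)=N\|U_1-U_2\|_{\Hzeroeps}\wedge 1$ after bounding $\|U_1-U_2\|$-type quantities — precisely, one shows $N\|U_1-U_2\|_{\Hzeroeps}\,e^{\frac\beta2\Psi_0(U_1)} \le C\big(N\|U_1-U_2\|_{\Hzeroeps}\,(1+e^{\beta\Psi_0(U_1)})\big)^{1/2}\cdot\big(\text{bounded}\big)^{1/2}$ when $\|U_1-U_2\|$ is small, which is the mechanism by which the halved exponent on the left is exactly what is needed. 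I would present this carefully for the first cross term and note the others are symmetric, then invoke~\cite[Proposition~A.3]{glatt2021mixing} for the remaining bookkeeping.
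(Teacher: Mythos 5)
Your overall structure matches the paper's: reduce to the pointwise quasi-triangle inequality for $d^{\mueps}_{N,\beta/2}$ and then lift via a gluing/coupling argument. You also correctly identify the obstruction — $\Psi_0$ is quadratic, so $|\Psi_0(U_1)-\Psi_0(U_2)|$ is \emph{not} uniformly small even when $\|U_1-U_2\|_{\Hzeroeps}\le 1/N$. But the mechanism you propose to resolve it does not close the gap. The hard term is the \emph{cross} term $d_N^{\mueps}(U_1,U_2)\,e^{\frac{\beta}{2}\Psi_0(U_3)}$, in which the weight involves $U_3$ but the distance factor involves $U_1,U_2$ only. Your displayed model estimate, $N\|U_1-U_2\|\,e^{\frac{\beta}{2}\Psi_0(U_1)}\le [N\|U_1-U_2\|(1+e^{\beta\Psi_0(U_1)})]^{1/2}\cdot(\text{bounded})^{1/2}$, is the easy ``self'' term (same index in the weight as in the distance); it never touches the cross term, and ``Young's inequality splitting $\tfrac{\beta}{2}=\beta-\tfrac{\beta}{2}$'' does not produce a comparison between $e^{\frac{\beta}{2}\Psi_0(U_3)}$ and $e^{\beta\Psi_0(U_1)}$ or $e^{\beta\Psi_0(U_2)}$. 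Also your case split is slightly off: when $d_N^{\mueps}(U_1,U_2)=1$ but $d_N^{\mueps}(U_2,U_3)<1$, the inequality is \emph{not} trivial (the right-hand side still lacks any factor $e^{\beta\Psi_0(U_3)}$ unless you can trade it for $e^{\beta\Psi_0(U_2)}$).

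The device the paper actually uses, and which your argument is missing, is the one-sided quadratic inequality
\begin{equation*}
\Psi_0(U)\ge -\|U-\Ut\|^2_{\Hzeroeps}+\tfrac{1}{2}\Psi_0(\Ut),
\end{equation*}
which follows from completing the square (see~\eqref{ineq:Psi_0(x)>-|x-xtilde|+Psi_0(xtilde)}). The factor $\tfrac{1}{2}$ on the right is the whole point: it is what makes the exponent halving on the left of~\eqref{ineq:W_epsilon:triangle-ineq} appear naturally. After the normalization $\Psi_0(U_1)\le\Psi_0(U_3)$, the case split is on $N\|U_2-U_3\|_{\Hzeroeps}$ alone. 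When $N\|U_2-U_3\|_{\Hzeroeps}<1$, the inequality above with $U=U_2$, $\Ut=U_3$ gives $e^{\beta\Psi_0(U_2)}\ge e^{-\beta/N^2}\,e^{\frac{\beta}{2}\Psi_0(U_3)}$, so both $d^{\mueps}_{N,\beta}(U_1,U_2)^2$ and $d^{\mueps}_{N,\beta}(U_2,U_3)^2$ carry at least a factor $c_N\,[1+e^{\frac{\beta}{2}\Psi_0(U_3)}]$, and the triangle inequality for the genuine metric $d_N^{\mueps}$ finishes. When $N\|U_2-U_3\|_{\Hzeroeps}\ge 1$ the second summand alone already dominates $1+e^{\frac{\beta}{2}\Psi_0(U_3)}$ and the bound is immediate. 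Without the quadratic lower bound tying $\Psi_0(U_2)$ to $\tfrac{1}{2}\Psi_0(U_3)$ at scale $1/N$, the cross term cannot be absorbed, so as written your proof has a genuine hole at precisely the step you flag as ``the technical heart.''
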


\begin{proof}
By~\eqref{form:W_0(nu_1,nu_2)}, it suffices to prove that there exists a $C>0$, independent of $U_1,U_2,U_3$ and $\varepsilon$, such that
\begin{equation} \label{ineq:d_epsilon:triangle-ineq}
d^{\mueps}_{N,\beta/2}(U_1,U_3)\le C\Big(d^{\mueps}_{N,\beta}(U_1,U_2)+d^{\mueps}_{N,\beta}(U_2,U_3)\Big),\quad U_1,U_2,U_3\in\Hzeroeps.
\end{equation}
To see this reduction, we first recall $\Psi_0$ from~\eqref{form:Psi_0} and use the Cauchy-Schwarz inequality to see that
\begin{align}
\Psi_0(u,\eta)&=\frac{1}{2}\|u\|^2_H+\frac{1}{2}(1-\kappa)\|\eta\|^2_{\Mzeroeps}\notag\\
&\ge \frac{1}2\|u-\utilde\|_H^2+\la u-\utilde,\utilde\ra_H+\frac{1}{2}\|\utilde\|^2_H+(1-\kappa)\left(\frac{1}2\|\eta\|_{\Mzeroeps}^2+\la \eta-\etatilde,\etatilde\ra_{\Mzeroeps}+\|\etatilde\ra_{M^0_{\mueps}}^2\right)\notag\\
&\ge -\|u-\utilde\|^2_H-(1-\kappa)\|\eta-\etatilde\|^2_{\Mzeroeps}+\frac{1}{2}\Psi_0(\utilde,\etatilde)\notag\\
&\ge -\|(u,\eta)-(\utilde,\etatilde)\|^2_{\Hzeroeps}+\frac{1}{2}\Psi_0(\utilde,\etatilde).\label{ineq:Psi_0(x)>-|x-xtilde|+Psi_0(xtilde)}
\end{align}
We now turn to the proof of~\eqref{ineq:d_epsilon:triangle-ineq}, recall
\begin{align*}
d^{\mueps}_{N,\beta/2}(U_1,U_3)&=\sqrt{d^{\mueps}_{N}(U_1,U_3)\left[1+\exp\left({\frac{\beta}{2}\Psi_0(U_1)}\right)+\exp\left({\frac{\beta}{2}\Psi_0(U_3)}\right)\right]}\\
&= \sqrt{\left(1\mi \left(N\|U_1-U_3\|_{\Hzeroeps}\right)\right)\left[1+\exp\left({\frac{\beta}{2}\Psi_0(U_1)}\right)+\exp\left({\frac{\beta}{2}\Psi_0(U_3)}\right)\right]}.
\end{align*}
Without loss of generality, assume that $e^{\frac{\beta}{2}\Psi_0(U_1)}\le e^{\frac{\beta}{2}\Psi_0(U_3)}$. Since $d^{\mueps}_{N}$ is actually a metric in $\Hzeroeps$, it holds that
\begin{align*}
\big{|}d^{\mueps}_{N,\beta}(U_1,U_3)\big{|}^2&= d^{\mueps}_N(U_1,U_3)\left[1+\exp\left({\frac{\beta}{2}\Psi_0(U_1)}\right)+\exp\left({\frac{\beta}{2}\Psi_0(U_3)}\right)\right]\\
&\le \left(d^{\mueps}_N(U_1,U_2)+d^{\mueps}_N(U_2,U_3)\right)\left[1+2\exp\left({\frac{\beta}{2}\Psi_0(U_3)}\right)\right].
\end{align*}
Now, there are two cases depending on $N\|U_2-U_3\|_{\Hzeroeps}$.

\noindent\textbf{Case 1:} $N\|U_2-U_3\|_{\Hzeroeps}<1$. In this case, we invoke~\eqref{ineq:Psi_0(x)>-|x-xtilde|+Psi_0(xtilde)} to see that
\begin{align*}
&\big{|}d^{\mueps}_{N,\beta}(U_1,U_2)\big{|}^2+\big{|}d^{\mueps}_{N,\beta}(U_2,U_3)\big{|}^2\\
&=d^{\mueps}_N(U_1,U_3)\left[1+\exp\left({\beta\Psi_0(U_1)}\right)+\exp\left({\beta\Psi_0(U_2)}\right)\right]+d^{\mueps}_N(U_2,U_3)\left[1+\exp\left({\beta\Psi_0(U_2)}\right)+\exp\left({\beta\Psi_0(U_3)}\right)\right]\\
&\ge  d^{\mueps}_N(U_1,U_3)\left[1+\exp\left(-\frac{\beta}{N^2}\right)\exp\left({\frac{\beta}{2}\Psi_0(U_3)}\right)\right]+d^{\mueps}_N(U_2,U_3)\left[1+\exp\left({\frac{\beta}{2}\Psi_0(U_3)}\right)\right]\\
&\ge \exp\left(-\frac{\beta}{N^2}\right)\big(d^{\mueps}_N(U_1,U_2)+d^{\mueps}_N(U_2,U_3)\big)\left[1+2\exp\left({\frac{\beta}{2}\Psi_0(U_3)}\right)\right].
\end{align*}
It follows that

\begin{align*}
\big{|}d^{\mueps}_{N,\beta/2}(U_1,U_3)\big{|}^2\le \exp\left(\frac{\beta}{N^2}\right)\Big(\big{|}d^{\mueps}_{N,\beta}(U_1,U_2)\big{|}^2+\big{|}d^{\mueps}_{N,\beta}(U_2,U_3)\big{|}^2\Big).
\end{align*}

\noindent\textbf{Case 2:} $N\|U_2-U_3\|_{\Hzeroeps}\ge 1$. In this case we have
$d^{\mueps}_N(U_2,U_3)  =1$.
Hence
\begin{align*}
&\big{|}d^{\mueps}_{N,\beta}(U_1,U_2)\big{|}^2+\big{|}d^{\mueps}_{N,\beta}(U_2,U_3)\big{|}^2\\
&=d^{\mueps}_N(U_1,U_3)\left[1+\exp\left({\beta\Psi_0(U_1)}\right)+\exp\left({\beta\Psi_0(U_2)}\right)\right]+d^{\mueps}_N(U_2,U_3)\left[1+\exp\left({\beta\Psi_0(U_2)}\right)+\exp\left({\beta\Psi_0(U_3)}\right)\right]\\
&\ge  1+\exp\exp\left({\frac{\beta}{2}\Psi_0(U_3)}\right).
\end{align*}
Also, since $d^{\mueps}_N$ is bounded by 1, 
\begin{align*}
\big{|}d^{\mueps}_{N,\beta}(U_1,U_3)\big{|}^2 \le 2\left[1+2\exp\left({\frac{\beta}{2}\Psi_0(U_3)}\right)\right].
\end{align*}
As a consequence, we obtain
\begin{align*}
\big{|}d^{\mueps}_{N,\beta}(U_1,U_3)\big{|}^2 \le 4\Big(\big{|}d^{\mueps}_{N,\beta}(U_1,U_2)\big{|}^2+\big{|}d^{\mueps}_{N,\beta}(U_2,U_3)\big{|}^2\Big).
\end{align*}

Lastly, we combine two cases above to produce~\eqref{ineq:d_epsilon:triangle-ineq}. The proof is thus complete.
\end{proof}

Lastly, in \cref{lem:W_0:triangle-ineq} below, we establish a similar generalized triangle inequality for $\W_{d_{N,\beta}}$. Since the proof of \cref{lem:W_0:triangle-ineq} is similarly to that of \cref{lem:W_epsilon:triangle-ineq}, it is omitted.

\begin{lemma} \label{lem:W_0:triangle-ineq}
For all $N,\beta>0$, $\nu_1,\nu_2,\nu_3\in \Pcal r(H)$, the following holds
\begin{equation} \label{ineq:W_0:triangle-ineq}
\W_{d_{N,\beta/2}}(\nu_1,\nu_3)\le C\Big(\W_{d_{N,\beta}}(\nu_1,\nu_2)+\W_{d_{N,\beta}}(\nu_2,\nu_3)\Big),
\end{equation}
for some positive constant $C=C(N,\beta)$ independent of $\nu_1,\nu_2,\nu_3$. Here, $\W_{d_{N,\beta}}$ is the Wasserstein distance associated with $d_{N,\beta}$ defined in~\eqref{form:d_N,beta}.
\end{lemma}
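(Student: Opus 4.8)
The plan is to transcribe the proof of \cref{lem:W_epsilon:triangle-ineq} to the present setting, with the norm $\|\cdot\|_H$ and the quadratic form $u\mapsto\tfrac12\|u\|_H^2$ playing the roles of $\|\cdot\|_{\Hzeroeps}$ and $\Psi_0$. As in that lemma, by \eqref{form:W_0(nu_1,nu_2)} it suffices to establish the pointwise estimate
\begin{equation*}
d_{N,\beta/2}(u_1,u_3)\le C\big(d_{N,\beta}(u_1,u_2)+d_{N,\beta}(u_2,u_3)\big),\qquad u_1,u_2,u_3\in H,
\end{equation*}
with $C=C(N,\beta)$ independent of the points: given arbitrary couplings $\pi_{12}$ of $(\nu_1,\nu_2)$ and $\pi_{23}$ of $(\nu_2,\nu_3)$, the gluing lemma produces a measure on $H^3$ whose $(1,2)$- and $(2,3)$-marginals are $\pi_{12}$ and $\pi_{23}$, hence whose $(1,3)$-marginal couples $(\nu_1,\nu_3)$; integrating the pointwise inequality against it and taking the infimum over $\pi_{12},\pi_{23}$ yields \eqref{ineq:W_0:triangle-ineq}.

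To prove the pointwise bound I would first record the $H$-analogue of \eqref{ineq:Psi_0(x)>-|x-xtilde|+Psi_0(xtilde)}: expanding $\|u\|_H^2=\|u-v\|_H^2+2\la u-v,v\ra_H+\|v\|_H^2$ and using Young's inequality on the cross term gives $\tfrac12\|u\|_H^2\ge -\|u-v\|_H^2+\tfrac14\|v\|_H^2$, so that
\begin{equation*}
\exp\!\left(\tfrac{\beta}{2}\|u\|_H^2\right)\ge e^{-\beta\|u-v\|_H^2}\exp\!\left(\tfrac{\beta}{4}\|v\|_H^2\right).
\end{equation*}
Second, I would use that $d_N$ in \eqref{form:d_0(x,y)} is a genuine metric on $H$ bounded by $1$, whence $d_N(u_1,u_3)\le d_N(u_1,u_2)+d_N(u_2,u_3)$; assuming without loss of generality $\|u_1\|_H\le\|u_3\|_H$, this gives $|d_{N,\beta/2}(u_1,u_3)|^2\le\big(d_N(u_1,u_2)+d_N(u_2,u_3)\big)\big(1+2e^{\frac{\beta}{4}\|u_3\|_H^2}\big)$. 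Then I would split on $N\|u_2-u_3\|_H$. If $N\|u_2-u_3\|_H<1$, then $\|u_2-u_3\|_H^2\le N^{-2}$, so the displayed exponential inequality (with $u=u_2$, $v=u_3$) bounds $|d_{N,\beta}(u_1,u_2)|^2+|d_{N,\beta}(u_2,u_3)|^2$ from below by $e^{-\beta/N^2}\big(d_N(u_1,u_2)+d_N(u_2,u_3)\big)\big(1+e^{\frac{\beta}{4}\|u_3\|_H^2}\big)$, and combining with the previous display (via $1+2e^{\frac\beta4\|u_3\|_H^2}\le 2(1+e^{\frac\beta4\|u_3\|_H^2})$) yields the pointwise estimate with $C=\sqrt2\,e^{\beta/(2N^2)}$. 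If instead $N\|u_2-u_3\|_H\ge1$, then $d_N(u_2,u_3)=1$, hence $|d_{N,\beta}(u_2,u_3)|^2\ge 1+e^{\frac{\beta}{2}\|u_3\|_H^2}\ge\tfrac12\big(1+2e^{\frac{\beta}{4}\|u_3\|_H^2}\big)$, while $|d_{N,\beta/2}(u_1,u_3)|^2\le 1+2e^{\frac{\beta}{4}\|u_3\|_H^2}$ since $d_N\le1$; together these give $|d_{N,\beta/2}(u_1,u_3)|^2\le 2\big(|d_{N,\beta}(u_1,u_2)|^2+|d_{N,\beta}(u_2,u_3)|^2\big)$. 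Taking square roots and using $\sqrt{a+b}\le\sqrt a+\sqrt b$ completes the pointwise inequality in both cases.

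The only subtlety, exactly as in \cref{lem:W_epsilon:triangle-ineq}, is that the passage from weight exponent $\beta$ to $\beta/2$ is what makes the argument close: the quadratic inequality loses a factor $e^{-\beta\|u_2-u_3\|_H^2}$, which is harmless precisely when $\|u_2-u_3\|_H$ is bounded, and in the complementary regime one instead exploits that $d_N$ has already saturated at its maximal value $1$, so no growth control on the intermediate point is needed. Since the purely $H$-valued setting here introduces no new analytic phenomena relative to the $\Hzeroeps$-setting of \cref{lem:W_epsilon:triangle-ineq} — the projections $\pi_i$ and the memory component play no role in this estimate — I do not expect any genuine obstacle; the proof is a direct adaptation with $\Psi_0$ replaced by $\tfrac12\|\cdot\|_H^2$, which is why the excerpt omits it.
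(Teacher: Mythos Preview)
Your proposal is correct and follows exactly the approach the paper intends: the paper omits the proof entirely, stating only that it is similar to that of \cref{lem:W_epsilon:triangle-ineq}, and your adaptation---replacing $\Psi_0$ by $\tfrac12\|\cdot\|_H^2$, rederiving the analogue of \eqref{ineq:Psi_0(x)>-|x-xtilde|+Psi_0(xtilde)}, and splitting on whether $N\|u_2-u_3\|_H$ is below or above $1$---is precisely that transcription. The gluing-lemma justification you give for passing from the pointwise inequality to the Wasserstein inequality is a welcome clarification that the paper leaves implicit.
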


\section{Auxiliary results}\label{sec:auxiliary-result}

\begin{lemma}\label{lem:tech:1}
Given $c,c'>0$, the following holds 
\begin{equation}\notag
{e^{-ct}\int_0^t e^{-(c'-c)r}\emph{d} r\le Ce^{-ct},}
\end{equation}
{for all $t\geq0$.}
\end{lemma}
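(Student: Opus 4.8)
The statement is a completely elementary inequality about a single real variable $t \geq 0$; the plan is to split into the two cases governed by the sign of $c' - c$ and bound the integral explicitly in each.

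First I would treat the case $c' \geq c$. Then $c' - c \geq 0$, so $e^{-(c'-c)r} \leq 1$ for all $r \geq 0$, whence $\int_0^t e^{-(c'-c)r}\,\d r \leq t$. Multiplying by $e^{-ct}$ gives $e^{-ct}\int_0^t e^{-(c'-c)r}\,\d r \leq t e^{-ct}$. Since $t e^{-ct}$ is bounded on $[0,\infty)$ — indeed $t e^{-ct} \leq (ec)^{-1}$, or more crudely one can use $t \leq C' e^{(c/2)t}$ and absorb — one concludes $e^{-ct}\int_0^t e^{-(c'-c)r}\,\d r \leq C e^{-ct}$ after replacing $e^{-ct}$ by, say, $e^{-ct}$ with a constant that swallows the polynomial factor; more cleanly, $te^{-ct} = t e^{-(c/2)t} e^{-(c/2)t} \le C e^{-(c/2)t}$, and one simply records the decay rate as this (still positive) rate, or keeps $c$ by noting the lemma as stated only claims \emph{some} $C$ with the bound $Ce^{-ct}$, which forces us instead to use $te^{-ct}\le (ec)^{-1}$ directly. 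So in this case $C = (ec)^{-1}$ works.

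Next, the case $c' < c$. Then $c - c' > 0$ and we compute the integral exactly: $\int_0^t e^{(c-c')r}\,\d r = \frac{1}{c-c'}\big(e^{(c-c')t} - 1\big) \leq \frac{1}{c-c'} e^{(c-c')t}$. Multiplying by $e^{-ct}$ yields $e^{-ct}\int_0^t e^{-(c'-c)r}\,\d r \leq \frac{1}{c-c'} e^{-c't} \leq \frac{1}{c-c'} e^{-ct}$, where the last step uses $c' < c$ so that $e^{-c't} \leq \ldots$ — wait, that inequality goes the wrong way, since $c' < c$ gives $e^{-c't} \geq e^{-ct}$. So here one genuinely gets the bound $\frac{1}{c-c'} e^{-c't}$ with the \emph{slower} rate $c'$; to match the lemma's claim of $Ce^{-ct}$ one must instead note that $e^{-c't} = e^{-ct} e^{(c-c')t}$, which does not decay, so the lemma as literally stated would be false in this regime unless $c \le c'$ is secretly assumed. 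I would flag this: the intended application has $c \le c'$ (e.g. $q \le \delta$ type conditions), so I would state and prove it under the hypothesis $c \le c'$, or reinterpret the conclusion with rate $\min(c,c')$. Assuming $c\le c'$, the first case's argument alone suffices.

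The only ``obstacle'' here is bookkeeping: deciding whether the lemma wants the literal rate $c$ in the exponent (which needs $c \le c'$) or the honest rate $\min(c,c')$, and choosing the constant $C$ accordingly — e.g. $C = \max\{(ec)^{-1}, (c'-c)^{-1}\}$ in the safe formulation, or $C = (ec)^{-1}$ under $c\le c'$. There is no analytic difficulty whatsoever; the proof is three lines once the case split and the convention are fixed.
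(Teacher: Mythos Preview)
You are right to flag that the lemma, read literally with the conclusion $Ce^{-ct}$, cannot hold for arbitrary $c,c'>0$: when $c'<c$ the left-hand side decays only at rate $c'$, and when $c'=c$ it equals $te^{-ct}$, which is not bounded by any multiple of $e^{-ct}$. Your diagnosis that the intended conclusion should carry the rate $\min(c,c')$ (or $c/2$ at the borderline) is correct.

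Your handling of the case $c'\ge c$, however, contains a genuine error. You bound the integral crudely by $t$, arrive at $te^{-ct}$, and then assert that $te^{-ct}\le (ec)^{-1}$ lets you take $C=(ec)^{-1}$ in the bound $Ce^{-ct}$. That is false: $te^{-ct}\le (ec)^{-1}$ is a uniform bound by a \emph{constant}, not a bound of the form $Ce^{-ct}$; dividing by $e^{-ct}$ leaves $t$, which is unbounded. The way to keep the full rate $c$ when $c'>c$ is not to estimate the integral by $t$ but to compute it exactly,
\[
\int_0^t e^{-(c'-c)r}\,\d r=\frac{1-e^{-(c'-c)t}}{c'-c}\le \frac{1}{c'-c},
\]
which gives $e^{-ct}\int_0^t e^{-(c'-c)r}\,\d r\le \frac{1}{c'-c}\,e^{-ct}$ directly. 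This is precisely what the paper does: it splits into the three cases $c'>c$, $c'<c$, $c'=c$ and obtains the rates $c$, $c'$, and $c/2$ respectively. (The paper's concluding line then also records the bound as $Ce^{-ct}$, which is a slip in view of its own second and third cases; what is actually proved, and what suffices for the single downstream use, is $Ce^{-\tilde c t}$ for some $\tilde c>0$ depending on $c,c'$.)
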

\begin{proof}
There are three cases depending on the sign of $c'-c$. If $c'-c>0$, then
\begin{align*}
e^{-ct}\int_0^t e^{-(c'-c)r}\d r\le \frac{e^{-ct}}{c'-c}.
\end{align*}
Otherwise, if $c'-c< 0$
\begin{align*}
e^{-ct}\int_0^t e^{-(c'-c)r}\d r= e^{-c't}\int_0^t e^{-(c-c')(t-r)}\d r\le \frac{e^{-c't}}{c-c'}.
\end{align*}
Now if $c'-c=0$,
\begin{align*}
e^{-ct}\int_0^t e^{-(c'-c)r}\d r= e^{-ct}t\le \frac{2e^{-ct/2}}{c}.
\end{align*}
Altogether, we observe that
\begin{equation}\notag
e^{-ct}\int_0^t e^{-(c'-c)r}\d r\le Ce^{-ct}.
\end{equation}

\end{proof}

\begin{lemma} \label{lem:|x^epsilon|<R}
Assume that \nameref{cond:mu} holds. Let $u_0(\cdot)\in C((-\infty,0];H^1)$ satisfy
\begin{align}\label{lem:hypo}
\sup_{r\geq r_0}e^{-\delta_0r}\|A^{1/2}u_0(-r)\|_H^2\le c_0e^{\delta_0 r},
\end{align}
for some $c_0,\delta_0,\,r_0>0$. Let
\begin{align*}
\eta_0(s):=\int_0^s u_0(-r)\d r,
\end{align*}
for $s\geq0$. Then 
\begin{align*}
\limsup_{\varepsilon\to 0}\|\eta_0\|^2_{\Moneeps}<\infty.
\end{align*}
\end{lemma}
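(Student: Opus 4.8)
The plan is to bound $\|\eta_0\|_{\Moneeps}^2$ by a direct computation, exploiting that as $\varepsilon\to0$ the weight $\mu_\varepsilon(s)=\varepsilon^{-2}\mu(s/\varepsilon)$ concentrates near $s=0$, where $\eta_0(s)=\int_0^s u_0(-r)\,\d r$ vanishes linearly; this linear vanishing is exactly what offsets the at most exponential growth of $\eta_0$ into the past once it is weighted against $\mu_\varepsilon$.

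The first step is a pointwise estimate. Let $g(r):=\|u_0(-r)\|_{H^{2}}$, i.e. the norm of $u_0(-r)$ whose weighted square enters the definition~\eqref{def:Mnorm} of $\|\cdot\|_{\Moneeps}$; by hypothesis $g$ is continuous on $[0,\infty)$ and $g(r)^2\le c_0 e^{\delta_0 r}$ for $r\ge r_0$ (see~\eqref{lem:hypo}), so using continuity on the compact set $[0,r_0]$ one obtains $g(r)\le C_1 e^{\delta_0 r/2}$ for all $r\ge0$, with $C_1=C_1(u_0,c_0,r_0,\delta_0)$. Since $\eta_0(s)$ is a Bochner integral, the triangle inequality together with the elementary bound $e^{x}-1\le x e^{x}$ ($x\ge0$, applied with $x=\delta_0 s/2$) gives
\begin{align*}
\|\eta_0(s)\|_{H^{2}}\le\int_0^s g(r)\,\d r\le\frac{2C_1}{\delta_0}\bigl(e^{\delta_0 s/2}-1\bigr)\le C_1\,s\,e^{\delta_0 s/2},\qquad s\ge0.
\end{align*}

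The second step inserts this into $\|\eta_0\|_{\Moneeps}^2=\int_0^\infty\mu_\varepsilon(s)\|\eta_0(s)\|_{H^{2}}^2\,\d s$ and rescales by $\sigma=s/\varepsilon$, recalling~\eqref{form:mu_epsilon}:
\begin{align*}
\|\eta_0\|_{\Moneeps}^2\le C_1^2\int_0^\infty\mu_\varepsilon(s)\,s^2 e^{\delta_0 s}\,\d s=C_1^2\,\varepsilon\int_0^\infty\mu(\sigma)\,\sigma^2 e^{\delta_0\varepsilon\sigma}\,\d\sigma.
\end{align*}
By~\nameref{cond:mu} one has $\mu(\sigma)\le\mu(0)e^{-\delta\sigma}$, so for every $\varepsilon<\delta/(2\delta_0)$ the remaining integral is at most $\mu(0)\int_0^\infty\sigma^2 e^{-(\delta/2)\sigma}\,\d\sigma=16\mu(0)\delta^{-3}<\infty$, hence $\|\eta_0\|_{\Moneeps}^2\le 16 C_1^2\mu(0)\delta^{-3}\,\varepsilon\to0$ as $\varepsilon\to0$. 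In particular $\limsup_{\varepsilon\to0}\|\eta_0\|_{\Moneeps}^2<\infty$, as claimed; the same computation carried out with the $H^1$-norm throughout gives the analogous bound for $\Mzeroeps$.

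No serious difficulty arises here; the single point that needs care is that the crude bound $\|\eta_0(s)\|_{H^{2}}^2\lesssim e^{\delta_0 s}$ is insufficient, since the zeroth moment $\int_0^\infty\mu_\varepsilon(s)e^{\delta_0 s}\,\d s$ diverges like $\varepsilon^{-1}$. Retaining the extra factor $s^2$ — which after rescaling turns into the finite second moment $\int_0^\infty\sigma^2\mu(\sigma)\,\d\sigma$ (finite by the exponential decay in~\nameref{cond:mu}) and supplies a favourable power $\varepsilon^{+1}$ — is precisely what makes the estimate uniform in, and in fact vanishing as, $\varepsilon\to0$.
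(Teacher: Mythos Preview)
Your proof is correct and follows essentially the same route as the paper's: obtain a pointwise bound $\|A\eta_0(s)\|_H^2\lesssim s^2 e^{\delta_0 s}$, insert it into $\int_0^\infty\mu_\varepsilon(s)\|A\eta_0(s)\|_H^2\,\d s$, rescale $s\mapsto\varepsilon\sigma$, and use the exponential decay of $\mu$ from \nameref{cond:mu}. The only cosmetic difference is that the paper keeps the two regimes $[0,r_0]$ and $[r_0,\infty)$ separate, whereas you absorb the compact-set supremum into a single constant $C_1$ and work with the unified bound $g(r)\le C_1 e^{\delta_0 r/2}$; your version makes the favourable factor of $\varepsilon$ (hence $\|\eta_0\|_{\Moneeps}^2\to0$, not merely bounded) slightly more transparent.
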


\begin{proof}
By the Cauchy-Schwarz inequality and the \eqref{lem:hypo}, we have
\begin{align*}
\|A\eta_0(s)\|^2_H&= \int_0^s \int_0^s\la Au_0(-r),Au_0(-\tilde{r})\ra_H\d r\d \tilde{r}\\
&\le \int_0^s\int_0^s\|Au_0(-r)\|_H\|Au_0(-\tilde{r})\|_H\d r\d \tilde{r}\\
&\le s\int_0^s \|Au_0(-r)\|^2_H\d r\\
&\le s^2\sup_{r\in[0,r_0]}\|Au_0(-r)\|^2_H+ \frac{c_0}{\delta_0} s\,e^{\delta_0 s}\boldsymbol{1}\{s\ge r_0\}.
\end{align*}
By~\eqref{form:mu_epsilon} and \nameref{cond:mu}, it follows that
\begin{align*}
\|\eta_0\|^2_{\Honeeps}&=\int_0^\infty\close  \mueps(s)\|A\eta_0(s)\|^2_H\d s\\
&\le \frac{\mu(0)}{\varepsilon^2}\int_0^\infty\close e^{-\frac{\delta}{\varepsilon}s}  \Big[s^2\sup_{r\in[0,r_0]}\|Au_0(-r)\|^2_H+ \frac{c_0}{\delta_0} s\,e^{\delta_0 s}\boldsymbol{1}\{s\ge r_0\}\Big]\d s\\
&= \varepsilon\mu(0) \sup_{r\in[0,r_0]}\|Au_0(-r)\|^2_H\int_0^\infty\close e^{-\delta s} s^2\d s +\frac{\mu(0)c_0}{\delta_0}\int_{r_0/\varepsilon}^{\infty} \close e^{-(\delta-\varepsilon\delta_0)s}s\, \d s.
\end{align*}
In the last implication above, we made a change of variable. It follows that 
\begin{align*}
\limsup_{\varepsilon\to 0}\|\eta_0\|^2_{\Moneeps}<\infty,
\end{align*}
as claimed.
\end{proof}

\bibliographystyle{abbrv}
{\footnotesize\bibliography{react-diff}}

\end{document}